\documentclass[a4paper]{book}
%\documentclass[a4paper,english]{smfbook} [SMF] 

%\input xy

%\xyoption{all}

\usepackage{amssymb,amsbsy,amsthm,amsmath,graphicx,epsfig}
\usepackage{mathabx,times,hyperref,eucal}
\usepackage{appendix}
\numberwithin{equation}{chapter}

\DeclareFontFamily{U}{FdSymbolC}{}
\DeclareFontShape{U}{FdSymbolC}{m}{n}{<-> s * FdSymbolC-Book}{}
\DeclareSymbolFont{fdarrows}{U}{FdSymbolC}{m}{n}
\DeclareMathSymbol{\spoon}{\mathrel}{fdarrows}{"6B}

\usepackage{sectsty}

\chapterfont{\Large}
\sectionfont{\large}

\subsectionfont{\normalsize}

% ENVIRONMENTS & THEOREMS

\newtheorem{thm}{Theorem}[chapter]
\newtheorem{lem}[thm]{Lemma}
\newtheorem{prop}[thm]{Proposition}
\newtheorem{cor}[thm]{Corollary}

\newtheorem{dfn}[thm]{Definition}
\newtheorem{prob}[thm]{Problem}

\newtheorem{mainthm}{Theorem}
 % make main theorems enumerated by capital letters

\theoremstyle{remark}
\newtheorem{ex}[thm]{Example}
\newtheorem{rmk}[thm]{Remark}

%\parskip 7pt
%\parindent 0pt

% COMMANDS: TYPEFACES

\newcommand{\bs}[1]{\boldsymbol{#1}}
\renewcommand{\bf}[1]{\mathbf{#1}}
\renewcommand{\rm}[1]{\mathrm{#1}}
\renewcommand{\cal}[1]{\mathcal{#1}} %[SMF]

% COMMANDS: CAPITAL BB

\newcommand{\bbC}{\mathbf{C}}

\newcommand{\bbE}{\mathbf{E}}

\newcommand{\bbN}{\mathbf{N}}
\newcommand{\bbP}{\mathbf{P}}

\newcommand{\bbR}{\mathbf{R}}
\newcommand{\bbT}{\mathbf{T}}
\newcommand{\bbZ}{\mathbf{Z}}

% COMMANDS: CAPITAL SF

% COMMANDS: CAPITAL RM

\newcommand{\rmS}{\mathbf{S}}

\newcommand{\rmH}{\mathrm{H}}
\newcommand{\rmI}{\mathrm{I}}
\newcommand{\rmM}{\mathbf{M}}
\newcommand{\rmU}{\mathbf{U}}

% COMMANDS: lower rm

\newcommand{\rmh}{\mathrm{h}}

% COMMANDS: CAPITAL CAL

\newcommand{\F}{\mathcal{F}}

\newcommand{\calK}{\mathcal{K}}

\newcommand{\V}{\mathcal{V}}
\newcommand{\X}{\mathcal{X}}
\newcommand{\Y}{\mathcal{Y}}

% COMMANDS: CAPITAL FRAK

\newcommand{\A}{\mathfrak{A}}
\newcommand{\B}{\mathfrak{B}}

\newcommand{\I}{\mathfrak{I}}

\newcommand{\K}{\mathfrak{K}}

\newcommand{\N}{\mathfrak{N}}

% COMMANDS: lower frak

% COMMANDS: CAPITAL GREEK

\newcommand{\G}{\Gamma}

\renewcommand{\S}{\Sigma}

% COMMANDS: LOWER GREEK

\renewcommand{\a}{\alpha}

\newcommand{\eps}{\varepsilon}
\renewcommand{\phi}{\varphi}

\renewcommand{\l}{\lambda}
\newcommand{\s}{\sigma}

% COMMANDS: MISC EXPRESSIONS

% COMMANDS: MISC SYMBOLS

\renewcommand{\hat}[1]{\widehat{#1}}
\newcommand{\ol}[1]{\overline{#1}}
\newcommand{\ul}[1]{\underline{#1}}
\renewcommand{\tilde}[1]{\widetilde{#1}}
\renewcommand{\t}[1]{\widetilde{#1}}

\newcommand{\fin}{\nolinebreak\hspace{\stretch{1}}$\lhd$}

\newcommand{\bspi}{\boldsymbol{\pi}}
\newcommand{\ann}{\mathrm{ann}}
\newcommand{\hann}{\mathrm{h}^\mathrm{ann}}

\newcommand{\vol}{\mathrm{vol}}

\newcommand{\Rep}{\mathrm{Rep}}

\newcommand{\PPSD}{\mathrm{PPSD}}

\newcommand{\sa}{\mathrm{sa}}
\newcommand{\tr}{\overline{\mathrm{tr}}}
\newcommand{\Tr}{\mathrm{tr}}
\newcommand{\Det}{\mathrm{det}}

\renewcommand{\Rep}{\mathrm{Rep}}

\begin{document}

%\title{\huge{Annealed almost periodic entropy}}
\title{Annealed almost periodic entropy}

\author{Tim Austin}
%\address{Mathematics Institute, Zeeman Building, University of Warwick, Coventry CV4 7AL, United Kingdom}
% It is required to enter 2020 MSC.
%\subjclass{Primary: 11K55, 37A45; Secondary: 28A50, 28A80, 37C45.}
% Please provide minimum  5 keywords.
% \keywords{Entropy ...???}

%\email{Tim.Austin@warwick.ac.uk}

\maketitle

\setcounter{tocdepth}{1}
\tableofcontents

\pagestyle{plain}

\chapter{Introduction}

\section*{\emph{Random almost periodic sequences and entropy}}

The recent work~\cite{APE4} studied various notions of entropy for unitary representations of countable groups and representations of separable C*-algebras.  These notions were defined based on an analogy between this setting and that of ergodic theory, where the Kolmogorov--Sinai entropy for single transformations now has various generalizations to measure-preserving actions of different classes of countable groups, and in particular beyond amenable groups (see~\cite{Bowen--survey} for an introduction).  This analogy led to new quantities that can be defined for positive definite functions associated to various kinds of representations.  The most substantial theorems in~\cite{APE4} show that these entropy values for representations are often given by certain Fuglede--Kadison determinants.  These results are interpreted as various non-commutative generalizations of Szeg\H{o}'s limit theorem for positive definite Toeplitz determinants.

In~\cite{APE4}, the analog of sofic entropy is called almost periodic (`AP') entropy.  It is defined in the generality of matrix-valued completely positive maps on a separable, unital C*-algebra $\A$.  To define it, one must first introduce an auxiliary `AP sequence', meaning a sequence of finite-dimensional representations of $\A$ whose dimensions diverge.  Along this AP sequence, one looks for finite-dimensional `models' of a desired positive functional $\phi$ on $\A$, and then AP entropy is defined in terms of the growth rates of the volumes of the sets of such models.  This is the analog of how a sofic approximation is used to define sofic entropy, and is the reason for the term `almost periodic'.

We can extend the theory of AP entropy from~\cite{APE4} by allowing AP sequences $\bspi = (\pi_n)_{n\ge 1}$ that are themselves random.  The main point is the resulting interaction between (i) the law of the chosen random AP sequence and (ii) the infinitary positive functional $\phi$ that one is trying to approximate.

In Part~\ref{part:general} of this work, we first recall some necessary background from various fields, and then in Chapter~\ref{chap:random-AP} we introduce two new entropy notions for a random AP sequence $\bspi$ and a completely positive map $\phi:\A\to \rmM_k$.  The first is the `annealed AP entropy' $\rmh^\ann_{\bspi}(\phi)$, which is introduced carefully in Section~\ref{sec:ann-APent}.  It is obtained by simply inserting an expectation over $\bspi$ into the definition of deterministic AP entropy.  Annealed AP entropy is the analog of Bowen's `annealed sofic entropy' from~\cite{Bowen10free} (originally referred to as the `f-invariant'; see the discussion of terminology in~\cite{AusBowShr}).  Many of our results about it lie parallel to that theory, but sometimes with quite different proofs.

To be more explicit, let $\B(\A,\rmM_k)_+$ be the space of completely positive maps from $\A$ to $\rmM_k$.  For each $n$ and any subset $O$ of $\B(\A,\rmM_k)_+$, let
\[\X(\pi_n,O) := \big\{[v_1,\dots,v_k] \in \bbC^n:\ [\langle \pi_n(\bullet)v_j,v_i\rangle]_{i,j=1}^k \in O\big\}.\]
Now fix one element $\phi$ of $\B(\A,\rmM_k)_+$. We define its \textbf{annealed almost periodic entropy} along $\bspi$ to be
\begin{equation}\label{eq:hann-preview}
\rmh^\ann_{\bspi}(\phi) := \inf_O\limsup_{n\to\infty}\frac{1}{n}\log \bbE\frac{\vol_{2kn}\X(\pi_n,O)}{v(n)^k},
\end{equation}
where $v(n)$ is the volume of the unit ball in $2n$ real dimensions, and where the infimum ranges over all neighbourhoods $O$ of $\phi$.  The expectation refers to the random choice of $\pi_n$.  We borrow the term `annealed' for these averages from similar calculations in statistical physics: see the discussion following Definition~\ref{dfn:annAPent} and in~\cite[Sec. I.1]{MezParVir--book}.

Our second new entropy notion is the `zeroth-order AP entropy' $\rmh^0_{\bspi}(\phi)$.  It governs the probability as $n\to\infty$ that $\phi$ approximately appears as the positive functional associated to $\pi_n$ by some vector in that representation: see Section~\ref{sec:0-APent} for the precise definition.  Zeroth-order AP entropy is more novel.  Unlike annealed AP entropy, it depends only on the equivalence class of the GNS representation of $\phi$, and this leads to a general definition of zeroth-order AP entropy for any separable representation.

If the random tracial states $\tr_{d_n}\circ \pi_n$ converge sufficiently fast in probability to a limiting tracial state $\tau$ on $\A$, then $\rmh^\ann_{\bspi}$ and $\rmh^0_{\bspi}$ are related to each other via another Fuglede--Kadison determinant: see Theorem~\ref{thm:three-entropy}.  This `three-entropy formula' is a generalization of the formula for deterministic AP entropy in~\cite[Thm. C]{APE4} to random AP sequences.

% [[ WHAT ABOUT: TALK SOMEWHERE -- MAYBE NOT IN THIS WORK -- ABOUT HOW YOU CAN SEE SOME OF THE OBJECTS HERE AS FREE- (OR OTHER-) GROUP EXTENSIONS OF ZETA FUNCTIONS, THERMODYNAMIC FORMALISM INGREDIENTS, COUNTING CLOSED ORBITS??? ]]

\section*{\emph{Random AP sequences for free groups}}

With this general theory in hand, in Part~\ref{part:free} we restrict our attention to unitary representations of a group $\Gamma$ that is freely generated by a finite set $S$.  (Much of our work should have straightforward extensions to the free group of countable infinite rank, but we do not develop these here.)

The free agroup $\G$ has many unitary representations in any desired dimension, since independent generators can be chosen arbitrarily.  In particular, for each positive integer $n$, we can let $(\pi_n(s):\ s\in S)$ be a tuple of $n$-by-$n$ unitary matrices chosen independently at random from Haar measure, and then let $\pi_n$ be the resulting unitary representation of the whole of $\G$.  These are the random AP sequences that we focus on in Part~\ref{part:free}.  (They are also central examples for Voiculescu's theory of `free probability'~\cite{VoiDykNic92,Voi02}, which already includes a notion of `free entropy', but the variants of AP entropy that we study here are quite different.)  To simplify notation, let us shorten $\rmh^\ann_{\bspi}$ to $\hann$ and $\rmh^0_{\bspi}$ to $\rmh^0$ for this sequence.

For this particular $\bspi$, quite precise calculations are possible that extend the general theory from Part~\ref{part:general}, and these lead to rich connections with random matrix theory, particularly through various large deviations principles.  Among the consequences we obtain new probability laws for such independent random unitary matrices, including a large deviations principle that enhances the Collins--Male strong convergence theorem for such matrices (as well as giving a new, independent proof of that theorem itself).

First, a kind of first-moment calculation (explained further below) leads to concrete formulas for $\hann$.  The next theorem states two of these; some others are derived in Section~\ref{sec:ann-AP-forms}.  Let $B_n$ denote the closed $n$-ball around the identity in the left Cayley graph of $\G$.  Also, if $\phi:\G\to\rmM_k$ is positive definite and $F$ is a finite subset of $\G$, then define a positive semidefinite $(k|F|)$-by-$(k|F|)$ matrix by
\begin{equation}\label{eq:Qpin}
\phi[F] := [\phi(g^{-1}h):\ g,h \in F].
\end{equation}

\begin{mainthm}\label{mainthm:annealed-formula}
	The function $\hann$ takes values in $[-\infty,\infty)$ for each $k$.	For any positive definite map $\phi:\G\to \rmM_k$, the value $\hann(\phi)$ is equal to both
	\begin{equation}\label{eq:LDP-formula-1}
		\lim_{n\to\infty}\Big(\log\Det
		\,\phi[B_n] - \sum_{s \in S}\log\Det\,\phi[B_n\cap sB_n]\Big)
	\end{equation}
	and
	\begin{equation}\label{eq:LDP-formula-2}
		\lim_{n\to\infty}\Big(\sum_{s \in S}\log\Det\,\phi[B_n\cup sB_n] - (2r-1)\log\Det\,\phi[B_n]\Big)
	\end{equation}
	(interpreting these as $-\infty$ if any of the determinants appearing here equals $0$).
\end{mainthm}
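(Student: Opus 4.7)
The plan is to execute the first-moment calculation promised in the introduction and then to reconcile the two formulas using a Gaussian conditional-entropy identity. Fix $n \geq 1$ and let $O = O_{n,\eps}$ be a basic neighborhood of $\phi$ controlling all entries with coordinates in $B_n$ to within $\eps$. By Fubini,
\[
\bbE\vol_{2kN}\X(\pi_N,O) = \int_{\bbC^{N\times k}} dV\;\bbP\bigl(\mathbf{W}(V)^*\mathbf{W}(V) \in \mathrm{nbd}(\phi[B_n])\bigr),
\]
where $V = [v_1,\dots,v_k]$, $\mathbf{W}(V) = [\pi_N(g)V]_{g \in B_n}$ is the $N \times k|B_n|$ orbit matrix, and the probability is over the Haar-distributed $U_s := \pi_N(s)$ for $s \in S$.

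The core computation exploits the tree structure of $B_n$ in the Cayley graph of the free group. Order $B_n$ by breadth-first search from $e$; each new vertex $g$ is uniquely of the form $g = sg'$ with $g' \in B_{n-1}$ for a distinguished letter $s \in S \cup S^{-1}$. Building the orbit matrix $\mathbf{W}$ in this order, I would record a conditional-Gaussian factor at each step via a complex-Wishart-type volume formula; after absorbing $k\log\pi$ into the normalization $v(N)^k$, telescoping over the tree yields a total contribution $\log\Det\phi[B_n]$. Simultaneously, each Haar unitary $U_s$ accumulates constraints from the pairs $(W_h,W_{sh})$ with both $h, sh \in B_n$, i.e.\ indexed by $h \in B_n \cap s^{-1}B_n$; generically $U_s$ is determined on a subspace of dimension $k|B_n \cap sB_n|$ while its orthogonal complement remains free under Haar measure, contributing $-\log\Det\phi[B_n \cap sB_n]$ to the log-volume. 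Collecting terms and taking $\limsup_N$, then $\eps \downarrow 0$, then $n \to \infty$ proves formula \eqref{eq:LDP-formula-1}.

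For \eqref{eq:LDP-formula-2} I would invoke the Gaussian inclusion--exclusion identity
\[
\log\Det\phi[B_n \cup sB_n] + \log\Det\phi[B_n \cap sB_n] = 2\log\Det\phi[B_n] - I_s(n,\phi),
\]
where $I_s(n,\phi)\geq 0$ is a conditional-mutual-information term (zero iff the Gaussian process associated with $\phi$ is Markov across $B_n \cap sB_n$) and I have used $\log\Det\phi[sB_n] = \log\Det\phi[B_n]$ by left-translation symmetry of the block structure. Substituting into \eqref{eq:LDP-formula-1} and summing over $s \in S$ expresses it as \eqref{eq:LDP-formula-2} plus the non-negative residual $\sum_{s \in S} I_s(n,\phi)$; since both candidate expressions must converge to the common quantity $\hann(\phi)$, this residual is forced to vanish in the limit.

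The main obstacle is the clean execution of the tree iteration in tandem with the Haar-measure decomposition along the tower of subspaces spanned by successive orbit vectors: tracking the Jacobians at each conditioning step so that they combine into precisely the claimed determinantal factors, and controlling the $\eps$- and $n$-uniformity of the Wishart approximation as $N \to \infty$. A secondary difficulty is the degenerate case where some determinant in the formulas vanishes, where the $-\infty$ convention must be matched by an independent superexponential-decay estimate for the first-moment integral itself.
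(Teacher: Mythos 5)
Your first-moment/Fubini reduction and the idea of expanding along a spanning-tree order of $B_n$ are in the right spirit, and your Koteljanski\u{\i} (strong-subadditivity) observation is correct as far as it goes. But there are two issues, one of approach and one a genuine gap.

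On formula~\eqref{eq:LDP-formula-1}: your sketch asks for a direct Wishart/Jacobian bookkeeping that telescopes to $\log\Det\phi[B_n]$ while simultaneously crediting each $U_s$ with $-\log\Det\phi[B_n\cap sB_n]$. The paper does not chase Jacobians at all. It first proves a large deviations principle for the random Gram matrix $Q^{\pi_n}[F]$ over every grounded $F$ (Theorem~\ref{mainthm:LDP}), by induction on enlargements $F \mapsto F\cup g$, and the engine there is Theorem~\ref{thm:dil-dist}: the Verblunsky coefficient from $F$ to $F\cup g$ is \emph{independent} of $Q^{\pi_n}[F]$ with an explicitly identified law, so the induction step is just the product-LDP plus the contraction principle. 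The rate function is then simplified using the chain rule for conditional log-determinant entropy and Lemma~\ref{lem:shift-enlargement}, which tracks exactly how $F\cap tF$ grows under an enlargement. Formula~\eqref{eq:LDP-formula-1} falls out afterwards by combining this LDP with the Fubini identity~\eqref{eq:probab-reform} and Lemma~\ref{lem:normalized3}. Your more direct route might be made to work, but you will in effect be re-deriving the Verblunsky independence and the $\sigma_{n-\ell,m,k}$ distributions by hand; the paper isolates these as reusable tools instead, and that is what keeps the induction clean. You should also be aware that the correct per-step increment is a conditional mutual information $\rmI_q(g\,;\,F\setminus sF\mid F\cap sF)$, not simply $-\log\Det\phi[F\cap sF]$; the closed form over $B_n$ emerges only after telescoping all the steps, and justifying that telescoping is precisely the work of Step~3 in the proof of Theorem~\ref{mainthm:LDP}.

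On formula~\eqref{eq:LDP-formula-2}: here there is a real gap. Your identity
\[
\log\Det\phi[B_n\cup sB_n] + \log\Det\phi[B_n\cap sB_n] = 2\log\Det\phi[B_n] - I_s(n,\phi), \qquad I_s \ge 0,
\]
summed over $s$, shows only that the $n$-th term of~\eqref{eq:LDP-formula-2} is \emph{less than or equal to} the $n$-th term of~\eqref{eq:LDP-formula-1}, i.e.\ $E_n' \le E_n$ in the paper's notation. That gives $\limsup$ of~\eqref{eq:LDP-formula-2} $\le \hann(\phi)$, nothing more. You then write ``since both candidate expressions must converge to the common quantity $\hann(\phi)$, this residual is forced to vanish in the limit'' --- but this is exactly the assertion you are trying to prove; you cannot invoke it. What closes the loop in the paper is a \emph{second}, independent subadditivity inequality (Step~2 of Lemma~\ref{lem:1-2-ineqs}), obtained by decomposing the sphere $S_{n+1}$ into the pieces $sB_n\setminus B_n$ for $s \in S\cup S^{-1}$ and applying subadditivity of conditional entropy, together with the geometric identity
\[
B_{n+1}\cap sB_{n+1} = B_n\cup sB_n,
\]
which lets the terms interleave across consecutive $n$. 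That yields $E_{n+1}'\le E_n\le E_n'$, a genuine sandwiching which forces both monotone sequences to the same limit. Without the second inequality (and without~\eqref{eq:cup-cap}, which you never use), your argument establishes nothing beyond a one-sided bound.

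A smaller point: the theorem's first assertion, that $\hann$ takes values in $[-\infty,\infty)$, also needs mention; it follows from Proposition~\ref{prop:hann-properties}(a), independently of the determinant formulas. And the degenerate case where some $\Det\phi[\cdot]=0$ is handled in the paper by reducing to the nonsingular case via Lemma~\ref{lem:LDP-open-subset} and passing between $\S^\rm{u}_k(F)$ and $\S_k^\circ(F)$; you flag this as a difficulty but do not resolve it.
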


Our proof of Theorem~\ref{mainthm:annealed-formula} is actually split into two parts, each of which gives slightly more besides: formula~\eqref{eq:LDP-formula-1} is contained in Proposition~\ref{prop:annealed-formula-1}, and formula~\eqref{eq:LDP-formula-2} is contained in Proposition~\ref{prop:annealed-formula-2}.

Formula~\eqref{eq:LDP-formula-2} is the analog of Bowen's original definition of annealed sofic entropy in ergodic theory (originally called the `f-invariant'): see~\cite{Bowen10free}.  The analogy is clear upon recognizing the log-determinant of a positive semidefinite matrix as an analog of Shannon entropy itself.  We explain this analogy further in Chapter~\ref{chap:PSD} (and also introduce a notation which reflects which we use in the remainder of the book).  More recently, essentially the same formulas as Bowen's have appeared independently in the study of certain random matrix models constructed over sparse locally-tree-like random graphs.  See, for example, the first displayed equation on~\cite[p4]{BordGuiMal--heavytail}, and the further references discussed in that paper.  In fact, all of these formulas can be seen as descendants of the Bethe and Kikuchi entropy functionals from statistical physics: see the discussion in~\cite[Subs. 3.3]{AusBowShr}.

%[[ mention that for a Gaussian process, your hann would be precisely the differential-entropy version of ergodic-theory hann (and mention Bord--Gui--Male arXiv:2409.14027 and their older references in passing?).  You're just not taking the Gaussian-process point of view in your work...  Say Bethe or Kikuchi here as well!  Actually, .. maybe Bordenave Caputo is the right reference to add for appearance of annealed sofic entropy in random matrices, according to an email from Gui? YES!  YOU MUST  MENTION SOMEWHERE THAT VARIANTS OF ANNEALED /SOFIC/ ENTROPY ITSELF HAVE ALSO FOUND APPLICATIONS TO CERTAIN RANDOM MATRIX MODELS MORE RECENTLY, E.G. IN BORDENAVE--GUIONNET--MALE, BASED ON THAT EARLIER (INDEPENDENT!?) WORK OF BORDENAVE--CAPUTO AND OTHERS? ]]

\section*{\emph{Large deviations principles}}

One of our first results about $\hann$ shows that, in case $\phi$ is unital, we can replace $\vol_{2kn}/v(n)^k$ in~\eqref{eq:hann-preview} with the unique unitary-invariant probability measure $m_{\bf{U}(k,n)}$ on the space $\bf{U}(k,n)$ of all orthonormal $k$-tuples in $\bbC^n$ (see Lemma~\ref{lem:normalized3}).  Having done so, the Fubini--Tonelli theorem lets us exchange probability and expectation like this:
\[\bbE m_{\bf{U}(k,n)}\X(\pi_n,O) = \bbE\int 1_{\X(\pi_n,O)}\,dm_{\bf{U}(k,n)} = \int\bbP(\Phi^{
\pi_n}_V \in O)\,dm_{\bf{U}(k,n)}(V).\]
Since the distribution of $\pi_n$ is invariant under the transitive action of $\bf{U}(n)$ on $\bf{U}(k,n)$, the last integrand above is actually constant, and so it simply equals $\bbP(\Phi^{\pi_n}_V \in O)$ for some fixed choice of $V \in \bf{U}(k,n)$. 	We now fix a choice of $V \in \bf{U}(k,n)$ for each $n$, suppressing its dependence on $n$.  Intuitively, the random function
\[\Phi^{\pi_n}_V(g) := V^\ast \pi_n(g)V \qquad (g \in \G)\]
describes how the random unitary matrices that generate $\pi_n$ `sit together' through their actions on a fixed orthonormal $k$-tuple of vectors $V$. As a result, we see that $\hann$ is really an exponent that governs certain probabilities relating to $\Phi^{\pi_n}_V$ as $n\to\infty$.  This is a crucial advantage of working with annealed averages.

Let $\S^\rm{u}_k(\G)$ be the convex set of all unital $\rmM_k$-valued positive definite functions on $\G$ (see Sections~\ref{sec:reps-and-maps} and~\ref{sec:ctble-groups}).  This set is compact and metrizable in its weak* topology, and $\Phi^{\pi_n}_V$ is a random element of it.  It turns out that $\Phi^{\pi_n}_V$ obeys the large deviations principle in this space with $-\hann$ as rate function, and this provides a route to proving Theorem~\ref{mainthm:annealed-formula}.  A standard formulation of large deviations principles is recalled in Definition~\ref{dfn:LDP}.

To state this principle formally, define the following function of $\phi \in \S^\rm{u}_k(\G)$ for any finite subset $F$ of $\G$:
\[h_F(\phi) := \left\{\begin{array}{ll} \log\Det\, \phi[F] - \sum_{s \in S}\log\Det\, \phi[F\cap sF] &\quad \hbox{if $\phi[F]$ is nonsingular} \\ -\infty & \quad \hbox{if $\phi[F]$ is singular.}\end{array}\right.\]
Notice that $h_F(\phi) > -\infty$ if and only if $\phi[F]$ is nonsingular, and that $h_F(\phi)$ agrees with the expression inside the limit~\eqref{eq:LDP-formula-1} when $F = B_n$.  Later we use these functions when $F$ is a general `grounded' subset of $\G$, meaning that $F$ is finite, contains $e$, and is connected in the left Cayley graph of $\G$: see Section~\ref{sec:group-geom}.  In particular, balls are grounded.

\begin{mainthm}\label{mainthm:LDP}
 The random positive definite function $\Phi^{\pi_n}_V$ obeys the large deviations principle in the space $\S^\rm{u}_k(\G)$ with rate function equal to
	\begin{equation}\label{eq:LDP-formula-inf}
		-\inf\big\{h_F(\phi):\ F\ \hbox{grounded}\} = -\lim_{n\to\infty}h_{B_n}(\phi) \qquad (\phi \in \S^\rm{u}_k(\G)).
	\end{equation}
\end{mainthm}

Because $V$ is fixed but $\pi_n$ is random in this theorem, we refer to it as the `chosen-tuple' large deviations principle.  It is somewhat analogous to several other proofs that consider joint large deviations for the largest eigenvalue of a random matrix together with the corresponding unit eigenvector: see~\cite{GuiHusRek25} for a recent example.

We prove Theorem~\ref{mainthm:LDP} in Section~\ref{sec:completed-LDP}, and then use it to prove Theorem~\ref{mainthm:annealed-formula} in Section~\ref{sec:ann-AP-forms}.  Related manipulations also lead to some other formulas for $\hann$, including two different series expansions for it in Corollaries~\ref{cor:first-expansion} and~\ref{cor:Sew}.  The second of these is the analog of Seward's formula for annealed sofic entropy from~\cite[Thm. 1.7]{Seward--freestab}, so we call it the `Seward expansion'.  It plays an important role later in the work.

The proof of Theorem~\ref{mainthm:LDP} depends on a way of building up a positive definite function on the whole of $\G$ through its restrictions to larger and larger grounded subsets.  At each stage, a space of possible extensions of a partially-defined positive definite function is parameterized by certain contraction matrices.  These provide an analog for free groups of the classical Verblunsky coefficients for a sequence of orthogonal polynomials on the unit circle: see, for instance,~\cite{SimOPUCI}.  We therefore refer to these contraction matrices as Verblunsky coefficients in our setting as well (Definition~\ref{dfn:Verb}).  (Beware that they also depend on choosing a certain enumeration of the group $\G$ in advance.)

When we obtain $\Phi^{\pi_n}_V$ at random as above, it is described by a random sequence of Verblunsky coefficients.  Our main technical step towards Theorem~\ref{mainthm:LDP} is the result that any finite initial segment of this random sequence of Verblunsky coefficients becomes independent once $n$ is large enough: see Theorem~\ref{thm:dil-dist} and Corollary~\ref{cor:KilNenfree}.  This is the analog for free groups of a theorem of Killip and Nenciu from~\cite{KilNen04} for single random unitary matrices, and our use of it to prove Theorem~\ref{mainthm:LDP} resembles recent work on a large-deviations approach to Szeg\H{o}'s limit theorems~\cite{GamNagRou16,BreSimZei18}.  Using this description of the joint distribution of Verblunsky coefficients, we first prove a large deviations principle for the restriction $\Phi^{\pi_n}_V[F]$ for any fixed grounded set $F$ (see Theorem~\ref{thm:pre-LDP}), and then turn this into Theorem~\ref{mainthm:LDP} by a standard argument about inverse limits.

Our next results combine these basic calculations for $\hann$ with more of the general theory of annealed and zeroth-order AP entropy from Part~\ref{part:general}.  First we study the probability that a given positive definite function $\phi:\G\to \rmM_k$ is asymptotically associated to our random AP sequence $(\pi_n)_{n\ge 1}$ at all.  For the sake of brevity, we describe our main results here less formally than above, referring to the main text for details.  Assume that $\phi$ is unital for simplicity.  Then, roughly put, we show that small neighbourhoods $O$ of $\phi$ satisfy
\[\bbP(\X(\pi_n,O)\ne \emptyset) \approx e^{\rmh^0(\phi)\cdot n + o(n)} \qquad \hbox{as}\ n\to\infty,\]
where the exponent can be obtained from Theorem~\ref{mainthm:annealed-formula} and the relation
\[\rmh^0(\phi) := \lim_{t\downarrow 0}\hann(t\phi + \tau\otimes I_k)\]
(see Lemma~\ref{lem:mollify}), and where $\tau$ is the regular character on $\G$.

With this interpretation of $\rmh^0$ in mind, our next major theorem identifies which positive definite functions $\phi$ have $\rmh^0(\phi) = 0$, or equivalently which ones are asymptotically associated to the random sequence $(\pi_n)_{n\ge 1}$ with high probability.

\begin{mainthm}\label{mainthm:tempered}
	Let $\phi:\G\to \rmM_k$ be positive definite. If $\phi$ is tempered, then for every neighbourhood $O$ of $\phi$ we have
	\[\bbP(\X(\pi_n,O) \ne \emptyset) \to 1.\]
	On the other hand, if $\phi$ is not tempered, then there are a neighbourhood $O$ of $\phi$ and a positive value $c$ such that
	\[\bbP(\X(\pi_n,O) \ne \emptyset) \le e^{-cn + o(n)}.\]
\end{mainthm}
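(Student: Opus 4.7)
The plan is to reduce the theorem to a dichotomy for the zeroth-order AP entropy: $\rmh^0(\phi) = 0$ if and only if $\phi$ is tempered. From the informal relation $\bbP(\X(\pi_n,O)\ne \emptyset) \approx e^{\rmh^0(\phi)\cdot n + o(n)}$ for small neighbourhoods $O$ stated in the discussion preceding the theorem, it suffices to establish (i) $\rmh^0(\phi) \le -c < 0$ whenever $\phi$ is not tempered, and (ii) $\rmh^0(\phi) = 0$ together with probabilities actually tending to $1$ whenever $\phi$ is tempered. Throughout I would work with the mollification $\phi_t := t\phi + (1-t)(\tau\otimes I_k)$ and the formula $\rmh^0(\phi) = \lim_{t\downarrow 0}\hann(\phi_t)$ from Lemma~\ref{lem:mollify}, combined with formula~\eqref{eq:LDP-formula-1} of Theorem~\ref{mainthm:annealed-formula}.

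For (i), non-temperedness of $\phi$ produces some $a$ in the group algebra with $\|\pi_\phi(a)\| > \|\lambda(a)\|$, where $\lambda$ is the left regular representation. A tuple $V \in \X(\pi_n,O)$ with $O$ sufficiently small approximately intertwines $\pi_\phi$ with $\pi_n$ on words of bounded length, so a near-maximizing vector of $\pi_\phi(a)$ lifts to a vector in $\bbC^n$ witnessing $\|\pi_n(a)\| \ge \|\pi_\phi(a)\| - \eps$. The large deviations principle for operator norms (Proposition~\ref{prop:norm-upper-tails}, which comes out of LD-convergence in Chapter~\ref{chap:LD-conv}) then yields $\bbP(\|\pi_n(a)\| > \|\lambda(a)\| + \delta) \le e^{-cn + o(n)}$ for suitable $c, \delta > 0$. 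Choosing $\delta$ strictly less than the temperedness gap $\|\pi_\phi(a)\| - \|\lambda(a)\|$ forces the required exponential decay of $\bbP(\X(\pi_n,O)\ne \emptyset)$.

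For (ii), temperedness of $\phi$ means $\pi_\phi$ is weakly contained in $\lambda$, so $\phi_t$ is a positive functional on $C^\ast_r \G$ for every $t \in [0,1]$. The goal is to show $\hann(\phi_t) \to 0$ as $t \downarrow 0$ using formula~\eqref{eq:LDP-formula-1}. At $t = 0$ the calculation is trivial: since $\tau(g) = \delta_{g,e}$, the matrices $(\tau \otimes I_k)[B_n]$ and $(\tau \otimes I_k)[B_n \cap sB_n]$ are identity matrices, so their log-determinants vanish and $\hann(\tau \otimes I_k) = 0$. For $t > 0$ with tempered $\phi$, I would show that the Toeplitz matrices built from $\phi_t$ inherit spectral properties from $C^\ast_r \G$ via the Plancherel trace, enabling control of the log-determinants uniformly in $n$ as $t \downarrow 0$. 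The upgrade from $\rmh^0(\phi) = 0$ to $\bbP(\X(\pi_n,O)\ne \emptyset) \to 1$ would then follow from the chosen-tuple LDP of Theorem~\ref{mainthm:LDP} combined with continuity of $\hann$ at $\tau \otimes I_k$.

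The main obstacle is the Fuglede--Kadison analysis of step (ii): controlling the asymptotic behaviour of log-determinants of Toeplitz matrices over grounded sets through the mollification limit. This is essentially a free-group Szeg\H{o} limit theorem of exactly the flavour announced in the introduction, and the crux is translating the $C^\ast$-norm condition of temperedness into asymptotic cancellation in the log-determinant formula. I expect this to require careful spectral work in $C^\ast_r \G$ using the Plancherel trace and, following~\cite{APE4}, the machinery of Fuglede--Kadison determinants on the free-group factor.
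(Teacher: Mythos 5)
The proposal has the right general framework (dichotomy for $\rmh^0$, reduction via Lemma~\ref{lem:mollify} to a bound on $\hann(\tau + t\phi)$), but the key step (i) is circular and so has a genuine gap.

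Proposition~\ref{prop:norm-upper-tails} is \emph{phrased in terms of} $\rmh^0$: its exponent is $I(c) := \sup\{\rmh^0(\phi):\ \phi(a)\ge c\}$. To obtain $\bbP(\|\pi_n(a)\|>\|\l(a)\|+\delta)\le e^{-cn+o(n)}$ from it, you must first know that $I(\|\l(a)\|+\delta)<0$, i.e., that $\rmh^0(\phi)<0$ (uniformly) for every state $\phi$ with $\phi(a)\ge\|\l(a)\|+\delta$. Such states are exactly the ones detecting non-temperedness, so this is the very statement you set out to prove. The proposition supplies no information about the sign of $I(c)$; nor does invoking the Collins--Male theorem help, since that gives only $\bbP(\|\pi_n(a)\|>\|\l(a)\|+\delta)\to 0$ with no rate, whereas an exponential rate is what is needed. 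Indeed the paper runs your argument in the opposite direction: Theorem~\ref{mainthm:tempered} is used, via Proposition~\ref{prop:norm-upper-tails}, to give a \emph{new proof} of Collins--Male (Section~\ref{sec:cyclic}). The actual content of the hard direction is a quantitative lower bound on $|\hann(\tau+t\phi)|$ for small $t>0$, and the paper obtains it by combining Haagerup's characterization of temperedness via $\ell^2$-growth over spheres (Proposition~\ref{prop:Haa-temp}), Haagerup's convolution inequality (Lemma~\ref{lem:Haa-basic}), the Seward expansion of $\hann$ (Corollary~\ref{cor:Sew}), a discrete Gronwall-type lemma (Lemma~\ref{lem:Gronwall}), and a delicate telescoping test-vector estimate (Lemma~\ref{lem:hann-aux}). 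None of this machinery is anticipated in your proposal, and no substitute is offered.

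The tempered direction (ii) is also handled more laboriously than necessary, and carries a secondary gap. The paper's argument is short: it cites Corollary~\ref{cor:APent-properties-trace}(a) (an approximate-association statement for any representation whose trace is close to $\tau$) together with superexponential trace convergence (Theorem~\ref{thm:asymp-free2}); the probability tends to $1$ because the exceptional event has superexponentially small probability. Your proposed route through formula~\eqref{eq:LDP-formula-1} and Fuglede--Kadison determinants is not wrong in principle, but establishing $\rmh^0(\phi)=0$ gives only $\bbP(\X(\pi_n,O)\ne\emptyset)\ge e^{-o(n)}$. Theorem~\ref{mainthm:LDP} is for a fixed tuple $V$ and likewise cannot on its own produce convergence to $1$; the upgrade is a genuine concentration-of-measure step, provided in the paper by Lemma~\ref{lem:typ-or-very-atyp}, which your outline omits.
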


Theorem~\ref{mainthm:tempered} is not really a new result.  The case $k=1$ is equivalent to the Collins--Male theorem from~\cite{ColMal14}, and the case of larger $k$ is reduced to the case $k=1$ fairly easily.  The case $k=1$ is deduced from the analogous theorem for GUE random matrices due to Haagerup and Thorbj\o rnsen~\cite{HaaTho05}, which was the first real breakthrough in this area when it appeared. Other proofs have been given more recently as well: see, in particular,~\cite{CGVTvH,CGVvH} and the survey~\cite{vanHan--strong-survey}.

However, our new proof is independent of previous works, and seems to have little in common with them.  Very roughly, the existing methods in this area depend on controlling the expected traces of powers or other transforms of a random matrix $\pi_n(a)$ so well that one can infer bounds on expected operator norms as a consequence.  By contrast, annealed AP entropy lets us work directly with positive definite functions on $\Gamma$, without ever picking an element $a$ of $C^\ast \G$ and considering the resulting random operator $\pi_n(a)$ or its norm.

The second part of Theorem~\ref{mainthm:tempered} requires most of the work.  The key is to prove that $\rmh^0(\phi) < 0$ whenever $\phi$ is not tempered.  We do this by combining the Seward expansion of $\hann$ with an explicit realization of the GNS representation $\pi_\phi$ as a compact perturbation of an inflation of the regular representation.

To place this argument in its proper context, we need some of the general theory of approximate equivalence and containment for separable unital representations of a separable C*-algebra $\A$, which goes back to classic work of Voiculescu, Arveson, and Hadwin.  More recently, Ab\'ert and Elek have introduced a natural topology on the space $\Rep^\sim_{\rm{a}}(\A)$ of approximate equivalence classes of separable unital representations of $\A$ (or on the corresponding space $\Rep^\sim_{\rm{a}}(\G)$ for a countable group $\G$).  They show that this topology is compact and metrizable.  Following~\cite{APE4}, we call it the `strong quotient' topology, because it simultaneously strengthens both Fell's quotient topology from~\cite{Fel60b,Fel62} and the mode of `strong convergence' for representations~\cite{Magee--survey,vanHan--strong-survey}.  We recall the background that we need from this theory in Chapter~\ref{chap:approx}, although our presentation of it has some novel features.  Later, Corollary~\ref{cor:h0-almost-contained} shows that zeroth-order AP entropy is invariant under approximate equivalence, and non-increasing under approximate containment.

Finally, with Theorem~\ref{mainthm:tempered} at our disposal, we can show how it implies the Collins--Male theorem, and also prove another large deviations principle to accompany that theorem.  Consider again our finitely generated free group $\G$ and the uniformly random AP sequence $(\pi_n)_{n\ge 1}$, and let $\tilde{\pi_n}$ be the approximate equivalence class of $\pi_n$.

\begin{mainthm}\label{mainthm:sq-LDP}
The sequence $(\t{\pi_n})_{n\ge 1}$ obeys a large deviations principle in $\Rep^\sim_{\rm{a}}(\G)$ with speed $n$ and rate function
\[I(\t{\pi}) := \left\{\begin{array}{ll}-\rmh^0(\pi) &\qquad \hbox{if}\ \pi \ \hbox{approximately contains the regular representation} \\ \infty &\qquad \hbox{otherwise.}\end{array}\right.\]
\end{mainthm}

Theorem~\ref{mainthm:sq-LDP} provides the exponential decay rate of upper-tail probabilities for the random operator norms $\|\pi_n(a)\|$ for any $a \in C^\ast \G$.  This large deviations principle is a natural companion to the Collins--Male theorem~\cite{ColMal14} giving strong asymptotic freeness of these random representations.  Concurrently with the present work, a large deviations principle for the largest eigenvalues of some closely related random matrices is approached in a different way in~\cite{GuiHusRek25}.  We discuss this a little further in Section~\ref{sec:op-norm-LDP}, and there may be more to learn by comparing that work with this one.

Our study of $\rmh^0$ also leads to a generalization of Veblunsky's form of Szeg\H{o}'s theorem to free groups, extending the discussion of this analogy from~\cite{APE4}: see the disucssion in Section~\ref{sec:three-entropy-cors}.

%In the last few sections of Chapter~\ref{chap:tempered}, we revisit some of our earlier topics in the light of Theorem~\ref{mainthm:tempered}, and derive some further consequences as a result.  These include an abstract large deviations principle for the sequence $(\pi_n)_{n\ge 1}$ in the `Fell-global topology' on a compact metrizable space $\cal{Z}\G$ that we call the `catalog' of $\G$.  This space parametrizes approximate equivalence classes of all separable representations of $\G$: see Section~\ref{sec:approx-equiv} and the analogous constructions in~\cite{AbeEle11,TucDro15}. Proposition~\ref{prop:weak-global-in-prob} gives the Fell-global convergence of $\pi_n$ in probability and then Theorem~\ref{mainthm:sq-LDP} gives the large deviations principle; necessary background definitions can be found in Chapter~\ref{chap:approx}. Using this, in Section~\ref{sec:MF} we obtain some more abstract consequences that describe those representations of $\G$ whose zeroth-order entropy is finite. These provide a new source of examples of C$\sp*$-algebras that have the `matricial field' property (recalled in Section~\ref{sec:FD-conv}).

\subsection*{Acknowledgements}

This work depended on insightful conversations and correspondence with many people.  In this regard I am particularly grateful to Nir Avni, Uri Bader, Lewis Bowen, Peter Burton, Amir Dembo, Michael Magee, Magdalena Musat, Narutaka Ozawa, Sorin Popa, Mikael R\o rdam, Brandon Seward, Dima Shlyakhtenko, Dan Timotin, Hugo Woerderman, and Ofer Zeitouni.

%For the purpose of open access, the author has applied a Creative Commons Attribution (CC-BY) license to any Author Accepted Manuscript version arising from this submission.

\part{PREPARATIONS}\label{part:general}

\chapter{Basic notation and conventions}\label{chap:basic-notn}

\section{Linear algebra and linear operators}\label{sec:lin-alg}

We assume several standard ideas from linear algebra and matrix analysis in the sequel.  For definiteness I use~\cite{HorJohMA} as a reference wherever possible

Throughout this work, our focus is restricted to linear algebra and functional analysis over $\bbC$ rather than $\bbR$.  This is the appropriate choice for studying unitary representations and C$\sp*$-algebras later.  Much of our work could be re-fashioned over $\bbR$ without requiring major new ideas.

For any positive integer $k$, we write $\bbC^{\oplus k}$ for the space of height-$k$ column vectors with complex entries.  More generally, if $H$ is a vector space, then we write $H^{\oplus k}$ for the $k$-fold \textbf{inflation} of $H$, which is the vector space of height-$k$ column vectors with entries in $H$.  If $S$ is a set, possible infinite, and $H$ is a Hilbert space, then we extend this notation further by writing $H^{\oplus S}$ for the Hilbert-space direct sum of an $S$-indexed family of copies of $H$, still regarded as a space of column vectors. This insistence on column vectors is slightly unusual in functional analysis, but for finite $k$ it enables us to use matrix-vector notation from linear algebra in places where it greatly simplifies some reasoning

If $P$ is an orthogonal projection from a Hilbert space to a closed subspace, then we sometimes write $P^\perp:=1-P$.

We write $\rmM_{n,k}$ for the vector space of $n$-by-$k$ matrices over the complex numbers, and identify these with linear maps from $\bbC^{\oplus k}$ to $\bbC^{\oplus n}$ using matrix-vector multiplication.  By writing such a matrix as $[v_1,\dots,v_k]$, where $v_1$, \dots, $v_k$ are its columns, we identify it with a $k$-tuple of vectors in $\bbC^{\oplus n}$.  We generalize this notation further by allowing columns from a general vector space $H$, meaning that a linear map from $\bbC^{\oplus k}$ to $H$ may still be written in the form $[v_1,\dots,v_k]$.  This identification of tuples with linear maps is so convenient that we often abuse notation by calling the linear map $V$ itself a `$k$-tuple of vectors in $H$'.  If $H$ is an inner product space, the adjoint $V^\ast$ is the map from $H$ to $\bbC^{\oplus k}$ whose output coordinates are given by the inner products with the vectors $v_i$.

If $S$ is any set (not necessarily finite), $M$ is an $S$-by-$S$ matrix, and $E,F \subset S$, then we write $M[E,F]$ for the submatrix of $M$ indexed by $E\times F$.  We usually abbreviate $M[E,E]$ to $M[E]$.

We abbreviate $\rmM_{k,k}$ to $\rmM_k$ and regard it as a $\ast$-algebra over $\bbC$ in the usual way.  We write $I_k$ for the $k$-by-$k$ identity matrix. We write $\Tr_k$ and $\Det$ for the usual trace and determinant on any such algebra, and we set
\[\tr_k M := k^{-1}\Tr_k\,M \qquad (M \in \rmM_k).\]

We write $\rmM_{k,\sa}$ for the subspace of self-adjoint members of $\rmM_k$, $\rmM_{k+}$ for the subset of positive semidefinite members of $\rmM_k$, and $\rmM^\circ_{k+}$ for the further subset of positive definite members.  The set $\rmM_{k+}$ is a closed cone in $\rmM_k$, and $\rmM^\circ_{k+}$ is the relative interior of $\rmM_{k+}$ in $\rmM_{k,\sa}$. This means that, if $A$ is positive definite, then any sufficiently small self-adjoint perturbation of $A$ is still positive definite; and this stability property characterizes the positive definite matrices among the positive semidefinite ones.

For a linear operator on an inner product space, or a matrix that can be regarded as such, the notation $\|\cdot\|$ means the operator norm by default.  We occasionally write it as $\|\cdot\|_{\rm{op}}$ where disambiguation may be helpful.

If $H$ is a Hilbert space, then we write $\B(H)$ for the C*-algebra of its bounded linear operators and $\mathfrak{K}(H)$ for the closed ideal of compact operators.  We also write $\|\cdot\|_{\rm{HS}}$ for the \textbf{Hilbert--Schmidt norm}.  If $(e_i:\ i\in I)$ is any orthonormal basis for $H$, then this norm is given by
\[\|T\|_{\rm{HS}}^2 := \sum_i \|Te_i\|^2 \qquad (T \in \B(H)).\]
Here we allow any element of $T \in \B(H)$, but the resulting sum may be infinite. If it is finite, then $T$ is an operator of \textbf{Hilbert--Schmidt class}, and the restriction of $\|\cdot\|_{\rm{HS}}$ to the set of these defines a Banach space.  See, for instance,~\cite[App. 2]{FolAHA}.  Because that choice of orthonormal basis is arbitrary, these norms satisfy
\begin{equation}\label{eq:op-HS-ineq}
\|T\| \le \|T\|_{\rm{HS}} \qquad (T \in \B(H)).
\end{equation}

We also use the following less standard notations.  When $k\le n$, we write $\bf{GL}(k,n)$ for the set of all linear injections from $\bbC^{\oplus k}$ to $\bbC^{\oplus n}$, or equivalently $n$-by-$k$ matrices with linearly independent columns.  When $k=n$ this is the usual general linear group $\bf{GL}(n,\bbC)$.  We write $\bf{U}(k,n)$ for the subset of unitary embeddings, or equivalently matrices whose columns are orthonormal.  When $k=n$ this gives the unitary group $\rmU(n)$.  Finally, we write $\Xi(k,n)$ for the subset of elements of $\rmM_{n,k}$ that are contractions for the Euclidean distances on $\bbC^{\oplus k}$ and $\bbC^{\oplus n}$, and $\Xi^\circ(k,n)$ for the further subset of strict contractions.  These are the closed and open unit balls of $\rmM_{n,k}$ in the operator norm, respectively.

\section{Real analysis}

We use Landau's `big-$O$' and `little-$o$' notation widely, but only for sequences indexed by natural numbers.  In case these sequences involve other parameters as well, each function hidden behind this asymptotic notation may depend freely on those other parameters.  Thus, for example, if $X$ is any set and $f_1$, $f_2$, \dots and $f$ are real-valued functions on $X$, then the assertion
\[``\ \ f_n(x) = f(x) + o(1) \ \ "\]
means that $f_n \to f$ pointwise, whereas uniform convergence could be written
\[``\ \ \sup_x |f_n(x) - f(x)| = o(1)\ \ ".\]
%At a few points, we write $f(n) \lesssim g(n)$ or $g(n) \gtrsim f(n)$ as an alternative notation for $f(n) = O(g(n))$.  We write $f(n) \simeq g(n)$ if both $f(n) \lesssim g(n)$ and $f(n) \gtrsim g(n)$.

For any dimension $d$, we write $\vol_d$ for Lebesgue measure on $\bbR^d$.  For any positive integers $d$ and $k$, we also write $\vol_{2kd}$ for the measure on $\rmM_{k,d}(\bbC)$ obtained by identifying this space with $\bbR^{2kd}$.

If $G$ is any compact metric group, then $m_G$ is its Haar probability measure.

\chapter{Large deviations principles}\label{chap:LDP-prelims}

Large deviations theory is treated in several standards texts, including~\cite{Var--LDPbook2} and~\cite{DemZei--LDPbook}.  This short chapter recalls the formulation that we use in the sequel, and a few technical results that we need.

Let $\X$ be a complete separable metric space, let $(\mu_n)_{n\ge 1}$ be a sequence of Borel probability measures on it, and let $I$ be a function from $\X$ to $[0,\infty]$.

\begin{dfn}[Large deviations principle]\label{dfn:LDP}
	The sequence $(\mu_n)_{n\ge 1}$ obeys the \textbf{large deviations principle} (`\textbf{LDP}') with \textbf{rate function} $I$ if the following hold:
	\begin{itemize}
		\item[i.] (Upper bound) If $x \in \X$ and $a < I(x)$, then there is a neighbourhood $U$ of $x$ such that
		\[\mu_n U \le e^{-an + o(n)}.\]
		\item[ii.] (Lower bound) If $x \in \X$ and $I(x) < a < \infty$, then any neighbourhood $U$ of $x$ satisfies
		\[\mu_n U \ge e^{-a n - o(n)}.\]
		\item[iii.] (Exponential tightness) For every $a > 0$ there is a compact subset $K$ of $\X$ such that
		\[\mu_n (\X\setminus K) \le e^{-an + o(n)}.\]
	\end{itemize}
\end{dfn}

\begin{rmk}
	In the main LDPs prove later, the relevant space $\X$ is compact.  In this case condition (iii) is superfluous.  However, during the proofs we must sometimes restrict attention from $\X$ to an open subset of it with its induced topology, and then the additional condition of exponential tightness becomes important.  See Section~\ref{sec:completed-LDP}, in particular. \fin
\end{rmk}

Put together, conditions (i) and (ii) from Definition~\ref{dfn:LDP} imply that
\[I(x) = -\inf_U \limsup_{n\to \infty} \frac{1}{n}\log \mu_n U = -\inf_U \liminf_{n\to \infty} \frac{1}{n}\log \mu_n U,\]
where $U$ runs over all neighbourhoods of $x$.  These in turn imply that $I$ must be lower semicontinuous because of the following general principle.

\begin{lem}\label{lem:upper-semicts}
	Let $\X$ be a topological space, let $\cal{U}$ be an open cover of it, let $F$ be a function from $\cal{U}$ to $[-\infty,\infty)$, and let
	\[f(x) := \inf\{F(U):\ U\in \cal{U},\,U\ni x\} \qquad (x \in \X).\]
	Then $f$ is upper semicontinuous.
\end{lem}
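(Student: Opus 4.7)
The plan is to verify upper semicontinuity directly via the standard characterization: $f$ is upper semicontinuous if and only if the sublevel set $\{x \in \X : f(x) < a\}$ is open for every real number $a$.

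First I would fix an arbitrary $a \in \bbR$ and an arbitrary point $x_0$ with $f(x_0) < a$, and produce an open neighbourhood of $x_0$ contained in the sublevel set. By the definition of $f(x_0)$ as an infimum, there exists some $U \in \cal{U}$ with $U \ni x_0$ and $F(U) < a$. Since $U$ is open (because $\cal{U}$ is an open cover of $\X$), it is itself a neighbourhood of $x_0$.

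Next I would observe that for every $y \in U$, the set $U$ is one of the candidates in the infimum defining $f(y)$, so $f(y) \le F(U) < a$. Hence $U \subset \{x : f(x) < a\}$, and the sublevel set is open at $x_0$. Since $x_0$ was arbitrary, the sublevel set is open, so $f$ is upper semicontinuous.

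There is no real obstacle here: the argument is essentially a one-line unpacking of the definition of an infimum together with the fact that every element of $\cal{U}$ is open. The only thing to be slightly careful about is the convention that the infimum of an empty set is $+\infty$; but since $\cal{U}$ covers $\X$, the collection $\{U \in \cal{U} : U \ni x\}$ is nonempty for every $x$, so $f$ takes values in $[-\infty,\infty)$ and the argument above goes through without qualification.
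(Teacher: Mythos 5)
Your proof is correct and is essentially identical to the paper's own one-line argument: given $f(x_0)<a$, pick $U\in\cal{U}$ with $U\ni x_0$ and $F(U)<a$, and observe that $f\le F(U)<a$ on all of $U$. Nothing further is needed.
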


\begin{proof}
	If $f(x) < a$, then there must be some $U \in \cal{U}$ such that $U \ni x$ and ${F(U) < a}$, and this implies that $f(x') < a$ for all $x' \in U$.
\end{proof}

We apply Lemma~\ref{lem:upper-semicts} to various other examples later, sometimes with the opposite sign to conclude lower semicontinuity.

In addition, conditions (i)--(iii) from Definition~\ref{dfn:LDP} imply that $I$ must be proper, meaning that its level set $\{I \le a\}$ is compat for every $a \in [0,\infty)$.

An alternative to Definition~\ref{dfn:LDP} is perhaps more standard.  This alternative assumes explicitly that $I$ is lower semicontinuous and proper, and then asserts that
\begin{equation}\label{eq:LDP2-1}
	\mu_n C \le \exp\big(-\inf_{x \in C}I(x)\cdot n + o(n)\big)
\end{equation}
for every closed subset $C$ of $\X$, and that
\begin{equation}\label{eq:LDP2-2}
\mu_n U \ge \exp\big(-\inf_{x \in U}I(x)\cdot n - o(n)\big)
\end{equation}
for every open subset $U$ of $\X$.  The equivalence of this definition with Definition~\ref{dfn:LDP} can be found in~\cite[Sec. 2.1]{Var--LDPbook2} or~\cite[Subs. 4.1.2]{DemZei--LDPbook}.  (Beware that~\cite[Sec. 2.1]{Var--LDPbook2} starts with an apparently weaker definition of `exponential tightness' than our condition (iii), but then derives this equivalence as well.)

Now suppose that $\Y$ and $\X$ are complete separable metric spaces and that $\pi:\Y \to \X$ is continuous.  Suppose in addition that $(\nu_n)_{n\ge 1}$ is a sequence of Borel probability measures on $\Y$, and let $\mu_n := \pi_\ast\nu_n$. If the sequence $(\nu_n)_{n\ge 1}$ obeys the LDP with rate function $I$, then the sequence $(\mu_n)_{n\ge 1}$ obeys the LDP with rate function
\begin{equation}\label{eq:contraction}
	J(x) := \inf_{y \in \pi^{-1}\{x\}}I(x).
\end{equation}
This is the \textbf{contraction principle}: see~\cite[Sec. 2.3]{Var--LDPbook2} or~\cite[Subs. 4.2.1]{DemZei--LDPbook}.

The next lemma can simplify the family of open sets that we need to check when proving a LDP.

\begin{lem}\label{lem:LDP-base-enough}
	Let $\cal{U}$ be a base for the topology of $\X$.  Let (i)---(iii) be the conditions from Definition~\ref{dfn:LDP}.  If condition (i) holds with the restriction that $U \in \cal{U}$, then it holds in full, and similarly with condition (ii).
\end{lem}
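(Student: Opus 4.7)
The plan is to observe that both assertions follow from a single monotonicity argument combined with the defining property of a base: for every $x \in \X$ and every neighbourhood $V$ of $x$, there is some $U \in \cal{U}$ with $x \in U \subset V$. The main subtlety is just to track which direction each bound is monotone.

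For part (ii), which I would treat first since it is the more substantive of the two, suppose the restricted form holds: for every $x$ and every $a > I(x)$, any $U \in \cal{U}$ containing $x$ satisfies $\mu_n U \ge e^{-an - o(n)}$. Given an arbitrary neighbourhood $V$ of $x$, I would invoke the base property to pick $U \in \cal{U}$ with $x \in U \subset V$, and then simply use monotonicity of $\mu_n$:
\[\mu_n V \ge \mu_n U \ge e^{-an - o(n)},\]
which is the full version of (ii).

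For part (i), suppose the restricted form holds: for every $x$ and every $a < I(x)$, there exists $U \in \cal{U}$ containing $x$ with $\mu_n U \le e^{-an + o(n)}$. Since every element of the base $\cal{U}$ is open and hence a neighbourhood of each of its points, this $U$ itself already serves as a witnessing neighbourhood in the full version of (i). So here there is nothing to check beyond the observation that base elements are open.

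There is no real obstacle; the only point to be careful about is that the monotonicity runs in opposite directions in (i) (existential, want a smaller set) and (ii) (universal, control comes from a smaller sub-neighbourhood), and in both cases the base property of $\cal{U}$ gives exactly the ``smaller $U$ inside $V$'' that we need. Note also that I would not need to invoke exponential tightness (condition (iii)) anywhere, since it is not referenced in the statement.
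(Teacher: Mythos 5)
Your proof is correct and is essentially the paper's argument, just spelled out in more detail: the paper compresses both cases into the single observation that the conditions get easier to verify on larger sets, so the base property (a base element inside any given neighbourhood) suffices. Your handling of the asymmetry between the existential condition (i) and the universal condition (ii) is exactly right, and you are also correct that exponential tightness plays no role.
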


\begin{proof}
	Among the neighbourhoods of a given point $x$, both conditions get stronger as $U$ gets smaller, so these implications hold because $\cal{U}$ is a base.
\end{proof}

Two simple applications of Lemma~\ref{lem:LDP-base-enough} are used in the sequel.

\begin{lem}\label{lem:LDP-product}
	Suppose $(\mu_n)_{n\ge 1}$ and $(\nu_n)_{n\ge 1}$ obey the LDPs on $\cal{X}$ and $\cal{Y}$ with rate functions $I$ and $J$, respectively.  Then $(\mu_n \times \nu_n)_{n\ge 1}$ obeys the LDP on $\cal{X}\times \cal{Y}$ with the rate function
	\[I(x) + J(y) \qquad ((x,y) \in \cal{X}\times \cal{Y}).\] \qed
\end{lem}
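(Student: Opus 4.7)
The plan is to verify the three conditions of Definition~\ref{dfn:LDP} for the product sequence $(\mu_n \times \nu_n)_{n \ge 1}$, using the product topology on $\cal{X} \times \cal{Y}$. Since the sets $U \times V$ with $U$ open in $\cal{X}$ and $V$ open in $\cal{Y}$ form a base, Lemma~\ref{lem:LDP-base-enough} reduces the upper and lower bounds to checking them on such product neighbourhoods of a given point $(x,y)$. The key structural fact that makes everything work is that
\[(\mu_n \times \nu_n)(U \times V) = \mu_n(U) \cdot \nu_n(V),\]
so inequalities for the two factors multiply, and exponents add.

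For the upper bound, I would fix $(x,y)$ and $a < I(x) + J(y)$. If both rate values are finite, split $a = a_1 + a_2$ with $a_1 < I(x)$ and $a_2 < J(y)$; the hypotheses furnish neighbourhoods $U \ni x$ and $V \ni y$ with $\mu_n(U) \le e^{-a_1 n + o(n)}$ and $\nu_n(V) \le e^{-a_2 n + o(n)}$, whence the product $U \times V$ witnesses the bound at speed $a$. If $I(x) = \infty$ (say), choose $a_1$ arbitrarily large and $a_2$ arbitrarily negative; only the $U$ side contributes meaningfully, and the $V$ side is controlled by exponential tightness plus the trivial bound $\nu_n(V) \le 1$. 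For the lower bound, fix $(x,y)$ with $I(x) + J(y) < a < \infty$; in particular both values are finite. Split $a > a_1 + a_2$ with $a_1 > I(x)$ and $a_2 > J(y)$, and apply condition (ii) to each factor separately: any product neighbourhood $U \times V$ of $(x,y)$ satisfies
\[(\mu_n \times \nu_n)(U \times V) \ge e^{-a_1 n - o(n)} \cdot e^{-a_2 n - o(n)} = e^{-a n - o(n)}.\]

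For exponential tightness, given $a > 0$, pick compact $K \subset \cal{X}$ and $L \subset \cal{Y}$ with $\mu_n(\cal{X} \setminus K) \le e^{-an + o(n)}$ and $\nu_n(\cal{Y} \setminus L) \le e^{-an + o(n)}$. Then $K \times L$ is compact, and the union bound
\[(\mu_n \times \nu_n)\big((\cal{X} \times \cal{Y}) \setminus (K \times L)\big) \le \mu_n(\cal{X} \setminus K) + \nu_n(\cal{Y} \setminus L) \le 2 e^{-an + o(n)}\]
absorbs the factor of $2$ into the $o(n)$ term.

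No step looks like a serious obstacle; the one mild care-point is the handling of infinite rate-function values in the upper bound, which is resolved by noting that condition (i) is vacuous where $I(x) = \infty$ except as a statement about arbitrarily large exponents, and the trivial estimate $\nu_n(V) \le 1$ (equivalently, exponential tightness) suffices to carry the other coordinate.
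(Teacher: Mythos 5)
Your proof is correct and follows exactly the route the paper indicates: apply Lemma~\ref{lem:LDP-base-enough} to the base of product open sets so that the upper and lower bounds reduce to multiplying the factor-wise estimates, and verify exponential tightness with a product of compact sets and a union bound. The handling of infinite rate values in the upper bound is fine (the trivial bound $\nu_n(V)\le 1$ on the other coordinate is all that is needed there), so nothing is missing.
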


Lemma~\ref{lem:LDP-product} holds by applying Lemma~\ref{lem:LDP-base-enough} to the base of product open sets, and by observing that products of compact sets can be used to verify exponential tightness of the product measures.  If we assume that $I$ and $J$ both have minimum value $0$, then the reverse of the implication in Lemma~\ref{lem:LDP-product} also holds, by applying the contraction principle to each coordinate projection.

Our second application of Lemma~\ref{lem:LDP-base-enough} requires a little more preamble.  Let $\cal{X}_i$ be a compact metric space for every $i\ge 1$, let $\cal{X} := \prod_{i\ge 1}\cal{X}_i$, and let $\Y$ be a nonempty closed subset of $\X$. For any subsets $F \subset G\subset \bbN$ let
\[\pi_F:\prod_{i \in G}\cal{X}_i\to \prod_{i \in F}\cal{X}_i\]
be the coordinate projection, and let $\Y_F := \pi_F[\Y]$.  Let $(\mu_n)_{n\ge 1}$ be a sequence of Borel probability measures on $\cal{Y}$, and let $\mu_{F,n} := \pi_{F\ast}\mu_n$ for any $n\in \bbN$ and $F\subset \bbN$. Finally, let $\cal{F}$ be an upwards-directed cover of $\bbN$ by finite subsets.

\begin{lem}\label{lem:LDP-inf-product}
	If $(\mu_{F,n})_{n\ge 1}$ obeys the LDP with rate function $I_F:\cal{Y}_F\to [0,\infty]$ for every $F\in \cal{F}$, then
	\begin{itemize}
		\item[a.] these rate functions are related by
		\[I_F(x) = \inf\{I_G(y):\ y \in \cal{Y}_G,\ \pi_F(y) = x\}\]
		whenever $F,G \in \cal{F}$, $F\subset G$, and $x \in \cal{Y}_F$, and
		\item[b.] the original sequence $(\mu_n)_{n \ge 1}$ obeys the LDP with rate function
		\[I(y) := \sup\{I_F(\pi_F(y)):\ F\in \cal{F}\} \qquad (y \in \Y).\] \qed
	\end{itemize}
\end{lem}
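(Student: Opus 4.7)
The plan is to prove Part (a) by a single application of the contraction principle~\eqref{eq:contraction}, and Part (b) by directly verifying the three conditions of Definition~\ref{dfn:LDP}. For Part (a), whenever $F\subset G$ with both in $\cal{F}$, the restricted coordinate projection $\pi_F|\cal{Y}_G: \cal{Y}_G \to \cal{Y}_F$ is continuous and satisfies $(\pi_F|\cal{Y}_G)_\ast \mu_{G,n} = \mu_{F,n}$, so the contraction principle produces an LDP for $(\mu_{F,n})_{n\ge 1}$ with rate function $x\mapsto \inf\{I_G(y):\ y\in \cal{Y}_G,\ \pi_F(y) = x\}$. Since LDP rate functions are uniquely determined by the LDP itself, this expression must coincide with the given $I_F$.

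For Part (b), exponential tightness is free: $\cal{Y}$ is a closed subset of the compact product $\cal{X}$, hence itself compact, so condition (iii) holds with $K := \cal{Y}$. For the upper bound (i), given $y \in \cal{Y}$ and $a < I(y)$, the supremum defining $I(y)$ provides some $F\in \cal{F}$ with $I_F(\pi_F(y)) > a$; the LDP for $(\mu_{F,n})_{n\ge 1}$ then supplies a neighbourhood $V$ of $\pi_F(y)$ in $\cal{Y}_F$ with $\mu_{F,n}(V) \le e^{-an+o(n)}$, and $\pi_F^{-1}(V)$ is the desired neighbourhood of $y$ in $\cal{Y}$, since $\mu_n(\pi_F^{-1}(V)) = \mu_{F,n}(V)$.

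For the lower bound (ii), I will invoke Lemma~\ref{lem:LDP-base-enough} using the base of $\cal{Y}$ consisting of cylinder sets $\pi_F^{-1}(V)$ where $F\in \cal{F}$ and $V$ is open in $\cal{Y}_F$. The key observation is that these really do form a base for the subspace topology on $\cal{Y}$, and this uses precisely the hypothesis that $\cal{F}$ is an upwards-directed cover of $\bbN$ by finite subsets (any standard cylinder base element over a finite subset of $\bbN$ is refined by one over some $F\in \cal{F}$ containing it). Given any such base neighbourhood $\pi_F^{-1}(V)$ of $y$ and any $a > I(y)$, we have $a > I_F(\pi_F(y))$, so the LDP lower bound for $(\mu_{F,n})_{n\ge 1}$ yields $\mu_n(\pi_F^{-1}(V)) = \mu_{F,n}(V) \ge e^{-an - o(n)}$.

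The argument is soft and I anticipate no serious obstacle. The only point worth pausing over is the reduction just used in the lower bound: the upwards-directedness of $\cal{F}$ and its exhaustion of $\bbN$ by finite subsets are exactly what is needed to describe the subspace topology on $\cal{Y}$ via cylinder base elements indexed by $\cal{F}$, after which everything reduces mechanically to the LDPs for the finite-dimensional marginals.
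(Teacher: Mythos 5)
Your proposal is correct and follows essentially the same route as the paper: part (a) by the contraction principle applied to the restricted projections (plus uniqueness of rate functions), and part (b) by reducing to the base of cylinder sets $\Y\cap\pi_F^{-1}[V]$ with $F\in\cal{F}$ via Lemma~\ref{lem:LDP-base-enough} and the identity $\mu_n(\Y\cap\pi_F^{-1}[V])=\mu_{F,n}V$. Your observations that compactness of $\Y$ disposes of exponential tightness and that upward-directedness of $\cal{F}$ is exactly what makes these cylinders a base are the right points to check, and they are handled correctly.
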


Part (a) of Lemma~\ref{lem:LDP-inf-product} is a case of the contraction principle. Part (b) follows from part (a) by applying Lemma~\ref{lem:LDP-base-enough} to the base of open subsets of $\cal{Y}$ that have the form $\Y\cap \pi_F^{-1}[U]$ for some $F\in\cal{F}$ and some open subset $U$ of $\cal{Y}_F$.  Indeed, for these sets we have
\[\mu_n(\Y\cap \pi_F^{-1}[U]) = \mu_{F,n}U,\]
and for each fixed $F$ the right-hand measure here is governed by $I_F$ as $n\to\infty$.

The last result of this section concerns how an LDP depends on the underlying space on which it is formulated.  Let $\cal{X}$ be a separable and completely metrizable space with an open subset $\cal{X}_0$.  Let $(\mu_n)_{n\ge 1}$ be a sequence of Borel probability measures on $\cal{X}$ such that ${\mu_n \cal{X}_0 = 1}$ for all sufficiently large $n$.  Then we can regard $\cal{X}_0$ as a separable and completely metrizable space in its own right, and choose a sequence $(\nu_n)_{n\ge 1}$ of Borel probability measures on it such that $\nu_n = \mu_n(\,\cdot\,\cap \cal{X}_0)$ for all sufficiently large $n$.  It is usually easy to pass probability limit laws between the measures $\mu_n$ on $\cal{X}$ and $\nu_n$ on $\cal{X}_0$.  The next lemma gives general conditions for an LDP to survive such a passage.  The important change here is not between one sequence of measures and the other, but rather between choosing $\X_0$ or $\X$ as the ambient space.

\begin{lem}\label{lem:LDP-open-subset}
	In the situation above, let $I:\cal{X}\to [0,\infty]$ be lower semicontinuous, and assume that $I(x) = \infty$ for every $x \in \cal{X}\setminus \cal{X}_0$. Then $(\mu_n)_{n\ge 1}$ obeys the LDP on $\cal{X}$ with rate function $I$ if and only if $(\nu_n)_{n\ge 1}$ obeys the LDP on $\cal{X}_0$ with rate function $I|\cal{X}_0$.
\end{lem}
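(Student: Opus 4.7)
The plan is to verify the three conditions of Definition~\ref{dfn:LDP} in both directions, using the fact that $\cal{X}_0$ being open means neighborhood bases transfer cleanly between $\cal{X}$ and $\cal{X}_0$ at points of $\cal{X}_0$, and that for all sufficiently large $n$ we have $\mu_n U = \nu_n(U \cap \cal{X}_0)$ for any Borel $U \subset \cal{X}$.

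Forward direction ($\mu_n$ LDP on $\cal{X}$ implies $\nu_n$ LDP on $\cal{X}_0$). For a point $x \in \cal{X}_0$, any neighborhood of $x$ in $\cal{X}_0$ contains a neighborhood of $x$ in $\cal{X}$ intersected with $\cal{X}_0$, and vice versa. Since $\nu_n$ and $\mu_n$ agree on Borel subsets of $\cal{X}_0$ for large $n$, both the upper bound (i) and the lower bound (ii) transfer verbatim, with rate $I|\cal{X}_0$. For exponential tightness on $\cal{X}_0$, fix $a > 0$ and use exponential tightness of $\mu_n$ on $\cal{X}$ to get a compact $K \subset \cal{X}$ with $\mu_n(\cal{X} \setminus K) \le e^{-an + o(n)}$. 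Since $I \equiv \infty$ on the compact set $K \setminus \cal{X}_0$, condition (i) gives each $x \in K \setminus \cal{X}_0$ a neighborhood $U_x$ with $\mu_n U_x \le e^{-an + o(n)}$; by compactness finitely many suffice, say with union $W$. Then $K' := K \setminus W$ is a closed subset of $K$ contained in $\cal{X}_0$, hence a compact subset of $\cal{X}_0$, and $\nu_n(\cal{X}_0 \setminus K') \le \mu_n(\cal{X} \setminus K) + \mu_n W \le e^{-an + o(n)}$.

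Reverse direction ($\nu_n$ LDP on $\cal{X}_0$ implies $\mu_n$ LDP on $\cal{X}$). At points $x \in \cal{X}_0$ the upper and lower bounds transfer by the same neighborhood-base argument. At points $x \in \cal{X} \setminus \cal{X}_0$, the lower bound is vacuous because $I(x) = \infty$. The upper bound at such $x$ is obtained from exponential tightness on $\cal{X}_0$: given $a > 0$, pick a compact $K \subset \cal{X}_0$ with $\nu_n(\cal{X}_0 \setminus K) \le e^{-an + o(n)}$; then $U := \cal{X} \setminus K$ is an open neighborhood of $x$ with $\mu_n U = \mu_n(\cal{X}_0 \setminus K) + \mu_n(\cal{X} \setminus \cal{X}_0) = \nu_n(\cal{X}_0 \setminus K) \le e^{-an + o(n)}$ for large $n$. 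Exponential tightness of $\mu_n$ on $\cal{X}$ is then immediate since any compact subset of $\cal{X}_0$ is also compact in $\cal{X}$.

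The main subtlety I expect is the exponential tightness step in the forward direction: compact sets in $\cal{X}$ need not sit inside $\cal{X}_0$, so one must trim $K \cap (\cal{X} \setminus \cal{X}_0)$ away using a finite subcover of neighborhoods provided by condition (i), and this is where the hypothesis that $I \equiv \infty$ off $\cal{X}_0$ does the real work. The rest is essentially bookkeeping with neighborhood bases of the open set $\cal{X}_0$ and the identity $\mu_n = \nu_n$ on Borel subsets of $\cal{X}_0$ for large $n$.
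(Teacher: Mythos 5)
Your proof is correct and follows essentially the same route as the paper's: both directions reduce to checking conditions (i)--(iii) of Definition~\ref{dfn:LDP}, with openness of $\cal{X}_0$ handling the neighbourhood bases and the hypothesis $I\equiv\infty$ off $\cal{X}_0$ doing the work in the two non-trivial steps. The only (immaterial) difference is in the forward exponential-tightness step, where you trim $K\setminus\cal{X}_0$ by a finite subcover of small-measure neighbourhoods supplied by condition (i), while the paper instead separates the compact sublevel set $\{I\le a\}$ from $\cal{X}\setminus\cal{X}_0$ and applies the closed-set form of the upper bound.
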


\begin{proof}
	By omitting finitely many terms of the sequence, we may assume that $\mu_n \cal{X}_0 = 1$ and $\nu_n = \mu_n(\,\cdot\, \cap \cal{X}_0)$ for all $n$.  In both directions below we refer conditions (i)--(iii) from Definition~\ref{dfn:LDP}.
	
	\vspace{7pt}
	
	\emph{Step 1: ($\Rightarrow$)}.\quad If conditions (i) and (ii) hold for neighbourhoods of arbitrary points in $\cal{X}$, then they also hold at points in $\cal{X}_0$ if we use neighbourhoods contained in $\cal{X}_0$.  These are always available because $\cal{X}_0$ is open.
	
	Now let $a > 0$, and let $K$ be a compact subset of $\X$ witnessing condition (iii) of Definition~\ref{dfn:LDP} for the sequence $(\mu_n)_{n\ge 1}$.  By our assumptions on $I$, the compact set $\{I \le a\}$ is disjoint from the closed set $\cal{X}\setminus \cal{X}_0$, and so the set $\{I\le a\}$ has an open neighbourhood $U$ whose closure is still disjoint from $\cal{X}\setminus \cal{X}_0$.  Now our choice of $K$ and an application of~\eqref{eq:LDP2-1} give
\[\mu_n(\cal{X}\setminus (K\cap \ol{U})) \le \mu_n(\cal{X}\setminus K) + \mu_n(\cal{X}\setminus U) \le e^{-an + o(n)} + e^{-an + o(n)}. \]	
So $K\cap \ol{U}$ is a compact subset of $\cal{X}_0$ that witnesses condition (iii) for $(\nu_n)_{n\ge 1}$.
	
	\vspace{7pt}
	
	\emph{Step 2: ($\Leftarrow$)}.\quad Conditions (i) and (ii) for $(\nu_n)_{n\ge 1}$ imply the same conditions for $(\mu_n)_{n\ge 1}$ around any any point that lies in $\cal{X}_0$.  In addition, for any $a > 0$, condition (iii) for $(\nu_n)_{n\ge 1}$ gives a compact subset $K$ of $\cal{X}_0$ such that
	\[\nu_n (\cal{X}_0\setminus K) \le e^{-an + o(n)}.\]
	Since $K$ is also compact as a subset of $\cal{X}$, this verifies condition (iii) for the sequence $(\mu_n)_{n\ge 1}$ as well.  However, it also verifies condition (i) for $(\mu_n)_{n\ge 1}$ at every point of $\cal{X}\setminus \cal{X}_0$, because $I(x) = \infty$ for such points, and the set $\cal{X}\setminus K$ is an open neighbourhood of them.  Lastly, condition (ii) for $(\mu_n)_{n\ge 1}$ is vacuous at any $x$ satisfying $I(x) = \infty$, so we have now verified all three conditions for $(\mu_n)_{n \ge 1}$ everywhere.
\end{proof}

Now let $f$ be a bounded and continuous function on $\X$.  If $(\mu_n)_{n\ge 1}$ obeys the large deviations principle on $\X$ with rate function $I$, then that principle also tells us the asymptotic behaviour of these exponential integrals:
\begin{equation}\label{eq:Var}
\frac{1}{n}\log \int e^{nf(x)}\ d\mu_n(x) \to \sup\{f(x) - I(x):\ x \in \X\}.
\end{equation}
See~\cite[Thm. 2.5]{Var--LDPbook2}.

\subsection*{\emph{Notes and further references}}

Most of the work in this appendix is subsumed by more general machinery that can be found in dedicated textbooks such as~\cite{DemZei--LDPbook}.  See~\cite[Sec. 1.2]{DemZei--LDPbook} for a discussion of the difference between a `full' LDP (as we have defined it) and a `weak LDP', which promises~\eqref{eq:LDP2-1} only for compact sets and does not assume exponential tightness.  Lemma~\ref{lem:LDP-inf-product} is really a special case of the Dawson--G\"artner theorem about large deviations theory on inverse limit spaces: see, for instance,~\cite[Thms. 4.6.1 and 4.6.9]{DemZei--LDPbook}.

\chapter{Positive definite matrices and their determinants}\label{chap:PSD}

\section{Gram matrices and log-determinant entropy}\label{sec:Gram}

Let $H$ be a complex inner product space and let $V = [v_1, \dots, v_k]$ be $k$-tuple in $H$, interpreted as a linear map from $\bbC^{\oplus k}$ to $H$. The \textbf{Gram matrix} $Q$ of this tuple is the $k$-by-$k$ matrix of their inner products:
\begin{equation}\label{eq:Gram}
	Q = [\langle v_j,v_i\rangle]_{i,j=1}^k = V^\ast V.
\end{equation}
%If $H = \bbC^{(n)}$, then we can also write
%\begin{equation}\label{eq:Gram2}
%	Q = [v_i^\ast v_j]_{i,j} = V^\ast V.
%\end{equation}
%We also use equation~\eqref{eq:Gram2} in general inner product spaces by following the rules of matrix multiplication and interpreting
%\[v_j^\ast := \langle \cdot,v_j\rangle,\]
%the canonical anti-linear embedding of an inner product space into its dual. 

Gram matrices are positive semidefinite, and every positive semidefinite matrix can be written as a Gram matrix.  The choice of this representation is not unique, but given $Q$ one can make the canonical choice $H := \bbC^{\oplus k}$ and $V:= Q^{1/2}$: this is the only choice for which $V$ is again positive semidefinite.

The Gram matrix in~\eqref{eq:Gram} nonsingular if and only if the vectors are linearly independent, hence if and only if $V$ is injective.  More generally, a positive semidefinite matrix $Q$ is the Gram matrix of a tuple of vectors in some inner product space $H$ if and only if $\dim H \ge \rm{rank}\,Q$, and in this case the choice of those vectors is unique up to a unitary transformation of $H$.  See~\cite[Thm. 7.2.10]{HorJohMA}, for example.  The Gram matrix $Q$ has all diagonal entries equal to $1$ if and only if every $v_i$ is a unit vector.

At many points in this book we need to manipulate logarithms of determinants of positive semidefinite matrices.  This work is made more transparent by an analogy between a tuple of vectors $V$ in an inner product space $H$ and a finite tuple of discrete random variables.  Within this analogy, the Gram matrix $V^\ast V$ is the analog of the joint distribution, and then the log-determinant of $V^\ast V$ is the analog of the joint Shannon entropy of the random variables.  This analogy is discussed at greater length in~\cite{APE4}, where some of the main theorems provide `equivariant' extensions of it for actions of countable groups.  Just a few key differences must be kept in mind, particularly these: (i) differential entropy depends on the ambient dimension in which a random vector takes it values; (ii) differential entropy can have either sign, or even equal $-\infty$.  These warnings apply directly to log-determinants as well.

\begin{dfn}
For any finite positive semidefinite matrix $Q$, its \textbf{log-determinant entropy} is
\[\rmH(Q) := \log \det Q,\]
with the usual convention that $\log 0 = -\infty$.

If $S$ is any set (not necessarily finite), $Q$ is a positive semidefinite $S$-by-$S$ matrix, and $F$ is a finite subset of $S$, then we define
\[\rmH_Q(F) := \rmH(Q[F]),\]
where $Q[F]$ is the $F$-by-$F$ submatrix of $Q$. (Recall our notation for submatrices from Section~\ref{sec:lin-alg}.)
\end{dfn}

If $V = [v_1,\dots,v_k]$ is a $k$-tuple in a Hilbert space $H$ and $Q = V^\ast V$, then we can also express $\rmH(Q)$ directly in terms of the tuple using exterior algebra:
	\[\rmH(Q) = 2\log\|v_1\wedge \cdots \wedge v_k\|.\]
See~\cite[Exer. 2.13]{WarFDMLG}, for example.  If we work over $\bbR$ rather than $\bbC$, then the norm of the wedge product on the right-hand side above is the intrinsic $k$-dimensional volume of the parallelepiped generated by $v_1$, \dots, $v_k$.  So this formula connects the `entropy' $\rmH(Q)$ directly to a geometric feature of the tuple $v_1$, \dots, $v_k$.

Let $V = [v_1, \dots, v_k]$ and $Q = V^\ast V$ be as above.
%Any orthonormal set of vectors with the same span as $v_1$, \dots, $v_k$ consists of linear combinations of $v_1$, \dots, $v_k$, and there are general methods for finding such an orthonormal set which use the entries of $Q$ to find the coefficients.  The simplest general method is the Gram--Schmidt procedure, but this depends on the order of the vectors $v_1$, \dots, $v_k$.  If $Q$ is nonsingular, then a more symmetric method is available. Indeed, in this case
Then $V$ has a polar decomposition $WQ^{1/2}$, where $W:\bbC^{\oplus k} \to H$ is a partial isometry with initial space $\rm{ran}\,Q = (\ker V)^\perp$ and final space $M := \rm{ran}\, V$.  Using this partial isometry, the orthogonal projection $P$ from $H$ onto $M$ is given by $WW^\ast$. In case $Q$ is invertible, we can write $W = VQ^{-1/2}$.  In general, we can write instead
\begin{equation}\label{eq:new-basis}
	W = V(Q^\dag)^{1/2},
\end{equation}
where $Q^\dag$ is the Moore--Penrose generalized inverse of $Q$~\cite[Exer. 7.3.P7]{HorJohMA}.  Since $Q$ is positive semidefinite, its image and its kernel are orthogonal complements and both $Q$-invariant, and so we can define the generalized inverse by
\[Q^\dag|\rm{ran}\,Q := (Q|\rm{ran}\,Q)^{-1} \qquad \hbox{and} \qquad Q^\dag|\ker Q := 0.\]
So now we obtain the formula
\begin{equation}\label{eq:proj-onto-span}
P = WW^\ast = VQ^\dag V^\ast.
\end{equation}

\section{Two-by-two block Gram matrices}\label{sec:block-Gram}

Let $V = [v_1,\dots,v_k]$ and $U = [u_1, \dots, u_\ell]$ be two tuples in the same Hilbert space $H$.  The combined tuple $[v_1, \dots, v_k, u_1, \dots, u_\ell]$ has a $(k+\ell)$-by-$(k+\ell)$ Gram matrix which we can write in the block form
\begin{equation}\label{eq:two-block}
	Q = \left[\begin{array}{cc} Q_{11} & R\\ R^\ast & Q_{22}\end{array}\right].
\end{equation}
The diagonal blocks $Q_{11}$ and $Q_{22}$ are the separate Gram matrices of $V$ and $U$, and
\[R= [\langle u_j,v_i\rangle]_{1 \le i \le k,\,1\le j \le \ell} = V^\ast U.\]

Let $M$ be the span of $v_1$, \dots, $v_k$ and $P$ the orthogonal projection onto $M$.  We can separate each $u_i$ into $Pu_i$ and $P^\perp u_i$, and then form two new Gram matrices from these tuples of components.  Because $M$ and $M^\perp$ are orthogonal, the full Gram matrix $Q_{22}$ of $u_1$, \dots, $u_k$ is the sum of these two Gram matrices of projections.

To find the Gram matrix of $[Pu_1,\dots,Pu_\ell]$, we can use~\eqref{eq:proj-onto-span} to express each $Pu_i$:
\begin{equation}\label{eq:proj-Gram}
	[\langle Pu_{i'},Pu_i\rangle]_{i,i'=1}^\ell = U^\ast P^2 U = U^\ast P U = U^\ast VQ^\dag_{11} V^\ast U = R^\ast Q^\dag_{11} R.
\end{equation}
It follows that the Gram matrix of the orthogonal projections $P^\perp u_1$, \dots, $P^\perp u_\ell$ is equal to the difference
\begin{equation}\label{eq:comp-Gram}
	Q_{22} - R^\ast Q^\dag_{11} R.
\end{equation}
This is the \textbf{generalized Schur complement} of $Q_{11}$ in $Q$~\cite[Exers. 7.1.P28 and 7.3.P8]{HorJohMA}.  It reduces to the usual Schur complement $Q_{22} - R^\ast Q_{11}^{-1}R$ when $Q_{11}$ is nonsingular~\cite[Subs. 0.7--8]{HorJohMA}.%  It appears widely in the analysis of positive semidefinite matrices, for instance in the formula for inverting any nonsingular $2$-by-$2$ block matrix~\cite[Secs. 0.7.3]{HorJohMA}.

Now consider a general a $2$-by-$2$ block self-adjoint matrix $Q$ written as in~\eqref{eq:two-block}, and assume that $Q_{11}$ and $Q_{22}$ are both positive semidefinite.  The next proposition characterizes those off-diagonal blocks for which the whole of $Q$ is still positive semidefinite.

\begin{prop}\label{prop:two-block-completion}
	Let $Q$ be as in~\eqref{eq:two-block}, and assume that $Q_{11}$ and $Q_{22}$ are positive semidefinite. Then the following are equivalent:
	\begin{itemize}
	\item $Q$ is positive semidefinite;
	\item $R$ is equal to $Q_{11}^{1/2}CQ_{22}^{1/2}$ for some contraction $C \in \Xi(\ell,k)$ (recall this notation from Section~\ref{sec:lin-alg});
	\item the generalized Schur complement $Q_{22} - R^\ast Q^\dag_{11} R$ is positive semidefinite.
	\end{itemize}
	
If $Q_{11}$ and $Q_{22}$ are both nonsingular, then the contraction $C$ promised above is unique, and the following additional conditions are equivalent:
	\begin{itemize}
		\item the whole of $Q$ is nonsingular;
		\item the contraction $C$ is strict;
		\item the Schur complement $Q_{22} - R^\ast Q^{-1}_{11} R$ is nonsingular. \qed
	\end{itemize}
\end{prop}

See~\cite[Thms. 7.7.7 and 7.7.9 and Exer. 7.7.P20]{HorJohMA}.  If $Q_{11}$ and $Q_{22}$ are nonsingular, then the contraction $C$ must be equal to $Q_{11}^{-1/2}RQ_{22}^{-1/2}$.  In general, this contraction need not be unique, but if it exists then a suitable choice is always given by $(Q^\dag_{11})^{1/2}R(Q^\dag_{22})^{1/2}$.

If we know that $Q$ is positive semidefinite, then it is a Gram matrix, say of the concatenated tuple $[V,U]$ as before.  In this case the various conclusions above all have natural geometric interpretations.  For example, the generalized Schur complement $Q_{22} - R^\ast Q^\dag_{11} R$ must be positive semidefinite because it is the Gram matrix of the projected tuple $P^\perp u_1$, \dots, $P^\perp u_\ell$.  Also, using the polar decompositions $V = WQ_{11}^{1/2}$ and $U = YQ_{22}^{1/2}$, we have
\[R = V^\ast U = Q_{11}^{1/2}(W^\ast Y)Q_{22}^{1/2},\]
and $W^\ast Y$ is a contraction as a composition of two partial isometries.  Expressing $W$ and $Y$ as in~\eqref{eq:new-basis}, we obtain the formula for this particular contraction in terms of Moore--Penrose generalized inverses.

\begin{ex}\label{ex:k1}
	Suppose that $\ell = 1$, so $Q$ is the $(k+1)$-by-$(k+1)$ Gram matrix of a tuple $[v_1,\dots,v_k,u]$. Assume that $v_1$, \dots, $v_k$ are linearly independent and $u \ne 0$.  In this case the top right block of $Q$ is a column vector in $\bbC^{\oplus k}$ which specifies the inner product of $u$ with each $v_i$.  This column vector is parametrized by a contraction from $\bbC$ to $\bbC^{\oplus k}$, or equivalently by a vector in $\bbC^{\oplus k}$ of length at most $1$.  Geometrically, we have a canonical choice of orthonormal basis for $\rm{span}\,\{v_1,\dots,v_k\}$ given by the columns of $W := VQ_{11}^{-1/2}$, and the parametrizing vector is the orthogonal projection of $u/\|u\|$ onto that span written in this basis.
	
	Most simply of all, if also $k = 1$, then the parametrizing contraction is simply a complex number of modulus $\cos \theta$, where $\theta$ is the angle between $v$ and $u$. \qed
\end{ex}

The generalized Schur complement also appears in Schur's determinantal formula:
\begin{equation}\label{eq:Schur-det}
	\det Q = \det Q_{11} \cdot \det (Q_{22} - R^\ast Q^\dag_{11} R).
\end{equation}
This is usually derived assuming that $Q_{11}$ is nonsingular~\cite[Sec. 0.8.5]{HorJohMA}, in which case $Q_{11}^\dag = Q_{11}^{-1}$.  But it also holds vacuously if $Q_{11}$ is singular, as then both sides vanish.  The next definition connects this with our `entropy' interpretation of log-determinants.

\begin{dfn}\label{dfn:cond-log-det-ent}
If $S$ is any set (not necessarily finite), $Q$ is a positive semidefinite $S$-by-$S$ matrix, and $E,F$ are disjoint finite subsets of $S$, then we write
\[Q[F\mid E] := Q[F] - Q[F,E]Q[E]^\dag Q[E,F],\]
the generalized Schur complement of $Q[E]$ in $Q[E\cup F]$.  Now we define the \textbf{conditional log-determinant entropy of $F$ given $E$ in $Q$} by
\[\rmH_Q(F\mid E) := \log \det Q[F\mid E],\]
and the \textbf{log-determinant mutual information between $E$ and $F$ in $Q$} by
\[\rmI_Q(E\,;\,F) := -\log \det \big(I_{|F|} - (Q[F]^\dag)^{1/2}Q[F,E]Q[E]^\dag Q[E,F](Q[F]^\dag)^{1/2}\big).\]
\end{dfn}

Regarding $Q[E\cup F]$ as a $2$-by-$2$ block matrix in the natural way, we have
\begin{equation}\label{eq:mut-inf-contraction}
\rmI_Q(E\,;\,F) = -\log \det(I_{|F|} - C^\ast C),
\end{equation}
where $C$ is the contraction from $\bbC^{\oplus F}$ to $\bbC^{\oplus E}$ promised by Proposition~\ref{prop:two-block-completion}. Since $I_{|F|} - C^\ast C$ is also a contraction, its log-determinant is non-positive, and a standard symmetry of determinants also shows that it has the same determinant as $I_{|E|} - CC^\ast$.  Also, if $Q$ is the Gram matrix of $[v_s:\ s \in S]$, then our discussion above expresses $C$ as $W^\ast Y$, where $W$ and $Y$ are partial isometries with final spaces equal to ${\rm{span}\{v_s:\ s \in E\}}$ and ${\rm{span}\{v_s:\ s \in F\}}$, respectively.  So the mutual information depends only on those subspaces. These facts show the following.

\begin{lem}\label{lem:mut-inf-nonneg}
	In the setting of Definition~\ref{dfn:cond-log-det-ent}, we have $\rmI_Q(E\,;\,F) = \rmI_Q(F\,;\,E)$, this value is non-negative, and it vanishes if and only if $Q[E,F] = 0$. If $Q$ is the Gram matrix of $[v_s:\ s \in S]$, then $\rmI_Q(E\,;\,F)$ depends only on the subspaces ${\rm{span}\{v_s:\ s \in E\}}$ and ${\rm{span}\{v_s:\ s \in F\}}$. \qed
	\end{lem}

\begin{ex}\label{ex:1D-cond-ent}
In Definition~\ref{dfn:cond-log-det-ent}, suppose that $Q$ is the Gram matrix of $[v_s:\ s \in S]$.  As a heuristic principle, `conditioning' on the subset $E$ means \emph{projecting everything to $M := \{v_s:\ s \in E\}^\perp$}.  For example, if $F$ is a singleton $\{t\}$ disjoint from $E$, then the geometric interpretation of Schur complements above gives
\[\rmH_Q(t\mid E) =  2\log\|P^\perp v_t\| \qquad \hbox{and} \qquad \rmI_Q(t\,;\,E) = - 2\log (\|P^\perp v_t\|/\|v_t\|),\]
where $P$ is the orthogonal projection onto $M$. \fin
\end{ex}

\begin{rmk}
%	For linear maps $S$ and $T$ taking values in $H$, observe that $\rmI(S\,;\,T)$ depends only on the subspaces $\rm{ran}\,S$ and $\rm{ran}\,T$.  In particular, pre-composing either $S$ or $T$ with an invertible map of its domain does not change the value of $\rmI(S\,;\,T)$.
	If $Q[E\cup F]$ is nonsingular, then $\rmI_Q(E\,;\,F)$ is equal (up to normalization) to the mutual information between $X_E$ and $X_F$ when $X = (X_E,X_F)$ is a complex Gaussian random vector with covariance matrix $Q[E\cup F]$.  See~\cite[Chap. 8]{CovTho06} for the real case, which is easily adapted.  In this setting, the last part of Lemma~\ref{lem:mut-inf-nonneg} implies that this mutual information is the same for $(aX_E,aX_F)$ whenever $a > 0$: this is the `scale invariance' property of mutual information in this setting~\cite[Sec. 11.3]{MacKay--book}. \fin
	%The invariance property of $\rmI$ described above then has a standard interpretation: this mutual information --- unlike the differential entropy of either $S^\ast X$ or $T^\ast X$ alone --- is independent of the scale or coordinate system in which those vectors are written.  Compare, for instance, the discussion of `infinite precision' on~\cite[p180]{MacKay--book}. \fin
	% [[ INTERESTING ASIDE: DOES THIS MEAN THAT YOU CAN GET A TRUE INVARIANT OF /PAIRS/ OF SUBREPRESENTATIONS INSIDE A SINGLE BIG REPRESENTAION, DESCRIBING HOW /THEY/ SIT TOGETHER?  WHAT COULD THAT BE USEFUL FOR, SINCE IT'S A TRUE INVAIRANT UNDER UNITARY EQUIVALENCE!?  WHAT COULD IT MEAN AND WHERE, IF ANYWHERE, COULD IT BE USEFUL!?!?!?!?! ]]
\end{rmk}

Consider again a positive semidefinite $S$-by-$S$ matrix $Q$ as in Definition~\ref{dfn:cond-log-det-ent}.  The analogy with Shannon entropy continues through the following analogs of the chain rule (see~\cite[Secs. 2.4 and 2.5]{CovTho06}).

\begin{prop}[Chain rules]\label{prop:chain1}
In the setting of Definition~\ref{dfn:cond-log-det-ent}, if $E,F$ are disjoint finite subsets of $S$, then
	\[\rmH_Q(E\cup F) = \rmH_Q(E) + \rmH_Q(F\mid E)\]
(including the possibility of $-\infty$ on either side), and
	\[\rmH_Q(F) - \rmI_Q(F\,;\,E) = \rmH_Q(F\mid E)\]
(including the possibility of $-\infty$ on either side).
\end{prop}

\begin{proof}
The first formula is simply the logarithm of Schur's determinantal formula~\eqref{eq:Schur-det}.

For the second formula, notice that both sides equal $-\infty$ if $Q[F]$ is singular, so we may assume it is nonsingular.  In that case we can write
\begin{multline*}
Q[F\mid E] =  Q[F]^{1/2}\Big(I_{|F|} - Q[F]^{-1/2}Q[F,E]Q[E]^\dag Q[E,F]Q[F]^{-1/2}\Big)Q[F]^{1/2}.
\end{multline*}
Now take determinants and logarithms again.
\end{proof}

\begin{cor}\label{cor:chain1}
In the setting of Definition~\ref{dfn:cond-log-det-ent}, if $E,F$ are disjoint finite subsets of $S$, then
	\begin{align}
	\rmH_Q(E\cup F) &= \rmH_Q(E) + \rmH_Q(F) - \rmI_Q(E\,;\,F) \nonumber\\
	&\le \rmH_Q(E) + \rmH_Q(F)  \label{eq:subadd}.
\end{align}
(including the possibility that any of these expressions could be $-\infty$).
\end{cor}

\begin{proof}
Combine the two formulas from Proposition~\ref{prop:chain1} and the non-negativity from Lemma~\ref{lem:mut-inf-nonneg}).
\end{proof}

Alternatively, the inequality~\eqref{eq:subadd} is the logarithm of Fischer's inequality~\cite[Thm. 7.8.5]{HorJohMA}, and then~\eqref{eq:strong-subadd} is the generalization of it to the Koteljanski\u{\i} (also called Hadamard--Fischer) inequality~\cite[Thm. 7.8.9]{HorJohMA}.  So these results of matrix analysis are analogs of the subadditivity of Shannon entropy and its conditional variant, respectively (see~\cite[Thm. 2.6.6]{CovTho06}).  Several other results of matrix analysis and information theory also fit into this analogy: for example Sz\'asz's inequality~\cite[Thm. 7.8.11]{HorJohMA} corresponds to one of Han's inequalities~\cite[Thm. 17.6.1]{CovTho06}.

Here is one further addition to our analogy between entropy and log-determinants

\begin{dfn}\label{dfn:cond-log-mut-inf}
If $S$ is any set (not necessarily finite), $Q$ is a positive semidefinite $S$-by-$S$ matrix, and $E,F,G$ are pairwise disjoint finite subsets of $S$, then the \textbf{conditional log-determinant mutual information between $E$ and $F$ given $G$ in $Q$} is defined by
\[\rmI_Q(E\,;\,F\mid G) := \rmI_{Q'}(E\,;\,F)\]
with $Q' := Q[E\cup F\mid G)]$.
\end{dfn}

If $Q$ is the Gram matrix of $[v_s:\ s \in S]$, then Definition~\ref{dfn:cond-log-det-ent} means the following. We let $L$ be the span of $\{v_t:\ t \in G\}$, we let $P$ be the orthogonal projection onto $L$, we let $Q'$ be the Gram matrix of $[P^\perp v_s:\ s \in E\cup F]$, and then we use this new Gram matrix to evaluate $\rmI_Q(E\,;\,F\mid G)$.

\begin{cor}\label{cor:cond-chain}
In the setting of Definition~\ref{dfn:cond-log-det-ent}, if $E,F,G$ are pairwise disjoint finite subsets of $S$, then
\begin{align}
\rmH_Q(E\cup F\mid G) &= \rmH_Q(E\mid G) + \rmH_Q(F\mid E\cup G) \label{eq:cond-chain1}\\
&\le \rmH_Q(E\mid G) + \rmH_Q(F\mid G) \label{eq:strong-subadd}
\end{align}
(including the possibility that any of these expressions could be $-\infty$), and
\begin{equation}\label{eq:cond-chain2}
\rmH_Q(E\mid G) - \rmI_Q(E\,;\,F\mid G) = \rmH_Q(E\mid F\cup G)
\end{equation}
(including the possibility of $-\infty$ on either side).
\end{cor}

\begin{proof}
These results follow by applying Proposition~\ref{prop:chain1} or Corollary~\ref{cor:chain1} to $Q' := Q[E\cup G\mid G]$.  To do this, the extra ingredient one needs is the quotient property for Schur complements:
\[Q'[E\mid F] = Q[E\mid F\cup G].\]
See~\cite[Eqn. (0.8.5.12)]{HorJohMA}.  That reference is phrased for normal Schur complements, so it assumes some nonsingularity, but the proof given there works for generalized Schur complements as well.
\end{proof}

\section{Three-by-three-block Gram matrices}\label{sec:three-block-Gram}

This section extends the work of Section~\ref{sec:block-Gram} by allowing three blocks rather than two.   Let $H$ be a complex inner product space as before. Let $k$,$\ell$ and $m$ be non-negative integers, and in $H$ consider tuples of vectors
\begin{equation}\label{eq:full-tuple}
	V = [v_1,\dots,v_k], \quad U = [u_1,\dots,u_\ell] \quad \hbox{and} \quad X = [x_1,\dots,x_m].
\end{equation}
Combine them into a single $(k+\ell+m)$-tuple, and write their joint Gram matrix explicitly in $3$-by-$3$ block form
\begin{equation}\label{eq:full-Gram}
	Q := \left[\begin{array}{ccc}V^\ast V & V^\ast U & V^\ast X\\ U^\ast V & U^\ast U & U^\ast X \\ X^\ast V & X^\ast U & X^\ast X\end{array}\right].
\end{equation}

Let $M$ be the span of $u_1$, \dots, $u_\ell$ (not $v_1$, \dots, $v_k$, as previously), and let $P$ be the orthogonal projection onto $M$. Let us decompose the matrix $Q$ into two summands by splitting each vector into its components in $M$ and $M^\perp$.  In terms of the linear maps in~\eqref{eq:full-tuple}, this simply means we write $V = PV + P^\perp V$, and similarly.  Since $PU = U$ and $P^\perp U = 0$, we find that $Q$ is equal to
\begin{equation}\label{eq:three-block-decomp}
	\left[\begin{array}{ccc} V^\ast P V & V^\ast  U & V^\ast P X\\ U^\ast V & U^\ast U & U^\ast X \\ X^\ast P V & X^\ast U & X^\ast P X\end{array}\right] + \left[\begin{array}{ccc} V^\ast P^\perp V & 0 & V^\ast P^\perp X\\ 0 & 0 & 0 \\ X^\ast P^\perp V & 0 & X^\ast P^\perp X\end{array}\right].
\end{equation}
The first summand here is the Gram matrix of three tuples of vectors that are all contained in $M$.  On the other hand, by ignoring the middle row and column, the second summand is effectively just a two-by-two-block Gram matrix.

This decomposition leads naturally to a generalization of Proposition~\ref{prop:two-block-completion} to three-by-three-block self-adjoint matrices.

\begin{prop}\label{prop:three-block-completion}
	Consider a $(k+\ell+m)$-by-$(k+\ell+m)$ self-adjoint matrix
	\begin{equation}\label{eq:full-Gram2}
		Q := \left[\begin{array}{ccc} Q_{11} & Q_{12} & R\\ Q_{12}^\ast & Q_{22} & Q_{23} \\ R^\ast & Q_{23}^\ast & Q_{33}\end{array}\right].
	\end{equation}
	Assume that the submatrices
	\[Q_{(1\cup 2)} := \left[\begin{array}{cc} Q_{11} & Q_{12}\\ Q_{12}^\ast & Q_{22}\end{array}\right] \qquad \hbox{and} \qquad Q_{(2\cup 3)} := \left[\begin{array}{cc} Q_{22} & Q_{23}\\ Q_{23}^\ast & Q_{33}\end{array}\right]\]
	are both positive semidefinite.  Then $Q$ is positive semidefinite if and only if it has the form
	\begin{equation}\label{eq:three-block-decomp2}
		\left[\begin{array}{ccc} Q_{12}Q^\dag_{22} Q_{12}^\ast & Q_{12} & Q_{12} Q^\dag_{22} Q_{23}\\ Q_{12}^\ast & Q_{22} & Q_{23} \\ Q_{23}^\ast Q^\dag_{22}Q_{12}^\ast  & Q_{23}^\ast & Q_{23}^\ast Q^\dag_{22} Q_{23}\end{array}\right] + \left[\begin{array}{ccc} S_{11} & 0 & S_{11}^{1/2}CS_{33}^{1/2}\\ 0 & 0 & 0 \\ S_{33}^{1/2}C^\ast S_{11}^{1/2} & 0 & S_{33}\end{array}\right],
	\end{equation}
	where
	\begin{equation}\label{eq:Schurs}
		S_{11} := Q_{11} - Q_{12}Q^\dag_{22}Q_{12}^\ast \quad \hbox{and} \quad S_{33} := Q_{33} - Q_{23}^\ast Q^\dag_{22}Q_{23}
	\end{equation}
	are generalized Schur complements and $C \in \Xi(m,k)$.
	
	If $Q_{(1\cup 2)}$ and $Q_{(2\cup 3)}$ are both nonsingular, then the contraction $C$ promised here is unique, and the whole of $Q$ is nonsingular if and only if $C$ is a strict contraction.
\end{prop}

%[[ ????? Make a bigger deal... out of an intuitive discussion of how your 3-block PSD matrix completion stuff IS the `linear algebra' analog of constructing all possible three-fold couplings given two of the factor couplings!  AND HOW YOU NEVER GET ANYTHING LIKE SUCH A NICE PARAMETRIZATION IN THE PROBABILITY WORLD -- EVEN OF JUST ALL POSSIBLE TWO-FOLD COUPLINGS.  DO YOU?  WHAT ABOUT CLASSIFYING THEM VIA EXTREME POINTS?  IF DISCRETE, YOU PROBABLY CAN'T EVEN DO THAT!! 
%
%GIVEN PROBABILITY VECTORS p and q, what ARE the extreme points of Cpl(p,q) IN GENERAL?  "WATER FILLING!"  What a continuous extension of THAT idea?  Borel versions? ]]

\begin{proof}
	\emph{Necessity.} \quad Let $n := k+\ell + m$. If the whole of $Q$ is positive semidefinite, then it is the Gram matrix of some tuples of vectors in $\bbC^{\oplus n}$ as in~\eqref{eq:full-tuple}, and so it decomposes as in~\eqref{eq:three-block-decomp}.
	
	Applying~\eqref{eq:proj-Gram} to the four corner blocks of the first summand in~\eqref{eq:three-block-decomp}, we find that it equals the first summand in~\eqref{eq:three-block-decomp2}.  On the other hand, by~\eqref{eq:comp-Gram}, the two nonzero diagonal blocks in the second summand in~\eqref{eq:three-block-decomp} are equal to $S_{11}$ and $S_{22}$.  Finally, Proposition~\ref{prop:two-block-completion} tells us that this second summand is positive semidefinite if and only if it has the asserted form for some contraction $C$.
	
	\vspace{7pt}
	
	\emph{Sufficiency.} \quad Suppose that $Q$ has the form in~\eqref{eq:three-block-decomp2}.  The second summand is positive semidefinite by Proposition~\ref{prop:two-block-completion}.  It therefore suffices to prove that $Q$ is positive semidefinite in case it equals the first summand in~\eqref{eq:three-block-decomp2}, meaning that $Q_{ii} = Q_{i2}^\ast Q^\dag_{22} Q_{2i}$ for $i \in \{1,3\}$.  By~\eqref{eq:proj-Gram}, this means that $Q_{(1\cup 2)}$ and $Q_{(2\cup 3)}$ are unchanged by the procedure of writing each as a Gram matrix and then projecting all the vectors onto the span of the vectors corresponding to the block $Q_{22}$.  We may therefore pick a tuple $U$ whose Gram matrix is $Q_{22}$ and then two more tuples $V$ and $X$ contained in $\rm{ran}\,U$ so that $Q_{(1\cup 2)}$ and $Q_{(2\cup 3)}$ are the Gram matrices of $[V,U]$ and $[U,X]$ respectively.  Finally, applying~\eqref{eq:proj-Gram} for the projection onto $\rm{ran}\,U$ and the whole triple tuple $[V,U,X]$, we find that under these conditions the Gram matrix of $[V,U,X]$ is the first summand in~\eqref{eq:three-block-decomp2}, so that summand must be positive semidefinite.
	
	\vspace{7pt}
	
	\emph{Uniqueness and nonsingularity.} \quad These follow from the conditions for uniqueness and nonsingularity in our application of Proposition~\ref{prop:two-block-completion} above.
\end{proof}

We can illustrate the parametrization by contractions in Proposition~\ref{prop:three-block-completion} with the following continuation of Example~\ref{ex:k1}.

\begin{ex}\label{ex:kell1}
	Suppose that $m=1$, and let $Q$ as above be the Gram matrix of the combined tuple $[v_1,\dots,v_k,u_1,\dots,u_\ell,x]$ in $\bbC^{\oplus n}$.  Let $P$ be the orthogonal projection onto $M = \rm{span}\,\{u_1,\dots,u_\ell\}$.  
	
	The parametrizing contraction $C$ appears in the second summand in~\eqref{eq:three-block-decomp2}, which corresponds to the second summand in~\eqref{eq:three-block-decomp}.  The nonzero blocks of this summand form the $(k+1)$-by-$(k+1)$ Gram matrix of the tuple $[P^\perp v_1,\dots,P^\perp v_k,P^\perp x]$.  As in Example~\ref{ex:k1}, the top right block here is a column vector in $\bbC^{\oplus k}$ which specifies the inner product of $P^\perp x$ with each $P^\perp v_i$.  This column vector is parametrized by an element of $\Xi(1,k)$, meaning simply a vector in $\bbC^{\oplus k}$ of length at most $1$.  Under the canonical choice of orthonormal basis for $\rm{span}\,\{P^\perp v_1,\dots,P^\perp v_k\}$, this vector corresponds to the orthogonal projection of $P^\perp x/\|P^\perp x\|$ onto that span. \qed
\end{ex}

Now suppose that $Q$ is an $S$-by-$S$ positive semidefinite matrix for some set $S$, and that $E,F,G$ are pairwise disjoint finite subsets of $S$.  Then $Q[E\cup F\cup G]$ has a natural $3$-by-$3$ block structure, and so Proposition~\ref{prop:three-block-completion} expresses $Q[E\cup F\cup G]$ in terms of $Q[E\cup F]$, $Q[F\cup G]$, and a auxiliary contraction matrix $C$ from $\bbC^{\oplus G}$ to $\bbC^{\oplus E}$.  By construction, this is the same contraction that appears when we use Proposition~\ref{prop:two-block-completion} to express $Q[E\cup G\mid F]$ starting from $Q[E\mid F]$ and $Q[G\mid F]$.  Therefore, in the present setting, equation~\eqref{eq:mut-inf-contraction} becomes
\begin{equation}\label{eq:cond-mut-inf-contraction}
\rmI_Q(E\,;\,G\mid F) = -\log \det (I_{|G|} - C^\ast C).
\end{equation}

Proposition~\ref{prop:three-block-completion} can be regarded as the solution to a `matrix completion problem'.  Referring to~\eqref{eq:full-Gram2}, imagine that we know all the blocks of this self-adjoint matrix apart from $R$ and $R^\ast$, and that the submatrices $Q_{(1\cup 2)}$ and $Q_{(2\cup 3)}$ are positive semidefinite.  The `completion problem' asks whether we can choose $R$ so that the whole of $Q$ remains positive semidefinite.  Assuming that $Q_{22}$ is nonsingular, Proposition~\ref{prop:three-block-completion} tells us that the answer is Yes, and moreover that the possible choices for $R$ are all matrices of the form
\begin{equation}\label{eq:param}
	R = Q_{12}Q^\dag_{22}Q_{23} + S_{11}^{1/2} C S_{33}^{1/2}
\end{equation}
as $C$ ranges over $\Xi(m,k)$, where $S_{11}$ and $S_{33}$ are as in~\eqref{eq:Schurs}.

The solution of this problem establishes a relationship between whole spaces of matrices.  Recall that $\rmM_{k+}$ denotes the closed cone of positive semidefinite elements of $\rmM_{k,sa}$, and that $\rmM^\circ_{k+}$ denotes the subset of positive definite elements, which is the relative interior of $\rmM_{k+}$ in $\rmM_{k,\sa}$.

The next space is less standard, so we define it formally.  Let $k$, $\ell$ and $m$ be non-negative integers, as before.

\begin{dfn}
	A \textbf{partial positive semidefinite} matrix with \textbf{block sizes} $k$, $\ell$ and $m$ is a $5$-tuple
	\[(Q_{11},Q_{12},Q_{22},Q_{23},Q_{33}) \in \rmM_{k+} \times \rmM_{k,\ell}\times \rmM_{\ell+} \times \rmM_{\ell,m}\times \rmM_{m+}\]
	with the property that both of the two-by-two-block matrices
	\[\left[\begin{array}{cc} Q_{11} & Q_{12}\\ Q_{12}^\ast & Q_{22}\end{array}\right] \qquad \hbox{and} \qquad \left[\begin{array}{cc} Q_{22} & Q_{23}\\ Q_{23}^\ast & Q_{33}\end{array}\right]\]
	are positive semidefinite.
	
	A partial positive semidefinite matrix is \textbf{partially nonsingular} if both of those two-by-two-block matrices are nonsingular; otherwise it is \textbf{partially singular}.
	
	We write $\PPSD_{k,\ell,m}$ for the space of partial positive semidefinite matrices with block sizes $k$, $\ell$ and $m$, and $\PPSD^\circ_{k,\ell,m}$ for the further subset of its partially nonsingular members.
\end{dfn}

Rather than writing out a $5$-tuple as above, henceforth we write the corresponding partial positive semidefinite matrices as a three-by-three-block matrix with some unknown entries:
\[\left[\begin{array}{ccc} Q_{11} & Q_{12} & ?\\ Q_{21}^\ast & Q_{22} & Q_{23} \\ ? & Q_{23}^\ast & Q_{33}\end{array}\right].\]
We often indicate such an entity by a raised question mark, as in ``$Q^?$''.  There is a natural map
\[\pi^?:\rmM_{(k+\ell+m)+}\to \PPSD_{k,\ell,m}\]
that simply erases the $k$-by-$m$ blocks in the top right and bottom left of its argument.  If $Q^? \in \PPSD_{k,\ell,m}$, then let us write
\[\Delta(Q^?) := \{Q \in \rmM_{(k+\ell+m)+}:\ \pi^?(Q) = Q^?\},\]
and refer to this fibre as the set of \textbf{completions} of $Q^?$.

\begin{cor}\label{cor:three-block-completion}
	The map $\pi^?$ is a surjection from $\rmM_{(k+\ell+m)+}$ onto $\PPSD_{k,\ell,m}$.  Its restriction to $\rmM^\circ_{(k+\ell+m)+}$ is surjective onto $\PPSD^\circ_{k,\ell,m}$.
	
	We can define a map
	\begin{equation}\label{eq:param-map}
		\PPSD_{k,\ell,m}\times \Xi(m,k) \to \rmM_{(k+\ell+m)+}
	\end{equation}
	 according to
	\[(Q^?,C) \mapsto \left[\begin{array}{ccc}Q_{11} & Q_{12} & R\\ Q_{12}^\ast & Q_{22} & Q_{23}\\ R^\ast & Q_{23}^\ast & Q_{33}\end{array}\right],\]
	where $S_{11}$, $S_{22}$ are given by~\eqref{eq:Schurs} and then $R$ is given by~\eqref{eq:param}.  For each $Q^? \in \PPSD_{k,\ell,m}$, this map sends $\{Q^?\}\times \Xi(m,k)$ onto the fibre $\Delta(Q^?)$, and the restriction of this map to $\PPSD^\circ_{k,\ell,m}\times \Xi^\circ(m,k)$ is a homeomorphism onto $\rmM^\circ_{(k+\ell+m)+}$.
\end{cor}

\begin{proof}
This is mostly just a reformulation of Proposition~\ref{prop:three-block-completion}.  The restriction of~\eqref{eq:param-map} to $\PPSD^\circ_{k,\ell,m}\times \Xi^\circ(m,k)$ is a homeomorphism because the relevant inverses and Schur complements in~\eqref{eq:Schurs} and~\eqref{eq:param} are continuous on this open subset.
\end{proof}

For any partially nonsingular $Q^?$, the map in~\eqref{eq:param-map} defines a homeomorphic parametrization of $\Delta(Q^?)$ by $\Xi(m,k)$ under which the nonsingular completions are identitied with the strict contractions.

If $Q_{22}$ is nonsingular, then a canonical choice of completion for $Q^?$ results by setting $C$ to be $0$.  Equivalently, this is the choice for which the second summand in~\eqref{eq:three-block-decomp2} vanishes.  If $Q$ is the combined Gram matrix of the tuples in~\eqref{eq:full-tuple}, then this decomposition is the same as~\eqref{eq:three-block-decomp}, and there the second term vanishes if and only if the subspaces $\rm{ran}\,V + \rm{ran}\,U$ and $\rm{ran}\,U + \rm{ran}\,X$ are relatively orthogonal over their common further subspace $\rm{ran}\,U$.  This particular choice of $Q$ is called the \textbf{central completion} of $Q_{(1\cup 2)}$ and $Q_{(2\cup 3)}$.  With our analogy between tuples of vectors and random variables in mind, the central completion is the analog of a relatively independent joint distribution for a triple of random variables (compare, for instance, the main technical construction in~\cite{Boz89}).  By~\eqref{eq:cond-mut-inf-contraction}, it is characterized by the vanishing of the relevant conditional log-determinant mutual information.

\subsection*{\emph{Notes and further references}}

Completion problems for positive semidefinite matrices, as well as many other classes of matrices, have been studied in great depth, both within pure linear algebra and for the sake of numerous engineering applications.  %[[ WIERD THOUGHT... COULD OPPOSITE MAPS BE INSIDE SOME STRUCTURE THEOREM FOR NON-COMPLETELY-POSITIVE MAPS? anything related to this in Paulsen?]]
A succinct classic can be found in~\cite{Joh90}, and the textbooks~\cite{FoiFraCL} and~\cite{BakWoeMCetc} both cover it thoroughly.  In particular, positive two-by-two-block and three-by-three-block matrix completion are both treated thoroughly in~\cite[Chap. XVI]{FoiFraCL}, and both that chapter and~\cite[Chap. 2]{BakWoeMCetc} consider the natural further generalization of Proposition~\ref{prop:three-block-completion} to `banded' matrices with larger numbers of blocks.  For these, possible completions are parametrized by sequences of contractions in a way that generalizes~\eqref{eq:param}: see~\cite[Sec. XVI.8]{FoiFraCL} or~\cite[Thm. 2.1.2]{BakWoeMCetc}.
	
This is also an area of longstanding interaction between matrix analysis and abstract operator theory: see, for instance,~\cite[Chap. 3]{PauCB} for an introduction, including a generalization of Proposition~\ref{prop:two-block-completion} to $2\times 2$ matrices over a C$\sp*$-algebra~\cite[Lem. 3.1]{PauCB}.
	
%For a different point of view on the more standard results in Section~\ref{sec:block-Gram} above, the book~\cite[Sections 1.3 and 1.5]{BhatPDM} relates them to the positivity or monotonicity of certain maps between matrices.  That book also explores some different but closely related completion questions for positive semidefinite matrices in~\cite[Section 3.4]{BhatPDM}, where they are also related to positivity of certain associated Schur product maps between spaces of matrices. 
%\item REFERENCE NOT USED: Related results on the intrinsic geometry of the spaces $\rmM_{k+}$ and $\rmM^\circ_{k+}$ are explored in~\cite[Chapter 6]{BhatPDM} (especially the latter space, which is denoted by $\mathbb{P}_k$ in that reference.) 

\chapter{Some probability on spaces of matrices}

\section{Polar decomposition of integrals}\label{sec:polar-decomp-int}

Recall the notations $\bf{GL}(k,n)$, $\bf{U}(k,n)$, $\rmM_{k+}$ and $\rmM^\circ_{k+}$ from Section~\ref{sec:lin-alg}.  The set $\bf{GL}(k,n)$ is a dense open subset of $\rmM_{n,k} \cong (\bbC^{\oplus n})^k$. Its complement is defined by the vanishing of determinants of submatrices, so it has zero volume.  Therefore we may regard Lebesgue measure $\vol_{2kn}$ as a measure on $\bf{GL}(k,n)$.

The set $\rmU(k,n)$ has a natural action of $\rmU(n)$ by post-multiplication, and this action is transitive. So $\rmU(k,n)$ is a homogeneous space of the compact Lie group $\rmU(n)$; the stabilizer of any fixed choice of $k$ orthonormal vectors in $\bbC^{\oplus n}$ is a copy of ${\rmU(n-k)}$.  As a result, $\rmU(k,n)$ inherits a Haar probability measure $m_{\rmU(k,n)}$ from $\rmU(n)$.  For example, if $k = 1$, then $\rmU(1,n)$ is the unit sphere $\rmS^{2n-1}$, and $m_{\bf{U}(1,n)}$ is the normalized surface-area measure.

Also, the set $\rmM^\circ_{k+}$ is relatively open in the real-linear subspace $\rmM_{k,\rm{sa}}$ of $\rmM_k$, and $\rmM_{k,\rm{sa}}$ has real dimension $k^2$, so the restriction of $\vol_{k^2}$ to $\rmM^\circ_{k+}$ gives a nontrivial sigma-finite measure.

If $k\le n$, then the continuity of spectral calculus lets us define a continuous map
\begin{equation}\label{eq:polar-decomp-map}
	\bf{U}(k,n)\times \rmM^\circ_{k+} \to \bf{GL}(k,n):(V,Q)\mapsto VQ^{1/2}.
\end{equation}
This is a homeomorphism with inverse is given by polar decomposition of elements of $\bf{GL}(k,n)$.
It to the following formula for integration over $\bf{GL}(k,n)$.

\begin{prop}\label{prop:int-form}
	If $n\ge k$, then any positive Borel function $f$ on $\bf{GL}(k,n)$ satisfies
	\begin{multline}\label{eq:int-form}
		\int_{\bf{GL}(k,n)} f(T)\,d\vol_{2kn}(T) \\ = v(k,n)\int_{\rmM^\circ_{k+}} (\det Q)^{n-k} \int_{\bf{U}(k,n)} f(VQ^{1/2})\ dm_{\bf{U}(k,n)}(V)\ d\vol_{k^2}(Q),
	\end{multline}
	where
	\begin{equation}\label{eq:ckn}
		v(k,n) := \frac{\pi^{nk}}{\pi^{k(k-1)/2}\prod_{j=1}^k(n-j)!}.
	\end{equation}
\end{prop}

When $k=1$,~\eqref{eq:int-form} is simply the formula for integration over a complex vector space using polar coordinates:
\begin{align*}
	\int_{\bbC^n} f(z)\,d\vol_{2n}(z) &= \frac{\pi^n}{(n-1)!}\int_0^\infty q^{n-1}\int_{\rmS^{2n-1}}f(\sqrt{q}u)\ d\s_{2n-1}(u)\ dq \\
	&= \frac{2n\pi^n}{n!}\int_0^\infty r^{2n-1}\int_{\rmS^{2n-1}}f(ru)\ d\s_{2n-1}(u)\ dr,
\end{align*}
where the second line follows by substituting $q := r^2$. See, for instance,~\cite[Subs. 1.4.3 and 1.4.9]{RudinFTUB}.  The same reference also gives the identification
\begin{equation}\label{eq:ball-vol-2}
	v(1,n) = \frac{\pi^n}{(n-1)!} = n\cdot v(n),
\end{equation}
where $v(n)$ is the ball-volume function defined in~\eqref{eq:ball-vol}.

%[MM
%
%\begin{proof}
%It is equivalent to prove that
%	\begin{multline*}
	%		\int_{\bf{GL}(k,n)} |\det T|^{-2n}\cdot f(T)\,d\vol_{2kn}(T) \\ = c(k,n)\int_{\rmM^\circ_{k+}} \int_{\bf{U}(k,n)}(\det Q)^{-k}\cdot f(VQ^{1/2})\ dm_{\bf{U}(k,n)}(V)\ d\rm{vol}_{k^2}(Q).
	%	\end{multline*}
%	The fact that this holds for some $c(k,n) > 0$ follows because (i) the map~\eqref{eq:polar-decomp-map} must send one sigma-finite invariant measure on its domain to another on its range, (ii) those measures are unique up to positive normalization, and (iii) those measures are identified explicitly in Lemmas~\ref{lem:Haar1} and~\ref{lem:Haar2}.
%\end{proof}
%
%MM]

\begin{proof}[Proof of Proposition~\ref{prop:int-form}]
We can reduce~\eqref{eq:int-form} to standard results about complex Wishart distributions: see~\cite{Goo63}, for example.  Let $T$ be a random matrix in $\rmM_{n,k}$ whose entries are independent standard complex Gaussian random variables. This means that the overall distribution of $T$ is
\begin{equation}\label{eq:C-Gauss}
	\frac{1}{\pi^{nk}}e^{-\Tr_k(T^\ast T)}\ d\vol_{2kn}(T) \qquad (T \in \rmM_{n,k}).
\end{equation}
This is a product of $n$ copies of~\cite[eqn. (1.5)]{Goo63}.  Then the resulting random Gram matrix $Q := T^\ast T$ has the \textbf{complex Wishart distribution}
\begin{equation}\label{eq:C-Wish}
	\frac{1}{\pi^{k(k-1)/2}\prod_{j=1}^k(n-j)!}(\det Q)^{n-k}e^{-\Tr_k Q}\ d\vol_{k^2}(Q) \qquad (Q \in \rmM_{k+}):
\end{equation}
see~\cite[eqn. (1.6)]{Goo63}.

Observe that both sides of~\eqref{eq:int-form} are invariant under the action of $\bf{U}(n)$ by left multiplication.  By averaging over that action first, we reduce to the case when $f(T) = g(T^\ast T)$ for some positive Borel function $g$ on $\rmM_{k+}$.  Then, letting $h(Q) := e^{\Tr_k Q}g(Q)$, the distribution formulas~\eqref{eq:C-Gauss} and~\eqref{eq:C-Wish} give the equality of expectations
\begin{multline*}
	\frac{1}{\pi^{nk}}\int_{\bf{GL}(k,n)} e^{-\Tr_k(T^\ast T)}h(T^\ast T)\ d\vol_{2kn}(T) \\ = \frac{1}{\pi^{k(k-1)/2}\prod_{j=1}^k(n-j)!}\int_{\rmM^\circ_{k+}} (\det Q)^{n-k}e^{-\Tr_k Q}h(Q)\ d\vol_{k^2}(Q).
\end{multline*}
This is~\eqref{eq:int-form} after multiplying through by $\pi^{nk}$.
\end{proof}

We refer to the normalizing constants $v(k,n)$ frequently in the sequel. Rather than the exact formula~\eqref{eq:ckn}, we usually need only the following asymptotic behaviour for $k$ fixed and $n\to\infty$, which follows from~\eqref{eq:ckn} and Stirling's approximation:
\begin{equation}\label{eq:ckn-asymp}
	v(k,n) = \frac{(e\pi)^{nk}}{n^{nk}}\cdot e^{o(n)} \qquad \hbox{and hence} \qquad v(k,n) = v(n)^k\cdot e^{o(n)}.
\end{equation}

\section{A method of types for Gram matrices}\label{sec:method-of-types}

%[[ REF TO ADD: Csiszar--Shields ]]

Section~\ref{sec:Gram} discusses log-determinants as an analog of discrete Shannon entropy.  One aspect of this analogy is a `method-of-types' interpretation for log-determinants.  It estimates the volumes of certain sets of `typical tuples' of vectors in high dimensions.  This is similar to the method of types in information theory~\cite[Sec. 11.1]{CovTho06}, which interprets Shannon entropy in terms of some basic counting problems.  When log-determinant entropy appears as the differential entropy of a jointly Gaussian random vector, it also has a standard method-of-types interpretation in terms of volumes: the basic idea is covered in~\cite[Sec. 8.2]{CovTho06}.

For any $n\ge k$ and $O \subset \rmM_{k+}$, let
\begin{equation}\label{eq:TnO}
	T(n,O) := \{X \in \rmM_{n,k}:\ X^\ast X \in O\}.
\end{equation}

\begin{thm}\label{thm:types-1}
	Let $Q$ be a $k$-by-$k$ positive semidefinite matrix.
	\begin{itemize}
		\item[a.] (Lower bound) If $O$ is any neighbourhood of $Q$ in $\rmM_{k+}$, then
		\[\frac{\vol_{2kn}T(n,O)}{v(n)^k} \ge (\det Q)^{n - o(n)}.\]
		\item[b.] (Upper bound) For any $a > \det Q$ there is a neighbourhood $O$ of $Q$ in $\rmM_{k+}$ such that
		\[\frac{\vol_{2kn}T(n,O)}{v(n)^k} \le a^{n + o(n)}.\]
	\end{itemize}
\end{thm}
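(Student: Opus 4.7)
The plan is to reduce both inequalities to the integration formula~\eqref{eq:int-form} applied to $f = 1_{T(n,O)}$. Since $\rm{GL}(k,n)$ has full measure in $\rmM_{n,k}$ and the map $(V,Q')\mapsto VQ'^{1/2}$ sends $\rm{U}(k,n)\times \rmM^\circ_{k+}$ onto $\rm{GL}(k,n)$ homeomorphically with $(VQ'^{1/2})^\ast(VQ'^{1/2}) = Q'$, the indicator pulls back simply:
\[
1_{T(n,O)}(VQ'^{1/2}) = 1_O(Q') \qquad (V \in \rm{U}(k,n),\ Q' \in \rmM^\circ_{k+}).
\]
Proposition~\ref{prop:int-form} therefore gives the exact formula
\begin{equation}\label{eq:types-master}
\vol_{2kn} T(n,O) \;=\; v(k,n)\int_{O\cap \rmM^\circ_{k+}} (\det Q')^{n-k}\,d\vol_{k^2}(Q'),
\end{equation}
and the remaining task is to bound this integral above and below. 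The normalization is then handled uniformly by the asymptotic $v(k,n) = v(n)^k\cdot e^{o(n)}$ from~\eqref{eq:ckn-asymp}, which converts $v(k,n)/v(n)^k$ into a factor of $e^{o(n)}$, absorbed into the $o(n)$ exponent in both conclusions.

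For part (a), if $\det Q = 0$ the inequality is vacuous (interpreting $0^{n-o(n)}$ as $0$), so assume $\det Q > 0$, i.e.\ $Q \in \rmM^\circ_{k+}$. Given a neighbourhood $O$ of $Q$, pick $\eps \in (0,\det Q)$ and a small open ball $B\subset O\cap \rmM^\circ_{k+}$ around $Q$ on which $\det Q' \ge \det Q - \eps$. Since $B$ and its volume $\vol_{k^2}(B) > 0$ are fixed (independent of $n$), restricting the integral in~\eqref{eq:types-master} to $B$ yields
\[
\vol_{2kn}T(n,O) \;\ge\; v(k,n)\cdot (\det Q - \eps)^{n-k}\cdot \vol_{k^2}(B).
\]
Dividing by $v(n)^k$ and letting $\eps\downarrow 0$ after taking $n\to\infty$ gives the lower bound $(\det Q)^{n-o(n)}$.

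For part (b), fix $a > \det Q$, and choose $a'$ with $\det Q < a' < a$. By continuity of $\det$ on $\rmM_{k+}$, there is an open neighbourhood $O$ of $Q$ in $\rmM_{k+}$ on which $\det Q' < a'$; shrinking $O$ to a bounded set (intersect with an operator-norm ball around $Q$), its Lebesgue measure $\vol_{k^2}(O)$ is a finite constant depending only on $Q$ and $a$. Applying~\eqref{eq:types-master} and this pointwise bound yields
\[
\vol_{2kn}T(n,O) \;\le\; v(k,n)\cdot (a')^{n-k}\cdot \vol_{k^2}(O).
\]
Dividing by $v(n)^k$ and using $v(k,n)/v(n)^k = e^{o(n)}$, the right-hand side is at most $(a')^{n+o(n)}$, which is bounded by $a^{n+o(n)}$ for all $n$ large enough (since $a' < a$ gives $(a'/a)^n \to 0$ superpolynomially and thus is absorbed into the $e^{o(n)}$ prefactor).

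There is no real obstacle in this argument beyond bookkeeping: the essential substance lies in~\eqref{eq:types-master} (i.e.\ in Proposition~\ref{prop:int-form}) together with the asymptotic~\eqref{eq:ckn-asymp}. The only subtlety to watch is the boundary case $\det Q = 0$ in part (a), and the need to take the neighbourhood $O$ in part (b) bounded so that $\vol_{k^2}(O) < \infty$; both are minor.
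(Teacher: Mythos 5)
Your proposal is correct and follows essentially the same route as the paper's own proof: both apply Proposition~\ref{prop:int-form} to $1_{T(n,O)}$ to reduce the volume to $v(k,n)\int_{O}(\det Q')^{n-k}\,d\vol_{k^2}(Q')$, bound the integrand by continuity of $\det$ on a small neighbourhood, and absorb $v(k,n)/v(n)^k$ via~\eqref{eq:ckn-asymp}. The only cosmetic differences are that you treat the singular case of part (a) as vacuous explicitly and note the boundedness of $O$ in part (b), both of which the paper leaves implicit.
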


This is~\cite[Thm. 6.3]{APE4}.  Here we give an alternative deduction from Proposition~\ref{prop:int-form}.

\begin{proof}
	When $Q$ is nonsingular, we can prove both parts together rather directly from Proposition~\ref{prop:int-form}.  Indeed, in that case we can shrink $O$ if necessary to assume that it is contained in $\rmM^\circ_{k+}$, and then that proposition gives
	\begin{align*}
		\vol_{2kn}T(n,O) &= v(k,n)\int_O (\det Q)^{n-k}\int_{\bf{U}(k,n)} 1\ dm_{\bf{U}(k,n)}(V)\ d\vol_{k^2}(Q)\\
		&=v(k,n)\int_O (\det Q)^{n-k}\ d\vol_{k^2}(Q).
	\end{align*}
	If $\eps > 0$, then we may choose $O$ so small that every $Q' \in O$ satisfies
	\begin{equation}\label{eq:O-above-and-below}
		e^{-\eps} \det Q \le \det Q' \le e^\eps \det Q,
	\end{equation}
	at which point the above integral implies that
	\[(e^{-\eps}\det Q)^{n-k}\vol_{k^2}O \le \frac{\vol_{2kn}T(n,O)}{v(k,n)} \le (e^\eps \det Q)^{n-k}\vol_{k^2}O.\]
	This implies both the lower and upper bounds, because (i) $\eps$ is arbitrary, (ii) $k$ is fixed while $n\to\infty$, (iii) the asymptotic~\eqref{eq:ckn-asymp} lets us replace $v(k,n)$ with $v(n)^k$, and (iv) the lower bound for a smaller choice of $O$ implies it for a larger choice of $O$, so $\eps n$ can be improved to $o(n)$ in the lower bound.
	
	The proof of the upper bound when $Q$ is singular is analogous, except this time we choose $O$ so small that every $Q' \in O$ satisfies $\det Q' \le a$, rather than~\eqref{eq:O-above-and-below}.
\end{proof}

We can generalize Theorem~\ref{thm:types-1} by considering the combined Gram matrix of one fixed tuple together with another that may vary.  This leads to a method-of-types interpretation of the log-determinant entropy of a Schur complement, which can be regarded as a conditional log-determinant entropy according to Definition~\ref{dfn:cond-log-det-ent}.   Assume that $n\ge \ell + k$, let $e_1$, \dots, $e_n$ be a fixed orthonormal basis for $\bbC^{\oplus n}$, and let $E := [e_1,\dots,e_\ell]$. For any subset $O$ of $\rmM_{(k+\ell)+}$, let
\begin{equation}\label{eq:T'}
	T'(n,O) := \{X \in \rmM_{n,k}:\ [E,X]^\ast [E,X] \in O.\}
\end{equation}
Let
\[Q = \left[\begin{array}{cc}I_\ell & R\\ R^\ast & Q_{22}\end{array}\right].\]
be a fixed $(k+\ell)$-by-$(k+\ell)$ positive semidefinite matrix, and let $S$ be the Schur complement $Q_{22} - R^\ast R$.

\begin{thm}\label{thm:types-2}
	The following hold:
	\begin{itemize}
		\item[a.] (Lower bound) If $Q$ is nonsingular and $O$ is any neighbourhood of $Q$ in $\rmM_{(k+\ell)+}$, then
		\[\frac{\vol_{2kn}T'(n,O)}{v(n)^k} \ge (\det S)^{n - o(n)}.\]
		\item[b.] (Upper bound) For any $a > \det S$ there is a neighbourhood $O$ of $Q$ in $\rmM_{(k+\ell)+}$ such that
		\[\frac{\vol_{2kn}T'(n,O)}{v(n)^k} \le a^{n + o(n)}.\]
	\end{itemize}
\end{thm}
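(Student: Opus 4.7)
\textbf{Proof plan for Theorem~\ref{thm:types-2}.}
The plan is to reduce to Theorem~\ref{thm:types-1} by orthogonally decomposing each tuple $X$ according to the splitting $\bbC^{\oplus n} = \mathrm{ran}\,E \oplus \mathrm{ran}\,\tilde E$, where $\tilde E := [e_{\ell+1},\dots,e_n]$. Each $X \in \rmM_{n,k}$ corresponds uniquely to a pair $(Y_1,Y_2) \in \rmM_{\ell,k}\times \rmM_{n-\ell,k}$ with $Y_1 = E^\ast X$ and $Y_2 = \tilde E^\ast X$; under this identification the Lebesgue measure splits as $d\vol_{2\ell k}(Y_1)\, d\vol_{2(n-\ell)k}(Y_2)$, and
\[
	[E,X]^\ast[E,X] = \begin{bmatrix} I_\ell & Y_1 \\ Y_1^\ast & Y_1^\ast Y_1 + Y_2^\ast Y_2 \end{bmatrix}.
\]
Thus the condition $[E,X]^\ast[E,X]\in O$ becomes a joint condition on $Y_1$ (to lie near $R$) and on the sum $Y_1^\ast Y_1 + Y_2^\ast Y_2$ (to lie near $Q_{22}$); since $Y_1 \mapsto Y_1^\ast Y_1$ is continuous, for $Y_1$ near $R$ this forces $Y_2^\ast Y_2$ to lie near $Q_{22} - R^\ast R = S$.

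For part~(a), I would fix a small product neighbourhood of $(R,S)$ inside $O$, i.e.\ an open set $U \subset \rmM_{\ell,k}$ around $R$ and an open $W \subset \rmM_{k+}$ around $S$, such that whenever $Y_1 \in U$ and $Y_2^\ast Y_2 \in W$ the full matrix above lies in $O$. Fubini then gives
\[
	\vol_{2kn}T'(n,O) \;\ge\; \vol_{2\ell k}(U) \cdot \inf_{Y_1 \in U} \vol_{2(n-\ell)k}\{Y_2 : Y_2^\ast Y_2 \in W\},
\]
and Theorem~\ref{thm:types-1}(a), applied in ambient dimension $n-\ell$ around the nonsingular matrix $S$ (nonsingular because $Q$ is), yields a lower bound $(\det S)^{n-\ell-o(n)} v(n-\ell)^k$ for the inner volume. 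Using $v(n-\ell)^k = v(n)^k\cdot e^{o(n)}$ (immediate from $v(d) = \pi^d/d!$ and $\ell$ fixed) and absorbing the constant $\vol_{2\ell k}(U)$ into the $o(n)$ error gives the claimed lower bound.

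For part~(b), given $a > \det S$, choose $a' \in (\det S,a)$. Use Theorem~\ref{thm:types-1}(b) to produce a neighbourhood $W$ of $S$ and a dimension $n_0$ such that $\vol_{2(n-\ell)k}\{Y_2 : Y_2^\ast Y_2 \in W\}/v(n-\ell)^k \le (a')^{n+o(n)}$ for $n\ge n_0$; then pick a bounded neighbourhood $U$ of $R$ small enough that whenever $Y_1 \in U$ and $[E,EY_1+\tilde E Y_2]^\ast[E,EY_1+\tilde E Y_2]$ belongs to a sufficiently small $O$, one has $Y_2^\ast Y_2 \in W$. Fubini then gives
\[
	\vol_{2kn}T'(n,O) \;\le\; \vol_{2\ell k}(U) \cdot \sup_{Y_1}\vol_{2(n-\ell)k}\{Y_2 : Y_2^\ast Y_2 \in W\},
\]
and the same asymptotic $v(n-\ell)^k = v(n)^k e^{o(n)}$ finishes the proof, after replacing $a'$ by $a$ to absorb the constant $\vol_{2\ell k}(U)$.

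The main obstacle is purely bookkeeping: one must choose the neighbourhoods in $\rmM_{(k+\ell)+}$ carefully enough to produce clean product-type conditions on $(Y_1,Y_2)$, so that Theorem~\ref{thm:types-1} applies uniformly in $Y_1$. The nonsingularity assumption in part~(a) enters only to ensure $\det S > 0$ and to make $S \in \rmM^\circ_{k+}$, so that the inner Theorem~\ref{thm:types-1}(a) lower bound is non-vacuous; part~(b) needs no such hypothesis because Theorem~\ref{thm:types-1}(b) is valid for any $Q$ positive semidefinite.
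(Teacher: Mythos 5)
Your proposal is correct and follows essentially the same route as the paper: split $X$ into $E^\ast X$ and its component in $(\mathrm{ran}\,E)^\perp$, observe that the constraint on the combined Gram matrix becomes a condition on $E^\ast X$ near $R$ together with a condition on the Gram matrix of the complementary component near the Schur complement $S$, and then invoke Theorem~\ref{thm:types-1} in dimension $n-\ell$ together with $v(n-\ell)^k = v(n)^k e^{o(n)}$. The only (cosmetic) difference is that the paper chooses a neighbourhood base of $Q$ parametrized directly by $(Q_{11}', R', Q_{22}'-(R')^\ast R')$ so that $T'(n,O)$ is exactly a Cartesian product, whereas you sandwich $T'(n,O)$ between product sets by separate inclusion arguments for the two bounds.
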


This reduces to Theorem~\ref{thm:types-1} when $\ell = 0$.  We prove it in general by a reduction to that theorem.

\begin{proof}
	Denote a typical element of $\rmM_{(k+\ell)+}$ by
	\begin{equation}\label{eq:Q'blocks}
		Q' = \left[\begin{array}{cc}Q_{11}' & R'\\ (R')^\ast & Q'_{22}\end{array}\right].
	\end{equation}
	In this space, $Q$ has a neighbourhood base consisting of the sets of the form
	\[O = \{Q':\ Q_{11}' \in O_{11},\ R' \in O_{12},\ Q'_{22} - (R')^\ast R' \in O_{22}\}\]
	as $O_{11}$, $O_{12}$ and $Q_{22}$ range over neighbourhoods of $I_\ell$ in $\rmM_{\ell+}$, of $R$ in $\rmM_{\ell,k}$, and of $S$ in $\rmM_{k+}$ respectively.  It therefore suffices to prove parts (a) and (b) using sets of this form.  However, for such a set $O$, we have
	\[T'(n,O) = \{X \in\rmM_{n,k}: E^\ast X \in O_{12} \ \hbox{and}\ (PX)^\ast(PX) \in O_{22}\},\]
	where $P$ is the orthogonal projection of $\bbC^{\oplus n}$ onto the last $n-k$ coordinates, regarded as spanning a copy of $\bbC^{\oplus (n-k)}$.  As a result, $T'(n,O)$ has the same volume as the Cartesian product
	\[O_{12} \times T(n-\ell,O_{22}) \subset \rmM_{\ell,k} \times \rmM_{(n-\ell),k},\]
	where $T(n-\ell,O_{22})$ is defined as in~\eqref{eq:TnO}. The resulting factor of $\vol_{2k\ell} O_{12}$ does not depend on $n$, and the factor of $\vol_{2k(n-\ell)}T(n-\ell,O_{22})$ is governed by Theorem~\ref{thm:types-1}. This completes the proof when combined with~\eqref{eq:ckn-asymp}, which now gives the asymptotic
	\[v(k,n-\ell) = e^{o(n)}v(n)^k \qquad \hbox{as}\ n\to\infty\ \hbox{for fixed}\ k\ \hbox{and}\ \ell.\]
\end{proof}

In the rest of this section, we show how Theorems~\ref{thm:types-1} and~\ref{thm:types-2} imply some related large deviations principles for certain random Gram matrices.  %Our most general result of this kind is Corollary~\ref{cor:matrix-LDP2}, but we need a few steps before we reach that.

Let $k$, $\ell$ and $n$ be positive integers with $n\ge k+\ell$.  In $\bbC^{\oplus n}$, let $e_1$, \dots, $e_n$ be the standard basis, and let $E := [e_1,\dots,e_\ell]$. In addition, let $V$ be a random element of $\rmU(k,n)$ with distribution $m_{\bf{U}(k,n)}$.  The Gram matrices of $E$ and $V$ are $I_\ell$ and $I_k$, respectively.  The matrix of inner products $E^\ast V$ is a random element of $\Xi(k,\ell)$: it is simply given by the first $\ell$ rows of $V$.  We denote its distribution by
\begin{equation}\label{eq:projected-contraction-dist}
	\s_{n,\ell,k}A := m_{\rmU(k,n)}\{V:\ E^\ast V \in A\} \qquad (A \subset \Xi(k,\ell),\ A\ \hbox{Borel}).
\end{equation}

\begin{ex}\label{ex:k=1}
	Suppose that $k = 1$, so that $V$ is given by a single random vector $v$ distributed uniformly over the sphere $\rmS^{2n-1}$, and $E^\ast v$ is the vector of its first $\ell$ coordinates.  When $\ell=n$, $\s_{n,\ell,1}$ is simply the usual normalized surface measure of $\rmS^{2n-1}$. When $\ell < n$, it is given by
	\[\frac{(n-1)!}{\pi^\ell(n-\ell-1)!}\cdot (1 - |y|^2)^{n-\ell-1}\cdot 1_{\{|y| \le 1\}}\cdot d\vol_{2\ell}(y) \qquad (y \in \bbC^{\oplus \ell}).\]
	See~\cite[Sec. 1.4]{RudinFTUB}, for example.
	
	One can also derive an explicit formula for $\s_{n,\ell,k}$ when $\ell$ and $k$ are both at least $2$, but it is more complicated, and we avoid using it in the sequel. \qed
\end{ex}

\begin{thm}\label{thm:matrix-LDP1}
	For fixed $k$ and $\ell$, the sequence of distributions $(\s_{n,\ell,k})_{n\ge 1}$ obeys the large deviations principle on $\Xi(k,\ell)$ with speed $n$ and rate function
\[-\log\det(I_\ell - C^\ast C) \qquad (C \in \Xi(k,\ell)).\]
\end{thm}

\begin{proof}
	Since $\Xi(k,\ell)$ is compact, exponential tightness holds automatically.  It remains to prove conditions (i) and (ii) from Definition~\ref{dfn:LDP}.
	
	Let $O_{11}$ be a neighbourhood of $I_\ell$ in $\rmM_{\ell+}$.  Let $C\in \Xi(k,\ell)$ and let $O_{12}$ be a neighbourhood of it in that space.  Finally, let $\eps > 0$, and let $O_{22}$ be a neighbourhood of $I_k$ in $\rmM_{k+}$ which is so small that
	\begin{equation}\label{eq:det-control}
		e^{-\eps} \le \Det\, Q'_{22} \le e^\eps \qquad \hbox{for every}\ Q'_{22} \in O_{22}.
	\end{equation}
	(so, in particular, $O_{22} \subset \rmM^\circ_{k+}$).
	
	Write a typical element $Q'$ of $\rmM_{(k+\ell)+}$ in $2$-by-$2$ block form with $R'$ off the diagonal (see~\eqref{eq:Q'blocks}), and let
	\[O := \{Q' \in \rmM_{(k+\ell)+}:\ Q'_{11} \in O_{11},\ (Q_{11}')^{-1/2}R'(Q_{22}')^{-1/2} \in O_{12},\ Q_{22}' \in O_{22}\}.\]
	As we vary $O_{11}$, $O_{12}$ and $O_{22}$, such sets form a neighbourhood base in $\rmM_{(k+\ell)+}$ around the element
	\[Q := \left[\begin{array}{cc} I_\ell & C\\ C^\ast & I_k\end{array}\right].\]
	Let $S := I_\ell - C^\ast C$, the Schur complement of the $(1,1)$ block within $Q$.
	
	Define $T'(n,O)$ from $O$ as in~\eqref{eq:T'}. Applying Proposition~\ref{prop:int-form} and simplifying gives
	\[\frac{\vol_{2kn}T'(n,O)}{v(k,n)} = \s_{n,\ell,k}O_{12}\cdot \int_{O_{22}}(\Det\, Q'_{22})^{n-k}\ d\vol_{k^2}(Q'_{22}).\]
	Combined with~\eqref{eq:det-control}, this re-arranges to give
	\begin{equation}\label{eq:vol-and-sigma}
		e^{-\eps(n-k)}\frac{\vol_{2kn}T'(n,O)}{v(k,n)\vol_{k^2}O_{22}} \le \s_{n,\ell,k}O_{12} \le e^{\eps(n-k)}\frac{\vol_{2kn}T'(n,O)}{v(k,n)\vol_{k^2}O_{22}}.
	\end{equation}
	
	To prove the probability lower bound, note that for any neighbourhood $O_{12}$ and any $\eps > 0$ we may choose $O_{22}$ so that~\eqref{eq:det-control} holds, and then the left-hand side of~\eqref{eq:vol-and-sigma} is estimated by Theorem~\ref{thm:types-2}(a) to give
	\[\s_{n,\ell,k}O_{12} \ge (\det S)^ne^{- \eps}e^{- o(n)}.\]
	On the other hand, if $a > \det S$, then we may choose $O_{11}$, $O_{12}$ and $Q_{22}$ small enough that~\eqref{eq:det-control} holds, and now the right-hand side of~\eqref{eq:vol-and-sigma} and Theorem~\ref{thm:types-2}(b) give
	\[\s_{n,\ell,k}O_{12} \le a^ne^{\eps n}e^{o(n)}.\]
	By the arbitrariness of $\eps$ and $a$, this completes the proof.
\end{proof}

\begin{ex}\label{eq:k=1II}
	When $k = 1$ as in Example~\ref{ex:k=1}, so each $V$ is given by a single random vector $v$, then Theorem~\ref{thm:matrix-LDP1} asserts that the random $k$-dimensional projection $E^\ast v$ obeys a large deviations principle in the closed unit ball $\ol{B}$ of $\bbC^k$ with rate function
	\begin{equation}\label{eq:ball-LDP}
	-\log (1 - |y|^2) \qquad (y \in \ol{B}).
		\end{equation}
	This classical fact can also be read off directly from the formula in Example~\eqref{ex:k=1}.
	
	In this example, notice that $v$ is a uniform random element of the sphere whose \emph{real} dimension is $2n-1$.  This might make it more natural to consider the large deviations rate function for speed $2n$ rather than $n$.  This would be compensated by changing the formula~\eqref{eq:ball-LDP} into
	\[-\frac{1}{2}\log (1 - |y|^2) = -\log \sqrt{1 - |y|^2}.\]
However, by convention we always take the complex dimension $n$ as the speed in this paper.  This is also why the general rate function in Theorem~\ref{thm:matrix-LDP1} involves no square root.  Analogous remarks apply to Theorem~\ref{mainthm:LDP} in full, and to its 1-dimensional predecessor~\cite{GamNagRou16} (see also~\cite[Thm. 4.1]{BreSimZei18}), which is phrased with speed $n$ and no square roots. \qed
\end{ex}

%\begin{rmk}
%	Our proof of Theorem~\ref{thm:matrix-LDP1} is arguably a trick: simple comparison allow us to replace $\s_{n,k,\ell}$ with volumes in the necessary estimates, and these are controlled by Theorem~\ref{thm:types-1} and another change-of-variables.  Theorem~\ref{thm:matrix-LDP1} does have analogs in large deviations theory for discrete random variables [[ REALLY?  WHERE?]], but once again the proof method above depends on heavily on the special linear of the present setting.
%	
%	A proof of Theorem~\ref{thm:matrix-LDP1} that more closely resembles the discrete setting could be accomplished by iteratively performing the explicit calculations mentioned in Example~\ref{eq:l=1II} for the case $k=1$, and then reaching larger values of $k$ by using this case inductively.  This method requires more exact calculations and is more involved overall. \fin
%\end{rmk}

The next property of our measures on $\Xi(k,\ell)$ is also needed later.

\begin{lem}\label{lem:a-s-strict}
	If $n \ge k+\ell$ then
	\[\s_{n,\ell,k}(\Xi^\circ(k,\ell)) = 1.\]
\end{lem}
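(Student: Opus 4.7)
The plan is to observe that membership in $\Xi^\circ(k,\ell)$ is detected by a polynomial non-vanishing condition on the Stiefel manifold, and then appeal to the standard fact that the zero set of a non-trivial real-analytic function has measure zero with respect to any smooth measure. More explicitly, $E^\ast V$ is a strict contraction if and only if
\[
f(V) := \Det\big(I_k - V^\ast EE^\ast V\big) > 0,
\]
since for any $C \in \Xi(k,\ell)$ we have $C \in \Xi^\circ(k,\ell)$ precisely when $I_k - C^\ast C$ is positive definite, i.e.\ nonsingular. So the event $\{E^\ast V \notin \Xi^\circ(k,\ell)\}$ coincides with the event $\{f(V) = 0\}$, and the lemma reduces to showing that this event has Haar measure zero.

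The function $f$ is a polynomial in the real and imaginary parts of the entries of $V$, hence restricts to a real-analytic function on the real-analytic manifold $\rm{U}(k,n) = \rm{U}(n)/\rm{U}(n-k)$. This manifold is connected (as a homogeneous space of the connected group $\rm{U}(n)$), and Haar measure $m_{\rm{U}(k,n)}$ is a smooth volume on it. A standard result says that a real-analytic function on a connected real-analytic manifold is either identically zero or its zero set has measure zero with respect to every smooth measure. So it remains to exhibit a single point of $\rm{U}(k,n)$ where $f$ does not vanish.

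The hypothesis $n \ge k+\ell$ provides exactly this: take $V_0 := [e_{\ell+1},\dots,e_{\ell+k}]$, which is an orthonormal $k$-tuple in $\bbC^n$ because the indices are distinct and lie in $\{1,\dots,n\}$. Since each column of $V_0$ is orthogonal to every column of $E = [e_1,\dots,e_\ell]$, we have $E^\ast V_0 = 0$, and hence $f(V_0) = \Det I_k = 1 \ne 0$. Therefore $f$ is not identically zero on $\rm{U}(k,n)$, so $\{f = 0\}$ has Haar measure zero, and pushing forward through $V \mapsto E^\ast V$ gives $\s_{n,\ell,k}(\Xi(k,\ell)\setminus \Xi^\circ(k,\ell)) = 0$.

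There is no real obstacle here; the only delicate point worth flagging is justifying the standard real-analytic zero-set fact in our precise setting. If one prefers not to invoke this as a black box, an alternative is to read off an explicit density for $\s_{n,\ell,k}$ on $\Xi(k,\ell)$ from Proposition~\ref{prop:int-form} (exactly as in the proof of Theorem~\ref{thm:matrix-LDP1}), which shows that $\s_{n,\ell,k}$ is absolutely continuous with respect to Lebesgue measure on the $\ell$-by-$k$ complex matrices. Since $\Xi(k,\ell) \setminus \Xi^\circ(k,\ell)$ is the proper real-algebraic subvariety cut out by the single polynomial equation $\Det(I_k - C^\ast C) = 0$, it is Lebesgue-null, and the conclusion follows again.
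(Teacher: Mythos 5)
Your proof is correct and takes essentially the same route as the paper: the paper also identifies the singular event as the zero set of a determinant polynomial (there written as $\det\left[\begin{smallmatrix} I_k & X^\ast UY\\ Y^\ast U^\ast X & I_\ell\end{smallmatrix}\right]$, which equals your $f$ via a Schur complement), shows it is not identically zero because $n\ge k+\ell$ allows the two ranges to be placed in general position, and concludes that the zero set is Haar-null. The only cosmetic difference is that the paper randomizes $U\in\rm{U}(n)$ while you work directly on the Stiefel manifold $\rm{U}(k,n)$; both rest on the same real-analytic zero-set fact.
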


\begin{proof}
	If we fix $X \in \bf{U}(k,n)$ and $Y \in \bf{U}(\ell,n)$, then it is equivalent to show that
	\[m_{\rmU(n)}\{U:\ \|X^\ast U Y\| < 1\} = 1.\]
	The complement of this event is the same as the event that the concatenation $[X,UY]$ is not an injection, or equivalently the event that
	\[\det \left[\begin{array}{cc}I_k & X^\ast UY\\ Y^\ast U^\ast X & I_\ell \end{array}\right] = 0.\]
	Regarded as a function of $U$, this determinant is a polynomial in its entries.  It does not vanish identically, because when $n \ge k +\ell$ we can certainly make some choice of $U$ so that the ranges of $V$ and $UY$ are disjoint subspaces.  Therefore the zero set of this polynomial has zero measure.
\end{proof}

Because of Theorem~\ref{thm:matrix-LDP1} and Lemma~\ref{lem:a-s-strict}, Lemma~\ref{lem:LDP-open-subset} lets us transfer our large deviations principle between $\Xi(k,\ell)$ and $\Xi^\circ(k,\ell)$.

\begin{cor}\label{cor:matrix-LDP1}
	Let $\s^\circ_{n,\ell,k}$ be a Borel probability measure on $\Xi^\circ(k,\ell)$ which agrees with the restriction of $\s_{n,\ell,k}$ whenever $n\ge k+\ell$.  For fixed $k$ and $\ell$, the sequence of distributions $(\s^\circ_{n,\ell,k})_{n\ge 1}$ obeys the LDP on $\Xi^\circ(k,\ell)$ with rate function
	\[-\log\det(I_\ell - C^\ast C) \qquad (C \in \Xi^\circ(k,\ell)).\]
\end{cor}

\begin{proof}
	Theorem~\ref{thm:matrix-LDP1} gives the analogous conclusion for $\Xi(k,\ell)$ rather than $\Xi^\circ(k,\ell)$.  To apply Lemma~\ref{lem:LDP-open-subset}, it remains to observe that $-\log \det(I_\ell - C^\ast C) = \infty$ whenever $C \in \Xi(k,\ell)\setminus \Xi^\circ(k,\ell)$. Indeed, if $C \in \Xi(k,\ell)$ is not a strict contraction, then $I_\ell - C^\ast C$ has a nontrivial kernel, in which case $\det(I_\ell - C^\ast C) = 0$.
\end{proof}

Later we need to apply Theorem~\ref{thm:matrix-LDP1} in a setting of the kind studied in Section~\ref{sec:three-block-Gram}. We next explain how the distributions $\s_{n,\ell,k}$ arise in that setting.  Suppose that $n\ge k+\ell+m$, and as in~\eqref{eq:full-tuple} fix three tuples
\[V = [v_1,\dots,v_k],\quad U = [u_1,\dots,u_\ell], \quad \hbox{and} \quad X_0 = [x_1,\dots,x_m].\]
Let $M$ be $\rm{ran}\,U$, let $G$ be the compact subgroup of $\rmU(n)$ that fixes $M$, let $W$ be a random element of $G$ with distribution $m_G$, and let $X = WX_0$.

Because $W$ is unitary and it fixes $M$, the combined tuple $[U,X] = [U,WX_0]$ always has the same joint Gram matrix as $[WU,WX_0]$, and hence the same as $[U,X_0]$.  Intuitively, $X$ is a tuple drawn `uniformly at random' conditionally on having this joint Gram matrix with $U$.  As a result, the overall joint Gram matrix
\[Q := \left[\begin{array}{ccc} Q_{11} & Q_{12} & R\\ Q_{12}^\ast & Q_{22} & Q_{23} \\ R^\ast & Q_{23}^\ast & I_m\end{array}\right] := \left[\begin{array}{ccc}V^\ast V & V^\ast U & V^\ast X\\ U^\ast V & U^\ast U & U^\ast X \\ X^\ast V & X^\ast U & X^\ast X\end{array}\right] \]
is random only in the blocks $R$ and $R^\ast$.  Thus, the partial matrix $Q^? := \pi^?(Q)$ is deterministic, fixed by our initial choice of $U$, $V$ and $X_0$, and $Q$ is a random element of $\Delta(Q^?)$ (see Section~\ref{sec:three-block-Gram} for notation).

We now add the assumption that $Q^?$ is partially nonsingular.  In this case, the relation~\eqref{eq:param} can be inverted to give a random contraction $C\in \Xi(k,m)$ corresponding to the random completion $Q \in \Delta(Q^?)$.  Having fixed $Q^?$, each of $Q$ and $C$ determines the other uniquely and continuously, by Corollary~\ref{cor:three-block-completion}.

\begin{prop}\label{prop:law-of-contraction}
	Under the assumptions above, $C$ has distribution $\s_{n-\ell,m,k}$.
\end{prop}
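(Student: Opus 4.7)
My plan is to identify the random contraction $C$ explicitly in terms of the tuples $V$, $U$, $X_0$ and $W$, and then read off its distribution by unitary invariance of Haar measure on $G$.

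The first step is to recall from Proposition~\ref{prop:three-block-completion} and the discussion of Section~\ref{sec:three-block-Gram} how the contraction $C \in \Xi(k,m)$ parametrizes the completion $Q$. Writing $S_V := Q_{11} - Q_{12} Q_{22}^{-1} Q_{12}^{\ast}$ and $S_X := I_m - Q_{23}^{\ast} Q_{22}^{-1} Q_{23}$ for the two Schur complements (both positive and nonsingular by partial nonsingularity of $Q^?$), the parametrization reads
\[
R \;=\; Q_{12} Q_{22}^{-1} Q_{23} + S_V^{1/2} \, C \, S_X^{1/2},
\]
so that
\[
C \;=\; S_V^{-1/2}\bigl(R - Q_{12} Q_{22}^{-1} Q_{23}\bigr) S_X^{-1/2}.
\]
A direct computation using $R = V^\ast X$, $Q_{12} = V^\ast U$, $Q_{22} = U^\ast U$, $Q_{23} = U^\ast X$ shows that $R - Q_{12} Q_{22}^{-1} Q_{23} = V^\ast(I - P_M)X = (P_M^\perp V)^\ast (P_M^\perp X)$.

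Next I would orthonormalize. Set $V' := (P_M^\perp V) S_V^{-1/2}$ and $X_0' := (P_M^\perp X_0) S_X^{-1/2}$. The Gram matrices of $P_M^\perp V$ and $P_M^\perp X_0$ are precisely $S_V$ and $S_X$, so $V'$ and $X_0'$ are fixed orthonormal tuples in $M^\perp$ of sizes $k$ and $m$ respectively, i.e.\ elements of $\rm{U}(k,M^\perp)$ and $\rm{U}(m,M^\perp)$. Because $W$ fixes $M$ pointwise it also preserves $M^\perp$, and $X = WX_0$ gives $P_M^\perp X = W P_M^\perp X_0 = W X_0' S_X^{1/2}$. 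Substituting back yields the clean identity
\[
C \;=\; (V')^\ast W X_0',
\]
where $V'$ and $X_0'$ are deterministic orthonormal tuples in $M^\perp$ and $W$ is Haar-distributed on $G$.

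Finally, I would invoke unitary invariance. The group $G$ is canonically isomorphic (via the choice of any orthonormal basis of $M^\perp$) to $\rm{U}(n-\ell)$, and under this isomorphism $V'$ and $X_0'$ become fixed orthonormal tuples in $\bbC^{n-\ell}$. By left-invariance of $m_G$ we may rotate $V'$ to the first $k$ standard basis vectors, and by right-invariance we may rotate $X_0'$ to the first $m$ standard basis vectors; both moves leave the distribution of $W$ unchanged. Under these choices $(V')^\ast W X_0'$ is exactly the upper-left $m\times k$ block of a Haar-random element of $\rm{U}(n-\ell)$ restricted to $k$ orthonormal columns, which by definition~\eqref{eq:projected-contraction-dist} has distribution $\sigma_{n-\ell,m,k}$.

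The only delicate point I anticipate is keeping the Schur-complement normalizations consistent with the precise normalization convention adopted in Proposition~\ref{prop:three-block-completion} (i.e.\ which square root appears on which side of $C$); once that bookkeeping is settled the rest is a short linear-algebra identity plus a one-line invariance argument.
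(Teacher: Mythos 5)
Your proposal is correct and follows essentially the same route as the paper's own proof: split $V^\ast X$ into $V^\ast P_M X_0 + (P_M^\perp V)^\ast W(P_M^\perp X_0)$, recognize the Schur complements $S_V$ and $S_X$ as the Gram matrices of the projected tuples so that $C=(V')^\ast W X_0'$ with $V'$, $X_0'$ fixed orthonormal tuples in $M^\perp$, and conclude by Haar invariance after identifying $G\cong\rm{U}(n-\ell)$. The only difference is cosmetic — you spell out the left/right-invariance reduction to standard coordinate tuples, which the paper compresses into "a final unitary identification of $M^\perp$ with $\bbC^{\oplus(n-\ell)}$" — and the row/column bookkeeping you flag as delicate is exactly the $\Xi(m,k)$ versus $\Xi(k,m)$ convention the paper itself treats loosely.
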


\begin{proof}
	Let $P$ be the orthogonal projection onto $M$.  Since every $W \in G$ satisfies $PW = WP = P$, the relevant block of $Q$ is given by
	\begin{align*}
		V^\ast X &= V^\ast WX_0\\
		&= V^\ast P WX_0 + V^\ast P^\perp WX_0 \\
		&= V^\ast P X_0 + V^\ast P^\perp WX_0\\
		&= V^\ast P X_0 + (P^\perp V)^\ast W (P^\perp X_0).
	\end{align*}
	
	The partial Gram matrix $Q^?$ determines the matrices $S_{11}$ and $S_{33}$ in~\eqref{eq:Schurs}. Using these and the terms that appear above, the random contraction $C$ is given by
	\[C = S_{11}^{-1/2}(P^\perp V)^\ast W (P^\perp X_0)S_{33}^{-1/2} = (P^\perp V S_{11}^{-1/2})^\ast W (P^\perp X_0S_{33}^{-1/2}). \]
	
	By~\eqref{eq:proj-Gram}, $S_{11}$ and $S_{33}$ are the Gram matrices of the linearly independent tuples $P^\perp V$ and $P^\perp X_0$, and therefore the tuples $P^\perp VS_{11}^{-1/2}$ and $P^\perp X_0S_{33}^{-1/2}$ are unitary maps from $\bbC^{\oplus k}$ into $M^\perp$ and from $\bbC^{\oplus m}$ into $M^\perp$, respectively.  Since $W$ corresponds to a uniformly random element of $G$, making a final unitary identification of $M^\perp$ with $\bbC^{\oplus (n-\ell)}$ completes the proof.
\end{proof}

\section{Measure concentration}

The groups $\bf{U}(n)$ are examples of the measure concentration phenomenon as $n\to\infty$, and so are fixed Cartesian powers $\bf{U}(n)^r$.  To formulate the result precisely, define the Hilbert--Schmidt metric on $\bf{U}(n)^r$ by
\[d_{\rm{HS}}\big((U_{1,1},\dots,U_{1,r}),(U_{2,1},\dots,U_{2,r})\big) := \sqrt{\sum_{i=1}^r \|U_{1,i}-U_{2,i}\|_{\rm{HS}}^2}.\]

\begin{thm}\label{thm:Un-conc}
If $F:\bf{U}(n)^r\to \bbR$ is $L$-Lipschitz for $d_{\rm{HS}}$, then
	\[m_{\bf{U}(n)}^{\times r}\Big\{\Big|F - \int F\,dm_{\bf{U}(n)}^{\times r}\Big| \ge t\Big\} \le 2e^{-nt^2/24L^2}.\] \qed
	\end{thm}

This generalizes L\'evy's classical concentration inequality for the sphere $\rmS^{2n-1}$, which played a role in the foundations of AP entropy via~\cite[Lem. 6.1]{APE4}.

The analog of Theorem~\ref{thm:Un-conc} for $\bf{SU}(n)$ can be deduced from the lower bound on its Ricci curvature as a Riemannian manifold.  Starting from this curvature lower bound, one can either use the volume-comparison estimates of Gromov to show that these inequalities are implied by their counterparts on the sphere~\cite{GroMil83}, or run the semigroup argument of Bakry and Emery~\cite{BakEme85,Led92} to prove a logarithmic Sobolev inequality.  Theorem~\ref{thm:Un-conc} for $\bf{U}(n)$ can be derived from the result for $\bf{SU}(n)$ by a suitable coupling of the Haar measures on these groups.  See~\cite[Thm. 15]{MecMec13} for a proof of this form that gives the optimal constant.

%[[ RECORD THE RESULT FOR SYMMETRIC GROUPS AS WELL, SO CAN CITE IT FOR THE ASYMPTOTIC FREENESS OF RANDOM PERMUTATION MATRICES WITH HIGH PROBABILITY -- OR JUST CITE THAT DIRECTLY ITSELF FROM SOMEWHERE, MAYBE EASIER? ]]

\subsection*{\emph{Notes and further references}}

See, for instance,~\cite[Exer. 2.1.18 and Prop. 4.1.3]{AndGuiZei--book} (as well as the bibliographical notes to both chapters),~\cite[Sec. 6.2]{WaiHDS}, or~\cite[pp129--131 and Lem. 4.4.7]{HiaPetSL} for several variants and enhancements of Proposition~\ref{prop:int-form} relating to Wishart distributions.

An overview of concentration inequalities and methods for proving them is given in several books, including~\cite{Led95}.  See~\cite[Sec. 5.3]{MecRMTCCG} or~\cite[Subs. 4.4.2]{AndGuiZei--book} for more specialized accounts of concentration for compact matrix groups with thorough selections of references.

\chapter{C*-algebras and their representations}\label{chap:rep-th-op-alg}

We generally assume familiarity with basic functional analysis and operator algebra theory, but this chapter recalls a number of standard results that we need explicitly later.  For definiteness, I cite the standard texts by Dixmier~\cite{Dix--Cstar,Dix--vN}, Folland~\cite{FolAHA}, Paulsen~\cite{PauCB}, or Mackey~\cite{Mac76} wherever possible.

In this book, $\A$ always denotes a \emph{separable} C$\sp\ast$-algebra.  Our main examples are group C$\sp*$-algebras $C^\ast\G$ of countable groups $\G$: see Section~\ref{sec:ctble-groups} for more about these.  In Part~\ref{part:free}, $\G$ is usually a finitely-generated free group.

We adhere rather strictly to the following convention:
\begin{itemize}
	\item C$^\ast$-algebras may exist \emph{in the abstract};
	\item a von Neumann algebra is a weak-operator closed $\ast$-subalgebra of $\B(H)$ for some particular Hilbert space $H$.
\end{itemize}

Several of the sections below end with a review of more advanced references for the interested reader.  Readers with experience in this area may prefer to skip this chapter and refer back to it.

\section{Representations, positive functionals, and completely positive maps}\label{sec:reps-and-maps}

In this book we always study separable representations, meaning that they act on separable complex Hilbert spaces.  We also always assume that $\A$ is \emph{unital} unless stated otherwise.  We usually denote a representation by a single letter such as $\pi$, and then write its Hilbert space as $H_\pi$ when it is needed.  Part~\ref{part:free} of the book is largely concerned with sequences of finite-dimensional representations together with their limits in various modes of convergence.  All such limits are still separable, so assuming separability from the outset does not entail any real loss of generality.

Once a particular representation $\pi$ is being considered, operator theory provides many auxiliary constructions of operators in $\B(H_\pi)$, for instance via the Borel functional calculus or symmetry considerations.  These often fall outside the operator-norm closure of $\pi(\A)$, which is a C$\sp*$-algebra, but within its bi-commutant $\pi(\A)''$, which agrees with the weak-operator closure of $\pi(\A)$ and is a von Neumann algebra.

If $\A$ is non-unital, or if $\A$ is unital but a representation $\pi$ of it is non-unital, then $\pi$ may be \textbf{degenerate} in the sense that
\begin{equation}\label{eq:non-deg-part}
\ol{\rm{span}}\{\pi(a)v:\ a \in \A,\ v \in H_\pi\}
\end{equation}
is a proper subspace of $H_\pi$.  Equivalently, this holds if $H_\pi$ itself is not cyclic for $\pi$.  Otherwise $\pi$ is called \textbf{non-degenerate}.

If $\pi$ is a representation of $\A$ and $M$ is a closed $\pi$-invariant subspace of $H_\pi$, then we write $\pi^M$ for the associated \textbf{subrepresentation}.  A representation $\pi$ is \textbf{irreducible} if it nonzero but its only closed invariant subspaces are $\{0\}$ and $H_\pi$.  If $\pi$ is irreducible then it is either $0$ or non-degenerate, since the closed span in~\eqref{eq:non-deg-part} is $\pi$-invariant.  We write $\hat{\A}$ for the space of equivalence classes of nonzero irreducible representations of $\A$, and call this the \textbf{spectrum} of $\A$: see~\cite[Chap. 3]{Dix--Cstar}.

We use $\oplus$ for general direct sums in the categories of Hilbert spaces or representations.  For a single representation $\pi$ and any $k \in \{1,2,\dots\} \cup \{\infty\}$, we write $\pi^{\oplus k}$ for the direct sum of $k$ copies of $\pi$, using the indexing set $\bbN$ when $k=\infty$.  We call this the \textbf{$k$-fold inflation} of $\pi$.

If $\pi$ is a representation of $\A$, then a subset $S$ of $H$ is \textbf{cyclic} for $\pi$ if it is not contained in any proper closed invariant subspace of $H_\pi$, or equivalently if
\[\ol{\sum_{v \in S}\pi(\A)v} = H_\pi.\]

If $\pi$ is a representation of $\A$ and $v \in H_\pi$, then the formula
\[\Phi^\pi_v(a) := \langle \pi(a)v,v\rangle \qquad (a \in \A)\]
defines a positive linear functional on $\A$.  All positive linear functionals arise this way because of the GNS construction.  We write $\A^\ast_+$ for the space of positive linear functionals on $\A$, and always endow it with the weak$^\ast$ topology inherited from $\A$.  These standard notions and their relation to representations can be found in~\cite[Chap. 2]{Dix--Cstar}.

More generally, let $v_1$, \dots, $v_k \in H_\pi$, and regard the tuple $V := [v_1,\dots,v_k]$ as a linear map from $\bbC^{\oplus k}$ to $H_\pi$.  Then the formula
\begin{equation}\label{eq:cp-assoc}
\Phi^\pi_V(a) := V^\ast \pi(a) V = [\langle \pi(a)v_j,v_i\rangle]_{i,j=1}^k
\end{equation}
defines a linear map from $\A$ to $\rmM_k$. Notice that the order of the indices in each matrix entry matches the convention for the Gram matrix of a tuple of vectors in~\eqref{eq:Gram}.  We sometimes write $\Phi^\pi_{v_1,\dots,v_k}$ instead of $\Phi^\pi_V$.

The map $\Phi^\pi_V$ lies in the class of \textbf{completely positive} maps. The basic theory of these can be found in~\cite[Chaps. 3--6]{PauCB}, for example.  This particular instance of them is discussed following~\cite[Thm. 4.1]{PauCB}.  We say that the completely positive map in~\eqref{eq:cp-assoc} is \textbf{associated to $\pi$ by $v_1$, \dots, $v_k$}. Alternatively, we adapt a term from information theory by calling $\Phi^\pi_V$ the \textbf{type} of the tuple $V$ in $\pi$ (compare with~\cite[Sec. 11.1]{CovTho06}, for example).  Much of our work later concerns classifying the tuples in a representation space according to their type.

In this work we do not need the definition of complete positivity directly, but only a few of its basic consequences.  First, unital completely positive maps are continuous~\cite[Prop. 3.6]{PauCB}.  Let $\B(\A,\rmM_k)$ be the space of all continuous linear maps from $\A$ to $\rmM_k$, endowed with the pointwise topology (equivalently, the weak$^\ast$ topology applied to each matrix entry), and let $\B(\A,\rmM_k)_+$ be the subset of all completely positive ones.  The definition of complete positivity consists of a family of closed linear inequalities, showing that $\B(\A,\rmM_k)_+$ is a closed cone in $\B(\A,\rmM_k)$ for the pointwise topology.  Finally, a generalization of the GNS construction shows that any $\phi \in \B(\A,\rmM_k)_+$ has the form in~\eqref{eq:cp-assoc} for some representation $\pi$ and tuple $v_1$, \dots, $v_k$. If this tuple is cyclic for $\pi$, then $\pi$ and the tuple are unique up to unitary equivalence; in this case we call them a \textbf{minimal dilation} of $\phi$, and often denote the representation by $\pi_\phi$.   This fact is a special case of Stinespring's theorem: see~\cite{Sti55} or~\cite[Thm. 4.1]{PauCB}.

A completely positive map $\phi:\A \to \rmM_k$ is called \textbf{normalized} if $\tr_k\phi(1) = 1$, and \textbf{unital} if $\phi(1) = I_k$.  We denote the sets of these by $\S_k(\A)$ and $\S_k^\rm{u}(\A)$, respectively.  In particular, $\S_1(\A)$ and $\S_1^\rm{u}(\A)$ are both just the usual state space of $\A$.  For each $k$, the set $\S_k(\A)$ is a compact and convex subset of $\B(\A,\rmM_k)$, and $\S_k^{\rm{u}}(\A)$ is a further compact convex subset of $\S_k(\A)$.

If $\phi \in\B(\A,\rmM_k)_+$ and $Q \in \rmM_k$, then the map defined by
	\begin{equation}\label{eq:psi-Q-phi}
\psi(a) := (Q^\rm{T})^\ast\phi(a)Q^\rm{T} \qquad (a \in \A)
	\end{equation}
is also completely positive.  This may be checked directly from the definition of complete positivity, or by observing the corresponding relationship between associating vectors.  If $\phi$ is associated to a representation $\pi$ by the tuple $v_1$, \dots, $v_k$, then the map $\psi$ in~\eqref{eq:psi-Q-phi} is associated to $\pi$ by the $k$-tuple defined by
\[y_i := \sum_j q_{ij}v_j \qquad (1\le i \le k).\]
Identifying such $k$-tuples with elements of $H_\pi\otimes \bbC^{\oplus k}$, and writing them as column vectors of height $k$ with entries in $H_\pi$, we can alternatively write this new tuple as
\begin{equation}\label{eq:pre-lin-maps-Q}
\left[\begin{array}{c}y_1\\ \vdots \\ y_k\end{array}\right] := (I_{H_\pi}\otimes Q) \left[\begin{array}{c}v_1\\ \vdots \\ v_k\end{array}\right].
\end{equation}
(We put transposes in~\eqref{eq:psi-Q-phi} to avoid them in~\eqref{eq:pre-lin-maps-Q}.) If $\phi(1)$ is non-singular, then one can often use~\eqref{eq:psi-Q-phi} with $Q = \phi(1)^{-1/2}$ to reduce results about $\phi$ to the unital case.

Many results about positive functionals generalize to matrix-valued completely positive maps fairly mechanically.  Sometimes the scalar-valued versions imply their matrix-valued counterparts via the following standard device.  Given a positive integer $k$, let $\rmM_k(\A)$ be the algebra of $k$-by-$k$ matrices over $\A$, made into a C*-algebra using any faithful representation of $\A$.  Let $\phi:\A\to\rmM_k$ be completely positive, and define a linear functional on $\rmM_k(\A)$ by the pairing
\begin{equation}\label{eq:pairing}
	\langle \phi,a\rangle := \frac{1}{k}\sum_{ij}\phi_{ij}(a_{ij}).
\end{equation}
This pairing defines a canonical isomorphism from $\B(\A,\rmM_k)$ to $\rmM_k(\A)^\ast$, and it identifies the closed cone $\B(\A,\rmM_k)_+$ with $\rmM_k(\A)^\ast_+$.  See~\cite[Thm. 6.1]{PauCB} or~\cite[Subs. 2.6]{APE4}.  The pairing~\ref{eq:pairing} leads to the identity
\begin{equation}\label{eq:norm-cts}
\|\langle \phi,\cdot\rangle\|_{\rmM_k(\A)^\ast} = \langle \phi,1\otimes I_k\rangle = \tr_k\phi(1),
\end{equation}
which is a consequence of positivity: see~\cite[Cor. 2.8]{PauCB}.

The weak$^\ast$ topology on $\B(\A,\rmM_k)$ is generally not metrizable, or even first countable.  However, since $\A$ is separable, the restriction of that topology is metrizable on any subset of $\B(\A,\rmM_k)_+$ that is bounded in the dual norm obtained from $\rmM_k(\A)_+^\ast$~\cite[Proposition V.5.1]{ConFA}.  Because of~\eqref{eq:norm-cts}, the subsets $\S_k(\A)$ and $\S_k^{\rm{u}}(\A)$ are examples.  These subsets are also weak$^\ast$-closed and convex, and hence compact by the Banach--Alaoglu theorem.  For the whole of $\B(\A,\rmM_k)_+$ we also have the following: see~\cite[Lem. 2.7]{APE4}.

\begin{lem}\label{lem:lcsc}
The restriction of the weak$^\ast$ topology to $\B(\A,\rmM_k)_+$ is locally compact and second countable. \qed
\end{lem}

%The next lemma lies between these cases.  It must surely be classical, but I have not found a suitable reference.
%
%\begin{lem}\label{lem:lcsc}
%The restriction of the weak$^\ast$ topology to $\B(\A,\rmM_k)_+$ is locally compact and second countable.
%\end{lem}
%
%\begin{proof}
%First, by considering pairing functionals on $\rmM_k(\A)$ instead of positive definite maps on $\A$, we may reduce to the case $k=1$.  Having done so, let
%\[U_r := \{\phi \in \A^\ast_+:\ \phi(1) < r\} \qquad (r > 0).\]
%Each of these sets is relatively weak$^\ast$-open in $\A^\ast_+$.  On the other hand, by~\eqref{eq:norm-cts}, the weak$^\ast$-closure $\ol{U_r}$ is equal to the intersection of $\A^\ast_+$ with a closed ball of radius $r$ in $\A^\ast$.  Therefore $\ol{U_r}$ is metrizable and also compact, the latter by the Banach--Alaoglu theorem, and hence it is second countable.  So $\A^\ast_+$ is covered by the sequence $U_1$, $U_2$, \dots of open subsets, each of which is precompact in $\A_+^\ast$ and second countable in the weak$^\ast$ topology.
%\end{proof}

%With the pairing isomorphism in hand, many standard results about positive functionals can be generalized to $\rmM_k$-valued completely positive maps simply by citing the scalar-valued results for $\rmM_k(\A)$ rather than $\A$.  A classic example is the implication from Glimm's lemma to its matrix-valued generalization: see the first lemma in~\cite[Section 2]{Arv77} or~\cite[Lemma 40.5]{ConOT}.  Some related examples are explained in~\cite[Section 4.3]{APE4}. In later sections we recall a few from that reference that we need directly in this book.

A linear functional $\tau$ on $\A$ is \textbf{tracial} if
\begin{equation}\label{eq:trace}
	\tau(ab) = \tau(ba) \qquad (a,b \in \A).
\end{equation}
%The study of these is motivated by two fundamental families of examples.  First, if $\pi$ is a $d$-dimensional representation with $d$ finite, then the normalized trace on $\B(\bbC^d)$ pulls back to the tracial state $\tr_d \circ \pi$ on $\A$.  Second, for group C$\sp*$-algebras, tracial states correspond to characters of the group.
If a tracial positive functional $\tau$ is associated to the representation $\l$ on $H$ by a vector $\xi$, then~\eqref{eq:trace} becomes an identity for $\xi$:
\begin{equation}\label{eq:trace-vec}
	\langle \l(a)\l(b)\xi,\xi\rangle = \langle \l(b) \l(a)\xi,\xi\rangle \qquad (a,b \in \A).
\end{equation}
In any representation, a vector satisfying~\eqref{eq:trace-vec} is called \textbf{tracial}.  This identity can also be re-arranged into the form
\[\langle \l(b)\xi,\l(a^\ast)\xi\rangle = \langle \l(a)\xi,\l(b^\ast)\xi\rangle \qquad (a,b \in \A).\]
In this form, we can take weak-operator limits in $a$ and then separately in $b$ to conclude that the positive functional
\begin{equation}\label{eq:tau-on-N}
\t{\tau}(A) := \langle A\xi,\xi\rangle
\end{equation}
is actually tracial on the whole von Neumann algebra $\N := \l(\A)''$.  %Clearly $\xi$ is still cyclic for $\N$. It follows that $\xi$ is also cyclic for $\N'$~\cite[Cor. I.6.1]{Dix--vN}, hence also separating for both $\N$ and $\N'$~\cite[Prop.  I.1.5]{Dix--vN}, and that $\N$ and $\N'$ are both finite von Neumann algebra~\cite[Prop. I.6.9(ii)]{Dix--vN}.

\section{Comparing representations or completely positive maps}\label{sec:compare}

If $\pi$ and $\rho$ are two representations of $\A$, then:
\begin{itemize}
	\item $\pi$ is \textbf{equivalent} to $\rho$, written $\pi \simeq \rho$, if there is a unitary operator from $H_\pi$ to $H_\rho$ that intertwines $\pi$ with $\rho$;
	\item $\pi$ is \textbf{contained} in $\rho$, written $\pi \lesssim \rho$, if $\pi$ is equivalent to a subrepresentation of $\rho$;
	\item $\pi$ and $\rho$ are \textbf{disjoint}, written $\pi \spoon \rho$, if no nontrivial subrepresentations of them are equivalent.
\end{itemize}

Another possible relation is that $\pi$ is contained in an inflation of $\rho$. If $H_\pi$ is separable, then we can take this to mean that $\pi \lesssim \rho^{\oplus \infty}$.

The next two propositions give fundamental ways to decompose representations.

\begin{prop}\label{prop:Leb-reps}
	If $\pi$ and $\rho$ are two representations of $\A$, then $\pi$ has a unique invariant subspace $M$ such that $\pi^M \lesssim \rho^{\oplus \infty}$ and $\pi^{M^\perp} \spoon \rho$. The orthogonal projection onto $M$ lies in the centre of $\pi(\A)''$. \qed
\end{prop}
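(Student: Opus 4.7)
The plan is to construct $M$ by maximality, to check disjointness and uniqueness by a standard absorption argument, and to establish centrality via polar decomposition of intertwiners. The proof essentially parallels the Lebesgue decomposition for representations, with the key observation being that ``contained in an inflation of $\rho$'' and ``disjoint from $\rho$'' are complementary in precisely the right sense.

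First I would consider the family $\F$ of closed $\pi$-invariant subspaces $N \subseteq H_\pi$ such that $\pi^N \lesssim \rho^{\oplus \infty}$. This family is nonempty (it contains $\{0\}$) and closed under arbitrary closed linear spans of its members, because a direct sum of representations each embedding into $\rho^{\oplus \infty}$ again embeds into $\rho^{\oplus \infty}$ (and separability of $H_\pi$ keeps everything manageable). So the closure of the sum of all members of $\F$ is itself a member, giving a largest element $M \in \F$. For this $M$, I must show $\pi^{M^\perp} \spoon \rho$: if not, there would be a nonzero invariant subspace $N \subseteq M^\perp$ with $\pi^N$ equivalent to a subrepresentation of $\rho$, hence $\pi^N \lesssim \rho^{\oplus \infty}$; then $M \oplus N \in \F$, contradicting maximality of $M$.

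For uniqueness, suppose $M'$ also has the two listed properties. Then $\pi^{M \cap (M')^\perp}$ is a subrepresentation of $\pi^M$ (so contained in $\rho^{\oplus \infty}$) and also a subrepresentation of $\pi^{(M')^\perp}$ (so disjoint from $\rho$). Anything simultaneously contained in $\rho^{\oplus \infty}$ and disjoint from $\rho$ must be zero, since a nonzero subrepresentation of $\rho^{\oplus\infty}$ certainly has nontrivial subrepresentations equivalent to subrepresentations of $\rho$. Hence $M \subseteq M'$, and symmetrically $M' \subseteq M$.

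The hard part, and the step I expect to need the most care, is showing that the projection $P$ onto $M$ lies in the centre of $\pi(\A)''$. Since $M$ and $M^\perp$ are both $\pi$-invariant, $P \in \pi(\A)'$ already; what remains is to show $P$ commutes with every $T \in \pi(\A)'$, i.e.\ that $T(M) \subseteq M$. For this, take any $T \in \pi(\A)'$ and form $T|_M : M \to H_\pi$, which intertwines $\pi^M$ with $\pi$. Its polar decomposition $T|_M = U \lvert T|_M \rvert$ yields a partial isometry $U$ that also intertwines (by uniqueness of polar decomposition, since $\lvert T|_M \rvert \in \pi^M(\A)'$), so $\pi^{\,\overline{T(M)}}$ is equivalent to a subrepresentation of $\pi^M$ and therefore lies in $\F$. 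By maximality, $\overline{T(M)} \subseteq M$. Applying the same reasoning to $T^\ast \in \pi(\A)'$ shows $T$ also preserves $M^\perp$, and hence $TP = PT$. So $P \in \pi(\A)' \cap \pi(\A)'' = Z(\pi(\A)'')$.
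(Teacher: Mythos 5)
The paper does not give its own proof of this proposition; it cites~\cite[Theorem 1.11]{Mac76} (and~\cite{Tak58}) and remarks that Mackey's argument for group representations carries over to C$^\ast$-algebras without change. Those references proceed via central carrier projections (Mackey) and via the universal representation (Takesaki), so your Zorn-type construction is a genuinely different, more elementary route, and it is essentially correct.

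Two small comments. First, the construction of $M$ needs slightly more than "the closure of the sum of all members of $\F$ is in $\F$." The clean way to say it: $\F$ is closed under joins of two members (via the polar-decomposition observation that a bounded intertwiner with dense range exhibits the target as a subrepresentation of the source), and closed under joins of increasing chains (because $\pi^{\overline{\bigcup N_i}} \simeq \pi^{N_1}\oplus\bigoplus_i \pi^{N_{i+1}\ominus N_i}\lesssim (\rho^{\oplus\infty})^{\oplus\infty}\simeq\rho^{\oplus\infty}$), so Zorn's lemma gives a maximal element, and closure under binary joins then forces that maximal element to be the largest element. Separability of $H_\pi$ is not actually needed once you argue this way.

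Second, you remark that centrality is "the hard part," but in fact the uniqueness you have already proved makes it nearly automatic, without any appeal to polar decomposition: if $T \in \pi(\A)'$ is unitary, then $TM$ is $\pi$-invariant with $\pi^{TM}\simeq\pi^M\lesssim\rho^{\oplus\infty}$ and $\pi^{(TM)^\perp}=\pi^{T(M^\perp)}\simeq\pi^{M^\perp}\spoon\rho$, so $TM = M$ by uniqueness, hence $TPT^\ast = P$. Since $\pi(\A)'$ is generated by its unitaries, $P$ commutes with $\pi(\A)'$, giving $P\in\pi(\A)'\cap\pi(\A)'' = Z(\pi(\A)'')$. Your polar-decomposition argument for the same step is also correct, but this is shorter and relies only on what you have already established.
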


For unitary representations of groups, this is~\cite[Thm. 1.11]{Mac76}; the proof given there handles general C$\sp\ast$-algebras without change.  This result also appeared independently in~\cite{Tak58} with a proof in terms of decompositions of the universal representation of $\A$.

\begin{prop}\label{prop:Leb-reps-2}
If $\pi$ and $\rho$ are two representations of $\A$, then they have decompositions $\pi \simeq \pi_1 \oplus \pi_2$ and $\rho \simeq \rho_1 \oplus \rho_2$ so that $\pi_1 \lesssim \rho_1$, $\pi_2 \gtrsim \rho_2$ and $\rho_1 \spoon \pi_2$. \qed
\end{prop}

See~\cite[Thm. 1.13]{Mac76}, where these decompositions are obtained by comparing against two other carefully chosen representations using Proposition~\ref{prop:Leb-reps}.  Alternatively Proposition~\ref{prop:Leb-reps-2} holds by comparing the corresponding projections in the von Neumann algebra $(\pi\oplus \rho)(\A)'$ via their relations to its centre (see~\cite[Thm. III.1.1]{Dix--vN}, for example). % OR~\cite[Thm. 47.8]{ConOT}

\begin{cor}\label{cor:Leb-reps}
Let $\pi$ and $\rho$ be decomposed as in Proposition~\ref{prop:Leb-reps-2}.  Then $\rho_1 \oplus \pi_2$ contains both $\pi$ and $\rho$, and is contained in any other representation which contains both $\pi$ and $\rho$.
\end{cor}

\begin{proof}
We have $\rho_1 \oplus \pi_2 \gtrsim \pi_1\oplus \pi_2 \simeq \pi$, and similarly for containing $\rho$.  On the other hand, if $\kappa$ contains both $\pi$ and $\rho$, then $\kappa$ certainly contains both $\rho_1$ and $\pi_2$, and now it must contain their direct sum because they are disjoint.
\end{proof}

Because of the second part of Corollary~\ref{cor:Leb-reps} and the version of the Schr\"oder--Bernstein theorem for *-representations~\cite[Thm. 1.1]{Mac76}, the resulting representation $\rho_1\oplus \pi_2$ is unique up to unitary equivalence.

The relations between representations described above enable various ways of comparing completely positive maps.

Let $\phi:\A\to \rmM_k$ and $\psi:\A\to \rmM_\ell$ be completely positive.  If $k = \ell$, then $\phi \le \psi$ in the \textbf{positive definite} ordering if $\psi - \phi$ is still positive definite.  When $k=\ell=1$,  this relation implies containment of GNS representations by the following basic result.

\begin{prop}\label{prop:dom-contained}
	Let $k=\ell=1$, and assume that $\psi$ is associated to $\pi_\psi$ by $v$. Let $c > 0$. Then $\phi \le c\psi$ if and only if there exists $T \in \pi_\psi(\A)'$ such that (i) $0\le T \le c$ and (ii) $\phi$ is associated to $\pi_\psi$ by $Tv$. If such a $T$ exists, then it is unique.
	
In particular, if $\phi \le c\psi$ for some $c > 0$, then $\phi$ is associated to $\pi_\psi$. \qed
\end{prop}

See~\cite[Prop. 2.5.1]{Dix--Cstar}.  This can be generalized to allow $k=\ell > 1$ using the pairing isomorphism, but we avoid needing that generalization in the sequel.  On the other hand, we do need the following corollary.

\begin{cor}\label{cor:dom-contained}
Let $k=\ell=1$, assume that $\psi \spoon \phi$, and let $a,b > 0$.  Then
\[\pi_{a\phi+b\psi} \simeq \pi_\phi \oplus \pi_\psi.\]
\end{cor}

\begin{proof}
Let $\pi:= \pi_{a\phi+b\psi}$ for brevity. By applying the last part of Proposition~\ref{prop:dom-contained} to $\phi$ (respectively, $\psi)$ and $a\phi + b\psi$, we find that $\phi$ (respectively, $\psi$) is associated to $\pi$, say by the vector $u$ (respectively, $v$).  Then $u$ and $v$ generate copies of $\pi_\phi$ and $\pi_\psi$ inside $\pi$, and these copies must be orthogonal because of the assumed disjointness.  From that orthogonality, it follows that $a \phi + b\psi$ is associated to $\pi$ by the vector $\sqrt{a}u + \sqrt{b}v$.  By the uniqueness of the GNS representation, this vector is cyclic for the whole of $\pi$.  In particular, the sum of those orthogonal copies of $\pi_\phi$ and $\pi_\psi$ equals the whole of $\pi$.
\end{proof}

We also need two other relations involving maps and representations:
\begin{itemize}
\item If $\pi$ is a representation and $\phi$ is a completely positive map, then $\phi$ is \textbf{$\pi$-normal} if $\phi$ is associated to $\pi^{\oplus \infty}$.  This is equivalent to $\phi$ having the form $\t{\phi}\circ \pi$ for some normal positive functional $\t{\phi}$ on $\pi(\A)''$, by the general characterization of normal positive functionals on a von Neumann algebra~\cite[Thm. I.4.1]{Dix--vN}.
\item On the other hand, $\phi$ is \textbf{$\pi$-singular} if $\pi_\phi \spoon \pi$.  If $\pi = \pi_\psi$, then we call $\phi$ and $\psi$ themselves \textbf{disjoint}, and sometimes write $\phi \perp \psi$.
\end{itemize}

These definitions lead to analogs of the Lebesgue decomposition and Radon--Nikodym theorem.  The first of these is obtained by applying Proposition~\ref{prop:Leb-reps} to $\pi_\phi$ and $\pi$.

\begin{cor}\label{cor:Leb}
	For any $\phi$ and $\pi$ as above, there is a unique decomposition
	\begin{equation}\label{eq:Leb}
	\phi = \phi_{\rm{ac}} + \phi_{\rm{sing}}
	\end{equation}
	into positive summands such that $\phi_{\rm{ac}}$ is $\pi$-normal and $\phi_{\rm{sing}}$ is $\pi$-singular. This decomposition is linear in $\phi$. \qed
\end{cor}

We refer to~\eqref{eq:Leb} as the \textbf{Lebesgue decomposition} of $\phi$ with respect to $\psi$.  As far as I know, for positive functionals the earliest appearance of this result was in~\cite{Tak58}.  The case of $\rmM_k$-valued completely positive maps has a similar proof, or it can be derived via the pairing isomorphism (see~\cite[Subs. 2.7]{APE4}).

The Radon--Nikodym theorem has several non-commutative generalizations.  Indeed, Proposition~\ref{prop:dom-contained} can already be regarded as having this flavour.  Another variant holds when we compare a completely positive map to a \emph{tracial} positive functional.  To explain it, let $\l$ be a representation with a cyclic tracial unit vector $\xi$.

\begin{prop}\label{prop:RadNik}
Let $\phi \in \B(\A,\rmM_k)_+$ be $\l$-normal, and form this $k$-tuple in $H_\l^{\oplus k}$:
	\begin{equation}\label{eq:xi-i}
	\xi_i = [0,\dots,0,\xi,0,\dots,0]^\rm{T} \qquad (i=1,2,\dots,k),
	\end{equation}
	where only the $i^\rm{th}$ coordinate of $\xi_i$ is nonzero. Then there is a unique non-negative operator $T$ affiliated to $\l^{\oplus k}(\A)'$ such that $\xi_i \in \rm{dom}\,T$ for each $i$ and $\phi$ is associated to $\l^{\oplus k}$ by $T\xi_1$, \dots, $T\xi_k$. \qed
\end{prop}

See~\cite[Prop. 2.8]{APE4}.  The case $k=1$ already contains the main ideas, and is due to Dye.

The next definition offers more ways to compare completely positive maps.  Let $k$ and $\ell$ be positive integers, and let
\[K \:= \{1,\dots,k\} \qquad \hbox{and} \qquad L := \{k+1,\dots,k+\ell\}.\]

\begin{dfn}\label{dfn:joining}
Let $\phi:\A \to \rmM_k$ and $\psi:\A \to \rmM_\ell$ be completely positive.  A \textbf{joining} of them is a completely positive map $\theta:\A\to \rmM_{k+\ell}$ such that
\[\theta(a)[K] = \phi(a) \qquad \hbox{and} \qquad \theta(a)[L] = \psi(a) \qquad (a \in \A).\]
In particular, the \textbf{diagonal joining} is defined by
\[\rm{diag}(\phi,\psi)(a) := \left[\begin{array}{cc} \phi(a) & 0\\ 0 & \psi(a)\end{array}\right] \qquad (a \in \A).\]

Joinings of more than two completely positive maps are defined similarly.  If $\phi$ is a positive functional and $k$ is a positive integer, then we write $\phi\otimes I_k$ for the diagonal joining of $k$ copies of $\phi$.
\end{dfn}

The term `joining' is borrowed from Furstenberg's classic paper~\cite{Fur67} in ergodic theory, and was also used this way for representations in~\cite[Def. 2.9]{APE4}.  See~\cite[Sec. 10.(C)]{KecGAEGA} for an example of their use in ergodic theory which is close in spirit to the present work.  Their importance is more limited for representations, because these can always be broken up using orthogonal complements, but they are still a convenient way of capturing various constructions in the sequel.

The joinings of $\phi$ and $\psi$ describe all the ways these two maps can `sit together' inside a single representation.  If $\theta$ is a joining, then $\pi_\theta$ contains copies of $\pi_\phi$ and $\pi_\psi$ that together span the whole of $H_\theta$. The minimal dilation of $\rm{diag}(\phi,\psi)$ is equivalent to $\pi_\phi \oplus \pi_\psi$, and $\phi$ and $\psi$ are disjoint if and only if $\rm{diag}(\phi,\psi)$ is their only joining.

If $\phi$ and $\psi$ are both normalized, then so is any joining $\theta$ of them.  This follows from the simple relation
\begin{equation}\label{eq:still-normalized}
\tr_{k+\ell}\theta(1) = \frac{k}{k+\ell}\tr_k\theta(1)[K] + \frac{\ell}{k+\ell}\tr_\ell\theta(1)[L].
\end{equation}
However, unital maps may have non-unital joinings.

\section{C*-algebras of countable groups}\label{sec:ctble-groups}

Let $\G$ be a countable group.  We write $\bbC[\G]$ for the complex group algebra of $\G$, and regard it concretely as the space of functions from $\G$ to $\bbC$ that have finite support. Given $g \in \G$, we write $\delta_g$ for its canonical image in $\bbC[\G]$, which is the indicator function of the singleton $\{g\}$. We may express a general element of $\bbC[\G]$ as either an indexed family $(a_g:\ g \in \G)$ or a sum like $\sum_g a_g\delta_g$, where in either case $a_g$ is zero for all but finitely many $g$.  The vector space operations of $\bbC[\G]$ are pointwise, and its multiplication is convolution:
\[a\ast b := \sum_{g,h} a_g b_h \delta_{gh} \qquad (a,b \in \bbC[\G]).\]
We also always endow $\bbC[\G]$ with the involution
\begin{equation}\label{eq:CG-inv}
	a^\ast := \sum_g \ol{a_g}\delta_{g^{-1}} \qquad (a \in \bbC[\G]),
\end{equation}
making it a $\ast$-algebra with unit $\delta_e$.

The group C*-algebra $C^\ast \G$ is the maximal C*-completion of $\bbC[\G]$.  It is unital and separable.  The main properties of $C^\ast \G$ can be found in~\cite[Sec. 7.1]{FolAHA} or~\cite[Sec. 13.9]{Dix--Cstar}, which both allow the greater generality of locally compact groups.  For each $g \in \G$, we continue to write $\delta_g$ for its image in $C^\ast \G$.  Any unitary representation of $\G$ extends uniquely to a unital representation of $C^\ast\G$, and any unital representation of $C^\ast \G$ arises uniquely in this way.  With this in mind, we pass back and forth between groups and their C$^\ast$-algebras rather freely, for instance by using the same notation for a representation of either.  The space $\hat{\G}$ of irreducible unitary representations of $\G$ is naturally identified with $\hat{C^\ast \G}$, and is called the \textbf{unitary dual} of $\G$: see~\cite[Chap. 18]{Dix--Cstar}.

Any $\phi \in \B(C^\ast \G,\rmM_k)_+$ defines a map from $\G$ itself to $\rmM_k$ by restriction:
\[g\mapsto \phi(\delta_g) \qquad (g\in \G).\]
In the sequel we often write simply $\phi(g)$ instead of $\phi(\delta_g)$ in this situation.  If $\phi = \Phi^\pi_V$ for some representation $\pi$ and $k$-tuple $V$ as in~\eqref{eq:cp-assoc}, then this becomes
\begin{equation}\label{eq:matrix-elements}
\Phi^\pi_V(g) = [\langle \pi(g)v_j,v_i\rangle] \qquad (g \in \G).
\end{equation}
In representation theory, the function on $\G$ given by $\langle \pi(g)v,u\rangle$ is called the \textbf{$(u,v)$-matrix element} of the representation.

If a function $\phi:\G\to \rmM_k$ arises this way, then it is a \textbf{positive definite} map (sometimes called a `map of positive type').  This means that it is bounded and satisfies
\begin{equation}\label{eq:pdf}
	\sum_{1 \le i,j\le k}\phi_{ij}(a_i^\ast a_j) = \sum_{g,h \in \G,\,1 \le i,j \le k}\ol{a_{i,h}}a_{j,g}\phi_{ij}(h^{-1}g) \ge 0
\end{equation}
for any $a_1$, \dots, $a_k \in \bbC[\G]$.  For instance, for the map in~\eqref{eq:matrix-elements}, this holds because the sum in~\eqref{eq:pdf} is just equal to the squared length of the vector $\sum_i\pi(a_i)v_i$.  On the other hand, if $\phi$ is any map on $\G$ satisfying~\eqref{eq:pdf}, then another variant of the GNS construction produces a unitary representation of $\G$ to which $\phi$ is associated as in~\eqref{eq:matrix-elements}.  Using this, $\phi$ then extends to a completely positive map on the whole of $C^\ast\G$.  The case $k=1$ of this result first appeared in work of Gelfand and Raikov~\cite{GelRai43}, and can also be found in~\cite[Sec. 3.3]{FolAHA} or~\cite[Sec. 13.4]{Dix--Cstar}.  It is an analog for groups of applying the GNS construction to a state on a C*-algebra.  Similarly, the generalization to $\rmM_k$-valued positive definite functions on a group is a cousin of Stinespring's theorem, and has an analogous proof.  It can be found as~\cite[Thm 4.8]{PauCB}, where the key ideas are traced back to Naimark. The relationship between positive definite functions on a group and completely positive maps on the group C$\sp*$-algebra is discussed further at the end of~\cite[Chap. 4]{PauCB}. However, beware that the versions for groups are not quite special cases of the versions for C$\sp*$-algebras, because the notions of `positivity' in their hypotheses are not known to match beforehand. See~\cite{Oza13,Helt02} for related discussion and the connection to generalizations of Hilbert's 17th problem. %For matrix-valued positive definite functions on groups, the essential idea of the construction appears in~\cite{Nai43}, although it is presented there only for Abelian groups.  It is fully covered by the generality in~\cite{Kun66}.

These results establish a bijection between positive definite maps from $\G$ to $\rmM_k$ and the space $\B(C^\ast \G,\rmM_k)_+$.  Under this bijection, the weak$^\ast$ topology on $\B(C^\ast \G,\rmM_k)_+$ corresponds to the usual weak$^\ast$ topology of $\ell^\infty(\G;\rmM_k)$, and when restricted to bounded subsets it coincides with the topology of pointwise convergence for functions on $\G$.  We carry all of our previous notation and terminology for completely positive maps such as `assocation', `dilations', and `types' over to this setting in the obvious way.

When working with a countable group $\G$, we often write $\S_k(\G)$ instead of $\S_k(C^\ast \G)$ and regard it as a space of maps on $\G$ itself, and similarly with $\S^\rm{u}_k(\G)$.  Both of these are convex subsets of $\ell^\infty(\G;\rmM_k)$, and compact by the Banach--Alaoglu theorem.

A state on $C^\ast \G$ is tracial if and only if it arises from a \textbf{character} of $\G$, meaning a normalized positive definite function $\chi$ that satisfies
\begin{equation}\label{eq:char}
	\chi(g^{-1}hg) = \chi(h) \qquad (g,h \in \G).
\end{equation}

\begin{ex}\label{ex:regular}
	On any countable group $\G$, the function $1_{\{e\}}$ is positive definite. It is associated to the left regular representation on $\ell^2(\G)$ by the vector $\delta_e$.  It is called the \textbf{regular character} of $\G$.  If $\chi$ is the regular character, then $\chi \otimes I_k$ is the \textbf{regular} $\rmM_k$-valued positive definite function.  % We sometimes denote it by $\chi_{\rm{reg}}$, and its extension to $C^\ast \G$ by $\tau_{\rm{reg}}$. \qed
\end{ex}

\begin{ex}\label{ex:subgroup}
	If $H$ is a subgroup of $\G$, then $1_H$ is positive definite.  It is associated to the quasi-regular representation of $\G$ on $\ell^2(\G/H)$ by the function $\delta_{eH}$.  It is a character if and only if $N$ is normal in $\G$.  These examples are called \textbf{quasi-regular} characters. \qed % [[ WORTH MENTIONING OR ADDING REFS FOR IRS'S? ]]
\end{ex}

For a free group, we discuss another simple family of positive definite functions with more diverse properties in Section~\ref{sec:Haa} below.

\subsection*{\emph{Notes and further references}}

Thorough accounts of unitary group representations and their relation to group C*-algebras can be found in~\cite[Apps. A--C]{BekdelaHarValKaz},~\cite[Chaps. 1 and 2]{BekdelaHarUDC}, or~\cite[Chap. VI]{DorFelRepI}.

Group C$\sp*$-algebras offer a diverse source of examples of C$\sp*$-algebras with many different properties.  Several of these can be found in~\cite{BroOza08},~\cite{Oza13}, or~\cite{PisOST}, for example.

Besides the regular and quasi-regular representations, some research has investigated in general which representations generate a von Neumann algebra of operators that admits a normal tracial functional.  Some of the results are recounted in~\cite[Chaps. 10--12]{BekdelaHarUDC}. It would be interesting to know whether these representations occupy any special place in the theory of AP entropy that we develop below.

\chapter{Approximate equivalence and the strong-quotient topology}\label{chap:approx}

In this chapter we recall the notions of approximate association for maps and approximate equivalence of representations, and study natural topologies on the space of approximate equivalence classes.  Throughout this chapter, unless specified otherwise, $\A$ is a separable unital C*-algebra, and all representations of $\A$ are also separable.

\section{Approximate association and typical tuples}

We can classify tuples of vectors in a representation according to their types.

\begin{dfn}\label{dfn:typical}
For any positive integer $k$ and subset $O$ of $\B(\A,\rmM_k)$, let
\[ \X(\pi,O) := \big\{[v_1,\dots,v_k]^\rm{T} \in H_\pi^{\oplus k} :\ \Phi^\pi_{v_1,\dots,v_k} \in O \big\}. \]
The elements of $\X(\pi,O)$ are the \textbf{$O$-typical} tuples of the representation $\pi$.
\end{dfn}

We often use Definition~\ref{dfn:typical} when $O$ is a small neighbourhood around a given `target' completely positive map $\phi$.  In this case we may write informally about elements of $\X(\pi,O)$ as `approximately $\phi$-typical vectors'.  This resembles the use of terms such as `microstate' or `good model' in free probability and the study of sofic entropy in ergodic theory (compare~\cite[Sec. 2.3]{Bowen--survey}, for example).

For a group C$\sp*$-algebra $C^\ast\G$, completely positive maps to $\rmM_k$ can be identified with positive definite maps from $\G$ itself to $\rmM_k$, as explained in~\ref{sec:ctble-groups}.  We modify our notation accordingly by writing $\X(\pi,O)$ when $\pi$ is any representation of $\G$ and $O$ is any set of functions from $\G$ to $\rmM_k$.  All the general properties established in the rest of this section carry over without change.

As a vector space topology, the weak$^\ast$ topology on $\B(\A,\rmM_k)$ also defines a canonical uniform structure on this space (see, for instance,~\cite[Sec. 8.1]{Eng89} for the basics of uniform structures).  Restricted to any bounded subset of $\B(\A,\rmM_k)$, this uniform structure is generated by any translation-invariant metric compatible with the topology.  We need this uniform structure at a few points through the following lemma:

\begin{lem}\label{lem:unif-cts}
	For any $\pi$ and $k$, the type map
	\[H_\pi^{\oplus k} \mapsto \B(\A,\rmM_k):[v_1,\dots,v_k]^\rm{T}\mapsto \Phi^\pi_{v_1,\dots,v_k}\]
	is continuous, and uniformly continuous on any bounded subset of $H_\pi^{\oplus k}$.
\end{lem}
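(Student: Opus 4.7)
The plan is to verify both statements directly from the definitions by writing out the bilinear form in the type map and applying a single Cauchy--Schwarz-type bound. The key observation is that the weak$^\ast$ topology on $\B(\A,\rmM_k)$ is generated by the seminorms $\phi\mapsto |\phi_{ij}(a)|$ with $a \in \A$ and $i,j \in \{1,\dots,k\}$, and the induced uniform structure, restricted to any norm-bounded subset of $\B(\A,\rmM_k)$, is generated by the entourages
\[\big\{(\phi,\psi):\ |\phi_{ij}(a_\ell) - \psi_{ij}(a_\ell)| < \eps\ \hbox{for}\ \ell = 1,\dots,N\big\}\]
for finite collections $a_1,\dots,a_N \in \A$, $\eps > 0$, and indices $i,j$.

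First I would unpack the definition: from~\eqref{eq:cp-assoc},
\[(\Phi^\pi_{v_1,\dots,v_k})_{ij}(a) = \langle \pi(a)v_j,v_i\rangle,\]
a sesquilinear expression in the tuple $V = [v_1,\dots,v_k]^\rm{T}$. For a second tuple $V' = [v'_1,\dots,v'_k]^\rm{T}$, the routine bilinear splitting
\[\langle \pi(a)v'_j,v'_i\rangle - \langle \pi(a)v_j,v_i\rangle = \langle \pi(a)(v'_j-v_j),v'_i\rangle + \langle \pi(a)v_j,v'_i-v_i\rangle\]
combined with Cauchy--Schwarz and the bound $\|\pi(a)\|\le \|a\|$ yields
\[\bigl|(\Phi^\pi_{V'})_{ij}(a) - (\Phi^\pi_V)_{ij}(a)\bigr| \le \|a\|\bigl(\|v'_j - v_j\|\cdot \|v'_i\| + \|v_j\|\cdot\|v'_i - v_i\|\bigr).\]

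Fix a neighbourhood of $\Phi^\pi_V$ generated by finitely many $a_1,\dots,a_N \in \A$ and tolerance $\eps$. To ensure $V'$ maps into this neighbourhood, it suffices to require $\|V'-V\|_{H_\pi^{\oplus k}} < \delta$ for
\[\delta < \frac{\eps}{2\,\max_{\ell}\|a_\ell\|\cdot(\|V\|_{H_\pi^{\oplus k}} + 1)},\]
which proves pointwise continuity. For uniform continuity on a set $B \subset H_\pi^{\oplus k}$ of norm at most $M$, the same estimate gives
\[\bigl|(\Phi^\pi_{V'})_{ij}(a_\ell) - (\Phi^\pi_V)_{ij}(a_\ell)\bigr| \le 2M\|a_\ell\|\cdot \|V' - V\|_{H_\pi^{\oplus k}}\]
for every pair $V,V' \in B$ and every $\ell$; since the right-hand side is independent of the base point $V \in B$, choosing $\delta$ as above (with $M$ in place of $\|V\|+1$) supplies uniform continuity against the fixed entourage.

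There is no real obstacle: the only point requiring care is matching the uniform structure on $\B(\A,\rmM_k)$ (induced by the weak$^\ast$ topology on the bounded set $\{\Phi^\pi_V : V \in B\}$) with the explicit entourages above, which is justified by the preamble of Section~\ref{sec:CP} (metrizability of the weak$^\ast$ topology on norm-bounded subsets via separability of $\A$).
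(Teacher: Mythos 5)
Your proposal is correct and is essentially the paper's own argument, just written out in full: the paper simply notes that the claim is elementary for the inner product map and follows for types by arguing pointwise in $i$, $j$, and $a$, which is precisely the bilinear splitting and Cauchy--Schwarz estimate you carry out. Nothing to add.
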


\begin{proof}
These properties are elementary for the inner product map $H_\pi\times H_\pi\to \bbC$.  They follow for types by arguing pointwise for each $i$, $j$, and $a \in \A$.
\end{proof}

We also need a notation for the images of the maps in Lemma~\ref{lem:unif-cts} when we restrict to orthonormal tuples.

\begin{dfn}\label{dfn:type-set}
For any representation $\pi$, we set
\[\S_k(\pi) := \big\{\Phi^\pi_{v_1,\dots,v_k}:\ v_1,\dots,v_k \in H_\pi\ \hbox{and}\ \|v_1\|^2 + \cdots + \|v_k\|^2 = k\big\}\]
and
\[\S_k^{\rm{u}}(\pi) := \big\{\Phi^\pi_{v_1,\dots,v_k}:\ v_1,\dots,v_k \in H_\pi\ \hbox{are orthonormal}\big\}.\]
These are the subsets of elements of $\S_k(\A)$ and $\S_k^{\rm{u}}(\A)$ that are associated to $\pi$.
\end{dfn}

For any representation $\pi$ and subset $O$ of $\S_k(\A)$, observe that
$O$ meets $\S_k(\pi)$ if and only if $\X(\pi,O) \ne \emptyset$.

Now let $\pi$ be a separable representation and $\phi \in \B(\A,\rmM_k)_+$. We say that $\phi$ is \textbf{approximately associated} to $\pi$ if $\phi$ lies in the closure in $\B(\A,\rmM_k)_+$ of the set of maps associated to $\pi$.  By Lemma~\ref{lem:lcsc} and a diagonal argument, this holds if and only if there is a sequence of $k$-tuples $v_{n,1}$, \dots, $v_{n,k}$ in $H_\pi$ such that
\[\Phi^\pi_{v_{n,1},\dots,v_{n,k}}(a) \to \phi(a) \qquad \hbox{as}\ n\to\infty \qquad \hbox{for every}\ a\in \A.\]

If $\phi$ is normalized (respectively, unital), and the sequence of tuples $v_{n,1}$, \dots, $v_{n,k}$ witnesses the convergence above, then we can always perturb those tuples slightly so that they are normalized (respectively, orthonormal).  Compare, for instance, the remarks that precede~\cite[Thm. 4]{Arv77} or parts (iv) and (v) of~\cite[Prop. 2.2]{Kec05}.  It follows that an element of $\S_k(\A)$ (respectively, $\S_k^{\rm{u}}(\A)$) is approximately associated to $\pi$ if and only if it lies in the closure $\ol{\S_k(\pi)}$ (respectively, $\ol{\S_k^{\rm{u}}(\pi)}$).

The term `approximately associated' is not standard.  I have chosen it to make contact with `approximate equivalence' in the next section.  See also Remark~\ref{rmk:approx-contain} below.

Alternatively, a unital completely positive map $\phi \in\S_k(\A)$ is \textbf{weakly associated} to a representation $\pi$ if it can be approximated by convex combinations of elements of $\S_k(\pi)$: that is, if $\phi \in \ol{\rm{conv}}\,\S_k(\pi)$.  This definition is conventional only in the case $k=1$, but no new ideas are needed if we allow larger values of $k$.  Weak association is the basis for Godemont and Fell's relation of weak containment for representations: see~\cite{God48},~\cite{Fel60} or~\cite[Sec. 3.4]{Dix--Cstar}.  As a result, weak association has a somewhat bigger place in the literature than approximate association

We can relate approximate and weak association to each other in terms of inflations.

\begin{lem}\label{lem:approx-and-weak-dilation}
For any positive integer $k$ and representation $\pi$ we have
	\[\ol{\rm{conv}}\,\S_k(\pi) = \ol{\bigcup_{n\ge 1}\S_k(\pi^{\oplus n})} = \ol{\S_k(\pi^{\oplus \infty})}.\]
	In particular, a map $\phi \in \S_k(\A)$ is weakly associated to $\pi$ if and only if it is approximately associated to $\pi^{\oplus \infty}$.
	\end{lem}

See, for instance,~\cite[Prop. 2.2]{Kec05}, where this result is formulated for unitary group representations, but the proof given works with only cosmetic changes in the general case.  We sketch it for completeness. %[[ BETTER TO OMIT? ]]

\begin{proof}
	For any positive integer $n$, the decomposition of $\pi^{\oplus n}$ into summands expresses any element of $\S_k(\pi^{\oplus n})$ as a convex combination of at most $n$ elements of $\S_k(\pi)$.  Letting $n$ increase and then taking closures, this gives the left-hand equality.  The right-hand equality holds by approximating vectors in $H_\pi^{\oplus \infty}$ by vectors with only finitely many nonzero coordinates.
	\end{proof}

If $\phi$ is an extreme point of $\S_k(\A)$, so its GNS representation is irreducible, then allowing a convex hull makes no difference.  Such a $\phi$ is weakly associated to another representation $\pi$ if and only if it is approximately associated; see, for instance, the proof of~\cite[Thm. 3.4.10]{Dix--Cstar}.

If a map $\phi$ is associated to a representation $\pi$, then it is clearly also approximately associated.  The next lemma is a robust version of this fact.

\begin{lem}\label{lem:typ-trans}
Let $\phi \in \B(\A,\rmM_k)_+$ and $\psi \in \B(\A,\rmM_\ell)_+$, and assume that $\psi$ is approximately associated to $\pi_\phi$.  Then for any neighbourhood $U$ of $\psi$ there is a neighbourhood $V$ of $\phi$ such that
\[\X(\pi,V)\ne \emptyset \qquad \Rightarrow \qquad \X(\pi,U) \ne \emptyset\]
	for any representation $\pi$. \qed
\end{lem}

%\begin{proof}
%	Let $\phi$ be associated to $\pi_\phi$ by the cyclic tuple $x_1$, \dots, $x_k$.  By cyclicity and Lemma~\ref{lem:unif-cts}, there is an $\ell$-by-$k$ matrix $[a_{ij}]$ of elements of $\A$ such that the tuple defined by
%	\[y_i := \sum_{j=1}^k \pi_\phi(a_{ij})x_j \qquad (i=1,2,\dots,\ell)\]
%	satisfies $\g := \Phi^\pi_{y_1,\dots,y_\ell} \in U$.
%	
%In terms of $\phi$, this new map $\g$ is given by the equations
%\begin{align}\label{eq:phi-gamma}
%\g_{ii'}(b) &= \langle \pi_\phi(b)y_{i'},y_i\rangle \nonumber\\
%&= \sum_{j,j'=1}^k\langle \pi_\phi(ba_{i'j'})x_{j'},\pi_\phi(a_{ij})x_j\rangle \nonumber\\
%&= \sum_{j,j'=1}^k\phi_{jj'}(a_{ij}^\ast ba_{i'j'}) \qquad (b \in \A,\ 1\le i,i'\le \ell).
%\end{align}
%Having fixed the matrix $[a_{ij}]$ and also $b$, the last displayed expression in~\eqref{eq:phi-gamma} is weak$^\ast$ continuous in $\phi$.  Therefore there is a neighbourhood $V$ of $\phi$ such that, if we perturb $\phi$ so that it stays inside $V$, the corresponding perturbation to $\g$ given by formula~\eqref{eq:phi-gamma} stays inside $U$.  This property of $V$ translates into~\eqref{eq:V-nonempty-U-nonempty}.
%\end{proof}

See~\cite[Cor. 5.4]{APE4}.

On the other hand, if $\phi$ and $\psi$ are normalized and disjoint, then we have a robust version of the fact that $\rm{diag}(\phi,\psi)$ is their only joining.

\begin{lem}\label{lem:near-diag}
Let $\phi \in \S_k(\A)$ and $\psi \in \S_\ell(\A)$, and assume that they are disjoint.  Then for any neighbourhood $W$ of $\rm{diag}(\phi,\psi)$ there are neighbourhoods $U$ of $\phi$ in $\S_k(\A)$ and $V$ of $\psi$ in $\S_\ell(\A)$ such that
\[[\ \X(\pi,U)\ne \emptyset \quad \hbox{and}\quad \X(\pi,V)\ne \emptyset \ ]\qquad \Rightarrow \qquad \X(\pi,W) \ne \emptyset\]
	for any representation $\pi$. \qed
\end{lem}

See~\cite[Lem. 5.5]{APE4}.

%\begin{proof}
%[[ CONTRADICTION AND COMPACTNESS! ]]
%\end{proof}

If $\phi:\A\to \rmM_k$ is completely positive, $Q \in \rmM_k$, and we define $\psi:\A\to \rmM_k$ as in equation~\eqref{eq:psi-Q-phi}, then we obtain the following more explicit relationship between the sets of approximately typical vectors for $\phi$ and for $\psi$.

\begin{lem}\label{lem:lin-maps}
Suppose that $\phi \in \B(\A,\rmM_k)_+$, and let
\[\psi(a) := (Q^\rm{T})^\ast \phi(a) Q^\rm{T} \qquad (a \in \A)\]
for some $Q \in \rmM_{\ell,k}$.  For any neighbourhood $O$ of $\psi$ there is a neighbourhood $U$ of $\phi$ such that
\begin{equation}\label{eq:lin-maps-Q}
(I_{H_\pi}\otimes Q)[\X(\pi,U)] \subset \X(\pi,O)
\end{equation}
for any representation $\pi$, recalling the notation from~\eqref{eq:pre-lin-maps-Q}. \qed
\end{lem}

See~\cite[Lem. 5.3]{APE4}.

%\begin{proof}
%For the first assertion, consider a $k$-tuple $v_1$, \dots, $v_k$ in $H_\pi$, and define the $\ell$-tuple $y_1$, \dots, $y_\ell$ by
%\[[y_1,\dots, y_\ell]^\rm{T} := (I_{H_\pi}\otimes Q)[v_1,\dots,v_k]^\rm{T}.\]
%If these two tuples have respective types $\phi'$ and $\psi'$, then this is the relation~\eqref{eq:pre-lin-maps-Q} again, so $\psi'$ is related to $\phi'$ as in~\eqref{eq:psi-Q-phi}.  Now the existence of $U$ for a given $O$ follows by the continuity of sums and scalar multiplication.
%\end{proof}

\section{Approximate equivalence and approximate containment}\label{sec:approx-equiv}

\begin{dfn}\label{dfn:approx-equiv}
Two separable representations $\pi$ and $\rho$ are \textbf{approximately equivalent}, written $\pi \simeq_\rm{a} \rho$, if there are unitary isomorphisms $U_1,U_2,\dots:H_\pi \to H_\rho$ such that
\[\|\pi(a) - U_n^\ast \rho(a) U_n\| \to 1 \qquad \hbox{as} \ n\to\infty \qquad \hbox{for every}\ a \in \A.\]

Similarly, $\pi$ is \textbf{approximately contained} in $\rho$, written $\pi \lesssim_{\rm{a}} \rho$, if there are unitary embeddings $V_1,V_2,\dots:H_\pi \to H_\rho$ such that
\[\|\pi(a) - V_n^\ast \rho(a) V_n\| \to 1 \qquad \hbox{as} \ n\to\infty \qquad \hbox{for every}\ a \in \A.\]
\end{dfn}

The study of approximate equivalence originates in Voiculescu's paper~\cite{Voi76}, inspired by older questions about single operators and building on parts of Glimm's work in~\cite{Gli60}.  Arveson offered a revealing alternative approach in~\cite{Arv77}, and Hadwin extended the theory in several directions, for example in~\cite{Had81}.  Here we state only the unital cases of the results, which avoid some complications.

%The engine of Voiculescu's work is the following theorem.

\begin{thm}\label{thm:get-sum}
	If $\rho$ and $\pi$ are separable unital representations such that $\pi(a)$ is zero whenever $\rho(a)$ is compact, then $\rho\oplus \pi \simeq_{\rm{a}} \rho$. \qed
\end{thm}
See~\cite[Thm. 1.3]{Voi76},~\cite[Cor. 2 of Thm. 4]{Arv77}, or the textbook~\cite[Secs. 3.4--6]{HigRoeAKH}.  Building on Theorem~\ref{thm:get-sum}, the main result in~\cite{Voi76} is a list of apparently quite different conditions that are all actually the same as approximate equivalence.  The next theorem records a few of these that we need.  (Proofs of the theorem usually also go via other conditions not listed here as well.)

\begin{thm}\label{thm:Voi-main}
For separable unital representations $\pi$ and $\rho$, the following are equivalent:
\begin{itemize}
\item[i.] $\pi \simeq_{\rm{a}}\rho$;
\item[ii.] we have $\ker \pi = \ker \rho$, and $\pi^{-1}[\mathfrak{K}(H_\pi)] = \rho^{-1}[\mathfrak{K}(H_\rho)]$, and the restrictions of $\pi$ and $\rho$ to this latter ideal are unitarily equivalent;
\item[iii.] we have $\rm{rank}\,\pi(a) = \rm{rank}\,\rho(a)$ for every $a\in \A$ (regarding ranks as values in $\{0,1,2,\dots\}\cup \{\infty\}$);
	\item[iv.] for some subset $S$ of $\A$ which generates a dense $\ast$-subalgebra, there is a sequence of unitary operators $U_n:H_\pi \to H_\rho$ such that
	\[\|\pi(a) - U_n^\ast \rho(a) U_n\| \to 0 \quad \hbox{for every}\ a\in S.\] \qed
	\end{itemize}
\end{thm}

According to Theorem~\ref{thm:Voi-main}(ii), the relation $\pi \simeq_{\rm{a}} \rho$ is stronger than weak equivalence, but only through the representations of the common ideal $\pi^{-1}[\mathfrak{K}(H_\pi)]$, which consist of compact operators.

The equivalence of (i) and (ii) in Theorem~\ref{thm:Voi-main} is covered by~\cite[Thm. 1.5]{Voi76} or~\cite[Thm. 5]{Arv77}.  The equivalence with condition (iii) is~\cite[Thm. 2.5]{Had81}.  Finally, condition (iv) is equivalent to (i) because, for any unitary operators $U_n:H_\pi \to H_\rho$, the set
\[\big\{a \in \A:\ \|\pi(a) - U_n^\ast \rho(a) U_n\| \to 0\big\}\]
is a closed $\ast$-subalgebra of $\A$. % REF NOT USED NOW:\cite[41.12(b,d,e)]{ConOT}

For some applications, another characterization of approximate equivalence is important: if $\pi \simeq_{\rm{a}} \rho$ and both are unital, then one can also choose the unitary isomorphisms in Definition~\ref{dfn:approx-equiv} so that $\pi(a) - U_n^\ast\rho(a)U_n$ is compact for every $a \in \A$.  This is also covered in~\cite{Voi76} or~\cite{Arv77}.  Coupled with Theorem~\ref{thm:get-sum}, this fact is essential for the foundations of $\rm{Ext}$ and K-homology~\cite[Chap. 3]{HigRoeAKH}. We do not need this fact directly in the present book, but some similar conclusions appear naturally in Section~\ref{sec:tempered} below.

Hadwin studied several extensions of approximate equivalence.  In particular, he introduced approximate containment in~\cite[Sec. 5]{Had81} under the term `approximate subrepresentations'.  Like approximate equivalence, approximate containment for unital representations has many alternative characterizations (indeed, our initial choice in Definition~\ref{dfn:approx-equiv} is different from Hadwin's).  From these we extract the following.

\begin{thm}\label{thm:Had-main}
For separable unital representations $\pi$ and $\rho$, the following are equivalent:
\begin{itemize}
\item[i.] $\pi \lesssim_{\rm{a}}\rho$;
\item[ii.] we have $\rm{rank}\,\pi(a) \le \rm{rank}\,\rho(a)$ for every $a\in \A$;
\item[iii.] there is a representation $\pi_1$ such that $\pi \oplus \pi_1 \simeq_{\rm{a}} \rho$.
\end{itemize}
\end{thm}

This is covered by~\cite[Thm. 5.1]{Had81}.  The implication from property (ii) to property (iii) takes the most work, and Hadwin explains it in terms of the `approximate multiplicities' of irreducible sub-representations, which we do not define here.  Actually, the proof of property (iii) in~\cite{Had81} really gives representations $\pi' \simeq_{\rm{a}} \pi$ and $\rho' \simeq_{\rm{a}} \rho$ such that $\pi' \lesssim \rho'$.  However, having found these, we can identify $\rho'$ with $\pi' \oplus \pi_1$ and then swap out $\pi'$ for $\pi$ in this direct sum to conclude that $\pi \oplus \pi_1 \simeq_{\rm{a}} \rho' \simeq_{\rm{a}} \rho$. Alternatively, Theorem~\ref{thm:Had-main}(iii) can be proved along the lines of~\cite[Thm. 4 and Prop. 6]{AbeEle11}; we discuss this reference further in the next section.

The relations of approximate equivalence and approximate containment are both transitive.  Also, two separable unital representations $\pi$ and $\rho$ are approximately equivalent if and only if each is approximately contained in the other, for example by Theorem~\ref{thm:Voi-main}(ii) and Theorem~\ref{thm:Had-main}(ii).  Therefore the truth of the assertion ``$\pi \lesssim_{\rm{a}} \rho$'' depends only on the approximate equivalence classes of $\pi$ and $\rho$.  We henceforth regard ``$\simeq_{\rm{a}}$''  or ``$\lesssim_{\rm{a}}$'' as relations between approximate equivalence classes rather than individual separable unital representations whenever this is convenient.

In addition to Theorems~\ref{thm:Voi-main} and~\ref{thm:Had-main}, we can also characterize approximate containment of representations by comparing their sets of approximately associated completely positive maps.

\begin{lem}\label{lem:approx-contain}
Two separable unital representations satisfy $\pi \lesssim_\rm{a} \rho$ if and only if $\ol{\S_k(\pi)}\subset \ol{\S_k(\rho)}$ for all $k$, and $\pi \simeq_\rm{a} \rho$ if and only if $\ol{\S_k(\pi)} =\ol{\S_k(\rho)}$ for all $k$.
	\end{lem}

\begin{proof}
First, let $\pi \lesssim_\rm{a} \rho$, let $(U_n)_{n\ge 1}$ be a sequence of unitary embeddings as in the second part of Definition~\ref{dfn:approx-equiv}, and let $v_1$, \dots $v_k$ in $H_\pi$.  Then
\[\langle \rho(a)U_nv_j,U_nv_i\rangle \to \langle \pi(a)v_j,v_i\rangle \qquad (a \in \A).\]
This shows that $\S_k(\pi)\subset \ol{\S_k(\rho)}$, so taking closures gives the forward implication.

Now assume that $\ol{\S_k(\pi)}\subset \ol{\S_k(\rho)}$ for every $k$.  For any positive integer $k$ and $a \in \A$, we have $\rm{rank}\,\rho(a) \le k$ if and only if $\Det\,\phi(a^\ast a) = 0$ for every $\phi \in \bigcup_{\ell \ge k+1}\ol{\S_\ell(\rho)}$, and similarly for $\pi$.  Therefore $\rm{rank}\,\pi(a) \le \rm{rank}\,\rho(a)$ for every $a\in \A$, and so $\pi \lesssim_{\rm{a}} \rho$ by Theorem~\ref{thm:Had-main}.

This proves the characterization of approximate containment.  The characterization of approximate equivalence follows immediately.
	\end{proof}

For weak containment of separable unital representations, the analog of Lemma~\ref{lem:approx-contain} asserts that $\pi$ is weakly contained in another one $\rho$ if and only if $\ol{\rm{conv}}\,\S_1(\pi) \subset \ol{\rm{conv}}\,\S_1(\rho)$. This leads to the standard characterization of weak containment in terms of inclusion of supports: see~\cite[Thm. 3.4.4]{Dix--Cstar}.  We can also write the support of a general representation $\pi$ as
\begin{align*}
\rm{spt}\,\pi &= \{[\rho] \in \hat{\A}:\ \rho\ \hbox{weakly contained in }\ \pi\}\\
&= \{[\rho] \in \hat{\A}:\ \rho\ \hbox{approximately contained in }\ \pi\}.
\end{align*}
Both formulas follow from the equivalences proved in~\cite[Sec. 3.4]{Dix--Cstar}.  Comparing this with Theorem~\ref{thm:Voi-main} sheds some light on the possible failure of the sets $\ol{\S_k(\rho)}$ to be convex: this can happen, but only if $\rho$ contains some non-zero compact operators, whose ranks must then be respected under approximate equivalence. % [[ IS THAT CLEAR ENOUGH? ]] [[ SAY MORE?  E.g. ....  `` part (b) of Corollary~\ref{cor:sums} shows that it does hold in a limited instance: if $\bspi$ converges weakly-globally, if $\phi_1$ and $\phi_2$ are mutually singular, and if they both lie in $\P_1(\bspi;\rmM_k)$, then so do their convex combinations.'' ]]

\begin{rmk}\label{rmk:approx-contain}
A more recent convention refers to approximate containment as `weak containment in the sense of Zimmer': see, for instance,~\cite{BekdelaHarValKaz,KecGAEGA,AbeEle11}.  Zimmer does indeed use the term `weak containment' in~\cite[Dfn. 7.3.5]{Zim84} for what we call `approximate containment'.  However, in~\cite{Zim84} Zimmer himself attributes this definition to Fell's original papers.  I have not found it there explicitly, but it is certainly suggested by Fell's manipulations with the quotient topology: see, for example,~\cite[Lem. 1.3]{Fel62}.  The book~\cite{DorFelRepI} presents that topology again (calling it `regional' rather than `quotient') in a way that makes the connection clearer, and the preface to that book suggests that it was decades in preparation.  These references effectively define approximate containment directly through the criterion in Lemma~\ref{lem:approx-contain}, and its equivalence with Hadwin's notion does not seem to be so widely known. \fin % MORE REFS NOT USED: see~\cite{God48},~\cite{Fel60} or.
\end{rmk}

\section{Background: hyperspaces and their topologies}\label{sec:Vietoris}

If $T$ is a topological space, then we write $\calK(T)$ for its \textbf{hyperspace}: the space of all nonempty compact subsets of $T$.  Unless specified otherwise, this is endowed with the \textbf{Vietoris topology}.  This is generated by the \textbf{Vietoris basic sets}: these are the sets
\begin{multline*}
	\V(U_1,\dots,U_k) := \{K \in \calK(T):\ K\subset U_1\cup\cdots \cup U_k\\ \hbox{and}\ K\cap U_i \ne \emptyset\ \hbox{for}\ i=1,2,\dots,k\}
\end{multline*}
as $U_1$, \dots, $U_k$ range over all open subsets of $T$.

The basic properties of the Vietoris topology can be found in~\cite[Ex.s 2.7.20 and 3.12.27]{Eng89} or~\cite[Subs. I.4.F]{Kec95}.  Here we recall a few that we need without proof.

If $T$ is compact and metrizable, then so is $\calK(T)$.  For example, starting from a compact metric for $T$, a compact metric for $\calK(T)$ is given by the resulting Hausdorff metric~\cite[Ex. 4.5.23]{Eng89}.  The examples in our applications below fall into this class, but often without any particular canonical choice of metric, so we generally use the more abstract description in terms of Vietoris basic sets.  We restrict our attention to this case in the rest of this section.

Because of metrizability, the Vietoris topology on $\calK(T)$ may be described in terms of limits.  In hyperspaces we can describe this in the following explicit way.  Given a sequence $(K_n)_{n\ge 1}$ in $\calK(T)$, its \textbf{topological upper} and \textbf{lower limits} are the sets
\[\rm{T}\limsup_{n\to\infty} K_n := \{t \in T:\ \hbox{every nbhd of $t$ meets $K_n$ for infinitely many $n$}\}\]
and
\begin{align*}
\rm{T}\liminf_{n\to\infty} K_n  &:= \{t \in T:\ \hbox{every nbhd of $t$ meets $K_n$ for all sufficiently large $n$}\}\\
&= \{t \in T:\ t = \lim_n t_n\ \hbox{for some convergent sequence with $t_n \in K_n$}\}.
\end{align*}
These are both closed.  If $t_n \in K_n$ for each $n$, then any subsequential limit of $(t_n)_{n\ge 1}$ lies in $\rm{T}\limsup_n K_n$, so compactness implies that this set is nonempty.  Finally, the sequence $(K_n)_{n\ge 1}$ converges in the Vietoris topology if and only if
\begin{equation}\label{eq:Vietoris-conv}
\rm{T}\limsup_n K_n = \rm{T}\liminf_n K_n,
\end{equation}
and then this common set is the limit of the sequence.

An alternative topology on $\calK(T)$ is the \textbf{lower Vietoris topology}: see, for instance,~\cite[App.]{Mic51}. This is generated by the basic open sets of the form
\[	\V'(U_1,\dots,U_k) := \{K \in \calK(T):\ K\cap U_i \ne \emptyset\ \hbox{for}\ i=1,2,\dots,k\}\]
for some open subsets $U_1$, \dots, $U_k$ of $T$. In the previous notation, this is simply equal to $\V(U_1,\dots,U_k,T)$, so the lower Vietoris topology is weaker than the Vietoris topology.  Like the Vietoris topology, it is second countable in case $T$ is compact and metrizable.

However, in all but degenerate cases, the lower Vietoris topology is much weaker than the Vietoris topology, and also less well-behaved.  Indeed, if $T$ is a Hausdorff space with at least two points, then the lower Vietoris topology is not T$_1$, because for any $K \in \calK(T)$ the lower-Vietoris closure of the singleton $\{K\}$ is the whole of $\calK(K)$, regarded as a subset of $\calK(T)$.  On the other hand, since the lower Vietoris topology is weaker than the Vietoris topology, any subset of $\calK(T)$ that is Vietoris-closed is still quasi-compact in the lower Vietoris topology (meaning that every open cover has a finite sub-cover, but the space fails to be Hausdorff~\cite[p132]{Eng89}).

\section{Topologies on approximate equivalence classes}\label{sec:q-and-sq}

\subsection{The space of approximate equivalence classes}

Choose a list of all separable Hilbert spaces up to isomorphism, say $\bbC^{\oplus n}$ for $n\ge 0$ and $\ell^2(\bbN)$.  Let $\Rep(\A)$ be the set of all \emph{unital} representations of $\A$ on any of these spaces, let $\Rep^\sim(\A)$ be the resulting set of unitary equivalence classes, and $\Rep^\sim_{\rm{a}}(\A)$ be the set of approximate equivalence classes.  Similarly, if $\G$ is a countable group, then we write $\Rep^\sim_{\rm{a}}(\G)$ for the space of approximate equivalence classes of unitary representations of $\G$.

%\begin{cor}\label{cor:approx-equiv}
%	Two representations $\pi$ and $\rho$ are approximately equivalent if and only if $\ol{\S_\bullet(\pi)} = \ol{\S_\bullet(\rho)}$.
%	\end{cor}
%
%\begin{proof}
%	Assuming they are approximately equivalent, let the unitary operators $U_n$ be as in Definition~\ref{dfn:approx-equiv}, and let $v_1$, $v_2$, \dots, $v_k \in H_\pi$.  Then
%	\[\Phi^\rho_{U_nv_1,\dots,U_nv_k}(a) = [\langle \rho(a)U_nv_j,U_nv_i\rangle] = [\langle \pi(a)v_j,v_i\rangle + o(1)] \quad (a \in \A),\]
%	so $\Phi^\pi_{v_1,\dots,v_k}$ lies in the closure of $\S_k(\rho)$.  Therefore $\ol{\S_k(\pi)} \subset \ol{\S_k(\rho)}$ for any $k$, and the reverse inclusion holds as well by symmetry.
%	
%	On the other hand, for any $a \in \A$ and representation $\pi$, we have
%	\begin{align*}
%	\rm{rank}\,\pi(a) &= \sup\big\{k \in \bbN \cup \{0\}:\ \exists\ \hbox{orthonormal}\ v_1,\dots, v_k \in H_\pi\ \hbox{such that}\\
%	&\qquad \qquad \qquad \qquad \qquad \quad [\langle \pi(a)v_j,\pi(a)v_i\rangle]\ \hbox{is nonsingular}\big\} \\
%	&= \sup\big\{k \in \bbN \cup \{0\}:\ \exists \phi \in \S_k(\pi)\ \hbox{such that}\ \Det\,\phi(a^\ast a) \ne 0\big\}.
%	\end{align*}
%	For fixed $a$, the condition that $\phi$ satisfies $\Det\,\phi(a^\ast a) \ne 0$ defines an open subset of $\S_k(\A)$.  Therefore the last line above is unchanged if we replace $\S_k(\pi)$ with $\ol{\S_k(\pi)}$.  Consequently, if this set is the same for $\rho$ as for $\pi$, then $\pi(a)$ and $\rho(a)$ have the same rank for every $a$.  Now Theorem~\ref{thm:approx-equiv} completes the proof.
%	\end{proof}

If $\pi \in \Rep(\A)$, then by Lemma~\ref{lem:approx-contain} each of the sets $\ol{\S_k(\pi)}$ depends only on the class of $\pi$ in $\Rep^\sim_{\rm{a}}(\A)$.  As in~\cite{APE4}, we call $\ol{\S_k(\pi)}$ the \textbf{$k$-summary} of $\pi$, and we sometimes regard it instead as a function of a class in $\Rep^\sim_{\rm{a}}(\A)$.  Lemma~\ref{lem:approx-contain} tells us that the equivalence class $\t{\pi} \in \Rep^\sim_{\rm{a}}(\A)$ of a separable unital representation $\pi$ is uniquely parameterized by the sequence
\[\ol{\S_\bullet(\pi)} := \big(\ol{\S_1(\pi)},\ol{\S_2(\pi)},\dots\big) \in \prod_{k\ge 1} \calK(\S_k(\A)).\]
We call this sequence the \textbf{summary} of $\pi$.  Since it depends only $\t{\pi}$, we may also write it as $\ol{\S_\bullet(\t{\pi})}$.

We next identify the subset of all summaries in this product space more explicitly.

\begin{prop}\label{prop:approx-equiv}
A sequence $Z_\bullet = (Z_1,Z_2,\dots)$ in $\prod_{k\ge 1}\calK(\S_k(\A))$ is the summary of some separable unital representation $\pi$ if and only if it has these two closure properties:
	\begin{itemize}
		\item[i.] if $\phi \in Z_k$ then $\S_\ell(\pi_\phi) \subset Z_\ell$ for every $\ell$;
		\item[ii.] if $\phi \in Z_k$ and $\psi \in Z_\ell$ then some joining of them lies in $Z_{k+\ell}$.
	\end{itemize}
\end{prop}

\begin{rmk}
Using Proposition~\ref{prop:Leb-reps-2}, one can prove that, in the presence of condition (i) above, condition (ii) is implied by this weaker version:
\begin{itemize}
\item[ii$^\prime$.] if $\phi \in Z_k$ and $\psi \in Z_\ell$ are mutually singular, then $\rm{diag}(\phi,\psi) \in Z_{k+\ell}$.
\end{itemize}
However, the slightly more general formulation in (ii) seems to be more convenient for our work later. \fin
\end{rmk}

%In practice, a more precise version property (ii) of Definition~\ref{dfn:catalog} can be more convenient.
%
%\begin{lem}\label{lem:catalog-improve}
%Let $Z_\bullet \in \cal{Z}\A$ and let $\phi \in Z_k$ and $\psi \in Z_\ell$.  Then there are an integer $m\le k+\ell$, an element $\theta\in Z_m$, and matrices $U \in \bf{U}(k,m)$ and $V \in \bf{U}(\ell,m)$ such that
%\end{lem}
%
%\begin{proof}
%By property (ii), choose $\theta' \in \bigcup_{m'}Z_{m'}$ such that $\phi$ and $\psi$ are both associated to $\pi_{\theta'}$, say by the orthonormal tuples $x_1$, \dots $x_k$ and $y_1$, \dots, $y_\ell$ in $H_{\theta'}$.  Let $M$ be the span of $\{x_1,\dots,x_k,y_1,\dots,y_\ell\}$, let $m$ be the dimension of $\dim M$, and let $v_1$, \dots, $v_m$ be an orthonormal basis of $M$.  Then there are $U \in \bf{U}(k,m)$ and $V \in \bf{U}(\ell,m)$ such that
%\[ [x_1,\dots,x_k] = [v_1,\dots,v_m]U \qquad \hbox{and} \qquad [y_1,\dots,y_\ell] = [v_1,\dots,v_m]V.\]
%With $\theta := \Phi^{\pi_{\theta'}}_{v_1,\dots,v_k}$, this turns directly into~\eqref{eq:phi-psi-theta}.
%\end{proof}

\begin{proof}
\emph{($\Rightarrow$).}\quad Let $Z_\bullet := \ol{\S_\bullet(\pi)}$.  It satisfies property (i) by the transitivity of approximation association (see Lemma~\ref{lem:typ-trans}).  To check property (ii), suppose that $\phi \in \ol{\S_k(\pi)}$ and $\psi \in \ol{\S_\ell(\pi)}$. Then there are sequences of $k$-tuples $X_n$ and $\ell$-tuples $Y_n$ in $H_\pi$ such that
\[\Phi^\pi_{X_n} \to \phi \qquad \hbox{and} \qquad \Phi^\pi_{Y_n} \to \psi \qquad \hbox{as}\ n\to\infty.\]
The sequence of combined types $\Phi^\pi_{[X_n,Y_n]}$ is normalized by~\eqref{eq:still-normalized}, and hence norm-bounded by~\eqref{eq:norm-cts}. It therefore has subsequential limits by the Banach--Alaoglu theorem and the metrizability of $\S_{k+\ell}(\A)$.  Any of these is a joining of $\phi$ and $\psi$ that lies in $\ol{\S_{k+\ell}(\pi)}$.

\vspace{7pt}

\emph{($\Leftarrow$).}\quad Given $Z_\bullet$, we must construct a suitable representation $\pi$.

Let $S_k$ be a countable dense subset of $Z_k$ for each $k$, and let $S$ be their union.  Enumerate $S$ as $\{\phi_1,\phi_2,\dots\}$, and let $\rho_i = \pi_{\phi_i}$ for each $i$.  As minimal dilations, these are all unital.

For notational purposes, let $\pi_0$ be a trivial representation. We now define by recursion a sequence of representations $\pi_i$, $i\ge 1$, that has the following properties:
\begin{itemize}
\item $\pi_{i+1}$ contains both $\pi_i$ and $\rho_{i+1}$ for each $i\ge 0$;
\item each $\pi_i$ has a finite cyclic tuple;
\item $Z_\bullet$ contains $\ol{\S_\bullet(\pi_i)}$ for each $i$.
\end{itemize}
To begin the recursion, let $\pi_1 := \rho_1$.  Now suppose that $\pi_1$, \dots, $\pi_i$ have already been constructed.  Since $\pi_i$ and $\rho_{i+1}$ both have finite cyclic tuples, and since \[Z_\bullet \supset \ol{\S_\bullet(\pi_i)} \cup \ol{\S_\bullet(\rho_{i+1})},\]
applying property (ii) and then property (i) gives a new unital representation $\pi_{i+1}$ with a finite cyclic tuple that contains both $\pi_i$ and $\rho_{i+1}$ and satisfies $Z_\bullet \supset \ol{\S_\bullet(\pi_{i+1})}$.  This continues the recursion.

To finish, let $\pi^{M_i}_{i+1}$ be a subrepresentation of $\pi_{i+1}$ which is equivalent to $\pi_i$, and define
\[\pi:= \pi_1 \oplus \pi_2^{M_1^\perp}\oplus \pi_3^{M_2^\perp}\oplus \cdots.\]
This contains an increasing sequence of subrepresentations equivalent to $\pi_1$, $\pi_2$, \dots, and their union is dense in $H_\pi$.  It follows by Lemma~\ref{lem:unif-cts} that
\[\ol{\S_k(\pi)} = \ol{\bigcup_{i\ge 1}\S_k(\pi_i)} \subset Z_k \qquad \hbox{for each}\ k.\]
On the other hand, for every $k$, we also have $\ol{\S_k(\pi)} \supset \ol{\S_k(\rho_i)}$ for each $i$, and hence $\ol{\S_k(\pi)} \supset Z_k$ by the density of $S_k$.
\end{proof}

\subsection{The lower Vietoris and quotient topologies}

We can endow the product space $\prod_{k\ge 1}\calK(\S_k(\A))$ with the product of either the Vietoris topologies or the lower Vietoris topologies.  By a slight abuse of nomenclature, we also refer to this product topology itself as `the' lower Vietoris or Vietoris topology on the product space.  Now we pull this topology back to $\Rep^\sim_{\rm{a}}$ through the map
\begin{equation}\label{eq:lower-Vietoris-to-Fell}
\t{\pi} \mapsto \ol{\S_\bullet(\pi)},
\end{equation}
where $\t{\pi}$ is the approximate equivalence class of a separable unital representation $\pi$.

We consider the pull-back of the lower Vietoris topology in the present subsection, and the pull-back of the Vietoris topology in the next subsection.  We call this pulled-back topology the \textbf{quotient topology} on $\Rep^\sim_{\rm{a}}$. Like the lower Vietoris topology itself, the quotient topology is second countable, but it can have poor separation properties.

A similar pull-back of the lower Vietoris topology is one standard way of defining the Fell topology on $\hat{\A}$ from~\cite{Fel62}, which generalized Godemont's topology on $\hat{\G}$ for a locally compact group $\G$ from~\cite{God48}.  It has many other equivalent characterizations: see~\cite[Chap. 3]{Dix--Cstar} (or~\cite[sec. 7.2]{FolAHA} for a quicker overview in the case of a group C$\sp*$-algebra).

However, these characterization can diverge outside the class of irreducible representations.  Fell studied two of these possibilities in~\cite[Sec. 3]{Fel60b} and~\cite{Fel62}, calling them the `quotient' and `inner hull-kernel' topologies.  Modern practice often assigns Fell's name to the second of these, but here we need the first one.  Fell defines it as the quotient topology on $\Rep^\sim_{\rm{a}}(\A)$ of the pointwise strong operator topology on $\Rep(\A)$.  It coincides with our pull-back of the lower Vietoris topology, so we have adopted Fell's name for it.  We do not work directly with Fell's original definition in this book, but it can be shown to agree with ours by slightly adapting the arguments in~\cite[Sec. 1]{Fel62} (or with a little more work, those in~\cite[Sec. 3.5]{Dix--Cstar}).  We often indicate this topology by the single letter `q' in a phrase such as `q-open'.  One could also study Fell's inner hull-kernel topology in an analogous way: one would simply replace the sets $\ol{\S_k(\pi)}$ in~\eqref{eq:lower-Vietoris-to-Fell} with their convex hulls.  See~\cite[Thm 2.1]{Fel62}, and also compare with Lemma~\ref{lem:approx-and-weak-dilation}.

Since the quotient topology on $\Rep^\sim_{\rm{a}}(\A)$ is second countable, it can be characterized in terms of convergence of sequences.  Unpacking the definition of the lower Vietoris topology, we find that a sequence $(\t{\pi_n})_{n \ge 1}$ of approximate equivalence classes in $\Rep^\sim_{\rm{a}}(\A)$ q-converges to another class $\t{\pi}$ if and only if
\[\rm{T}\liminf_n \ol{\S_k(\pi_n)}\supset \ol{\S_k(\pi)} \qquad \hbox{for every}\ k=1,2,\dots.\]
So the class in $\Rep^\sim_{\rm{a}}(\A)$ of any representation whose $k$-summaries are all sufficiently small is a q-limit of $(\t{\pi_n})_{n\ge 1}$, illustrating how this topology often fails to be T$_1$.

We can also pull back the lower Vietoris basic sets from Section~\ref{sec:Vietoris} to obtain a base for the quotient topology on $\Rep^\sim_{\rm{a}}(\A)$.  It then turns out that the properties from Proposition~\ref{prop:approx-equiv} enable a simplication to this base.

\begin{lem}\label{lem:lower-Vietoris-simplify}
	For any positive integer $k$ and open subset $U$ of $\S_k(\A)$, let
	\[\cal{W}(k,U) := \big\{\t{\pi} \in \Rep^\sim_{\rm{a}}(\A):\ \S_k(\pi)\ \hbox{meets}\ U\big\}.\]
The collection of all subsets of $\Rep^\sim_{\rm{a}}(\A)$ of this form is a base for the quotient topology.
\end{lem}

%[[ GOOD TO USE SUBSCRIPT `0' LIKE THIS MORE GENERALLY WITH THIS BASE? ]]

\begin{proof}
As we vary $k$ and $U$, the definitions that the collection of sets of this form is a sub-base.  To show that it is actually a base, consider two such sets, say $\cal{W}(k,U)$ and $\cal{W}(\ell,V)$, and let $\pi$ be a separable unital representation whose approximate equivalence class lies in $\cal{W}(k,U) \cap \cal{W}(\ell,V)$.  Choose tuples $X$ and $Y$ in $H_\pi$ so that $\Phi^\pi_X \in U$ and $\Phi^\pi_Y \in V$.  Now Lemma~\ref{lem:typ-trans} gives a neighbourhood $W$ of $\Phi^\pi_{[X,Y]}$ such that
\[\cal{W}(k+\ell,W)\subset \cal{W}(k,U) \cap \cal{W}(\ell,V),\]
and the class of $\pi$ still lies in the set on the left.
\end{proof}

The next result is a kind of `partial reverse' to Lemma~\ref{lem:lower-Vietoris-simplify}.  We need it later during the proof of Theorem~\ref{mainthm:sq-LDP}.

\begin{lem}\label{lem:two-q-nbhds}
Let $\pi$ and $\rho$ be separable unital representations, and let $\pi' \lesssim \pi$ and $\rho' \lesssim \rho$ be sub-representations produced by Proposition~\ref{prop:Leb-reps-2}.  If $W$ is any q-neighbourhood of $(\pi'\oplus \rho')^\sim$, then there are q-neighbourhoods $O$ of $\t{\pi}$ and $U$ of $\t{\rho}$ such that $O\cap U \subset W$.
\end{lem}

\begin{proof}
Since $\pi' \lesssim \pi$, any q-neighbourhood of $\t{\pi'}$ is also a q-neighbourhood of $\t{\pi}$, and similarly for $\rho'$ and $\rho$.  It therefore suffices to find q-neighbourhoods $O$ of $\t{\pi'}$ and $U$ of $\t{\rho'}$.  Having observed this, we may henceforth assume that $\pi \spoon \rho$, $\pi' = \pi$, and $\rho' = \rho$.

Next, by Lemma~\ref{lem:lower-Vietoris-simplify}, we may assume that $W = \cal{W}(k,W_0)$, where $k$ is a positive integer and $W_0$ is an open subset of $\S_k(\A)$ which meets $\S_k(\pi \oplus \rho)$.  This means that $W_0$ contains $\Phi^{\pi\oplus \rho}_{x_1,\dots,x_k}$ for some vectors $x_i = (y_i,z_i) \in H_\pi \oplus H_\rho$, $i=1,2,\dots,k$, such that the values
\[a^2 := \sum_{i=1}^k\|y_i\|^2 \qquad \hbox{and} \qquad b^2 := \sum_{i=1}^k \|z_i\|^2\]
satisfy $a^2 + b^2 = k^2$.

Assume first that $a$ and $b$ are strictly positive, define $y'_i:= ky_i/a$ and $z'_i := kz_i/a$ for each $i$, and let $\phi := \Phi^\pi_{y'_1,\dots,y'_k}$ and $\psi := \Phi^\rho_{z'_1,\dots,z'_k}$. Then $\Phi^{\pi \oplus \rho}_{x_1,\dots,x_k}$ is associated to the GNS representation of $\rm{diag}(\phi,\psi)$, and $\phi$ and $\psi$ are disjoint.  Therefore Lemma~\ref{lem:near-diag} gives neighbourhoods $O_0$ of $\phi$ and $U_0$ of $\psi$ in $\S_k(\A)$ such that any $\theta \in \S_{2k}(\A)$ which satisfies
\[\theta[\{1,\dots,k\}] \in O_0 \qquad \hbox{and} \qquad \theta[\{k+1,\dots,2k\}] \in U_0\]
must also lie in $W_0$.  Finally, letting $O := \cal{W}(k,O_0)$ and $U := \cal{W}(k,U_0)$ as in Lemma~\ref{lem:lower-Vietoris-simplify}, this completes the proof.

The arguments when $a = 0$ or $b=0$ are degenerate versions of that above.
\end{proof}

\subsection{The Vietoris and strong-quotient topologies}

We now consider the topology on $\Rep^\sim_{\rm{a}}(\A)$ pulled back from the product of Vietoris topologies through the map in~\eqref{eq:lower-Vietoris-to-Fell}.  As far as I know, this topology on approximate equivalence classes of representations was first studied by Ab\'ert and Elek in~\cite{AbeEle11}; see also~\cite{TucDro15} and~\cite{BurKec20}.  Those papers all focus on an analogous construction for measure-preserving systems, but Ab\'ert and Elek indicate the variants for unitary representations as well.  They phrase their work in terms of Hausdorff metrics rather than Vietoris topologies.  Many of the terms and conventions we use here appeared for the first time in~\cite[Subs. 5.3]{APE4}.

Since $A$ is separable and unital, each $\S_k(\A)$ is compact and metrizable (see~\cite[Prop. V.5.1]{ConFA}, for example).  Therefore the product topology on $\prod_{k\ge 1}\calK(\S_k(\A))$ is also compact and metrizable.  By Lemma~\ref{lem:approx-contain}, the map in~\eqref{eq:lower-Vietoris-to-Fell} is a bijection from $\Rep^\sim_{\rm{a}}(\A)$ to its image, so the pull-back of the Vietoris topology to $\Rep^\sim_{\rm{a}}(\A)$ is still metrizable. Following~\cite{APE4}, we call this pull-back the \textbf{strong-quotient topology} on $\Rep^\sim_{\rm{a}}(\A)$.   This name locates this topology relative to some other standard notions for representations: see Subsection~\ref{subs:other-tops} below.  We often indicate it by the prefix `sq-', as in `sq-open set'.  Since the strong-quotient topology is metrizable, it may be described in terms of sequences: in practice, this means we apply the criterion in~\eqref{eq:Vietoris-conv} to sequences of $k$-summaries for each $k$.

% [[ NO LONGER NEED? ]]
%\begin{lem}\label{lem:typ-not-typ}
%	A sequence $(\pi_n)_{n\ge 1}$ in $\Rep^\sim_{\rm{a}}(\A)$ converges to a limit in the strong-quotient topology if and only if, for every positive integer $k$, every $\phi \in \S_k(\A)$ satisfies one of the following:
%	\begin{itemize}
%		\item[i.] every neighbourhood of $\phi$ meets $Z_{n,k}$ for all sufficiently large $n$;
%		\item[ii.] some neighbourhood of $\phi$ meets $Z_{n,k}$ for only finitely many $n$.
%	\end{itemize}
%	In this case,
%\begin{align*}
%\lim_n Z_{n,k} &= \{\phi \in \S_k(\A):\ \phi\ \hbox{satisfies (i)}\}\\
%&= \{\lim_n\phi_n:\ \phi_n \in Z_{n,k}\ \hbox{and}\ (\phi_n)_{n\ge 1}\ \hbox{converges}\}.
%\end{align*}
%\qed
%\end{lem}

The next property is less immediate.

\begin{lem}\label{lem:catalog-closed}
The strong-quotient topology on $\Rep^\sim_{\rm{a}}(\A)$ is compact.
\end{lem}

This can be proved in the same way as~\cite[Thm. 3 and Prop. 7]{AbeEle11} and~\cite[Thm. 5.1]{TucDro15}.  Here we give an alternative proof which avoids their use of ultralimits and depends on Proposition~\ref{prop:approx-equiv}.

\begin{proof}
Let $\cal{Z}$ be the image of $\Rep^\sim_{\rm{a}}(A)$ in $\prod_{k\ge 1}\calK(\S_k(\A))$ under the map in~\eqref{eq:lower-Vietoris-to-Fell}.  This image is characterized by Proposition~\ref{prop:approx-equiv}.  Since that map is a bijection and the Vietoris topologies are compact, it suffices to show that $\cal{Z}$ is a closed subset of that product space.  So now let $(Z_{n,\bullet})_{n\ge 1}$ be a sequence in $\cal{Z}$ that converges to a limit $Z_\bullet$ in the Vietoris topology.  We must show that $Z_\bullet$ still satisfies properties (i) and (ii) from Proposition~\ref{prop:approx-equiv}.

\vspace{7pt}

\emph{Property (i).}\quad Suppose that $\phi \in Z_k$, $\psi \in \S_\ell(\pi_\phi)$, and $U$ is a neighbourhood of $\psi$.  Then Lemma~\ref{lem:typ-trans} gives a neighbourhood $V$ of $\phi$ such that
\begin{equation}\label{eq:U-V-nonempty}
\X(\pi,V)\ne \emptyset \qquad \Rightarrow \qquad \X(\pi,U) \ne \emptyset
\end{equation}
for any representation $\pi$.

Since $\phi \in Z_k = \rm{T}\lim_n Z_{n,k}$, there are elements $\phi_n \in Z_{n,k}$ that converge to $\phi$.  These elements witness that $\X(\pi_{\phi_n},V)$ is nonempty for all sufficiently large $n$, and so $\X(\pi_{\phi_n},U)$ is also nonempty for all sufficiently large $n$, by~\eqref{eq:U-V-nonempty}. Therefore $U$ meets $Z_{n,\ell}$ for all sufficiently large $n$.  Since $U$ is an arbitrary neighbourhood of $\psi$, this shows that $\psi \in Z_\ell$.

\vspace{7pt}

\emph{Property (ii).}\quad Let $\phi \in Z_k$ and $\psi \in Z_\ell$.  Choose sequences $\phi_n \in Z_{n,k}$ converging to $\phi$ and $\psi_n \in Z_{n,\ell}$ converging to $\psi$. For each $n$, property (ii) for $Z_{n,\bullet}$ gives a joining $\theta_n$ of $\phi_n$ and $\psi_n$ that lies in $Z_{n,k+\ell}$. By compactness, the sequence $(\theta_n)_{n\ge 1}$ has a subsequential limit in $\S_{k+\ell}(\A)$.  This limit is a joining of $\phi$ and $\psi$ that lies in $Z_{k+\ell}$.
\end{proof}

The next lemma is a cousin of Lemma~\ref{lem:lower-Vietoris-simplify} for the strong-quotient topology.

\begin{lem}\label{lem:Vietoris-simplify}
Consider the subsets of $\Rep^\sim_{\rm{a}}(\A)$ that have the form $U\cap \cal{W}'(\ell,O)$, where $U$ is a q-open subset of $\Rep^\sim_{\rm{a}}(\A)$ and
		\[\cal{W}'(\ell,O) := \big\{\t{\kappa} \in \Rep^\sim_{\rm{a}}(\A):\ \ol{\S_\ell(\kappa)} \subset O\big\}\]
for some positive integer $\ell$, and some open subset $O$ of $\S_\ell(\A)$.  The collection of all such subsets is a base for the strong-quotient topology of $\Rep^\sim_{\rm{a}}(\A)$.
	\end{lem}

\begin{proof}
As in the proof of Lemma~\ref{lem:lower-Vietoris-simplify}, this collection is a sub-base for the strong-quotient topology simply by transplanting the usual sub-base for the Vietoris topology on $\prod_{k\ge 1}\calK(\S_k(\A))$.  To show that it is a base, consider two such sets, say $U_1\cap \cal{W}'(\ell_1,O_1)$ and $U_2\cap \cal{W}'(\ell_2,O_2)$, and let $\pi$ be a separable unital representation whose approximate equivalence class lies in $U_1\cap \cal{W}'(\ell_1,O_1)\cap U_2\cap \cal{W}'(\ell_2,O_2)$.  Now let
	\[L_1 = \{1,\dots,\ell_1\}, \qquad L_2 := \{\ell_1+1,\dots,\ell_1 + \ell_2\},\]
and
	\[O := \{\phi \in \S_{\ell_1 + \ell_2}(\A):\ \phi[L_1] \in O_1\ \hbox{and}\ \phi[L_2] \in O_2\}.\]
Then property (i) from Proposition~\ref{prop:approx-equiv} gives $\ol{\S_{\ell_1 + \ell_2}(\pi)} \subset O$, and therefore the class of $\pi$ lies in
\[(U_1\cap U_2)\cap \cal{W}'(\ell_1 + \ell_2,O) \subset U_1\cap \cal{W}'(\ell_1,O_1)\cap U_2\cap \cal{W}'(\ell_2,O_2).\]
	\end{proof}

Related to sq-convergence, the next definition is also used at many points below.  Let $(\pi_n)_{n\ge 1}$ be a sequence of separable unital representations and let $\t{\pi_n}$ for $n\ge 1$ be their approximate equivalence classes.

\begin{dfn}\label{dfn:asymp-assoc}
	A completely positive map $\phi\in \B(\A,\rmM_k)_+$ is \textbf{asymptotically associated} to $(\pi_n)_{n\ge 1}$ (or to $(\t{\pi_n})_{n\ge 1}$) if, for every neighbourhood $U$ of $\phi$ in $\B(\A,\rmM_k)_+$, there are $k$-tuples $V_n$ in $H_{\pi_n}$ such that $\Phi^{\pi_n}_{V_n} \in U$ for infinitely many $n$.
	\end{dfn}

By Lemma~\ref{lem:lcsc} and a diagonal argument, $\phi$ is asymptotically associated to $(\pi_n)_{n\ge 1}$ if and only if there are a subsequence $n_1 < n_2 < \dots$ and $k$-tuples $V_i$ in $H_{\pi_{n_i}}$ such that $\Phi^{\pi_{n_i}}_{V_i} \to \phi$.  If $\phi$ is normalized, then we may normalize these tuples as well, and conclude that $\phi$ is asymptotically associated to $(\pi_n)_{n\ge 1}$ if and only if $\phi$ belongs to $\rm{T}\limsup_n \ol{\S_k(\pi_n)}$. The choice of $\rm{T}\limsup$ rather $\rm{T}\liminf$ here is a matter of convention.  If $(\pi_n)_{n\ge 1}$ strong-quotient converges then this choice makes no difference: in that case, the limit of $\ol{\S_k(\pi_n)}$ is precisely the set of maps in $\S_k(\A)$ that are asymptotically associated to $(\pi_n)_{n\ge 1}$.

\subsection{Other topologies and their relations}\label{subs:other-tops}

Both Fell's quotient and inner hull-kernel topologies can be defined by comparing representations through their $k$-summaries for all $k$.  From this point of view, the difference is that we take convex combinations of the $k$-summaries before defining the inner hull-kernel topology.  As a consequence, the Fell topology is insensitive to multiplicities of subrepresentations, whereas the quotient topology does remember them to some extent: specifically, through the ranks of all compact operators in the representations, as a consequence of Theorem~\ref{thm:Voi-main}.

On the other hand, neither the quotient topology nor the Fell topology controls the values of operator norms $\|\pi(a)\|$ for a fixed element $a$ of $\A$ and a variable choice of the representation $\pi$.  In general these norms are only lower semicontinuous in those topologies, not continuous; see~\cite[Chap. II]{Fel60} or~\cite[Sec. 3.3]{Dix--Cstar}.  Recent years have seen a swell of interest in a mode of convergence for representations called `strong convergence', which is defined precisely to control these operator norms; see, for intance, the recent surveys~\cite{Magee--survey,vanHan--strong-survey}.  On the other hand, control of those norms does not require controlling multiplicities, or tuples of more than one vector, and so strong convergence is again insensitive to multiplicities.  In fact, the strong convergence of an AP sequence $(\pi_n)_{n\ge 1}$ is equivalent to Vietoris convergence of the sequence of closed convex hulls $\ol{\rm{conv}}\,\S_1(\pi_n)$.

For example, if $\A = C(\bbT)$, then a non-degenerate representation is generated by a single unitary operator.  For a sequence of unitaries $(U_n)_{n\ge 1}$ and another one $U$, the resulting representations converge strongly if and only if their spectra satisfy $\s(U_n) \to \s(U)$ in the Hausdorff distance, while strong-quotient convergence additionally keeps track of the multiplicities of isolated eigenvalues.

I have chosen the term `strong-quotient topology' because strong-quotient convergence is a simultaneous strengthening of (i) convergence in the quotient topology and (ii) strong convergence.  The previous subsections show its relationship to the quotient topology.  It controls strong convergence because, if $a \ge 0$, then any representation $\pi$ satisfies
\begin{equation}\label{eq:sq-to-norm}
\|\pi(a)\| = \sup\{\phi(a):\ \phi \in \ol{\S_1(\pi)}\},
\end{equation}
and in general we may apply this identity to $a^\ast a$.

The next table summarizes this point of view on the Fell, quotient, strong, and strong-quotient topologies.

\begin{center}
\begin{tabular}{ccc}%{p{3cm}p{5cm}p{3cm}}
%\toprule
This mode...  & ... means convergence of ... & ... in this topology\\
%\midrule
Fell & $\ol{\rm{conv}}\,\S_1(\pi_n)$ & lower Vietoris\\
quotient & $\ol{\S_k(\pi_n)}$ for each $k$ & lower Vietoris\\
strong & $\ol{\rm{conv}}\,\S_1(\pi_n)$ & Vietoris\\
strong-quotient & $\ol{\S_k(\pi_n)}$ for each $k$ & Vietoris\\
%\bottomrule
\end{tabular}
\end{center}

Although these four topologies differ in general, sometimes special features of a representation $\pi$ imply that convergence of a sequence to $\pi$ in a weaker topology actually implies it in a stronger topology.

%PUT IN to your survey??

\begin{prop}\label{prop:perfect}
Let $\rho$ be a separable unital representation whose support is a perfect subset of $\hat{\A}$ for the Fell topology: that is, it has no isolated points.
\begin{enumerate}
\item[a.] If another separable unital representation $\pi$ weakly contains $\rho$, then $\pi \gtrsim_\rm{a} \rho$.
\item[b.] If another separable unital representation $\pi$ is weakly equivalent to $\rho$, then ${\pi \simeq_\rm{a} \rho}$.
\item[c.] If a sequence $(\pi_n)_{n\ge 1}$ converges to $\rho$ strongly, then $\t{\pi_n}$ sq-converges to $\t{\rho}$.
\end{enumerate}
\end{prop}

\begin{proof}
\emph{Part (a).}\quad  Consider the ideals
\[\I_0 := \ker \pi \qquad \hbox{and} \qquad \I_1 := \pi^{-1}[\mathfrak{K}(H_\pi)].\]
We have $\I_0 \subset \ker \rho$ by assumption.  To complete the proof we show, that in fact $\I_1 \subset \ker \rho$, for then Theorem~\ref{thm:get-sum} gives $\pi \simeq_{\rm{a}}\pi \oplus \rho$.

Both $\pi$ and $\rho$ give well-defined quotient representation of $\A/\I_0$, say $\pi'$ and $\rho'$.  According to~\cite[Prop 3.2.1]{Dix--Cstar}, there is a canonical homeomorphism
\[\rm{spt}\,\pi = \{\kappa \in \hat{\A}:\ \kappa|\I_0 = 0\} \to \hat{\A/\I_0}.\]
It carries the subset $\rm{spt}\,\rho$ to $\rm{spt}\,\rho'$, so $\rm{spt}\,\rho'$ is also perfect.  Therefore, after replacing $\pi$ with $\pi'$ and $\rho$ with $\rho'$ if necessary, we may assume that $\I_0 = 0$ and $\rm{spt}\,\pi = \hat{\A}$.

Having done so, it follows that $\pi|\I_1$ is an injective representation into $\mathfrak{K}(H_\pi)$.  As a result, the structure theory for C*-algebras of compact operators shows that $\I_1$ is isomorphic to the restricted direct sum of $\mathfrak{K}(H_j)$ for some family $(H_j:\ j \in J)$ of pairwise-orthogonal subspaces of $H_\pi$ (see~\cite[Sec. 1.4]{Arv76}, for example).  As a result, $\hat{\I_1}$ consists of a collection of isolated points indexed by $J$. %ALT REF~\cite[Thm. 16.18]{ConOT}

Finally, according to~\cite[Prop. 3.2.1]{Dix--Cstar} again, the subset
\[\hat{\A}^{\I_1} = \{\kappa \in \hat{\A}:\ \kappa|\I_1 \ne 0\}\]
is open in $\hat{\A}$, and restriction of representations defines a homeomorphism from $\hat{\A}^{\I_1}$ to $\hat{\I_1}$.  Therefore $\hat{\A}^{\I_1}$ is a open subset of $\hat{\A}$ that also consists of a family of isolated points.  Since $\rm{spt}\,\rho$ is perfect, this requires that $\hat{\A}^{\I_1}$ and $\rm{spt}\,\rho$ are disjoint, which implies that $\rho|\I_1 = 0$, as required.

\vspace{7pt}

\emph{Part (b).}\quad Part (a) gives $\pi \gtrsim_{\rm{a}} \rho$.  On the other hand, applying part (a) to $\rho$ (in place of $\pi$) and $\rho^{\oplus \infty}$ (in place of $\rho$), it gives that $\rho \gtrsim_{\rm{a}} \rho^{\oplus \infty}$.  Therefore $\rho$ contains no nonzero compact operators, and $\ker \rho \subset \ker \pi$ by assumption, so now Theorem~\ref{thm:get-sum} gives $\pi \lesssim \rho\oplus \pi \simeq_{\rm{a}} \rho$.

\vspace{7pt}

\emph{Part (c).}\quad Since $\Rep^\sim_{\rm{a}}(\A)$ is compact and metrizable, the sequence $(\t{\pi_n})_{n\ge 1}$ has a subsequential limit in the strong-quotient topology, say $\t{\pi}$.  It suffices to prove that $\pi \simeq_\rm{a} \rho$.  Since sq-convergence implies strong convergence, this limit must satisfy
\[\|\pi(a)\| = \lim_n \|\pi_n(a)\| = \|\rho(a)\| \qquad (a \in \A).\]
So $\pi$ is weakly equivalent to $\rho$, and now the result follows from part (b).
\end{proof}

%Intuitively, one can understand why strong convergence is not always implied by quotient-topology convergence from the following description.  Consider a sequence $(\pi_n)_{n\ge 1}$ and a desired limit $\pi$.
%\begin{itemize}
%\item For quotient-topology convergence: for every open subset $O$ of $\S_k(\A)$ that meets $\ol{\S_k(\pi)}$, and for all sufficiently large $n$, there must be \emph{some} $k$-tuples $V_n$ in $H_{\pi_n}$ such that $\Phi^{\pi_n}_{V_n} \in O$.
%\item For strong convergence: for every $a \in \A$, every $\eps > 0$, and all sufficiently large $n$, \emph{all} unit vectors $v \in H_{\pi_n}$ must satisfy $\|\pi_n(a)v\|\le \|\pi(a)\|+\eps$.
%\end{itemize}
%

\begin{rmk}\label{rmk:local-global}
Let $G_n = (V_n,E_n)$ for $n=1,2,\dots$ be finite graphs whose degrees are uniformly bounded.  Combinatorists have defined several modes of convergence that capture different asymptotic properties of such a sequence.  Among these is `local-global convergence', which originates in~\cite{BolRio11,HatamiLovSze14}.  If $A_k$ is the finite set $\{1,2,\dots,k\}$, and $\bf{x}_n$ is an $A_k$-colouring of $V_n$ for each $n$, then one can define an `empirical distribution' $P_{G_n,\bf{x}_n}$ that describes the local statistics of the colouring relative to the local statistics of the underlying graph.  Very roughly, the sequence $(G_n)_{n\ge 1}$ converges `locally-globally' if, for each $k$, the set of distributions that one can approximate using this $G_n$ and different choices of $\bf{x}_n$ converges in the Vietoris topology of a suitable space of possible `limit distributions'.  This is a graph-theoretic analog of strong-quotient convergence, and is cited by the authors of~\cite{AbeEle11} as one of their inspirations. \fin
\end{rmk}

%\subsection*{\emph{Notes and further references}}

%For a locally compact group $\G$, Godemont defined a topology on the unitary dual $\hat{\G}$ in~\cite{God48}, and showed that it corresponds to the Jacobson topology on $\rm{Prim}(\G)$.  His definition is in terms of the weakly associated sets of positive definite functions, and he also defined the relation of weak containment on representations of $\G$ in the process.  For a general C$\sp*$-algebra $\A$, Fell then introduced his topology on $\hat{\A}$ and related it to weak containment in~\cite{Fel60}.  He studied its generalizations to larger classes of representations in~\cite{Fel62}, for the purpose of applying it to induced representations.  Thorough textbook accounts of this theory are given in~\cite[Appendix F]{BekdelaHarValKaz} for group representations and~\cite[Chapter VII]{DorFelRepI} for C$\sp*$-algebras. Note that the latter reference calls the resulting topology on a space of representations the `regional topology'.

\section{Regular representations of countable groups}\label{sec:regular}

When $\A = C^\ast \G$, the left regular representation $\l$ plays a distinguished role in the representation theory of $\A$.  The image $C^\ast_{\rm{r}}\G := \l(C^\ast \G)$ is the \textbf{reduced group C*-algebra} of $\l$.

\begin{lem}\label{lem:mix}
If $\G$ is infinite then the C*-algebra $C_{\rm{r}}^\ast \G$ contains no nonzero compact operators, and we have $\l \simeq_{\rm{a}} \l^{\infty}$.
\end{lem}

\begin{proof}
The right regular representation of $\G$ on $\ell^2(\G)$ commutes with $\l$.  Also, like $\l$, the right regular representation is mixing~\cite[Sec. 11]{KecGAEGA}, and therefore it has no finite-dimensional subspaces.  As a result, the right regular representation cannot commute with any nonzero finite-rank operator on $\ell^2(\G)$, and hence also not with any nonzero compact operator.  Now Theorem~\ref{thm:get-sum} gives the last conclusion.
\end{proof}

The \textbf{reduced dual} $\hat{\G}_\rm{r}$ of $\G$ is the support of $\l$ in the unitary dual $\hat{\G}$.  The reduced dual is canonically homeomorphic to $\hat{C^\ast_{\rm{r}}\G}$~\cite[Prop. 18.2.3]{Dix--Cstar}.

A positive definite function $\phi$ on $\G$ is called \textbf{tempered} if it is approximately associated to $\l$.  Temperedness is more often defined with `weakly associated' in place of `approximately associated', but in the case of $\l$ these coincide by a result of Takenouchi: see~\cite[Prop. 18.3.5(a)]{Dix--Cstar}. This also follows by combining Lemmas~\ref{lem:mix} and~\ref{lem:approx-and-weak-dilation}. %For a discrete group, this result can actually be deduced from Proposition~\ref{prop:RadNik}.  In that case, if $\l$ is the regular representation, then it has a tracial cyclic vector, and so Proposition~\ref{prop:RadNik} shows that $\S_1(\l^{\oplus \infty})$ is actually equal to $\S_1(\l)$.  Therefore all the intermediate spaces $\S_1(\l^{\oplus k})$ are equal to $\S_1(\l)$ as well.  On the other hand, given an element of $\S_1(\l^{\oplus m})$ and an element of $\S_1(\l^{\oplus n})$, any convex combination of them lies in $\S_1(\l^{\oplus (m+n)})$.
Tempered positive definite functions on free groups are the key objects in Theorem~\ref{mainthm:tempered}, and they play an important role in much of Part~\ref{part:free}.

With a little more care, we can enhance Lemma~\ref{lem:mix} to the next proposition and corollary.  They are well-known, but we include a proof for completeness.  We do not need them in the proofs of our main theorems later, but at some points they help clarify their context.

\begin{prop}\label{prop:Mau}
If $\G$ is a finitely generated infinite discrete group, then the reduced dual $\hat{\G}_\rm{r}$ is a perfect subset of $\hat{\G}$.
\end{prop}

\begin{proof}
Any isolated points in $\hat{\G}_\rm{r}$ would give rise to irreducible direct summands in $\l$ (by applying~\cite[Thm. 1.7]{Wang75} to singleton subsets of $\hat{C^\ast_{\rm{r}}\G}$, for example).  So it suffices to prove that $\l$ has no irreducible direct summands.

If $\G$ is virtually Abelian, then it is amenable, so $\hat{\G}_\rm{r} = \hat{\G}$.  From this point the result reduces to commutative Pontrjagin duality for a finite-index Abelian subgroup.

Now suppose that $\G$ is finitely generated and not virtually Abelian.  Let $\G^{\rm{fc}}$ be the union of all finite conjugacy classes in $\G$.  Then $\G^{\rm{fc}}$ is a normal subgroup of $\G$, and if $\G$ is finitely generated but not virtually Abelian then this subgroup must have infinite index.  At this point a classic theorem of Mautner from~\cite{Mau50} asserts that the von Neumann algebra $\l(\G)''$ is of Type II.  This rules out irreducible direct summands in $\l$, because the corresponding isotypic summand in $\l$ would generate a Type I summand in $\l(\G)''$.
\end{proof}

Using some of Thoma's arguments from~\cite{Thoma64,Thoma68}, one can extend Proposition~\ref{prop:Mau} to some discrete groups that are not finitely generated, but we leave these aside here.

\begin{cor}\label{cor:perfect}
Parts (a)--(c) of Proposition~\ref{prop:perfect} apply when $\rho = \l$. \qed
\end{cor}

%[MM
%\begin{lem}\label{lem:weak-cont-cont}
%	Let $\rho$ be any separable representation and let $\pi \in \hat{\A}$.  Assume that $\pi$ is (i) approximately contained in $\rho$ and (ii) contained in the stiff part of $\rho \oplus \pi$.  Then $\pi$ is actually contained in $\rho$.
%	\end{lem}
%
%\begin{proof}
%	By assumption (ii), there are $a\in \A$ and a unit vector $z \in H_\pi$ such that $\rho(a)\oplus \pi(a)$ is compact and $\pi(a)z \ne 0$.
%
%By assumption (i), there is a sequence of unit vectors $(z_n)_{n\ge 1}$ in $H_\rho$ such that $\Phi^\rho_{z_n}\to \Phi^\pi_z$.  Let $w_n := \rho(a)z_n$ for each $n$.  Then we have
%\[\langle \rho(b)w_n,w_n\rangle = \langle \rho(a^\ast ba)z_n,z_n\rangle \to \langle \pi(a^\ast ba)z,z\rangle = \langle \pi(b)\pi(a)z,\pi(a)z\rangle\]
%for any $b \in \A$.
%By passing to a subsequence if necessary, we may also assume that $(z_n)_{n\ge 0}$ converges weakly to a limit vector $u$. Then compactness of $\rho(a)$ implies that $w_n\to \rho(a)u$ in norm.  Therefore the convergence above implies that $\Phi^\rho_{\rho(a)u} = \Phi^\pi_{\pi(a)z}$.  Since $\pi$ is irreducible and $\pi(a)z\neq 0$, this implies that $\rho(a)u$ generates a copy of $\pi$ inside $\rho$.
%	\end{proof}
% MM]

\chapter{Almost periodic sequences and almost periodic entropy}\label{chap:AP}

In this chapter we recall the basic theory of a new quantity recently introduced in~\cite{APE4}. This is `almost periodic' (or `AP') entropy.  It depends on a completely positive map and a fixed sequence of finite-dimensional representations called an `almost periodic sequence'. It is a rough analog of sofic entropy for representations of C*-algebras rather than measure-preserving group actions.  It is the deterministic predecessor of the two new notions of entropy that we introduce in Part~\ref{part:general}: `annealed' and `zeroth-order' AP entropy.

\section{Almost periodic sequences}\label{sec:FD-conv}

\begin{dfn}[Almost periodic sequence]\label{dfn:AP}
	An \textbf{almost periodic} (`\textbf{AP}') \textbf{sequence} for $\A$ is a sequence of finite-dimensional representations of $\A$ whose dimensions tend to $\infty$.
\end{dfn}

The quotient and strong-quotient topologies from Section~\ref{sec:q-and-sq} offer two possible modes of convergence for an AP sequence.  Neither of these depends on the representations being finite-dimensional.

We sometimes need another mode of convergence which is specific to finite-dimensional representations.  For an AP sequence $\bspi$, this is the convergence of the pulled-back traces on $\A$: that is, $\tr_{d_n}\circ \pi_n \to \tau$ in the weak$^\ast$ topology for some limit tracial state $\tau$ on $\A$.  In the terminology of free probability theory, this asserts that, for any finite subset $F$ of $\A$, the tuples $(\pi_n(a):\ a \in F)$ form a sequence of `microstates' for $F$ in the `non-commutative probability space' $(\A,\tau)$ (see~\cite{Voi02--survey}, for example)  This mode of convergence is rather different from the previous two modes, but the phenomenon of measure concentration does give rise to a connection: see Proposition~\ref{prop:APent-properties-trace}(b) below.

All of these modes of convergence appear naturally in explorations of how the finite-dimensional representations are distributed among all representations in either topology.  We next recall a few aspects of this long-running story.

A group $\G$ is called \textbf{maximally almost periodic} if it has an AP sequence $(\pi_i)_{i\ge 1}$ that separates the elements of the group.  For finitely generated groups, this property turns out to be equivalent to residual finiteness.  Maximally almost periodic groups were introduced by von Neumann~\cite{vonNeu34} and treated by Weil in~\cite[Chap. VII]{WeilIGT} and Dixmier in~\cite[Secs. 16.4--5]{Dix--Cstar}.

If $\A = C^\ast \G$ where $\G$ is countable and discrete, and $\tau$ is the regular tracial state on $\A$, then an AP sequence whose tracial states converge to $\tau$ is called \textbf{asymptotically regular}.  Such an AP sequence is necessarily separating, so a group must be maximally almost periodic to have such a sequence.  On the other hand, starting from any AP sequence that separates the elements of $\G$, a standard construction of running tensor products produces a new such sequence whose tracial states converge to $\tau$.  So a finitely generated group has an asymptotically regular AP sequence if and only if it is residually finite, by the result recalled above.

A general C$\sp*$-algebra $\A$ is called \textbf{residually finite dimensional} if it has a faithful family of finite-dimensional representations.  This is equivalent to the subset $\hat{\A}_\rm{fd}$ of finite-dimensional irreducible representation classes being dense in $\hat{\A}$ for the Fell topology.  For a group C$\sp*$-algebra, this is strictly stronger than maximal almost periodicity in general, and it is not known to hold for many groups.  One important family of examples is free groups, for which this property follows from work of Lubotzky and Shalom~\cite{LubotSha04}.  In fact, they prove the even stronger result that representations induced by finite permutation actions are dense in the Fell topology.  By an extension of this argument one can actually approximately `count' the finite-permutation actions that weakly approximate a given measure-preserving free-group action.  This discovery emerges independently from Bowen's works~\cite{Bowen10free,Bowen10c,Bowen10c} on annealed sofic entropy.  The survey~\cite{BurKec20} goes over this story carefully.

However, AP sequences and AP entropy can also be used to study groups that are not maximally almost periodic.  We do this by requiring that the group law hold only `in the limit' along an AP sequence.  To be specific, let $\G$ be any countable group, and write it as $F/N$ for some free group $F$ and normal subgroup $N$. Now we can look for finite-dimensional representations of $F$ whose pulled-back tracial states converge to $1_N$, rather than finite-dimensional representations of $\G$ whose tracial states converge to $1_{\{e\}}$.  This allows considerable extra flexibility, because those finite-dimensional representations of $F$ need not have trivial restriction to $N$ until we take their limit.  Allowing convergence in this sense, the availability of finite-dimensional approximants to the regular representation of $\G$ is equivalent to $\G$ being what R\u{a}dulescu called `hyperlinear': see, for instance, the original paper~\cite{Rad08} or the introductions in~\cite{KwiPes13} or~\cite{CapLup15}.

In this way, free groups are essentially universal among all countable groups for the purposes of the present work, since the desired convergence for other groups can always be captured by choosing the right limit character over the free group.  For this reason, results that are formulated for free groups, including the results of Part~\ref{part:free} below, could still lead to new understanding of other classes of groups or other C$\sp*$-algebras entirely.

Focusing on free groups in particular, we arrive at Voiculescu's theory of `free probability' and the use of probabilistic techniques.  For example, if $\G$ is freely generated by $\{s_1,\dots,s_r\}$, and $\bspi = (\pi_n)_{n\ge 1}$ is an AP sequence for it, then $\bspi$ is asymptotically regular if and only if the tuples of generators
\[\pi_n(s_1), \dots \pi_n(s_r)\in \bf{U}(d_n) \qquad (n\ge 1)\]
are \textbf{asymptotically free} in Voiculescu's sense: see the survey~\cite{Voi02--survey} or textbooks such as~\cite{VoiDykNic92,HiaPetSL} or~\cite[Chap. 5]{AndGuiZei--book}.

Free groups have many such sequences, but they may not be easy to construct explicitly.  Instead, one of the basic results of free probability theory asserts that `randomly chosen' $n$-dimensional representations are asymptotically free with high probability as $n\to\infty$.  See Theorems~\ref{thm:asymp-free1} and~\ref{thm:asymp-free2} for asymptotic freeness itself.

Beyond this, essentially following~\cite{CapDon-Mar07,ColMal14}, an AP sequence $\bspi = (\pi_n)_{n\ge 1}$ for a free group $\G$ is called \textbf{strongly asymptotically free} if it converges strongly to the regular representation.  Here, too, the known proofs that such sequences exist for free groups are probabilistic.  They include the result of Collins and Male~\cite{ColMal14} for uniform random representations (which is based on an analogous result for the Gaussian unitary ensemble by Haagerup and Thorbj\o rnsen~\cite{HaaTho05}) and the result of Bordenave and Collins~\cite{BordCol19} for random permutations; we recall these formally as Theorems~\ref{thm:ColMal} and~\ref{thm:BordCol} below.  See also the surveys~\cite{Magee--survey,vanHan--strong-survey}.  Note that, while those references discuss strong convergence, this immediately implies sq-convergence by part (c) of Corollary~\ref{cor:perfect} as well, since the limit is a regular representation.

For any countable group $\G$, if an AP sequence $\bspi$ converges strongly to the left regular representation $\l$, then any subsequential limit of the sequence $\tr_{d_n}\circ \pi_n$ in the compact space $\S_1(\G)$ extends to a tracial state on the reduced C$\sp*$-algebra $C_{\rm{r}}^\ast \G := \l(C^\ast\G)$.  In case $C_{\rm{r}}^\ast \G$ has only the regular tracial state, it follows that $\tr_{d_n}\circ \pi_n$ must also converge to that tracial state.  This property is quite widespread. For free groups it is a classical result of Powers~\cite{Pow75}, so strong asymptotic freeness does indeed imply asymptotic freeness.  For other groups, the definitive recent work~\cite{BreKalKenOza17} explains the story in full.

\begin{rmk}
	This discussion above also has much in common with the study of various modes of convergence for sparse sequences of large finite graphs.  This link consists of instructive analogies more than formal results, but offers considerable guidance and motivation for our study.  It is discussed more fully in~\cite{AbeEle11}. \fin
	\end{rmk}

%
%\section{Discussion: analogies with groups and permutations}
%
%???
%
%Weak-global convergence is an analog for finite-dimensional representations of the `local-global convergence' introduced in~\cite{BorChaKahLov12} [[ or Hatami--Lov\'asz--Szegedy!? ]] for sequences of bounded-degree graphs.  Clearly the property of weak-global convergence to order $k$ is stronger for larger $k$.
%
%

\section{Almost periodic entropy}

Define the function
\begin{equation}\label{eq:ball-vol}
v(d) := \frac{\pi^d}{d!}\qquad (d=1,2,\dots).
\end{equation}
Then $v(d)$ is the volume of the unit ball in $\bbC^d$~\cite[Subsection 1.4.9]{RudinFTUB}.

Now let $\A$ be a separable unital C$\sp*$-algebra.  Fix an AP sequence $\bspi = (\pi_n)_{n\ge 1}$, a positive integer $k$, and an element $\phi$ of $\B(\A,\rmM_k)_+$.  Let $d_n$ be the dimension of $\pi_n$ for each $n$.

\begin{dfn}\label{dfn:APent}
The \textbf{almost periodic} (`\textbf{AP}') \textbf{entropy of $\phi$ along $\bs{\pi}$} is the quantity
\begin{equation}\label{eq:APent}
 \rmh_{\bspi}(\phi) := \inf_O \limsup_{n \to\infty} \frac{1}{d_n}\log \frac{\vol_{2kd_n}\X(\pi_n,O)}{v(d_n)^k},
\end{equation}
where the infimum runs over all neighbourhoods of $\phi$ in $\B(\A,\rmM_k)_+$.
\end{dfn}

Recall Definition~\ref{dfn:typical} for the set $\X(\pi_n,O)$ of $O$-typical vectors.  By the monotonicity of these sets in $O$, we can restrict $O$ to any base of neighbourhoods at $\phi$ in formula~\ref{eq:APent} without changing the resulting value.

The next results are some of the basic properties of AP entropy established in~\cite{APE4}.  The first is an immediate consequence of its definition and Lemma~\ref{lem:upper-semicts}.

\begin{lem}\label{lem:APent-usc}
For any $\bspi$ and $k$, the function $\rmh_{\bspi}$ is upper semicontinuous on $\B(\A,\rmM_k)_+$. \qed
\end{lem}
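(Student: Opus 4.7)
The plan is to exploit the fact that $\rmh_{\bspi}(\phi)$ is defined as an infimum, over neighbourhoods of $\phi$, of a quantity that depends only on the chosen neighbourhood and not on $\phi$ itself. For each open set $O \subset \B(\A,\rmM_k)_+$, set
\[
F(O) := \limsup_{n \to \infty} \frac{1}{d_n} \log \frac{\vol_{2kd_n} \X(\pi_n, O)}{v(d_n)^k},
\]
so that $\rmh_{\bspi}(\phi) = \inf\{F(O) : O \text{ open},\ \phi \in O\}$. Observe that $F$ is monotone with respect to inclusion: if $O \subset O'$, then $\X(\pi_n, O) \subset \X(\pi_n, O')$, hence $F(O) \le F(O')$.

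Given this structure, upper semicontinuity is essentially automatic. Fix $\phi$ and any real number $a > \rmh_{\bspi}(\phi)$. By definition of the infimum, there exists some open neighbourhood $O$ of $\phi$ with $F(O) < a$. But $O$ is then also an open neighbourhood of every $\phi' \in O$, and so for each such $\phi'$ we have $\rmh_{\bspi}(\phi') \le F(O) < a$. Thus the sublevel set $\{\phi' : \rmh_{\bspi}(\phi') < a\}$ contains the open neighbourhood $O$ of $\phi$, establishing upper semicontinuity at $\phi$. This is exactly the content that Lemma~\ref{lem:upper-semicts} is presumably packaging in the appendix, and the proof should simply cite it after verifying the two hypotheses (infimum representation, monotonicity in the neighbourhood).

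There is no real obstacle here; the lemma is a formal consequence of the shape of Definition~\ref{dfn:APent}. The only minor point worth noting is that nothing about $\vol_{2kd_n}\X(\pi_n,O)$ or the behaviour of $\bspi$ is used beyond the monotonicity $O \subset O' \Rightarrow \X(\pi_n,O) \subset \X(\pi_n,O')$, so the argument applies uniformly to any similarly-defined entropy-type functional.
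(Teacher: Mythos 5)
Your proof is correct and is exactly the paper's argument: the paper proves this lemma by citing Lemma~\ref{lem:upper-semicts}, whose one-line proof (if $\rmh_{\bspi}(\phi)<a$ then some neighbourhood $O$ of $\phi$ has $F(O)<a$, and then $\rmh_{\bspi}(\phi')\le F(O)<a$ for all $\phi'\in O$) is precisely what you have written out. Nothing further is needed; as you note, the monotonicity of $O\mapsto\X(\pi_n,O)$ is not even required for this step, only the representation of $\rmh_{\bspi}$ as an infimum over neighbourhoods.
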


\begin{prop}\label{prop:APent-properties}
Fix a positive integer $k$ and a map $\phi\in\B(\A,\rmM_k)_+$.
\begin{itemize}
\item[a.] We have
\[\rmh_{\bspi}(\phi) \le \log\det \phi(1).\]
\item[b.] If $Q$ is an invertible $k$-by-$k$ matrix, and we define
\begin{equation}%\label{eq:psi-Q-phi}
\psi(a) := Q^\ast \phi(a)Q \qquad (a \in \A),
\end{equation}
then
\[\rmh_{\bspi}(\psi) = 2\log|\det Q| + \rmh_{\bspi}(\phi).\] \qed
\end{itemize}
	\end{prop}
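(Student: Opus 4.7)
The plan is to prove (b) first and use it to reduce (a) to a direct volume estimate.

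For (b), the key observation is that $\Phi^{\pi_n}_V(a) = V^\ast \pi_n(a) V$ gives $\Phi^{\pi_n}_{VQ}(a) = Q^\ast\Phi^{\pi_n}_V(a)Q$. In particular, the linear bijection $L:[v_1,\dots,v_k]\mapsto [v_1,\dots,v_k]Q$ on the space of $k$-tuples in $\bbC^{d_n}$ (viewed as $\bbR^{2kd_n}$) carries $\X(\pi_n,U)$ onto $\X(\pi_n,T(U))$, where $T(\phi') := Q^\ast\phi'Q$ is a homeomorphism of $\B(\A,\rmM_k)$ sending $\phi$ to $\psi$. Writing $L = Q^{\rm{T}}\otimes I_{d_n}$ as a $\bbC$-linear map shows that the real Jacobian of $L$ equals $|\det Q|^{2d_n}$, giving
\[
\vol_{2kd_n}\X(\pi_n,T(U)) = |\det Q|^{2d_n}\,\vol_{2kd_n}\X(\pi_n,U).
\]
Taking $\tfrac{1}{d_n}\log$ of the normalized volumes, then $\limsup_n$, and finally $\inf$ over $U$ (which corresponds via $T$ to $\inf$ over neighbourhoods of $\psi$) yields $\rmh_{\bspi}(\psi) = 2\log|\det Q| + \rmh_{\bspi}(\phi)$.

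For (a), I would first apply (b) with a unitary $U$ that diagonalizes $\phi(1)$: since $|\det U|=1$, it suffices to prove the inequality under the assumption $\phi(1) = D = \rm{diag}(\lambda_1,\dots,\lambda_k)$ with $\lambda_i\ge 0$. Now fix $\eps > 0$. Because each coordinate evaluation $\phi'\mapsto \phi'(1)_{ii}$ is weak-$\ast$ continuous, there is a neighbourhood $O_\eps$ of $\phi$ such that $V=[v_1,\dots,v_k]\in \X(\pi_n,O_\eps)$ forces $\|v_i\|^2 = (V^\ast V)_{ii} \le \lambda_i+\eps$ for every $i$. Hence
\[
\X(\pi_n,O_\eps)\subset \prod_{i=1}^k \{v\in\bbC^{d_n}:\|v\|^2\le \lambda_i+\eps\},
\]
whose Lebesgue volume is $v(d_n)^k\prod_i(\lambda_i+\eps)^{d_n}$. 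This gives
\[
\limsup_{n\to\infty}\frac{1}{d_n}\log\frac{\vol_{2kd_n}\X(\pi_n,O_\eps)}{v(d_n)^k} \le \sum_{i=1}^k\log(\lambda_i+\eps),
\]
and sending $\eps\downarrow 0$ produces $\rmh_{\bspi}(\phi)\le \sum_i\log\lambda_i = \log\det\phi(1)$ when $\phi(1)$ is invertible, or $-\infty$ otherwise (where each $\lambda_i = 0$ contributes an unbounded $\log\eps$ term).

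Both arguments are essentially routine. The only care needed is bookkeeping: in (b), one must verify that $T$ sends a neighbourhood base at $\phi$ to a neighbourhood base at $\psi$, which is immediate since $Q$ (and hence $T$) is invertible and bicontinuous; in (a), the order of limits ($\limsup_n$ followed by $\inf_\eps$) is exactly the order built into the definition of $\rmh_{\bspi}$, so no interchange is required. I do not anticipate any real obstacle.
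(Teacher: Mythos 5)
Your proposal is correct. Note first that the paper does not actually prove this proposition in the text: it is quoted from the reference \cite{APE4} (Lemma 9.7 and Proposition 9.10(a) there), and the nearest in-paper arguments are the proofs of the annealed analogues in Proposition~\ref{prop:hann-properties}. Your part (b) matches that argument in substance — both rest on the observation that right-multiplication of the tuple by $Q$ transforms types by $\phi'\mapsto Q^\ast\phi' Q$ and scales $2kd_n$-dimensional volume by $|\det Q|^{2d_n}$ — though you exploit the fact that the map is an exact bijection between $\X(\pi_n,U)$ and $\X(\pi_n,T(U))$ to get equality in one pass, where the paper's annealed version proves an inclusion via Lemma~\ref{lem:lin-maps} and then runs the argument again with $Q^{-1}$ to get the reverse inequality. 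For part (a) your route is genuinely different: the paper reduces to the method-of-types estimate for Gram matrices (Theorem~\ref{thm:types-1}(b)), which controls the volume of $\{X:\ X^\ast X\in O\}$ using the full Gram-matrix constraint, whereas you diagonalize $\phi(1)$ (legitimately, via part (b) with a unitary) and then use only the diagonal constraints $\|v_i\|^2\le\lambda_i+\eps$ to trap $\X(\pi_n,O_\eps)$ inside a product of balls. Your bound is cruder but entirely sufficient, since only an upper bound on the normalized volume is needed and $\prod_i(\lambda_i+\eps)^{d_n}\to(\det\phi(1))^{d_n}$ in exponent as $\eps\downarrow 0$; it also handles the singular case cleanly. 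The trade-off is that the paper's approach packages the estimate in a form (Theorem~\ref{thm:types-1}) that is reused elsewhere, while yours is more elementary and self-contained.
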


See~\cite[Lems. 6.6 and 6.7 and Prop. 6.10(a)]{APE4}, respectively.  We extend these results to annealed AP entropy in Chapter~\ref{chap:random-AP}.

Some proofs about AP entropy are easier to digest when $k=1$.  Several properties are proved in~\cite{APE4} by starting with that case and then using pairings as in~\eqref{eq:pairing} and the general identity
\begin{equation}\label{eq:h-pi-k}
\rmh_{\bspi^{(k)}}(\langle\phi,\cdot\rangle) = \frac{1}{k}\rmh_{\bspi}(\phi) \qquad (\phi \in \B(\A,\rmM_k)_+).
\end{equation}
See~\cite[Lem. 6.8]{APE4}. We do not use identity~\eqref{eq:h-pi-k} directly in the sequel, but we do cite some properties of AP entropy that are proved this way in~\cite{APE4}. Some of those proofs use~\eqref{eq:h-pi-k} in combination with the following alternative formula for the AP entropy of a state, which is~\cite[Lem. 6.12]{APE4}.

\begin{lem}\label{lem:use-of-spherical}
Let $\phi \in \S(\A)$, and let $\cal{O}$ be a neighbourhood base at $\phi$ in $\S(\A)$.  Then
\[\rmh_{\bspi}(\phi) = \inf_{O \in \cal{O}}\limsup_{n\to\infty}\frac{1}{d_n}\log\s_{2d_n-1}\X(\pi_n,O).\]
\qed
\end{lem}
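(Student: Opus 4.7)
The plan is to use polar decomposition on $\bbC^{d_n}$ to convert the volume ratio in Definition~\ref{dfn:APent} into a spherical measure, exploiting the key observation that if $\phi$ is a state then typical vectors in $\X(\pi_n,O)$ must have norm close to $1$. Indeed, $\Phi^{\pi_n}_v = \|v\|^2\,\Phi^{\pi_n}_{v/\|v\|}$, so for $O$ close to $\phi$, the value $\|v\|^2 = \Phi^{\pi_n}_v(1)$ is forced close to $1$, and the normalized vector $v/\|v\|$ lives in a corresponding set of unit vectors. I interpret $\s_{2d_n-1}$ as the uniform probability measure on the sphere, so that in polar coordinates
\[\vol_{2d_n}(B) = 2d_n\,v(d_n)\int_0^\infty r^{2d_n-1}\s_{2d_n-1}(r^{-1}B\cap S^{2d_n-1})\,dr.\]

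For the inequality $\rmh_{\bspi}(\phi)\le\inf_{U\in\cal O}\limsup\frac{1}{d_n}\log\s_{2d_n-1}\X(\pi_n,U)$, I would start from a given $U\in\cal O$ and $\eps>0$ and exploit continuity of the normalization map $\psi\mapsto\psi/\psi(1)$ on the open set $\{\psi(1)>0\}\subset\A_+^*$ to find a neighbourhood $O$ of $\phi$ in $\B(\A,\rmM_1)_+$ with the property that every $\psi\in O$ satisfies $|\psi(1)-1|<\eps$ and $\psi/\psi(1)\in U$. Then $\X(\pi_n,O)$ sits inside the spherical shell $\{ru:u\in\X(\pi_n,U),\ r\in[\sqrt{1-\eps},\sqrt{1+\eps}]\}$, whose volume is at most $v(d_n)\cdot[(1+\eps)^{d_n}-(1-\eps)^{d_n}]\cdot\s_{2d_n-1}\X(\pi_n,U)$. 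Dividing by $v(d_n)$, applying $\frac{1}{d_n}\log$, and taking $\limsup$ gives the bound up to an additive error $\log(1+\eps)$, which vanishes as $\eps\downarrow 0$.

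For the reverse inequality, I reverse the construction: given a neighbourhood $O$ of $\phi$ in $\B(\A,\rmM_1)_+$, continuity of scalar multiplication lets me choose a neighbourhood $U$ of $\phi$ in $\S(\A)$ and an $\eps>0$ so that $\psi\in U$ and $r^2\in(1-\eps,1+\eps)$ together force $r^2\psi\in O$. Then $\X(\pi_n,O)$ \emph{contains} the shell $\{ru:u\in\X(\pi_n,U),\ r^2\in(1-\eps,1+\eps)\}$, whose volume is at least $v(d_n)\cdot[(1+\eps)^{d_n}-(1-\eps)^{d_n}]\cdot\s_{2d_n-1}\X(\pi_n,U)$. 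Since $[(1+\eps)^{d_n}-(1-\eps)^{d_n}]\ge\frac{1}{2}(1+\eps)^{d_n}$ for large $n$, dividing, taking $(1/d_n)\log$ and $\limsup$ yields $\limsup\frac{1}{d_n}\log\s_{2d_n-1}\X(\pi_n,U)\le\limsup\frac{1}{d_n}\log[\vol_{2d_n}\X(\pi_n,O)/v(d_n)]$. Taking $\inf_{U}$ on the left and $\inf_O$ on the right completes the argument.

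The work is essentially the bookkeeping of the polar coordinate calculation and verifying the two-sided neighbourhood correspondence; there is no serious obstacle, since the key fact --- that the weak$^*$ continuity of $\psi\mapsto\psi(1)$ and of scalar multiplication controls the ``radial direction'' within $\B(\A,\rmM_1)_+$ --- is elementary. The exponential factor $(1+\eps)^{d_n}$ in either shell estimate contributes only the vanishing error $\log(1+\eps)$ after normalization by $d_n$, which is exactly what is needed for the exponential rates to coincide.
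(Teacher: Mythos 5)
Your proof is correct. The paper itself gives no argument for this lemma — it is quoted from \cite[Lemma 9.12]{APE4} with only a citation — and your polar-coordinate/shell argument is exactly the natural way to prove it: the key points (that $\|v\|^2=\Phi^{\pi_n}_v(1)$ is pinned near $1$ on small neighbourhoods of a state, that the shell of radii $r^2\in(1\pm\eps)$ has normalized volume $(1+\eps)^{d_n}-(1-\eps)^{d_n}$ times the spherical measure, and that this factor contributes only $\log(1+\eps)+o(1)$ per dimension) are all handled correctly in both directions.
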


\section{Convergence of traces}\label{sec:trace-conv}

Several further properties of AP entropy can be deduced if the normalized traces $\tr_{d_n}\circ \pi_n$ are known to converge to a limiting trace in $\A^\ast_+$.

\begin{prop}\label{prop:APent-properties-trace}
Assume that $\tr_{d_n}\circ \pi_n\to \tau$, and let $\l$ be the GNS representation of $\tau$. Let $k$ be a positive integer and let $\phi \in \B(\A,\rmM_k)_+$.
\begin{enumerate}
\item[a.] If $\phi$ is approximately associated to $\l^{\oplus \infty}$, then $\phi$ is asymptotically associated to $\bspi$.
\item[b.] If $\phi_{\rm{sing}}$ is asymptotically associated to $\bspi$, then so is the whole of $\phi$.
\end{enumerate} \qed
\end{prop}

See~\cite[Cors. 6.14 and 6.15]{APE4}. Both parts stem from an application of measure concentration for the high-dimensional spherical measures $\s_{2d-1}$. Since the trace of a matrix is invariant under unitary conjugation, if $\pi$ is a $d$-dimensional representation, then averaging over all of $\bf{U}(d)$ gives
\begin{equation}\label{eq:trace-integral}
\tr_d\circ \pi = \int_{\rmS^{2d-1}}\Phi^\pi_x\ d\sigma_{2d-1}(x),
\end{equation}
where $\s_{2d-1} = m_{\bf{U}(1,n)}$ is the uniform distribution on $\rmS^{2d-1}$.  When $d$ is large, an appeal to measure concentration improves on~\eqref{eq:trace-integral} considerably: $\Phi^\pi_x$ is actually close to $\tr_d\circ \pi$ for `most' $x \in \rmS^{2d-1}$ individually.  Now one can combine this conclusion with Lemma~\ref{lem:typ-trans}.

In the sequel, we need an approximate version of Proposition~\ref{prop:APent-properties-trace} that applies to a single high-dimensional representation, rather than to a whole AP sequence.

\begin{cor}\label{cor:APent-properties-trace}
Let $\tau$ be a tracial state of $\A$, let $\l$ be its GNS representation, and let $\phi \in \B(\A,\rmM_k)_+$.
\begin{enumerate}
\item[a.] Assume that $\phi$ is approximately associated to $\l^{\oplus \infty}$. Then, for every neighbourhood $O$ of $\phi$, there are a positive integer $d_0$ and a neighbourhood $V$ of $\tau$ such that the following holds:
\begin{quote}
If $\pi$ is a $d$-dimensional representation satisfying
\[d \ge d_0 \qquad \hbox{and} \qquad \tr_d\circ \pi \in V\]
then $\X(\pi,O) \ne \emptyset$.
\end{quote}
\item[b.] For every neighbourhood $O$ of $\phi$ there are a positive integer $d_0$ and neighbourhoods $U$ of $\phi_{\rm{sing}}$ and $V$ of $\tau$ such that the following holds:
\begin{quote}
If $\pi$ is a $d$-dimensional representation satisfying
\[d\ge d_0, \qquad \tr_d\circ \pi \in V, \qquad \hbox{and} \qquad \X(\pi,U) \ne \emptyset\]
then $\X(\pi,O) \ne \emptyset$.
\end{quote}
\end{enumerate}
\end{cor}

\begin{proof}
We show how to deduce part (a) from Proposition~\ref{prop:APent-properties-trace}(a).  The proof of (b) follows in the same way from Proposition~\ref{prop:APent-properties-trace}(b), so we leave this to the reader.

Suppose that the result is false, and let $O$ be a neighbourhood of $\phi$ for which it fails.  Because of Lemma
\ref{lem:lcsc}, we can choose a countable neighbourhood base $V_1\supset V_2 \supset \dots$ at $\tau$.  Then a diagonal argument gives a sequence $(\pi_n)_{n\ge 1}$ of finite-dimensional representations such that
\[d_n\ge n \qquad \hbox{and} \qquad \tr_{d_n}\circ \pi_n \in V_n\]
but $\X(\pi_n,O) = \emptyset$, meaning that $O$ does not meet $\S_k(\pi_n)$.  Together, these properties of $(\pi_n)_{n \ge 1}$ show that it is an AP sequence which contradicts Proposition~\ref{prop:APent-properties-trace}(a).
\end{proof}

\section{Fuglede--Kadison determinants}\label{sec:FK}

In the next section we recall one of the most substantial results from~\cite{APE4}: a formula for AP entropy as a Fuglede--Kadison determinant.  The present section we recalls some necessary background about these determinants.

Let $\A$ be a separable C*-algebra, $\tau$ a tracial state on it, and $\l$ the GNS representation of $\tau$ with canonical cyclic tracial vector $\xi$.  Write $\t{\tau}$ for the extension of $\tau$ to a normal tracial state on $\l(\A)''$ defined in~\eqref{eq:tau-on-N}, and also for its counterpart on $\l(\A)'$ defined by the same formula. Any positive invertible operator $A$ in $\l^{\oplus k}(\A)'$ has a Fuglede--Kadison determinant defined by
\[\Delta_{\tau\otimes \tr_k} A:= \exp((\t{\tau} \otimes \tr_k)(\log A)).\]
Alternatively, this can be written in terms of the spectral resolution of $A$.  In this setting its basic properties are covered in~\cite[Sec. I.6.11]{Dix--vN}.  From this starting point, it can be extended in several directions, for example to a large class of non-negative affiliated operators: see~\cite[Sec. 2]{HaaSch07} for a more complete account.

In~\cite[Subs. 2.8]{APE4}, a further extension defines $\Delta_\tau \phi$ for $\phi \in \B(\A,\rmM_k)_+$.  If $\phi$ is $\l$-normal, then we let $T$ be the non-negative operator affiliated to $\l^{\oplus k}(\A)'$ from Proposition~\ref{prop:RadNik}, and set
\[\Delta_\tau \phi:= (\Delta_{\tau\otimes \tr_k}T)^2.\]
In general, we simply set $\Delta_\tau \phi := \Delta_\tau \phi_{\rm{ac}}$, where $\phi_{\rm{ac}}$ is the absolutely continuous part of $\phi$ with respect to $\l$(see Corollary~\ref{cor:Leb}).

The quantity $\Delta_\tau \phi$ can also expressed through a variational principle~\cite[Prop. 2.14]{APE4}.  The idea for this originates with Arveson's work on subdiagonal subalgebras in~\cite[Dfn. 4.3.7]{Arv67}, where he takes a special case of this principle as his definition of `determinant'.  This and other basic theory for $\Delta_\tau \phi$ are covered fully in~\cite[Subs. 2.8]{APE4}.

A Fuglede--Kadison determinant of a completely positive map appears in a key formula for AP entropy.  While using this formula later, we need the following general properties of Fuglede--Kadison determinants.

\begin{prop}\label{prop:FK-det-properties}
Fix $\A$, $\tau$, and $\l$ as above, let $\Delta := \Delta_\tau$, and consider the functionals
\[\phi \mapsto \Delta \phi_{\rm{ac}} \qquad (\phi \in \B(\A,\rmM_k)_+)\]
for each positive integer $k$.  They have the following properties.
\begin{enumerate}
\item[a.] They are upper semicontinuous for every $k$.
\item[b.] For every $k$ and $\phi,\psi \in \B(\A,\rmM_k)_+$, they satisfy $\Delta \phi_{\rm{ac}} \le (\tr_k\phi_{\rm{ac}}(1))^k$, and $\Delta \phi_{\rm{ac}} \le \Delta \psi_{\rm{ac}}$ in case $\phi \le \psi$.
\item[c.] They are log-concave:
\[\log \Delta(t\phi_{\rm{ac}} + (1-t)\psi_{\rm{ac}}) \ge t\log \Delta \phi_{\rm{ac}} + (1-t)\log \Delta \psi_{\rm{ac}} \]
whenever $\phi,\psi \in \B(\A,\rmM_k)_+$ and $t\in [0,1]$.
\item[d.] They are multiplicative on diagonal joinings :
\[\Delta (\rm{diag}(\phi,\psi)_{\rm{ac}}) = \Delta \phi_{\rm{ac}} \cdot \Delta \psi_{\rm{ac}}\]
whenever $\phi \in \B(\A,\rmM_k)_+$ and $\psi \in \B(\A,\rmM_\ell)_+$. \qed
\end{enumerate}
\end{prop}

See~\cite[Props. 2.14 and 2.15]{APE4} and the subsequent remarks.

\section{The determinantal formula}

Given a tracial state $\tau$ and a completely positive map $\phi:\A\to\rmM_k$, Corollary~\ref{cor:Leb} decomposes $\phi$ into a summand $\phi_{\rm{ac}}$ that is absolutely continuous with respect to $\tau$ and another summand $\phi_{\rm{sing}}$ that is singular with respect to $\tau$.  The most substantial theorem about AP entropy from~\cite{APE4} is~\cite[Thm. C]{APE4}:

\begin{thm}\label{thm:det-form}
Assume that $\tr_{d_n}\circ \pi_n\to \tau$, and let $\Delta$ be the Fuglede--Kadison determinant associated to $\tau$. If $\phi \in \B(\A,\rmM_k)_+$ is asymptotically associated to $\bspi$, then
\[\rmh_{\bspi}(\phi) = \log \Delta \phi_{\rm{ac}}.\]
\end{thm}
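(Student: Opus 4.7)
The plan is to prove the two inequalities separately: the upper bound by a Gaussian integration argument feeding into the variational principle of Proposition~\ref{prop:matrix-det-var-princ}, and the lower bound by explicitly constructing typical tuples via the Radon--Nikodym representation of Proposition~\ref{prop:RadNik} combined with measure concentration.

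For the upper bound $\rmh_{\bspi}(\phi) \le \log\Delta\phi_{\rm{ac}}$, fix a positive invertible $a \in \rmM_k(\A)$ and start from the exact identity
\[\int_{\bbC^{kd_n}} \exp\!\big(\!-t\langle \pi_n^{(k)}(a) W, W\rangle\big)\,dW \;=\; \frac{\pi^{kd_n}}{t^{kd_n}\det \pi_n^{(k)}(a)} \qquad (t > 0).\]
By~\eqref{eq:phi-brackets-associated}, on a tuple $V \in \X(\pi_n, O)$ with $O$ a small neighbourhood of $\phi$, the quadratic form $\langle\pi_n^{(k)}(a) W, W\rangle$ (where $W$ reshapes $V$) differs from $k\langle\phi,a\rangle$ by $o_O(1)$. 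This converts the Gaussian identity into an upper bound on $\vol_{2kd_n}\X(\pi_n,O)$. Dividing by $v(d_n)^k = \pi^{kd_n}/(d_n!)^k$, taking $\tfrac{1}{d_n}\log$, scaling $t = d_n/\langle\phi,a\rangle$, and using Stirling to handle $\log d_n!$ leaves
\[\limsup_n \tfrac{1}{d_n}\log\tfrac{\vol_{2kd_n}\X(\pi_n,O)}{v(d_n)^k} \;\le\; k\log\langle\phi,a\rangle - \log\Delta_{\tau\otimes\Tr_k}(a) + o_O(1),\]
where the crucial limit $\tfrac{1}{d_n}\log\det\pi_n^{(k)}(a) \to \log\Delta_{\tau\otimes\Tr_k}(a)$ follows from the identity $\pi_n^{(k)}(\log a) = \log\pi_n^{(k)}(a)$ and the trace-convergence hypothesis. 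Shrinking $O$ around $\phi$, restricting to $a$ with $\Delta_{\tau\otimes\Tr_k}(a) = 1$, and infimizing, Proposition~\ref{prop:matrix-det-var-princ} delivers $\rmh_{\bspi}(\phi) \le k\log(\Delta\phi_{\rm{ac}})^{1/k} = \log\Delta\phi_{\rm{ac}}$.

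For the lower bound $\rmh_{\bspi}(\phi) \ge \log\Delta\phi_{\rm{ac}}$, first treat the $\l$-normal case $\phi = \phi_{\rm{ac}}$. Proposition~\ref{prop:RadNik} represents $\phi$ as the type of $T\xi_1,\dots,T\xi_k$ in $\l^{\oplus k}$ for some nonnegative square-integrable $T$ affiliated with $\l^{\oplus k}(\A)'$. A spectral regularization $T_\eps$ (bounded, invertible, with $T_\eps\xi_i \to T\xi_i$ as $\eps\downarrow 0$ and $M\uparrow\infty$) gives an associated $\phi_\eps \to \phi$ with $\Delta_{\tau\otimes\Tr_k}(T_\eps)^2 = \Delta\phi_\eps \to \Delta\phi_{\rm{ac}}$. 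The transfer to $\pi_n$ uses measure concentration: random unit tuples $V \in H_{\pi_n}^{\oplus k}$ have joint type concentrated near $\tau\otimes I_k$ (the quantitative form is Corollary~\ref{cor:APent-properties-trace}), and applying a finite-dimensional linear approximant $T_{\eps,n}$ on $\bbC^{kd_n}$ whose normalized-trace asymptotics mirror those of $T_\eps$ produces tuples with type close to $\phi_\eps$. The volume of the image set scales by the Jacobian $|\det T_{\eps,n}|^2$, and $\tfrac{1}{d_n}\log|\det T_{\eps,n}|^2 = 2k\tr_{kd_n}(\log T_{\eps,n}) \to 2\log\Delta_{\tau\otimes\Tr_k}(T_\eps) = \log\Delta\phi_\eps$, yielding $\rmh_{\bspi}(\phi_\eps) \ge \log\Delta\phi_\eps$; sending $\eps\downarrow 0$, $M\uparrow\infty$ and using upper semicontinuity of $\rmh_{\bspi}$ (Lemma~\ref{lem:APent-usc}) finishes the $\l$-normal case. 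For general $\phi$ with nontrivial $\phi_{\rm{sing}}$, combine this construction with an asymptotic-association witness tuple for $\phi$ placed on an orthogonal subspace: since the singular part contributes only subexponentially many tuples, the exponential rate $\log\Delta\phi_{\rm{ac}}$ is preserved.

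The main obstacle is constructing the finite-dimensional approximant $T_{\eps,n}$ with enough commutation-like structure to approximate $T_\eps \in \l^{\oplus k}(\A)'$ in a setting where the true commutant of $\pi_n(\A)$ in $\rmM_{d_n}$ may be very small. My expectation is that $T_{\eps,n}$ must be realized as a direct linear map on the tuple space $\bbC^{kd_n}$ (not a genuine commutant element), with the crucial commutation relations holding only on average via the trace-convergence hypothesis, and with measure concentration used to upgrade these average statements into the required pointwise (in the tuple) statements about types. A secondary difficulty is handling the singular part cleanly: this requires identifying an approximately $\pi_n$-invariant subspace of $H_{\pi_n}$ on which to place the witness tuple for $\phi_{\rm{sing}}$ without interfering with the $\l$-normal construction, which should be manageable once the $\l$-normal case is set up correctly.
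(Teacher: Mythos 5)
First, a point of reference: the paper does not prove Theorem~\ref{thm:det-form} at all — it is quoted verbatim from~\cite[Theorem C]{APE4} and described there as the most substantial result of that earlier work. So there is no internal proof to compare yours against; I can only assess your argument on its own terms. Your upper bound is in reasonable shape: the complex Gaussian identity, the optimisation $t = d_n/\langle\phi,a\rangle$, Stirling, the convergence $\tfrac{1}{d_n}\log\det\pi_n^{(k)}(a) = k\,\tr_{kd_n}\pi_n^{(k)}(\log a)\to \log\Delta_{\tau\otimes\Tr_k}(a)$ from the trace hypothesis, and the final infimum over $\{a : \Delta_{\tau\otimes\Tr_k}(a)\ge 1\}$ feeding into Proposition~\ref{prop:matrix-det-var-princ} all fit together correctly. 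This is the standard Laplace-transform route to an Arveson-type variational upper bound.

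The lower bound, however, has a genuine gap exactly where you flag it, and your proposed resolution points in a direction that does not work. The trace-convergence hypothesis controls only the quantities $\tr_{d_n}(\pi_n(a))$; it says nothing whatsoever about mixed traces of the form $\tr\big(T_{\eps,n}^\ast\,\pi_n^{(k)}(a)\,T_{\eps,n}\big)$ for an externally chosen linear map $T_{\eps,n}$ on $\bbC^{kd_n}$. Measure concentration in $W$ only tells you that $W^\ast B W$ concentrates near the (block-)trace of the \emph{fixed} matrix $B = T_{\eps,n}^\ast\pi_n^{(k)}(a)T_{\eps,n}$ — it cannot force that trace to be anywhere near $\phi_\eps(a)$, because an arbitrary AP sequence satisfying the trace hypothesis can be conjugated freely relative to a fixed $T_{\eps,n}$. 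So "commutation on average upgraded by concentration" is not available. The approximant has to be built \emph{inside} the algebra: since $T$ is affiliated to $\l^{\oplus k}(\A)'$ and the canonical involution $J$ of Lemma~\ref{lem:J} exchanges $\N$ and $\N'$ while fixing the tracial vector, the commutant operator can be traded for (an approximation by) an element $b_\eps\in\rmM_k(\A)$, and one takes $T_{\eps,n} = \pi_n^{(k)}(b_\eps)$. Only then does the Jacobian $\tfrac{1}{d_n}\log|\det\pi_n^{(k)}(b_\eps)|^2$ converge by the trace hypothesis, and only then is the type of $\pi_n^{(k)}(b_\eps)W$ controlled, since it equals $W^\ast\pi_n^{(k)}(b_\eps^\ast a b_\eps)W$, an honest algebra element to which concentration applies. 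Even with that fix there is real work left (relating $\Delta_{\tau\otimes\Tr_k}(\l^{\oplus k}(|b_\eps|))$ to $\Delta\phi_{\rm ac}$ through the regularisation, and the volume bookkeeping for the source set via $\rmh_{\bspi}(\tau\otimes I_k)=0$), so as it stands the lower half of your argument is a programme rather than a proof.
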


\begin{rmk}
Theorem~\ref{thm:det-form} can be regarded as a non-commutative generalization of Szeg\H{o}'s classic limit theorem for positive definite Toeplitz determinants (see~\cite{SimOPUCI,SimOPUCII}).  It is discussed more fully from this point of view in~\cite{APE4}, which includes other theorems that also have this flavour.  The most basic connection here is that the limit in Szeg\H{o}'s theorem can also be interpreted as the logarithm of a Fuglede--Kadison determinant applied to the absolutely continuous part of a positive functional.

Szeg\H{o}'s original theorem connects to several finer aspects of the study of orthogonal polynomials on the unit circle, and I am not aware of generalizations for most of these to the setting of Theorem~\ref{thm:det-form}.  However, in the special case of uniformly random AP sequences for free groups, we do find a tighter connection with Szeg\H{o}'s theorem in which many of these additional details reappear.  This begins with the definition of `generalized Verblunsky coefficients' in Section~\ref{sec:restrict-extend} below, and continues in the discussion of Subsection~\ref{subs:harm-an}. \fin
\end{rmk}

Even if $\phi$ is asymptotically associated to $\bspi$, it may happen that $\Delta \phi_{\rm{ac}} = 0$, in which case its logarithm is still equal to $-\infty$.  However, very importantly, there is at most one possible finite value for $\rmh_{\bspi}(\phi)$ once we know that $\tr_{d_n}\circ \pi_n\to \tau$.

In the sequel, we need to cite a reformulation of Theorem~\ref{thm:det-form} that gives an approximation for a single high-dimensional representation, rather than referring to an AP sequence.  This is the next theorem.

\begin{thm}\label{thm:APE4}
Fix $\tau$ and $\phi$ as above.
\begin{enumerate}
\item[a.] For any $h > \log \Delta \phi_{\rm{ac}}$, there are a positive integer $d_0$, a neighbourhood $V$ of $\tau$, and a neighbourhood $O$ of $\phi$ such that the following holds:
\begin{quote}
If $\pi$ is a $d$-dimensional representation of $\A$ satisfying
\[d\ge d_0 \quad \hbox{and} \quad \tr_d\circ \pi \in V,\]
then
\[\frac{\vol_{2kd}\X(\pi,O)}{v(d)^k} \le e^{hd}.\]
\end{quote}
\item[b.] If $\log \Delta \phi_{\rm{ac}} > h > \infty$, and if $O$ is any neighbourhood of $\phi$, then there are a positive integer $d_0$, a neighbourhood $V$ of $\tau$, and another neighbourhood $O'$ of $\phi$ such that the following holds:
\begin{quote}
If $\pi$ is a $d$-dimensional representation of $\A$ satisfying
\[d\ge d_0, \quad  \tr_d\circ \pi \in V, \quad \hbox{and} \quad \X(\pi,O')\ne \emptyset,\]
then
\[\frac{\vol_{2kd}\X(\pi,O)}{v(d)^k} \ge e^{hd}.\]
\end{quote}
\end{enumerate}
\end{thm}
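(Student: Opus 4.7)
The plan is to deduce both parts from Theorem~\ref{thm:det-form} by contradiction, via a diagonal construction of an AP sequence, in the spirit of the deduction of Corollary~\ref{cor:APent-properties-trace} from Proposition~\ref{prop:APent-properties-trace}. By Lemma~\ref{lem:lcsc}, I choose a decreasing countable neighbourhood base $V_1\supset V_2\supset\cdots$ at $\tau$ in $\A^\ast_+$, and likewise a decreasing countable neighbourhood base $O_1\supset O_2\supset\cdots$ at $\phi$ in $\B(\A,\rmM_k)_+$.

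For part (a), suppose the conclusion fails for some $h>\log\Delta\phi_{\rm{ac}}$. Then for every $n\ge 1$ one can select a finite-dimensional representation $\pi_n$ of some dimension $d_n\ge n$ such that $\tr_{d_n}\circ\pi_n\in V_n$ and yet
\[\frac{\vol_{2kd_n}\X(\pi_n,O_n)}{v(d_n)^k}>e^{hd_n}.\]
The resulting sequence $\bspi=(\pi_n)_{n\ge 1}$ is an AP sequence with $\tr_{d_n}\circ\pi_n\to\tau$, and positivity of the volumes above forces $\X(\pi_n,O_n)\ne\emptyset$; since the $O_n$ form a decreasing base at $\phi$, this witnesses that $\phi$ is asymptotically associated to $\bspi$ in the sense of Definition~\ref{dfn:asymp-assoc}. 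Theorem~\ref{thm:det-form} therefore applies and gives $\rmh_{\bspi}(\phi)=\log\Delta\phi_{\rm{ac}}$. On the other hand, for any fixed neighbourhood $O$ of $\phi$ we have $O_n\subset O$ for all sufficiently large $n$, so $\X(\pi_n,O)\supset\X(\pi_n,O_n)$ and the volume bound propagates to $O$. Taking the infimum over $O$ in Definition~\ref{dfn:APent} yields $\rmh_{\bspi}(\phi)\ge h>\log\Delta\phi_{\rm{ac}}$, the desired contradiction.

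For part (b), fix $h<\log\Delta\phi_{\rm{ac}}$ and a neighbourhood $O$ of $\phi$ for which the conclusion fails, and also pick a decreasing countable neighbourhood base $O'_1\supset O'_2\supset\cdots$ at $\phi$. The negated conclusion permits the choice, for each $n$, of a representation $\pi_n$ with $d_n\ge n$, $\tr_{d_n}\circ\pi_n\in V_n$, $\X(\pi_n,O'_n)\ne\emptyset$, and
\[\frac{\vol_{2kd_n}\X(\pi_n,O)}{v(d_n)^k}<e^{hd_n}.\]
The non-emptiness condition makes $\phi$ asymptotically associated to the AP sequence $\bspi=(\pi_n)_{n\ge 1}$, which also satisfies $\tr_{d_n}\circ\pi_n\to\tau$, so Theorem~\ref{thm:det-form} again gives $\rmh_{\bspi}(\phi)=\log\Delta\phi_{\rm{ac}}$. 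But the upper bound on the volumes, combined with Definition~\ref{dfn:APent} applied to the single neighbourhood $O$, forces $\rmh_{\bspi}(\phi)\le h<\log\Delta\phi_{\rm{ac}}$, again a contradiction.

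There is no substantive obstacle here: both parts are essentially bookkeeping against the definitions, once one notices that Theorem~\ref{thm:det-form} is pointwise in $\bspi$ and can therefore be contrapositive-diagonalised. The only real care needed in each part is to ensure that the contradictory sequence $\bspi$ does witness asymptotic association of $\phi$, which is required to invoke Theorem~\ref{thm:det-form}. This comes for free in part (a) from the positivity of the volumes forced by the failed upper bound, and is built in explicitly in part (b) via the auxiliary neighbourhoods $O'_n$.
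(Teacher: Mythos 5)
Your proof is correct and is essentially the paper's own argument: a contradiction-plus-diagonalisation against Theorem~\ref{thm:det-form}, using countable neighbourhood bases supplied by Lemma~\ref{lem:lcsc}. You in fact supply slightly more detail than the paper, which only writes out part (a) and leaves implicit the check that the diagonal sequence witnesses asymptotic association of $\phi$; your explicit verification of that hypothesis (via positivity of the volumes in part (a) and the auxiliary neighbourhoods $O'_n$ in part (b)) is exactly the right point to be careful about.
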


\begin{proof}
Both parts can be proved from Theorem~\ref{thm:det-form} by a diagonal argument and a contradiction, similarly to the proof of Corollary~\ref{cor:APent-properties-trace}.  We give the details for part (a) and leave part (b) to the reader.

Suppose that part (a) is false for some $\tau$ and $\phi$. Then it fails for some $h > \log\Delta\phi$.  Because of Lemma~\ref{lem:lcsc}, we can choose countable neighbourhood bases $V_1\supset V_2\supset \cdots$ at $\tau$ and $O_1 \supset O_2\supset \cdots$ at $\phi$.  Now a diagonal argument gives a sequence $(\pi_n)_{n\ge 1}$ of finite-dimensional representations such that $d_n \ge n$ and $\tr_{d_n}\circ \pi_n \in V_n$ but
\[\frac{\vol_{2kd_n}\X(\pi_n,O_n)}{v(d_n)^k} \ge e^{hd_n}.\]
Letting $O$ be any other neighbourhood of $\phi$, we must have $O\supset O_n$ for all sufficiently large $n$, and at that point $\vol_{2kd_n}\X(\pi_n,O)$ is bounded below by the last inequality above.  Substituting this lower bound into Definition~\ref{dfn:annAPent} and taking the infimum over $O$, we arrive at the inequality $\rmh_{\bspi}(\phi)\ge h$, which contradicts Theorem~\ref{thm:det-form}.
\end{proof}

We next recall some corollaries of Theorem~\ref{thm:det-form} from~\cite[Subs. 6.6]{APE4}.  Some or our results later are probabilistic generalizations of these, with closely related proofs.

The first corollary relates the limits of the sequences $(\S_k(\pi_n))_{n\ge 1}$, $k=1,2,\dots$, to properties of the entropy functional $\rmh_{\bspi}$.  To do this, it studies a general element $\phi$ of $\B(\A,\rmM_k)_+$ by using it to form the perturbations $\phi_t := \tau\otimes I_k + t\phi$ for $t\ge 0$.

\begin{cor}\label{cor:h-pi-and-strong-quot}
If $\tr_{d_n}\circ \pi \to\tau$ and $k$ is a positive integer, then
\[\rm{T}\limsup_{n\to\infty} \S_k(\pi_n)  = \{\phi \in \S_k(\A):\ \rmh_{\bspi}(\phi_t) \to 0\ \hbox{as}\ t\downarrow 0\}.\]
\qed
\end{cor}

See~\cite[Cor. 6.22]{APE4}.  If we replace `$\limsup$' with `$\liminf$' in the definition of $\rmh_{\bspi}$, then the same reasoning leads to a formula for $\rm{T}\liminf_{n\to\infty} \S_k(\pi_n)$.

The next corollary gives conditions for AP entropy to be additive or concave.

\begin{cor}\label{cor:AP-additive-concave}
Assume that $\tr_{d_n}\circ \pi \to\tau$, and let $\phi \in \B(\A,\rmM_k)_+$ and $\psi \in \B(\A,\rmM_\ell)_+$.
\begin{enumerate}
\item[a.] If $\rm{diag}(\phi,\psi)$ is asymptotically associated to $\bspi$, then
\[\rmh_{\bspi}(\rm{diag}(\phi,\psi)) = \rmh_{\bspi}(\phi) + \rmh_{\bspi}(\psi).\]
\item[b.] If $k = \ell$, $t \in [0,1]$, and $t\phi + (1-t)\psi$ is asymptotically associated to $\bspi$, then
\[\rmh_{\bspi}(t\phi + (1-t)\psi) \ge t\rmh_{\bspi}(\phi) + (1-t)\rmh_{\bspi}(\psi).\]
\end{enumerate}
\qed
\end{cor}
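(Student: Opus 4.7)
The plan is to reduce both parts directly to Theorem~\ref{thm:det-form} by invoking the corresponding properties of the Fuglede--Kadison determinant from Proposition~\ref{prop:FK-det-properties}(c,d) together with the linearity of the Lebesgue decomposition provided by Corollary~\ref{cor:Leb}. Once Theorem~\ref{thm:det-form} converts every AP entropy into a log-determinant, both an identity and an inequality for $\rmh_{\bspi}$ turn into identities and inequalities for $\log \Delta$.

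For part (a), the first step is to observe that if $\rm{diag}(\phi,\psi)$ is asymptotically associated to $\bspi$, then so are $\phi$ and $\psi$ individually. This is because any approximately $\rm{diag}(\phi,\psi)$-typical $(k+\ell)$-tuple has its first $k$ coordinates forming an approximately $\phi$-typical $k$-tuple (the type map being continuous by Lemma~\ref{lem:unif-cts}, and $\rm{diag}(\phi,\psi)[K] = \phi$), and similarly for the last $\ell$ coordinates. Theorem~\ref{thm:det-form} therefore applies to all three maps. Next I would verify that the Lebesgue decomposition commutes with diagonal joining, that is, $(\rm{diag}(\phi,\psi))_{\rm{ac}} = \rm{diag}(\phi_{\rm{ac}},\psi_{\rm{ac}})$; this holds because the minimal dilation of $\rm{diag}(\phi,\psi)$ is $\pi_\phi \oplus \pi_\psi$, so the invariant subspace in Proposition~\ref{prop:Leb-reps} splits along this direct sum by uniqueness, and hence the associating tuples project accordingly. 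The multiplicativity statement of Proposition~\ref{prop:FK-det-properties}(d) then reads $\Delta(\rm{diag}(\phi,\psi))_{\rm{ac}} = \Delta\phi_{\rm{ac}} \cdot \Delta\psi_{\rm{ac}}$, and taking logarithms (with the convention $\log 0 = -\infty$) delivers the claimed additivity.

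For part (b), the cases $t \in \{0,1\}$ are trivial. For $t \in (0,1)$, if either $\phi$ or $\psi$ fails to be asymptotically associated to $\bspi$, then the corresponding AP entropy equals $-\infty$ directly from Definition~\ref{dfn:APent}, so the right-hand side is $-\infty$ and the inequality holds vacuously. Otherwise both $\phi$ and $\psi$ are asymptotically associated, and Theorem~\ref{thm:det-form} applies to all three of $\phi$, $\psi$, and $t\phi+(1-t)\psi$. Linearity of the Lebesgue decomposition gives $(t\phi+(1-t)\psi)_{\rm{ac}} = t\phi_{\rm{ac}} + (1-t)\psi_{\rm{ac}}$, and the log-concavity property of Proposition~\ref{prop:FK-det-properties}(c) then yields the required inequality after a single application of Theorem~\ref{thm:det-form} to each side.

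The main obstacle is really only the functorial compatibility $(\rm{diag}(\phi,\psi))_{\rm{ac}} = \rm{diag}(\phi_{\rm{ac}},\psi_{\rm{ac}})$ needed in part (a); everything else is a matter of citing Theorem~\ref{thm:det-form} and transferring algebraic properties of $\Delta$ through the logarithm. No new asymptotic analysis or measure concentration is required beyond what is already packaged in Theorem~\ref{thm:det-form} and Proposition~\ref{prop:FK-det-properties}.
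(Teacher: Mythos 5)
Your proposal is correct and follows exactly the route the paper indicates: these statements are presented as corollaries of the determinantal formula (Theorem~\ref{thm:det-form}), obtained by combining it with the linearity of the Lebesgue decomposition (Corollary~\ref{cor:Leb}) and the multiplicativity and log-concavity of $\Delta(\cdot)_{\rm{ac}}$ from Proposition~\ref{prop:FK-det-properties}(c),(d); the paper itself defers the details to the cited reference. Your auxiliary observations — that asymptotic association of $\rm{diag}(\phi,\psi)$ passes to $\phi$ and $\psi$ by restricting tuples, and that a failure of asymptotic association forces the entropy to be $-\infty$ so the concavity inequality is vacuous — correctly fill the only gaps in this reduction.
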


Parts (a) and (b) of this corollary are contained within~\cite[Cor. 6.23]{APE4} and~\cite[Cor. 6.25]{APE4}, respectively.  (Those original corollaries are slightly more detailed.)

\subsection*{\emph{Notes and further references}}

Since its introduction in~\cite{FugKad51,FugKad52}, the Fuglede--Kadison determinant has played an increasingly important role in various connections of operator algebras to others parts of mathematics.  A succinct survey of some of these is given in~\cite{delaHar13}. A much more thorough exposition with a view towards handling to $L^2$-invariants in topology is given in~\cite[Sec. 3.2]{Luc02}; see also~\cite[Chap. 13]{Luc02} on the approximation and determinant conjectures.

\chapter{Random almost periodic sequences and notions of entropy}\label{chap:random-AP}

In this chapter we generalize AP entropy by allowing the individual representations in an AP sequence to be random.

\section{Random AP sequences}

Formally, assume that $\A$ is separable and unital, and let $\Rep_n(\A)$ denote the space of unital $\ast$-representations of $\A$ on $\bbC^n$ for each $n$. This is a space of maps $\A \to \B(\bbC^n)$, and we endow it with the topology of pointwise convergence as in~\cite[Subs. 3.5.2]{Dix--Cstar}.  This topology is Polish because $\A$ is separable~\cite[Prop. 3.7.1]{Dix--Cstar}.

Fix a background probability space $(\Omega,\F,\bbP)$. On this space, a \textbf{random almost periodic} (`\textbf{AP}') \textbf{sequence} is a sequence of random variables $\pi_n$ taking values in $\Rep_{d_n}(\A)$ for some divergent sequence of positive integers $(d_n)_{n \ge 1}$.  We always write $\bbE$ for expectation with respect to $\bbP$.

We only ever consider quantities that involve one value of $n$ at a time.  As a result, the properties we consider depend only on of individual distributions of each $\pi_n$; the coupling of those distributions is unimportant.  We assume that every $\pi_n$ is defined on the same background probability space only to lighten notation.  The reader wanting a definite choice may always assume that they are independent.

For example, let $\G$ be the group freely generated by a set $S$ of size $r$, and let $\A := C^\ast \G$.  Let $(d_n)_{n\ge 1}$ be a divergent sequence of positive integers, and let $\nu_n$ be a Borel probability measure on $\rmU(d_n)$ for each $n$.  Finally, for each $n$, choose a random element $\pi_n$ of $\Rep_{d_n}(\G)$ such that the generators $\pi_n(s) \in \rmU(d_n)$, $s \in S$, are independent and all have distribution $\nu_n$.  By the universal property of $C^\ast \G$ for unitary representations of $\G$, each $\pi_n$ extends uniquely to a random unitary representation of $C^\ast \G$.

Let $\bspi := (\pi_n)_{n\ge 1}$.  In this section we introduce two variants of AP entropy that account for the randomness in $\bspi$: `annealed' and `zeroth-order' AP entropy.  We establish some first properties of these which hold for any random AP sequence.

In Part~\ref{part:free} we study these quantities and their consequences in a special case: $\A = C^\ast \G$ for a group $\G$ which is freely generated by a set $S$ of size $r$, and we generate $\pi_n \in \Rep_{d_n}(\G)$ by choosing the generators $\pi_n(s) \in \rmU(n)$, $s \in S$, independently at random from Haar measure.  Several special features of this example lead to much more complete results.

\section{Annealed almost periodic entropy}\label{sec:ann-APent}

We now fix a random AP sequence $\bspi = (\pi_n)_{n\ge 1}$ for the rest of this chapter.  When needed, the dimension of $\pi_n$ is denoted by $d_n$.

Recall the ball-volume function $v$ from~\eqref{eq:ball-vol}.

\begin{dfn}[Annealed AP entropy]\label{dfn:annAPent}
Let $\phi \in \B(\A,\rmM_k)_+$. Its \textbf{annealed almost periodic} (`\textbf{AP}') \textbf{entropy along $\bspi$} is the quantity
\begin{equation}\label{eq:AnnAPent}
\rmh^\ann_{\bspi}(\phi) := \inf_O \limsup_{n\to\infty} \frac{1}{d_n}\log \bbE\frac{\vol_{2kd_n}\X(\pi_n,O)}{v(d_n)^k},
\end{equation}
where the infimum runs over all neighbourhoods of $\phi$.

Similarly, the \textbf{lower AP entropy of $\phi$ along $\bspi$} is the quantity
\begin{equation}\label{eq:lowerAnnAPent}
\ul{\rmh}^\ann_{\bspi}(\phi) := \inf_O \liminf_{n\to\infty} \frac{1}{d_n}\log \bbE\frac{\vol_{2kd_n}\X(\pi_n,O)}{v(d_n)^k},
\end{equation}
where the infimum runs over all neighbourhoods of $\phi$.
\end{dfn}

The term `annealed' is borrowed from the statistical physics of disordered materials: see, for instance,~\cite{MezParVir--book}.  Here it means that we apply the expectation directly to a random cardinality or a random volume in phase space.  Other options are available: for example, we briefly discuss the associated `quenched' average in Section~\ref{sec:three-entropy-cors} below. The annealed average is often easiest to study because Fubini's theorem allows us to exchange $\bbE$ and $\vol_{2kd_n}$ in the formula~\eqref{eq:AnnAPent}: this fact becomes crucial during our work in Part~\ref{part:free}.

If $\rmh^\ann_{\bspi}(\phi)$ and $\ul{\rmh}^\ann_{\bspi}(\phi)$ coincide, then this says that the `$\limsup$' and `$\liminf$' in their definitions become closer and closer as $O$ shrinks around $\phi$.  If this holds, then it can be seen as a rather rough mode of convergence for the expression inside the $\limsup$ of~\eqref{eq:AnnAPent}, regarded as a function of $O$.  A similar kind of convergence characterizes large deviations principles (see Appendix~\ref{chap:LDP-prelims}).  This connection becomes closer in Part~\ref{part:free}, where we prove that $\rmh^\ann_{\bspi}$ and $\ul{\rmh}^\ann_{\bspi}$ coincide for a uniformly random AP sequence for free groups and relate this fact to certain true large definitions principles for those random AP sequences.  We introduce notation for both $\rmh^\ann_{\bspi}$ and $\ul{\rmh}^\ann_{\bspi}$ for convenience during that work later.

In the rest of this section we largely focus on $\rmh^\ann_{\bspi}$.  The same reasoning gives analogous results for $\ul{\rmh}^\ann_{\bspi}$, but we omit these unless they are needed later.

In case each $\pi_n$ is deterministic, $\rmh^\ann_{\bspi}(\phi)$ simplifies back to Definition~\ref{dfn:APent}.  In Definition~\ref{dfn:annAPent}, as previously, we may restrict $O$ to lie in any chosen neighbourhood base at $\phi$ without changing the value of $\rmh^\ann_{\bspi}(\phi)$, by monotonicity.

Since $\rmh^\ann_{\bspi}(\phi)$ is an infimum over neighbourhoods of $\phi$, Lemma~\ref{lem:upper-semicts} gives the following extension of Lemma~\ref{lem:APent-usc}.

\begin{lem}\label{lem:hann-usc}
Each restriction $\rmh^\ann_{\bspi}|\B(\A,\rmM_k)_+$ is upper semicontinuous. \qed
\end{lem}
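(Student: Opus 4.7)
The proof will be a direct application of the general principle that any function defined as an infimum over open neighbourhoods of the point in question is automatically upper semicontinuous — this is what Lemma~\ref{lem:upper-semicts} provides, and the paper even remarks just before Definition~\ref{dfn:annAPent} that monotonicity lets us restrict to any neighbourhood base.

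Concretely, for each open subset $O$ of $\B(\A,\rmM_k)_+$, define
\[
F(O) := \limsup_{n\to\infty}\frac{1}{d_n}\log \bbE\frac{\vol_{2kd_n}\X(\pi_n,O)}{v(d_n)^k},
\]
which depends only on $O$ and not on any distinguished point of it. By Definition~\ref{dfn:annAPent},
\[
\rmh^\ann_{\bspi}(\phi) = \inf\{F(O) : O \text{ open}, \ \phi \in O\}.
\]
The plan is to verify the standard openness-of-sublevel-sets criterion for upper semicontinuity. Fix any real number $a$ and suppose $\phi$ satisfies $\rmh^\ann_{\bspi}(\phi) < a$. By the defining infimum, there exists an open neighbourhood $O$ of $\phi$ with $F(O) < a$. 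But $O$ is simultaneously a neighbourhood of every one of its points, so for any $\psi \in O$ we also have $\rmh^\ann_{\bspi}(\psi) \le F(O) < a$. Thus $O$ is an open subset of $\{\psi : \rmh^\ann_{\bspi}(\psi) < a\}$ containing $\phi$, establishing that this sublevel set is open and giving upper semicontinuity.

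There is no real obstacle here; the only point to keep in mind is that the argument goes through verbatim whether $\rmh^\ann_{\bspi}(\phi)$ is finite, $-\infty$, or $+\infty$, so the statement really holds as a map to the extended reals. This is precisely the content abstracted into Lemma~\ref{lem:upper-semicts}, and the proof amounts to citing it once we observe that $F(O)$ depends only on $O$.
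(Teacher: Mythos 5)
Your proof is correct and is essentially the paper's argument: the paper simply cites Lemma~\ref{lem:upper-semicts}, whose proof is exactly the sublevel-set verification you spell out, using that the infimand $F(O)$ depends only on the neighbourhood $O$ and not on the point. Your closing remark about $+\infty$ values is a fair (and harmless) observation, since the openness-of-sublevel-sets argument is insensitive to it even though Lemma~\ref{lem:upper-semicts} is stated for $[-\infty,\infty)$-valued $F$.
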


In ergodic theory, the analogous generalization from deterministic to annealed sofic entropy appears in the second displayed equation after~\cite[Definition 4]{Bowen10c}.

We can also extend Proposition~\ref{prop:APent-properties} with only small changes to the proofs.

\begin{prop}\label{prop:hann-properties}
Let $\phi \in \B(\A,\rmM_k)_+$ be positive definite.
\begin{itemize}
\item[a.] We have $\rmh^\ann_{\bspi}(\phi) \le \log \det \phi(1)$. In particular, if $\phi(1)$ is singular, then $\rmh^\ann_{\bspi}(\phi) = -\infty$.
\item[b.] If $Q$ is an invertible $k$-by-$k$ matrix and
\[\psi(a) := (Q^\rm{T})^\ast \phi(a) Q^\rm{T} \qquad (a \in \A),\]
then
\[\rmh^\ann_{\bspi}(\psi) = 2\log|\det Q| + \rmh^\ann_{\bspi}(\phi).\]
The analogous formula holds if $\rmh^\ann_{\bspi}$ is replaced by $\ul{\rmh}^\ann_{\bspi}$ on both sides.
\end{itemize}
\end{prop}

\begin{proof}
\emph{Part (a).}\quad Let $h > \log \det \phi(1)$.  Theorem~\ref{thm:types-1}(b) (the `method of types' for log-determinants) gives a neighbourhood $O$ of $\phi(1)$ in $\rmM_{k+}$ such that the sets
\[T(d,O) := \{X \in \rmM_{d,k}:\ X^\ast X \in O\}\]
satisfy
\begin{equation}\label{eq:phi-1-upper-bound}
\frac{\vol_{2dk}T(d,O)}{v(d)^k} \le e^{hd + o(d)} \qquad \hbox{as}\ d\to\infty.
\end{equation}
Now let
\[U := \{\psi \in \B(\A,\rmM_k)_+:\ \psi(1) \in O\}.\]
Then
\[\X(\pi_n,U)\subset T(d_n,O)\]
for every $n$ and every value of $\pi_n$. Therefore, by~\eqref{eq:phi-1-upper-bound}, the neighbourhood $U$ witnesses the upper bound $\rmh^\ann_{\bspi}(\phi) \le h$.  By the arbitrariness in $h$, this completes the proof.

\vspace{7pt}

\emph{Part (b).}\quad Let $O$ be any neighbourhood of $\psi$.  Then Lemma~\ref{lem:lin-maps} gives a neighbourhood $U$ of $\phi$ such that
\[(I_{d_n}\otimes Q)[\X(\pi_n,U)]\subset \X(\pi_n,O)\]
for every $n$ and every value of $\pi_n$, 
and hence
\begin{align*}
\vol_{2kd_n}\X(\pi_n,O) &\ge |\det (I_{d_n}\otimes Q)|^2\cdot \vol_{2kd_n}\X(\pi_n,U) \\
&= |\det Q|^{2d_n}\cdot \vol_{2kd_n}\X(\pi_n,U)
\end{align*}
for every $n$ and every value of $\pi_n$. The determinant here is squared because we must regard $I_{d_n}\otimes Q$ as a linear map in $2kd_n$ real dimensions for the purpose of computing volumes (see, for instance,~\cite[Subs. 1.3.5]{RudinFTUB}). Inserting this into Definition~\ref{dfn:annAPent}, we obtain
\[\limsup_{n\to\infty}\frac{1}{d_n}\log \bbE\frac{\vol_{2kd_n}\X(\pi_n,O)}{v(d_n)^k} \ge 2\log |\det Q| + \rmh^\ann_{\bspi}(\phi),\]
and similarly with the limit infimum on the left and the quantity $\ul{\rmh}^\ann_{\bspi}(\phi)$ on the right. By the arbitrariness of $O$, this gives the desired inequalities for both $\rmh^\ann_{\bspi}(\phi)$ and $\ul{\rmh}^\ann_{\bspi}(\phi)$ in one direction.  It holds in the other direction by applying the same argument with $\phi$ and $\psi$ switched and with $Q^{-1}$ in place of $Q$.
\end{proof}

Our next result generalizes Lemma~\ref{lem:use-of-spherical} in two ways.  First, it is a version for annealed AP entropy along a random AP sequence.  Secondly, it allows $k > 1$ by introducing the Haar measures on the sets $\bf{U}(k,d_n)$ of orthonormal tuples (see Appendix~\ref{sec:lin-alg} for this notation): these are the tuples whose types are unital.

\begin{lem}\label{lem:normalized3}
Let $\phi \in \B(\A,\rmM_k)_+$ be unital, and let $\cal{O}$ be any base of neighbourhoods around $\phi$ in $\S_k^\rm{u}(\A)$.  Then
\begin{equation}\label{eq:normalized3}
\rmh^\ann_{\bspi}(\phi) = \inf_{O\in \cal{O}} \limsup_{n \to\infty} \frac{1}{d_n}\log \bbE m_{\bf{U}(k,d_n)}\X(\pi_n,O).
\end{equation}
The analogous formula holds for $\ul{\rmh}^\ann_{\bspi}$ if `$\limsup$' is replaced by `$\liminf$'.
\end{lem}

The proof is similar to Lemma~\ref{lem:use-of-spherical}.  The main difference is that, in order to handle $\rmU(k,d_n)$ with $k > 1$, we need the more complicated integration formula from Proposition~\ref{prop:int-form} in place of integration in polar coordinates.

\begin{proof}
Consider the collection $\cal{W}$ of all sets that have the form
\[W = \{\psi \in \B(\A,\rmM_k)_+:\ \psi(1) \in U,\ \psi(1)^{-1/2}\cdot \psi\cdot \psi(1)^{-1/2} \in O\}\]
for some open neighbourhood $U$ of $I_k$ in $\rmM^\circ_{k+}$ and some $O \in \cal{O}$.  Since $\cal{O}$ is a base of neighbourhoods around $\phi$ in $\S^\rm{u}_k(\A)$, this $\cal{W}$ is a base of neighbourhoods around $\phi$ in $\B(\A,\rmM_k)_+$.  Therefore, as remarked following Definition~\ref{dfn:annAPent}, we may express $\rmh^\ann_{\bspi}(\phi)$ as an infimum over $\cal{W}$.

However, if $W$ is the set above, then its special form turns into the equation
\[1_{\X(\pi,W)}(VQ^{1/2}) = 1_{\X(\pi,O)}(V)\cdot 1_U(Q) \qquad (V \in \bf{U}(k,d),\ Q \in \rmM_{k+})\]
for any $d$-dimensional representation $\pi$. As a result, Proposition~\ref{prop:int-form} gives
\begin{align*}
&\vol_{2kd}\X(\pi,W)  \\
&= v(k,d)\int_{\rmM^\circ_{k+}} (\det Q)^{d-k} \int_{\bf{U}(k,d)} 1_{\X(\pi,W)}(VQ^{1/2})\ dm_{\bf{U}(k,d)}(V)\ d\vol_{k^2}(Q) \\
&= v(k,d)\cdot m_{\bf{U}(k,d)}\X(\pi,O)\cdot \int_U (\det Q)^{d-k}\ d\vol_{k^2}(Q).
\end{align*}

Finally, for any $\eps > 0$, we may choose $U$ so small that
\[e^{-\eps} < \det Q < e^\eps \qquad \hbox{for every}\ Q \in U.\]
Substituting these inequalities into the integral formula above, they lead to
\[e^{-d\eps}\cdot \vol_{k^2} U\cdot m_{\bf{U}(k,d)}\X(\pi,O) \le \frac{\vol_{2kd}\X(\pi,W)}{v(k,d)} \le e^{d\eps}\cdot \vol_{k^2} U\cdot m_{\bf{U}(k,d)}\X(\pi,O).\]
Recall that $O$ and $W$ range over neighbourhood bases of $\phi$ in $\S_k^\rm{u}(\G)$ and $\B(\A,\rmM_k)_+$ respectively.  Therefore substituting these inequalities and also the asymptotic~\eqref{eq:ckn-asymp} into Definition~\ref{dfn:annAPent}, and then letting $\eps \downarrow 0$, we obtain~\eqref{eq:normalized3} and also its analog for $\ul{\rmh}^\ann_{\bspi}$.
\end{proof}

If $\phi$ is not unital but $\phi(1)$ is invertible, then we can apply Lemma~\ref{lem:normalized3} to the map $\phi(1)^{-1/2}\cdot \phi \cdot \phi(1)^{-1/2}$ and combine the result with Proposition~\ref{prop:APent-properties}(b).

\section{Zeroth-order AP entropy}\label{sec:0-APent}

Consider $\phi \in \B(\A,\rmM_k)_+$ and a random AP sequence $\bspi$.  Heuristically, our next notion of entropy estimates the probability of asymptotic association as $n\to\infty$.

\begin{dfn}\label{dfn:0ent1}
The \textbf{zeroth-order almost periodic} (`\textbf{AP}') \textbf{entropy of $\phi$ along $\bspi$} is
\[\rmh^0_{\bspi}(\phi) := \inf_O \limsup_{n\to\infty} \frac{1}{d_n}\log \bbP(\X(\pi_n,O) \ne \emptyset),\]
where the infimum runs over all neighbourhoods of $\phi$.  Similarly, the \textbf{lower zeroth-order AP entropy of $\phi$ along $\bspi$} is
\[\ul{\rmh}^0_{\bspi}(\phi) := \inf_O \liminf_{n\to\infty} \frac{1}{d_n}\log \bbP(\X(\pi_n,O) \ne \emptyset),\]
where the infimum runs over all neighbourhoods of $\phi$.
\end{dfn}

As for annealed AP entropy, having notation for both $\rmh^0_{\bspi}(\phi)$ and its lower version $\ul{\rmh}^0_{\bspi}(\phi)$ is convenient during certain proofs later about cases when they coincide.  However, we regard $\rmh^0_{\bspi}$ as the primary notion, and focus on this in the remainder of this section, observing that analogous results hold for $\ul{\rmh}^0_{\bspi}$ with essentially the same proofs.

The term `zeroth-order' invokes the R\'enyi entropy of order zero of a random variable. This is simply the logarithm of the probability that the random variable is not zero: see, for instance,~\cite[eqn. (17.98)]{CovTho06}.  As with R\'enyi entropy, we could also define `order-$p$ entropy' for any other $p > 0$, recovering our original annealed entropy $\rmh^\ann_{\bspi}$ when $p=1$.  But this does not seem to provide much additional insight: see the discussion in Section~\ref{sec:three-entropy-cors} below.

Since $\rmh^0_{\bspi}(\phi)$ is an infimum over neighbourhoods of $\phi$, Lemma~\ref{lem:upper-semicts} gives the following partner of Lemma~\ref{lem:hann-usc}.

\begin{lem}\label{lem:hzero-usc}
Each restriction $\rmh^0_{\bspi}|\B(\A,\rmM_k)_+$ is upper semicontinuous. \qed
\end{lem}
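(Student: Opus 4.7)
The proof is a direct application of the general principle that an infimum over neighbourhoods of a parameter of a quantity depending only on the neighbourhood is automatically upper semicontinuous; this appears to be exactly the content of the cited Lemma~\ref{lem:upper-semicts}. The plan is therefore to verify the hypotheses of that principle in the present setting.

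Concretely, for each fixed open subset $O$ of $\B(\A,\rmM_k)_+$, write
\[F(O) := \limsup_{n\to\infty}\frac{1}{d_n}\log \bbP(\X(\pi_n,O)\ne \emptyset),\]
so that $\rmh^0_{\bspi}(\phi) = \inf\{F(O):\ O\ \hbox{open and}\ \phi \in O\}$. The key point is that $F(O)$ does not depend on the choice of distinguished point inside $O$. Thus, to prove upper semicontinuity at $\phi$, fix any $c > \rmh^0_{\bspi}(\phi)$. By definition of the infimum, there exists an open neighbourhood $O$ of $\phi$ with $F(O) < c$. For \emph{any} $\psi \in O$, this same $O$ is a neighbourhood of $\psi$, and so
\[\rmh^0_{\bspi}(\psi) \le F(O) < c.\]
Hence $O \subset \{\psi:\ \rmh^0_{\bspi}(\psi) < c\}$, establishing upper semicontinuity at $\phi$.

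There is no real obstacle here; the argument uses nothing about the sequence $\bspi$, the representations $\pi_n$, the probability measure $\bbP$, or even the structure of $\B(\A,\rmM_k)_+$ beyond the fact that it carries a topology. The same abstract argument yielded Lemma~\ref{lem:hann-usc} and Lemma~\ref{lem:APent-usc} earlier, and indeed one could state a single general lemma: any function on a topological space defined as $\inf_{O \ni \phi} F(O)$, with $F$ monotone decreasing in $O$ (which is automatic since $F$ is defined pointwise on opens), is upper semicontinuous. The only thing worth checking, should the author prefer an explicit statement, is monotonicity of $F$: if $O \subset O'$ then $\X(\pi_n,O)\subset \X(\pi_n,O')$, so $\bbP(\X(\pi_n,O)\ne\emptyset)\le \bbP(\X(\pi_n,O')\ne\emptyset)$ and hence $F(O)\le F(O')$, but this is not actually needed in the short argument above.
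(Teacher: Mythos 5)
Your argument is correct and is exactly the paper's: the paper proves this lemma by a one-line appeal to the general principle in Lemma~\ref{lem:upper-semicts} (that any pointwise infimum of a function of neighbourhoods is upper semicontinuous), which is precisely the argument you have written out explicitly.
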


It turns out that $\rmh^0_{\bspi}(\phi)$ is much less sensitive as a function of $\phi$ than $\rmh^\ann_{\bspi}(\phi)$.  We prove shortly that $\rmh^0_{\bspi}(\phi)$ depends only on the minimal dilation $\pi_\phi$ and is monotone under approximate containment of representations.  These properties motivate an extension of $\rmh^0_{\bspi}$ to general representations in Definition~\ref{dfn:0ent2} below.

\begin{lem}\label{lem:h0-almost-assoc}
If $\psi$ is approximately associated to $\pi_\phi$, then $\rmh^0_{\bspi}(\psi)\ge \rmh^0_{\bspi}(\phi)$.
\end{lem}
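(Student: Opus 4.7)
The plan is to reduce the statement directly to Lemma~\ref{lem:typ-trans}, which is the tuple-level version of the same transitivity phenomenon: approximate association of $\psi$ to $\pi_\phi$ is a purely topological/representation-theoretic input, while the desired inequality is simply an event inclusion translated into probabilities and then into exponential rates.

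Concretely, I would fix an arbitrary neighbourhood $U$ of $\psi$ and invoke Lemma~\ref{lem:typ-trans} (with $\pi_\phi$ as the ambient representation) to produce a neighbourhood $V$ of $\phi$ with the property that, for every representation $\pi$,
\[\X(\pi,V)\ne \emptyset \quad \Rightarrow \quad \X(\pi,U)\ne \emptyset.\]
Applied to each random $\pi_n$ pointwise on $\Omega$, this gives an inclusion of events
$\{\X(\pi_n,V)\ne\emptyset\} \subset \{\X(\pi_n,U)\ne\emptyset\}$
and hence
\[\bbP(\X(\pi_n,V)\ne\emptyset) \le \bbP(\X(\pi_n,U)\ne\emptyset) \qquad (n\ge 1).\]
Taking $\tfrac{1}{d_n}\log$ on both sides and then $\limsup_{n\to\infty}$, the right-hand side is an upper bound for the left-hand side; since $V$ is in particular a neighbourhood of $\phi$, taking the infimum over all such $V$ on the left gives
\[\rmh^0_{\bspi}(\phi) \le \limsup_{n\to\infty}\frac{1}{d_n}\log \bbP(\X(\pi_n,U)\ne\emptyset).\]
Finally, $U$ was an arbitrary neighbourhood of $\psi$, so taking the infimum over $U$ on the right yields $\rmh^0_{\bspi}(\phi) \le \rmh^0_{\bspi}(\psi)$, which is the desired inequality.

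There is no real obstacle here: Lemma~\ref{lem:typ-trans} is doing all of the substantive work, and once it is applied the argument is a mechanical chaining of monotonicity of probability, monotonicity of $\log$, preservation of inequalities under $\limsup$, and monotonicity of infima. The only point worth noting is that Lemma~\ref{lem:typ-trans} is genuinely needed over the naive intuition that ``if $\psi$ comes from $\pi_\phi$ then tuples realizing $\phi$ can be transformed into tuples realizing $\psi$'': one must know that this transformation is \emph{robust} in the sense that approximately $\phi$-typical tuples produce approximately $\psi$-typical tuples, which is precisely the content of that lemma.
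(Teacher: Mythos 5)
Your proof is correct and follows essentially the same route as the paper's: both apply Lemma~\ref{lem:typ-trans} to convert a neighbourhood of $\psi$ into a neighbourhood of $\phi$ with the nonemptiness implication, then chain monotonicity of probabilities, logarithms, limsups and infima. Nothing to add.
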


\begin{proof}
By Lemma~\ref{lem:typ-trans}, if $U$ is any neighbourhood of $\psi$ then there is a neighbourhood $O$ of $\phi$ such that
\[\X(\pi,O)\ne \emptyset \quad \Rightarrow \quad \X(\pi,U)\ne \emptyset\]
for any representation $\pi$.  With this choice of $O$, it follows that
\[\bbP(\X(\pi_n,O)\ne \emptyset) \le \bbP(\X(\pi_n,U)\ne \emptyset) \qquad (n\ge 1).\]
Taking logarithms, normalizing, letting $n\to\infty$, and then taking the infimum over $U$, this turns into the desired inequality.
\end{proof}

Lemma~\ref{lem:h0-almost-assoc} inspires our more general definition of $\rmh^0_{\bspi}$.

\begin{dfn}\label{dfn:0ent2}
For any separable representation $\pi$, its \textbf{zeroth-order AP entropy along $\bspi$} is
\[\rmh^0_{\bspi}(\pi) = \inf\big\{\rmh^0_{\bspi}(\Phi^\pi_{x_1,\dots,x_k}):\ k\ge 1,\ x_1,\dots,x_k \in H_\pi\big\}.\]
Its \textbf{lower zeroth-order AP entropy along $\bspi$} is defined analogously using $\ul{\rmh}^0$.
\end{dfn}

Further properties of $\rmh^0_{\bspi}$ flow from Definition~\ref{dfn:0ent2} rather similarly to the traditional development of Kolmogorov--Sinai entropy in ergodic theory, which can be defined as a supremum over partitions.  See, for example,~\cite[Secs. 4.4--6]{Walters--book}.  Let us stress, however, that $\rmh^0_{\bspi}$ is defined as an infimum rather than a supremum, even though Definition~\ref{dfn:0ent2} and Kolmogorov--Sinai entropy follow the same sign convention for an `entropy'.

Sometimes other formulas for zeroth-order entropy are convenient.

\begin{lem}\label{lem:0-ent-alt-dfn}
	We have
	\begin{align}
\rmh^0_{\bspi}(\pi) & = \inf\Big\{\rmh^0_{\bspi}(\phi):\ \phi \in \bigcup_{k \ge 1}\ol{\S_k(\pi)}\Big\} \label{eq:0-ent-alt-1}\\
&= \inf_U\limsup_{n\to\infty} \frac{1}{d_n}\log \bbP(\pi_n \in U) \label{eq:0-ent-alt-2},
\end{align}
where the infimum on the second line runs over all q-neighbourhoods of $\pi$.  The analogous formulas hold if `$\rmh^0$' is replaced with `$\ul{\rmh}^0$' and `$\limsup$' is replaced with `$\liminf$'.
\end{lem}

\begin{proof}
Definition~\ref{dfn:0ent2} may be written in the equivalent form
\[\rmh^0_{\bspi}(\pi) = \inf\Big\{\rmh^0_{\bspi}(\phi):\ \phi \in \bigcup_{k \ge 1}\S_k(\pi)\Big\}.\]
The formula~\eqref{eq:0-ent-alt-1} follows from this together with the upper semicontinuity from Lemma~\ref{lem:hzero-usc}.

To prove formula~\eqref{eq:0-ent-alt-2}, first observe that we may restrict attention to sets $U$ that lie in any choice of neighbourhood base at $\phi$, because the infimum is monotone in $U$.  Having done so, Lemma~\ref{lem:lower-Vietoris-simplify} tells us that it suffices to consider q-neighbourhoods that have the form
\[U = \{\rho:\ \ol{\S_k(\rho)}\ \hbox{meets}\ O\}\]
for some positive integer $k$ and some open subset $O$ of $\S_k(\A)$ that meets $\S_k(\pi)$.  This choice of $U$ satisfies
\[\bbP(\pi_n \in U) = \bbP(\S_k(\pi_n)\ \hbox{meets}\ O) = \bbP(\X(\pi_n,O)\ne \emptyset).\]
Therefore the infimum over all neighbhourhoods $U$ of this form turns into the infimum of the expression in Definition~\ref{dfn:0ent1} over $\phi \in \bigcup_k \S_k(\A)$, as required.

The formulas for $\ul{\rmh}^0_{\bspi}$ are proved in the same way.
\end{proof}

The next two results allow us to restrict which vectors in a representation we consider when computing $\rmh^0_{\bspi}$.  The first is analogous to the Kolmorogov--Sinai generator theorem~\cite[Thm. 4.17]{Walters--book}.

\begin{lem}\label{lem:h0-cyclic}
If the tuple $x_1$, \dots, $x_k$ is cyclic for $\pi$, then
\[\rmh^0_{\bspi}(\pi) = \rmh^0_{\bspi}(\Phi^\pi_{x_1,\dots,x_k}).\]
Equivalently, if $\phi \in \S_k(\A)$, then $\rmh^0_{\bspi}(\pi_\phi) = \rmh^0_{\bspi}(\phi)$.
\end{lem}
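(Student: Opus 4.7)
The inequality $\rmh^0_{\bspi}(\pi) \le \rmh^0_{\bspi}(\Phi^\pi_{x_1,\dots,x_k})$ is immediate from Definition~\ref{dfn:0ent2}, since that definition presents $\rmh^0_{\bspi}(\pi)$ as an infimum over types of tuples in $\pi$, and the tuple $x_1,\dots,x_k$ is one particular such tuple. So the plan is to establish the reverse inequality.

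To that end, let $\phi := \Phi^\pi_{x_1,\dots,x_k}$. The crucial observation is that, because $x_1,\dots,x_k$ is cyclic for $\pi$, the triple $(\pi, x_1,\dots,x_k)$ is itself a minimal dilation of $\phi$. By the uniqueness clause in Stinespring's theorem (recalled in Section~\ref{sec:CP}), this means $\pi$ is unitarily equivalent to $\pi_\phi$. Consequently, every type realized by a tuple in $\pi$ is also realized by a tuple in $\pi_\phi$, so in particular it is approximately associated to $\pi_\phi$.

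Now pick any tuple $y_1,\dots,y_\ell$ in $H_\pi$ and set $\psi := \Phi^\pi_{y_1,\dots,y_\ell}$. By the previous paragraph, $\psi$ is approximately associated to $\pi_\phi$, so Lemma~\ref{lem:h0-almost-assoc} yields
\[
\rmh^0_{\bspi}(\psi) \;\ge\; \rmh^0_{\bspi}(\phi).
\]
Taking the infimum of the left-hand side over all tuples $y_1,\dots,y_\ell$ in $H_\pi$ (and all $\ell$), and invoking Definition~\ref{dfn:0ent2} on the left, gives $\rmh^0_{\bspi}(\pi) \ge \rmh^0_{\bspi}(\phi)$, completing the first assertion.

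For the ``equivalently'' clause, given $\phi \in \S_k(\A)$ with minimal dilation $(\pi_\phi, \xi_1,\dots,\xi_k)$, the canonical cyclic tuple $\xi_1,\dots,\xi_k$ satisfies $\Phi^{\pi_\phi}_{\xi_1,\dots,\xi_k} = \phi$, so the first part applied to $\pi = \pi_\phi$ with this cyclic tuple delivers $\rmh^0_{\bspi}(\pi_\phi) = \rmh^0_{\bspi}(\phi)$. There is no real obstacle here: the statement reduces entirely to combining the definition with Lemma~\ref{lem:h0-almost-assoc}, once one notes that cyclicity forces $\pi \simeq \pi_\phi$.
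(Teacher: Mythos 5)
Your proof is correct and follows essentially the same route as the paper's: the inequality $\le$ is read off from Definition~\ref{dfn:0ent2}, and the reverse inequality comes from observing that cyclicity identifies $\pi$ with $\pi_\phi$, so every type of a tuple in $H_\pi$ is (approximately) associated to $\pi_\phi$ and Lemma~\ref{lem:h0-almost-assoc} applies. Your extra remark about Stinespring uniqueness just makes explicit what the paper leaves implicit; there is no substantive difference.
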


\begin{proof}
The inequality ``$\le$'' follows at once from the definition of $\rmh^0_{\bspi}$.  On the other hand, if $y_1$, \dots, $y_\ell$ is any other tuple in $H_\pi$, then $\Phi^\pi_{y_1,\dots,y_\ell}$ is associated to the GNS representation of $\Phi^\pi_{x_1,\dots,x_k}$, since the latter equals the whole of $\pi$ by cyclicity.  Therefore Lemma~\ref{lem:h0-almost-assoc} shows that $\rmh^0_{\bspi}(\Phi^\pi_{y_1,\dots,y_\ell})$ is bounded below by $\rmh^0_{\bspi}(\Phi^\pi_{x_1,\dots,x_k})$.
\end{proof}

\begin{rmk}
With Lemma~\ref{lem:h0-cyclic} in hand, we use the notations $\rmh^0_{\bspi}(\phi)$ and $\rmh^0_{\bspi}(\pi_\phi)$ interchangeably in the sequel. \fin
\end{rmk}

The next lemma generalizes Lemma~\ref{lem:h0-cyclic} for representation that do not have finite cyclic tuples.  It is an analog of~\cite[Thms. 4.21 and 4.22]{Walters--book}.

\begin{lem}\label{lem:h0-nearly-cyclic}
	Let $\pi$ be a representation of $\A$, and let $S$ be a subset of $H_\pi$ such that the linear span of $\{\pi(a)x:\ a \in \A, x \in S\}$ is dense in $H_\pi$.  Then
	\[\rmh^0_{\bspi}(\pi) = \inf\big\{\rmh^0_{\bspi}(\Phi^\pi_{x_1,\dots,x_k}):\ k\ge 1,\ x_1,\dots,x_k \in S\big\}.\]
\end{lem}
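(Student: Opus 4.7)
The inequality ``$\le$'' is immediate from Definition~\ref{dfn:0ent2}, since the right-hand side is the infimum of $\rmh^0_{\bspi}$ over a subcollection of tuples in $H_\pi$. For the reverse inequality, I plan to fix an arbitrary tuple $y_1,\dots,y_\ell\in H_\pi$ with type $\psi := \Phi^\pi_{y_1,\dots,y_\ell}$ together with any neighbourhood $O$ of $\psi$, and to exhibit some $x_1,\dots,x_k\in S$ (allowed to depend on $O$) for which $\phi := \Phi^\pi_{x_1,\dots,x_k}$ satisfies
\[
\rmh^0_{\bspi}(\phi) \;\le\; \limsup_{n\to\infty} \frac{1}{d_n}\log\bbP\big(\X(\pi_n,O)\ne\emptyset\big).
\]
Taking infima over tuples $(x_1,\dots,x_k)$ in $S$, then over neighbourhoods $O$ of $\psi$, and then over tuples $(y_1,\dots,y_\ell)$ will deliver the inequality ``$\ge$''.

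To produce such $x_1,\dots,x_k$, I would use the density assumption to pick them in $S$ together with elements $a_{ij}\in\A$ ($1\le i\le\ell$, $1\le j\le k$) such that the vectors $y'_i := \sum_j \pi(a_{ij})x_j$ approximate the $y_i$ in norm. By the uniform continuity of the type map on bounded subsets of $H_\pi^{\oplus\ell}$ (Lemma~\ref{lem:unif-cts}), the type $\psi' := \Phi^\pi_{y'_1,\dots,y'_\ell}$ will lie in $O$ once the approximation is close enough. Setting $\phi := \Phi^\pi_{x_1,\dots,x_k}$, observe that the $y'_i$ all lie in the closed cyclic subspace of $H_\pi$ generated by $\{x_1,\dots,x_k\}$, which by uniqueness of the GNS construction is canonically identified with $H_{\pi_\phi}$ in a way that carries the $x_j$ to the canonical cyclic tuple of $\pi_\phi$. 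Consequently $\psi'$ is actually associated to $\pi_\phi$, and in particular approximately associated.

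At this point Lemma~\ref{lem:typ-trans}, applied with $\psi'$ playing the role of ``$\psi$'' and with $O$ (a neighbourhood of the nearby $\psi'$) playing the role of ``$U$'', yields a neighbourhood $V$ of $\phi$ such that $\X(\rho,V)\ne\emptyset$ implies $\X(\rho,O)\ne\emptyset$ for every representation $\rho$. Taking $\rho=\pi_n$ gives $\bbP(\X(\pi_n,V)\ne\emptyset)\le\bbP(\X(\pi_n,O)\ne\emptyset)$, whence
\[
\rmh^0_{\bspi}(\phi) \;\le\; \limsup_{n\to\infty}\frac{1}{d_n}\log\bbP\big(\X(\pi_n,V)\ne\emptyset\big) \;\le\; \limsup_{n\to\infty}\frac{1}{d_n}\log\bbP\big(\X(\pi_n,O)\ne\emptyset\big),
\]
which is the desired bound. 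I do not expect a substantive obstacle here; the argument is a routine elaboration of the proofs of Lemmas~\ref{lem:h0-cyclic} and~\ref{lem:h0-almost-assoc}, with the density hypothesis replacing strict cyclicity to furnish ``approximate generators'' in $S$ for any prescribed tuple. The only subtlety worth flagging is keeping the roles of $\psi$ and the nearby $\psi'$ straight when invoking Lemma~\ref{lem:typ-trans}, which is legitimate because $O$ is open and $\psi'\in O$ by construction.
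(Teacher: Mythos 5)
Your proposal is correct and follows essentially the same route as the paper's proof: use the density hypothesis to approximate the given tuple $Y$ by vectors lying in the closed cyclic subspace generated by a finite tuple drawn from $S$, then transfer the entropy bound back via continuity of the type map and monotonicity of $\rmh^0_{\bspi}$ under association to the GNS representation. The only cosmetic difference is that the paper packages the final step by citing the upper semicontinuity of $\rmh^0_{\bspi}$ (Lemma~\ref{lem:hzero-usc}) together with Lemma~\ref{lem:h0-cyclic}, whereas you unpack the semicontinuity by hand through neighbourhoods and invoke Lemma~\ref{lem:typ-trans} directly.
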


\begin{proof}
The inequality ``$\le$'' follows at once from the definition of $\rmh^0_{\bspi}$.

For the reverse direction, suppose that $Y = [y_1,\dots,y_\ell]$ is a tuple in $H_\pi$.

For any tuple $X = [x_1,\dots,x_k]$ in $H_\pi$, let $M_X$ be the closed $\pi$-invariant subspace of $H_\pi$ generated by $X$. By our assumption on $S$, there are tuples $X_1$, $X_2$, \dots drawn from $S$ and further $\ell$-tuples $Y_1$, $Y_2$, \dots such that $Y_n$ lies in $M_{X_n}$ and $Y_n \to Y$, and hence also $\Phi^\pi_{Y_n}\to \Phi^\pi_Y$ by Lemma~\ref{lem:unif-cts}.  By Lemma~\ref{lem:hzero-usc} and Lemma~\ref{lem:h0-cyclic}, it follows that
\[\rmh^0_{\bspi}(\Phi^\pi_Y) \ge \limsup_{n\to\infty} \rmh^0_{\bspi}(\Phi^\pi_{Y_n}) \ge \limsup_{n\to\infty} \rmh^0_{\bspi}(\pi^{M_{X_n}}) = \limsup_{n\to\infty} \rmh^0_{\bspi}(\Phi^\pi_{X_n}).\]
Since each $X_n$ is a tuple drawn from $S$, this proves the inequality ``$\ge$''.
\end{proof}

Using Lemma~\ref{lem:0-ent-alt-dfn}, we can improve Lemma~\ref{lem:h0-almost-assoc} to the following monotonicity for an arbitrary pair of representations $\pi$ and $\rho$.

\begin{cor}\label{cor:h0-almost-contained}
If $\rho$ is approximately contained in $\pi$, then $\rmh^0_{\bspi}(\rho) \ge \rmh^0_{\bspi}(\pi)$, and similarly for $\ul{\rmh}^0_{\bspi}$.  In particular, the functions $\rmh^0_{\bspi}$ and $\ul{\rmh}^0_{\bspi}$ on representations are invariant under approximate equivalence.
\end{cor}

\begin{proof}
If $\phi$ is approximately associated to $\rho$, then it is also approximately associated to $\pi$, so this follows from formula~\eqref{eq:0-ent-alt-1}.
\end{proof}

\begin{rmk}
In general $\rmh^0_{\bspi}$ is \emph{not} an invariant of weak equivalence, because it can sometimes detect multiplicities of subrepresentations. This becomes clear from Proposition~\ref{prop:additivity} below. \fin
\end{rmk}

Part~\ref{part:free} below is dedicated to annealed and zeroth-order AP entropy and related quantities for uniformly random AP sequences for free groups.  By contrast, zeroth-order AP entropy turns out to be somewhat degenerate for uniformly random \emph{permutation} representations: see Section~\ref{sec:rndm-perms}.

\section{The three-entropy formula}\label{sec:three-entropy}

Some results about AP entropy require the convergence of the traces on $\A$ pulled back from an AP sequence to $\tau$.  For annealed and zeroth-order AP entropy along $\bspi$, this assumption can sometimes be replaced by convergence of the random traces $\tr_{d_n}\circ \pi_n$ in probability sufficiently fast.  Specifically, assume that $\tau$ is a tracial state on $\A$, let $\l$ be its GNS representation, and let $\Delta$ be the Fuglede--Kadison determinant defined by $\tau$ as in Section~\ref{sec:FK}.  In this section, we work from the following additional assumption: for every neighbourhood $V$ of $\tau$ in $\A^\ast_+$ and every $c > 0$, we have
\begin{equation}\label{eq:traces-fast}
\bbP(\tr_{d_n}\circ \pi_n \not\in V) = o(e^{-cd_n}) \qquad \hbox{as}\ n\to\infty.
\end{equation}
For uniformly random AP sequences for free groups, this holds by an asymptotic freeness theorem of Voiculescu: see Theorem~\ref{thm:asymp-free2} below.

Our first consequence of~\eqref{eq:traces-fast} is a probabilistic generalization of Proposition~\ref{prop:APent-properties-trace}.

\begin{lem}\label{lem:h0-sing-only}
Assume that~\eqref{eq:traces-fast} holds, and let $\pi$ be a separable unital representation of $\A$.
\begin{enumerate}
\item[a.] If $\phi:\A\to\rmM_k$ is approximately associated to $\l^{\oplus \infty}$, and $O$ is a neighbourhood of $\phi$ in $\B(\A,\rmM_k)_+$, then
\[\bbP(\X(\pi_n,O) \ne \emptyset) \to 1.\]
If $\pi \lesssim_{\rm{a}} \l^{\oplus \infty}$, then $\rmh_{\bspi}^0(\pi) = \ul{\rmh}_{\bspi}^0(\pi) = 0$.
\item[b.] In general, let $\pi^M$ be the subrepresentation such that $\pi^M \lesssim \l^{\oplus \infty}$ and $\pi^{M^\perp} \spoon \l$ (see Proposition~\ref{prop:Leb-reps}).  Then $\rmh_{\bspi}^0(\pi) = \rmh_{\bspi}^0(\pi^{M^\perp})$ and $\ul{\rmh}_{\bspi}^0(\pi) = \ul{\rmh}_{\bspi}^0(\pi^{M^\perp})$.
\end{enumerate}
\end{lem}

\begin{proof}
\emph{Part (a).}\quad For this $\phi$ and $O$, Corollary~\ref{cor:APent-properties-trace}(a) gives an integer $d_0$ and a neighbourhood $V$ of $\tau$ such that
\[d_n\ge d_0\quad \hbox{and}\quad \tr_{d_n}\circ \pi_n \in V \qquad \Rightarrow \qquad \X(\pi_n,O) \ne \emptyset.\]
By~\eqref{eq:traces-fast}, the probability that $\tr_{d_n} \circ \pi_n \in V$ tends to $1$ as $n\to\infty$, hence so does the probability that $\X(\pi_n,O) \ne \emptyset$.  Now the conclusions when $\pi \lesssim_{\rm{a}} \l^{\oplus \infty}$ follow from formula~\eqref{eq:0-ent-alt-1} for $\rmh^0_{\bspi}$ and its analog for $\ul{\rmh}^0_{\bspi}$.

\vspace{7pt}

\emph{Part (b).}\quad The inequality $\rmh_{\bspi}^0(\pi) \le \rmh_{\bspi}^0(\pi^{M^\perp})$ holds by Corollary~\ref{cor:h0-almost-contained}, and similarly for $\ul{\rmh}^0_{\bspi}$.

To prove the reverse inequality, we may assume that $\rmh_{\bspi}^0(\pi^{M^\perp}) > -\infty$.  Let $\phi:\A\to\rmM_k$ be associated to $\pi$, and let $O$ be a neighbourhood of $\phi$ in $\B(\A,\rmM_k)_+$.

By projecting the associating vectors of $\phi$ to $M$ and $M^\perp$, we obtain the Lebesgue decomposition $\phi_{\rm{ac}} + \phi_{\rm{sing}}$ of $\phi$ with respect to $\tau$ (see Corollary~\ref{cor:Leb}).  Now Corollary~\ref{cor:APent-properties-trace}(b) gives a positive integer $d_0$ and neighbourhoods $V$ of $\tau$ and $U$ of $\phi_{\rm{sing}}$.  Once $d_n \ge d_0$, the conclusion of that corollary yields
\begin{align*}
\bbP(\X(\pi_n,O)\ne \emptyset) & \ge \bbP(\tr_{d_n}\circ \pi_n \in V\ \hbox{and}\ \X(\pi_n,U)\ne \emptyset)\\
&\ge \bbP(\X(\pi_n,U)\ne \emptyset) - \bbP(\tr_{d_n}\circ \pi_n \not\in V).
\end{align*}
By~\eqref{eq:traces-fast}, the second term here decays faster than any exponential in $d_n$, leaving
\[\limsup_{n \to \infty}\frac{1}{d_n}\log \bbP(\X(\pi_n,O)\ne \emptyset) \ge \limsup_{n \to \infty}\frac{1}{d_n}\log \bbP(\X(\pi_n,U)\ne \emptyset)\]
for any such map $\phi$ and neighbourhood $O$.  The quantity on the right-hand side is at least $\rmh_{\bspi}^0(\pi^{M^\perp})$ for every $U$.  Therefore, taking the infimum over $\phi$ and $O$, formula~\eqref{eq:0-ent-alt-1} gives $\rmh^0_{\bspi}(\pi) \ge \rmh^0_{\bspi}(\pi^{M^\perp})$. To prove the result for $\ul{\rmh}^0$, replace `$\limsup$' with `$\liminf$' on both sides above.
\end{proof}

We have now met three notions of entropy for a map $\phi$ in $\B(\A,\rmM_k)_+$:
\begin{itemize}
	\item[i.] AP entropy along a fixed AP sequence, which always equals either $-\infty$ or a Fuglede--Kadison log-determinant from Theorem~\ref{thm:APE4};
	\item[ii.] annealed AP entropy along a random AP sequence $\bspi$ (or its lower version);
	\item[iii.] zeroth-order AP entropy along a random AP sequence $\bspi$ (or its lower version).
\end{itemize}

When~\eqref{eq:traces-fast} holds, these three notions are related by the following formulas.

\begin{thm}[Three-entropy formula]\label{thm:three-entropy}
If~\eqref{eq:traces-fast} holds and $\phi \in \B(\A,\rmM_k)_+$, then
	\begin{equation}\label{eq:three-entropies} 
\rmh_{\bspi}^\ann(\phi) = \rmh_{\bspi}^0(\phi) + \log \Delta \phi_{\rm{ac}}
\end{equation}
and
\begin{equation}\label{eq:three-entropies-2}
\ul{\rmh}^\ann_{\bspi}(\phi) = \ul{\rmh}_{\bspi}^0(\phi) + \log \Delta \phi_{\rm{ac}} .
\end{equation}
\end{thm}

\begin{proof}
We prove~\eqref{eq:three-entropies} and put the alterations needed for~\eqref{eq:three-entropies-2} in [brackets].

\vspace{7pt}
	
	\emph{Step 1:.}\quad Assume that the right-hand side of~\eqref{eq:three-entropies} is finite, and let $h_0 < \rmh_{\bspi}^0(\pi_\phi)$ and $h_1 < \log \Delta \phi_{\rm{ac}}$.

Let $O$ be any neighbourhood of $\phi$.  For this choice of $h_1$ and $O$, Theorem~\ref{thm:APE4}(b) gives a positive integer $d_0$, a neighbourhood $V$ of $\tau$, and another neighbourhood $O'$ of $\phi$.  As a result, for all sufficiently large $n$, we have
	\begin{align*}
		\bbE\frac{\vol_{2kd_n}\X(\pi_n,O)}{v(d_n)^k} &\ge \bbE\Big[\frac{\vol_{2kd_n}\X(\pi_n,O)}{v(d_n)^k}\,;\, \tr_{d_n}\circ \pi_n \in V,\ \X(\pi_n,O') \ne \emptyset\Big] \\
	&\ge e^{h_1d_n}\cdot \bbP(\tr_{d_n}\circ \pi_n \in V,\ \X(\pi_n,O') \ne \emptyset)\\
	&\ge e^{h_1d_n}\cdot \big(\bbP(\X(\pi_n,O') \ne \emptyset) - \bbP(\tr_{d_n}\circ \pi_n \not\in V)\big).
	\end{align*}
By the definition of zeroth-order AP entropy and~\eqref{eq:traces-fast}, the last line above is at least
\[e^{(h_1 + h_0)d_n} - o(e^{(h_1 + h_0)d_n})\]
for infinitely many $n$ [for all sufficiently large $n$].  Inserting this lower bound into Definition~\ref{dfn:annAPent}, we arrive at $\rmh_{\bspi}^\ann(\phi) \ge h_0 + h_1$.  By the arbitrariness of $h_0$ and $h_1$, this proves the inequality `$\ge$' in~\eqref{eq:three-entropies} [in~\eqref{eq:three-entropies-2}].

\vspace{7pt}

\emph{Step 2.}\quad Let $h_0 > \rmh_{\bspi}^0(\phi)$ and $h_1 > \log \Delta \phi_{\rm{ac}}$, and in addition let ${R > \max_i \phi_{ii}(1)}$.

On the one hand, the definition of $\rmh_{\bspi}^0$ gives a neighbourhood $O_0$ of $\phi$ such that $\bbP(\X(\pi_n,O_0) \ne \emptyset) \le e^{h_0 d_n}$ for all sufficiently large $n$.

On the other hand, having made our choice of $h_1$, Theorem~\ref{thm:APE4}(a) gives a positive integer $d_0$, a neighbourhood $V$ of $\tau$, and a neighbourhood $O$ of $\phi$.  By shrinking $O$ further if necessary, we may also assume that $O \subset O_0$ and that every $\psi \in O$ satisfies $\max_i\psi_{ii}(1) < R$.  The last property implies that $\X(\pi,O)$ is contained in $B_R(0)^k$ for every $d$-dimensional representation $\pi$, where $B_R(0)$ is the radius-$R$ ball in $\bbC^d$.

Once $d_n \ge d_0$, we can now estimate the relevant expected volume like this:
\begin{align*}
	&\bbE\frac{\vol_{2kd_n}\X(\pi_n,O)}{v(d_n)^k} \\ &=  \bbE\Big[\frac{\vol_{2kd_n}\X(\pi_n,O)}{v(d_n)^k}\,;\,\tr_{d_n}\circ \pi_n \not\in V\Big] + \bbE\Big[\frac{\vol_{2kd_n}\X(\pi_n,O)}{v(d_n)^k}\,;\,\tr_{d_n}\circ \pi_n \in V\Big] \\
	&\le R^{2kd_n}\cdot \bbP(\tr_{d_n}\circ \pi_n \not\in V) + \bbE\Big[\frac{\vol_{2kd_n}\X(\pi_n,O)}{v(d_n)^k}\,;\,\tr_{d_n}\circ \pi_n \in V,\ \X(\pi_n,O_0) \ne \emptyset\Big]
\end{align*}
We can insert the event $\{\X(\pi_n,O_0) \ne \emptyset\}$ into the last expectation here, because if $\X(\pi_n,O_0) = \emptyset$ then also $\X(\pi_n,O) = \emptyset$ and so $\vol_{2kd_n}\X(\pi_n,O) = 0$.

Since $R$ is a fixed real number, by~\eqref{eq:traces-fast} the first term above decays faster than any exponential in $d_n$.  On the other hand, on the event $\{\tr_{d_n}\circ \pi_n \in V\}$, our appeal to Theorem~\ref{thm:APE4} gives
\[\frac{\vol_{2kn}\X(\pi_n,O)}{v(d_n)^k} \le e^{h_1d_n}\]
for all sufficiently large $n$, and so the second term above is at most
\[e^{h_1d_n}\bbP(\X(\pi_n,O_0) \ne \emptyset)\]
for all sufficiently large $n$.  On the other hand, by our choice of $h_0 > \rmh_{\bspi}^0(\phi)$ and $O_0$, the expression above is less than or equal to $e^{(h_0 + h_1)d_n}$ for all sufficiently large $n$ [for infinitely many $n$].  Inserting this into Definition~\ref{dfn:annAPent}, and recalling the arbitrariness of $h_0$ and $h_1$, this proves the inequality `$\le$' in~\eqref{eq:three-entropies} [in~\eqref{eq:three-entropies-2}].
\end{proof}

%\begin{rmk}
%[[ WORTH INCLUDING?? ????? ]] On the other hand, if $\tr_{d_n}\circ \pi_n$ satisfies a large deviations principle with probabilities that decay at well-defined exponential rates in $d_n$, then the argument can already break down.  However, in this case it is possible that Theorem~\ref{thm:three-entropy} still generalizes to a principle in which $\rmh^0$ is replaced by an exponent governing large deviations probabilites for the random pairs $(\tr_{d_n}\circ \pi_n,\S_\bullet(\pi_n))$.  Such a generalization could be intersting in itself, but I do not know naturally occurring examples of AP sequences that it would apply to. \fin
%\end{rmk}

If $\phi \in \B(\A,\rmM_k)_+$, then Theorem~\ref{thm:three-entropy} expresses $\rmh_{\bspi}^\ann(\phi)$ in terms of $\rmh_{\bspi}^0(\phi)$ and $\log \Delta \phi_{\rm{ac}}$.  However, it can happen that $\rmh_{\bspi}^\ann(\phi)$ and $\log \Delta \phi_{\rm{ac}}$ both equal $-\infty$, and then we cannot simply re-arrange to find $\rmh_{\bspi}^0(\phi)$.  Instead, we can apply Theorem~\ref{thm:three-entropy} to the perturbations
\begin{equation}\label{eq:conv-comb-with-phi}
\phi_t := \tau \otimes I_k + t\phi \qquad (0 \le t < 1).
\end{equation}
This leads to a probabilistic variant of Corollary~\ref{cor:h-pi-and-strong-quot}.

\begin{lem}\label{lem:mollify}
If~\eqref{eq:traces-fast} holds, then the maps in~\eqref{eq:conv-comb-with-phi} satisfy
\begin{equation}\label{eq:mollify}
\rmh_{\bspi}^0(\phi) \le \rmh_{\bspi}^\ann(\phi_t)\le \rmh_{\bspi}^0(\phi) + k \log (1 + t\cdot \tr_k\phi(e)) \qquad (0 < t < 1).
\end{equation}
As a result, either $\rmh_{\bspi}^0(\phi)$ equals $-\infty$ and so does $\rmh_{\bspi}^\ann(\phi_t)$ for every $t$, or these quantities are all finite and
\[\rmh_{\bspi}^\ann(\phi_t) \to \rmh_{\bspi}^0(\phi) \qquad \hbox{as}\ t \downarrow 0.\]

The same conclusions hold if $\rmh^\ann_{\bspi}$ and $\rmh_{\bspi}^0$ are replaced with $\ul{\rmh}^\ann_{\bspi}$ and $\ul{\rmh}_{\bspi}^0$.
\end{lem}

\begin{proof}
We prove this for $\rmh^\ann_{\bspi}$ and $\rmh_{\bspi}^0$.  The proof for $\ul{\rmh}^\ann_{\bspi}$ and $\ul{\rmh}_{\bspi}^0$ is analogous.

Since $\tau\otimes I_k$ is absolutely continuous with respect to $\tau$, and the Lebesgue decomposition from Corollary~\ref{cor:Leb} is linear, we have $(\phi_t)_{\rm{ac}} = \tau \otimes I_k + t \phi_{\rm{ac}}$ and $(\phi_t)_{\rm{sing}} = t\phi_{\rm{sing}}$ for all $t$.  Therefore two appeals to Lemma~\ref{lem:h0-sing-only}(b) give
\[\rmh_{\bspi}^0(\phi_t) = \rmh_{\bspi}^0(t \phi_{\rm{sing}}) = \rmh_{\bspi}^0(\phi_{\rm{sing}}) = \rmh_{\bspi}^0(\phi)\]
when $t \in (0,1)$.  Inserting this in Theorem~\ref{thm:three-entropy}, we obtain
\begin{equation}\label{eq:three-entropies-again}
\rmh_{\bspi}^\ann(\phi_t) = \rmh_{\bspi}^0(\phi) + \log\Delta(\tau \otimes I_k + t \phi_{\rm{ac}}).
\end{equation}
Now~\eqref{eq:mollify} follows from the inequalities
\[0= \log \Delta(\tau\otimes I_k) \le \log \Delta(\tau \otimes I_k + t \phi_{\rm{ac}}) \le k \log (1 + t\cdot \tr_k\phi(e)),\]
which are both provided by Proposition~\ref{prop:FK-det-properties}(b).
\end{proof}

\begin{rmk}\label{rmk:other-orders}
Besides $\rmh^\ann_{\bspi}$ and $\rmh_{\bspi}^0$, one could consider entropies of `other orders'.  However, if~\eqref{eq:traces-fast} holds, then these do not seem to lead to anything new.  Indeed, if $\phi\in \B(\A,\rmM_k)_+$ and $p > 0$, then essentially the same proof as for Theorem~\ref{thm:three-entropy} gives the generalization
\[\inf_O\limsup_{n\to\infty}\frac{1}{d_n}\log \bbE\Big[\Big(\frac{\vol_{2kd_n}\X(\pi_n,O)}{v(d_n)^k}\Big)^p\Big] = \rmh_{\bspi}^0(\phi) + p\cdot \log\Delta \phi_{\rm{ac}}.\]
Theorem~\ref{thm:three-entropy} is the case $p=1$.  If $\log\Delta \phi_{\rm{ac}}$ is finite, then we recover $\rmh_{\bspi}^0(\phi)$ as $p\downarrow 0$, again justifying the name `zeroth-order'.  So random AP entropies of other `orders' are simply obtained from $\rmh_{\bspi}^0$ and $\rmh^\ann_{\bspi}$ by linear interpolation. \fin
\end{rmk}

\part{RANDOM AP SEQUENCES FOR FREE GROUPS}\label{part:free}

\chapter{Positive definite functions on free groups}\label{chap:free-prelims}

Much of Part~\ref{part:free} studies random positive definite functions on a finitely generated free group that are obtained by fixing a vector in $\bbC^n$ and then choosing a representation uniformly at random.  Before introducing randomness, we need some preliminaries about the geometry of these groups, and also a way of parametrizing those positive definite functions.  That parametrization generalizes the classical Verblunsky coefficients of a positive definite function on $\bbZ$.

Most of the results in Section~\ref{sec:group-geom} are standard, but the point of view in Section~\ref{sec:restrict-extend} is somewhat new.

Throughout this chapter, $\G$ is a group freely generated by a finite set $S$.

\section{Geometry of finitely generated free groups}\label{sec:group-geom}

The generating set $S\cup S^{-1}$ defines a Cayley graph on the vertex set $\G$.  It is a tree because $\G$ is free on $S$~\cite[Prop. I.15]{Ser--T}.  To be more precise, we have a choice between the left and right Cayley graphs.  We mostly use the \emph{left} Cayley graph, and later references to `the Cayley graph' all imply this choice.  The edges of this graph are the unordered pairs $\{g,sg\}$ for $g \in \G$ and $s \in S\cup S^{-1}$.  However, at a few points below we need the right Cayley graph instead, and this is always made explicit.

The graph metric of the Cayley graph is a right-invariant metric on $\G$.  For a group element $g$, its \textbf{length} $|g|$ is its distance from $e$ in this metric.  Equivalently, it the minimal length of a word in $S\cup S^{-1}$ that evaluates to $g$.  We write $B_n$ and $S_n$ for the closed ball and sphere of radius $n$ around $e$, respectively.  In particular, $S_1 = S\cup S^{-1}$.

Our main uses for this graph boil down to the following. We wish to interpret paths in the graph as sequences of moves around an orbit of a point under an action of $\G$, where each individual step is implemented by a generator or its inverse.  Since our actions are all from the left, this requires that we use the left Cayley graph.  (By contrast, the right Cayley graph is important as a graph on which $\G$ acts naturally by graph automorphisms.)

If $\G$ is the free group on $S$, then every element of $\G$ has a unique representation as a reduced word in $S\cup S^{-1}$.  If elements $g$ and $h$ of $\G$ are represented by reduced words $v$ and $w$ respectively, then $g$ is adjacent to $h$ in the left Cayley graph if and only if one of those words is an extension of the other by a single letter on the left.  More generally, if $v$ is an extension of $w$ by any number of letters on the left (resp. right), then we call $h$ a \textbf{suffix} (resp. \textbf{prefix}) of $g$.  For any $g$, its suffixes (resp. prefixes) are the vertices along the shortest path from $e$ to $g$ in the left (resp. right) Cayley graph.

If $E$ is a subset of $\G$, then its \textbf{exterior boundary} is the set of elements of $\G$ that are not in $E$ but are adjacent to $E$ in the left Cayley graph.  Similarly, the \textbf{interior boundary} of $E$ is the exterior boundary of $\G\setminus E$.

Assume now that $\G$ is freely generated by $S$. The following class of finite subsets of $\G$ plays a central role throughout the remaining chapters.

\begin{dfn}\label{dfn:grounded}
	A finite subset $F$ of $\G$ is \textbf{grounded} if it contains $e$ and is connected in the left Cayley graph.
\end{dfn}

Grounded sets may be visualized as the finite subtrees of the left Cayley graph that are rooted at the identity.  They (or their analogs for the right Cayley graph) appear naturally in many works on free groups.  For instance, they offer revealing choices of transversals to subgroups in Schreier's theorem: see, for example,~\cite[Prop. I.16]{Ser--T}.  They also arise implicitly in the procedure of `splitting' observables that Bowen uses for proving some of the properties of annealed sofic entropy (the `f-invariant') in~\cite{Bowen10free}. Our term for them follows~\cite[Sec. 14]{Oza13}.

If $F$ is a grounded subset of $\G$, and $g$ is any element of the exterior boundary of $F$, then $F\cup \{g\}$ is also grounded.  We often abbreviate such a union to $F\cup g$ in the sequel. Because the Cayley graph of $\G$ is a tree and $F$ is a connected subset of it, in this case there is a unique element $s$ of $S\cup S^{-1}$ such that $g \in sF$, and then we call $F\cup g$ an \textbf{enlargement of $F$ in direction $s$}. By removing interior boundary points one at a time and then reversing the process, any grounded set may be reached from $\{e\}$ by a sequence of enlargements.

Several proofs about grounded sets in the sequel are by \textbf{induction on the grounded set}. For a desired assertion, this means we prove (i) that it holds for $\{e\}$, and then (ii) that if it holds for a grounded set $F$ then it also holds for any enlargement of $F$.  This implies the assertion for any grounded set by the assembly procedure described above.

In some of our inductive proofs of this kind, we must keep careful track of the directed edges of the Cayley graph that are contained in each grounded set.  These directed edges are labeled by the generators from $S$ that give rise to them: the directed edges within $F$ that are labeled by generator $t$ emanate from the vertices in $t^{-1}F\cap F$.  The next lemma tracks how these families of edges grow when $F$ itself is enlarged.

\begin{lem}\label{lem:shift-enlargement}
If $F$ is a grounded set and $F' = F \cup g$ is an enlargement in direction $s \in S\cup S^{-1}$, then
\[F'\cap sF' = (F\cap sF) \cup \{g\},\]
\[F'\cap s^{-1}F' = (F\cap s^{-1}F)\cup\{s^{-1}g\},\]
and
\[F'\cap tF' = F\cap tF \quad \hbox{for all}\ t \in (S\cup S^{-1})\setminus \{s,s^{-1}\}.\]
\end{lem}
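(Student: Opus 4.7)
The plan is to expand each intersection using $F' = F \cup \{g\}$ and $tF' = tF \cup \{tg\}$, which gives four sub-pieces in each case:
\[
F' \cap tF' = (F \cap tF) \;\cup\; (F \cap \{tg\}) \;\cup\; (\{g\} \cap tF) \;\cup\; (\{g\} \cap \{tg\}).
\]
The first piece is always one of the sets that appears in the conclusion; the other three are what must be simplified. I would collect the necessary simplifications into one preliminary observation, and then just do bookkeeping for the three cases $t = s$, $t = s^{-1}$, and $t \in (S \cup S^{-1}) \setminus \{s, s^{-1}\}$.

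The entire argument rests on a single geometric fact: because $\G$ is freely generated by $S$, the left Cayley graph is a tree, so the connected grounded set $F$ together with the exterior boundary vertex $g$ forms a tree $F'$. In a tree, a newly attached leaf $g$ has exactly one neighbor in the rest of the set; and since $g \in sF$ means $g = s \cdot (s^{-1}g)$ with $s^{-1}g \in F$, that unique neighbor must be $s^{-1}g$. Equivalently: for $t \in S \cup S^{-1}$, we have $tg \in F$ if and only if $t = s^{-1}$. I will also use the trivial consequence of freeness that $s^2 \ne e$ and $t \ne e$, so that $sg, s^{-1}g, tg$ are all distinct from $g$ itself.

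Granted that observation, each case reduces to a one-line check. For part~(1), $F \cap \{sg\} = \emptyset$ (since $s \ne s^{-1}$), $\{g\} \cap sF = \{g\}$ by hypothesis, and $\{g\} \cap \{sg\} = \emptyset$. For part~(2), $F \cap \{s^{-1}g\} = \{s^{-1}g\}$ by the neighbor analysis with $t = s^{-1}$, while $\{g\} \cap s^{-1}F = \emptyset$ (this is the same as $sg \notin F$, i.e.\ the neighbor analysis with $t = s$), and $\{g\} \cap \{s^{-1}g\} = \emptyset$. For part~(3), all three new pieces vanish: $tg \notin F$ and $t^{-1}g \notin F$ by the neighbor analysis (since neither $t$ nor $t^{-1}$ equals $s^{-1}$), and $tg \ne g$. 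There is no real obstacle; the only point that needs care is isolating the tree-uniqueness statement before starting the case analysis.
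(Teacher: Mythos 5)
Your proof is correct and follows essentially the same route as the paper: both hinge on the fact that, because the Cayley graph is a tree and $F$ is connected, the exterior boundary point $g$ satisfies $tg \in F$ only for $t = s^{-1}$ (equivalently, $g \in tF$ only for $t = s$), and then the paper's two cases $h = g$ and $h = tg$ are exactly your pieces $\{g\}\cap tF$ and $F\cap\{tg\}$. Your four-piece expansion is marginally more complete in that it also verifies the forward inclusions (that $g$ and $s^{-1}g$ really do belong to the respective intersections), which the paper leaves implicit.
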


\begin{proof}
	Let $t \in S\cup S^{-1}$, and suppose that
	\[h \in (F'\cap tF')\setminus (F\cap tF).\]
	In particular, $h$ lies in both $F'$ and $tF'$, but must actually lie in at least one of $F'\setminus F = \{g\}$ and $tF'\setminus tF = \{tg\}$. This gives us two cases:
	\begin{itemize}
	\item If $h=g$, then we must also have $g \in tF'$, and hence $t^{-1}g \in F$.  Since $g$ is an external boundary point of $F$, this is possible only if $t=s$.
	\item If $h = tg$, then we must also have $tg \in F'$, and hence $tg \in F$.  Since $g$ is an external boundary point of $F$, this is possible only if $t=s^{-1}$.
	\end{itemize}
\end{proof}

There are many ways one can begin with $\{e\}$ and grow a sequence of grounded sets that exhaust $\G$ by enlargements.  At a few points below we need such sequences explicitly.  A \textbf{grounded enumeration} is an enumeration
\[e = g_0, g_1, g_2, \dots\]
of $\G$ such that $g_{i+1}$ is an external boundary point of $\{g_0,\dots,g_i\}$ for every $i$.  In any grounded enumeration, $\{g_0,\dots,g_i\}$ is a grounded set for every $i$, and each of these sets is an enlargement of its predecessor for $i\ge 1$.  For example, any enumeration that satisfies
\begin{equation}\label{eq:growing-length}
0=|g_0| \le |g_1| \le |g_2|\le \cdots
\end{equation}
is grounded -- we refer to these examples as \textbf{length-first} enumerations.

An even more specialized way to select an enumeration that satisfies~\eqref{eq:growing-length} is to place a total order on $S\cup S^{-1}$, and then apply the \textbf{length-lexicographic} ordering to elements of $\G$ by regarding them as reduced words.  In this ordering, $g$ precedes $h$ if and only if either $|g| < |h|$, or $|g| = |h|$ and the first letter of $g$ that differs from the corresponding letter of $h$ precedes that letter of $h$.

\section{Verblunsky coefficients}\label{sec:restrict-extend}

Let $\phi \in \S^\rm{u}_k(\G)$.  For any finite $F\subset \G$, we can consider the $F$-by-$F$ block matrix
\begin{equation}\label{eq:phi-QF}
\phi[F] := [\phi(g^{-1}h):\ g,h \in F],
\end{equation}
where each block lies in $\rmM_k$. These belong to the following sets.

\begin{dfn}
	For any finite subset $F$ of $\G$, we write $\S^\rm{u}_k(F)$ for the set of all positive semidefinite elements $Q$ of $\rmM_F(\rmM_k)$ that (i) have all diagonal blocks $Q(g,g)$ equal to $I_k$, and (ii) satisfy the symmetry
\begin{equation}\label{eq:Toe}
	Q(g_1,h_1) = Q(g_2,h_2) \quad \hbox{whenever}\ g_1,g_2,h_1,h_2 \in F\ \hbox{and}\ g_1^{-1}h_1 = g_2^{-1}h_2.
\end{equation}
	We refer to the elements of $\S^\rm{u}_k(F)$ as \textbf{partial positive definite functions over $F$}.
	
	We write $\S_k^\circ(F)$ for the subset of nonsingular members of $\S^\rm{u}_k(F)$.
\end{dfn}

If $\phi \in \S^\rm{u}_k(\G)$, then it is \textbf{nonsingular} if $\phi[F] \in \S^\circ_k(F)$ for every finite $F\subset \G$.

Starting from $\phi$, the matrices $\phi[F]$ in~\eqref{eq:phi-QF} are consistent in that $\phi[F]$ and $\phi[F']$ have the same submatrix indexed by $F\cap F'$.  On the other hand, given a consistent family of matrices $Q_F \in \S^\rm{u}_k(F)$ indexed by some upwards-directed family of finite subsets $F$ that cover $\G$, we can set $\phi(g) := Q_F(e,g)$ for any $F$ that contains $\{e,g\}$, and so produce an element of $\S^\rm{u}_k(\G)$ that gives rise to this family of matrices via~\eqref{eq:phi-QF}.  In this way, $\S^\rm{u}_k(\G)$ is identified with the space of consistent families of such $F$-by-$F$ block matrices.

For general locally compact groups, a classic question of abstract harmonic analysis asks when a partial positive definite function over a subset has a positive definite extension to the whole group.  When $\G = \bbR$ and $F$ is an interval, a classic result of Krein~\cite{Kre40} shows that this is always possible. The corresponding result for discrete intervals in $\bbZ$ is similar but simpler.  On the other hand, even when $\G = \bbZ^2$ and $F$ is a box, examples of Rudin~\cite{Rud63,Rud70} show that extension may be impossible, and similar examples seem to be available over most other groups. See~\cite{BakTim11} for recent advances and a selection of further references.

However, generalizing that result about $\bbZ$, extension is always possible when $\G$ is a free group and $F$ is a grounded subset. That is, for any such $F$ and any positive integer $k$, the map
\begin{equation}\label{eq:big-surj}
\S^\rm{u}_k(\G) \to \S^\rm{u}_k(F)
\end{equation}
defined by~\eqref{eq:phi-QF} is surjective.  Indeed, this is true even if we replace $\rmM_k$ with $\B(H)$ for any complex Hilbert space $H$.  This is essentially a result of Bakonyi and Timotin~\cite[Thm. 4.3]{BakTim07}, although they do not consider general grounded sets explicitly.  The variant of their proof in~\cite[Lem. 25]{Oza13} does allow this generality.  That section of~\cite{Oza13} offered strong inspiration for several of the arguments in this chapter and also Chapter~\ref{chap:LDP-proof}.

The key step in~\cite[Sec. 14]{Oza13} is showing that the submatrix map
\begin{equation}\label{eq:little-surj}
\S^\rm{u}_k(F\cup g)\to \S^\rm{u}_k(F)
\end{equation}
is surjective when $F$ is grounded and $F\cup g$ is an enlargement of it.  This implies the surjectivity of~\eqref{eq:big-surj} by extending an element of $\S^\rm{u}_k(F)$ through a sequence of enlargements that exhausts $\G$.

This surjectivity can be recast as an example of the completion problem for $3$-by-$3$ block positive semidefinite matrices: see Section~\ref{sec:three-block-Gram}.  In~\cite[Lem. 24]{Oza13}, Ozawa simply chooses the central completion of the relevant partial Gram matrix at each stage (see the remarks at the end of Section~\ref{sec:three-block-Gram}).

Theorem~\ref{mainthm:LDP} is a large deviations principle for certain random elements of the spaces $\S^\rm{u}_k(F)$.  It is proved by induction on $F$.  Along a sequence of grounded sets $F$, a sequence of these random positive semidefinite matrices is revealed one enlargement at a time.  To describe the distributions of these random matrices, we need not just the surjectivity of the map~\eqref{eq:little-surj} for each $F$ and $F\cup g$, but also a parametrization of all possible pre-images of an element of $\S^\rm{u}_k(F)$.  We obtain this using Proposition~\ref{prop:three-block-completion}.

In the rest of this section we explain this parametrization in more detail.  Suppose that $F$ is a grounded set and that $F\cup g$ is an enlargement of $F$ in direction $s$.  Let $Q \in \S^\rm{u}_k(F)$, and consider all possible extensions $Q' \in \S^\rm{u}_k(F\cup g)$.  Then $Q$ is the submatrix of $Q'$ obtained by removing the block-row and block-column indexed by $g$.  The symmetry~\eqref{eq:Toe} dictates some elements of this row and column: for example, we must have $Q'(g,h) = Q(s^{-1}g,s^{-1}h)$ whenever $h \in F\cap sF$.  It turns out that these are the only entries of $Q'$ that are determined by $Q$ through equation~\eqref{eq:Toe}.  This fact is contained within the proof of~\cite[Lem. 25]{Oza13}.

\begin{lem}\label{lem:Toe}
If $h,a,b \in F$ satisfy $a^{-1}b = h^{-1}g$, then $h \in F\cap sF$.
\end{lem}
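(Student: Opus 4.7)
My plan is to exploit the fact that the grounded subsets of the finitely generated free group $\G$ are exactly the suffix-closed subsets: since the geodesic from $e$ to any $x$ in the left Cayley graph visits exactly the suffixes of $x$'s reduced word, $F$ being grounded is equivalent to $F$ containing every suffix of every one of its elements. Under this dictionary the enlargement direction $s$ is the first letter of $g$'s reduced word, $s^{-1}g$ is the second-longest suffix of $g$, and the desired conclusion $h \in F \cap sF$ is just the statement $s^{-1}h \in F$.

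The easy subcase is when the first letter of $h$'s reduced word is $s$: there $s^{-1}h$ is literally a suffix of $h \in F$, so $s^{-1}h \in F$ by suffix-closedness, without even invoking the hypothesis $a^{-1}b = h^{-1}g$. The substantive subcase is when $h = e$ or when the first letter of $h$ differs from $s$. Here I would write the reduced words of $h$, $a$, $b$ with lengths $m$, $l$, $p$, and let $j$ be the length of the common prefix of $a$ and $b$. Since the first letter of $h^{-1}$ is not $s^{-1}$, no cancellation occurs at the junction when one forms $h^{-1}g$, so the reduced word of $w := h^{-1}g$ is obtained by simply concatenating the reduced word of $h^{-1}$ with that of $g$, and in particular it has $s$ in position $m+1$. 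But the same $w$ equals $a^{-1}b$, whose reduced word is $a_l^{-1}\cdots a_{j+1}^{-1}\, b_{j+1}\cdots b_p$.

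Comparing these two reduced expressions for $w$ letter by letter, the $(m+1)$-st letter is $s$, and on the $a^{-1}b$ side this position sits either inside the ``$a^{-1}$-part'' or inside the ``$b$-part''. If it were in the $b$-part, then the final segment of $w$ of length $|g|$ would coincide with a suffix of the reduced word of $b$, which would force $g$ itself to be a suffix of $b \in F$ and hence to lie in $F$ by suffix-closedness, contradicting $g \notin F$. So $s$ must originate from the $a^{-1}$-part, which forces $a_{l-m} = s^{-1}$ and $h = a_{l-m+1}\cdots a_l$; consequently $s^{-1}h$ is a (genuine, reduced) suffix of $a$, and so $s^{-1}h \in F$ by suffix-closedness once more. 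The only delicate issue is tracking indices at the boundary case $m+1 = l-j$ (and in the degenerate sub-subcase $h = e$, where the identifications above read as $a_l = s^{-1}$ and $h = e$), but no serious obstacle arises beyond this careful bookkeeping.
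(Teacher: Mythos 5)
Your proposal is correct and follows essentially the same route as the paper's proof: both translate groundedness into suffix-closedness, split on whether $h$ begins with $s$, and in the substantive case compare the two reduced expressions for $h^{-1}g=a^{-1}b$ to rule out $g$ being a suffix of $b$ (which would force $g\in F$) and to exhibit $s^{-1}h$ as a suffix of $a$. The only cosmetic differences are that the paper cancels the common prefix of $a$ and $b$ outright (replacing them by shorter elements of $F$) instead of carrying the index $j$, and a small slip in your wording: the letter governing cancellation at the junction of $h^{-1}\cdot g$ is the \emph{last} letter of $h^{-1}$ (namely $h_1^{-1}$), not its first, though your intended condition ``$h$ does not begin with $s$'' is the right one.
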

	
\begin{proof}
Since $F$ is grounded, it contains $e$ and is connected in the left Cayley graph.  Therefore, in addition to $a$ and $b$, $F$ contains all their suffixes.  We may therefore cancel a common prefix and so assume that $a$ and $b$ begin with different letters.

Now there are two cases.  First, if $s$ is the first letter of the reduced word of $h$, then $s^{-1}h$ is a suffix of $h$, so $s^{-1}h \in F$ by connectedness.

On the other hand, if $s$ is not the first letter of $h$, then $h$ and $g$ have no prefix in common.  In this case there is no cancellation between the reduced words of $h^{-1}$ and $g$ when they are multiplied, as we have already arranged for $a$ and $b$.  Since $b\in F$ and $a^{-1}b = h^{-1}g$, this is possible only if $b$ is a proper suffix of $g$, and therefore $h$ is a proper suffix of $a$.  Because of the equality $h^{-1}g = a^{-1}b$, this properness implies that $s^{-1}h$ is still a suffix of $a$ (now possibly equal to $a$), and so again it lies in $F$ by connectedness.
\end{proof}	
	
Now imagine ordering the set $F\cup g$ so that $F\setminus sF$ comes first, $F\cap sF$ comes next, and $g$ comes last.  Use this ordering to write $Q$ and $Q'$ as block matrices:
\begin{equation}\label{eq:K-ext}
Q = \left[\begin{array}{cc} Q_{11} & Q_{12}\\ Q_{12}^\ast & Q_{22}\end{array}\right] \quad \hbox{and} \quad Q' := \left[\begin{array}{ccc} Q_{11} & Q_{12} & R\\ Q_{12}^\ast & Q_{22} & Q_{23} \\ R^\ast & Q_{23}^\ast & I_k\end{array}\right].
\end{equation}
Let $Q^?$ be the partial matrix obtained from $Q'$ by replaced $R$ with $?$.

The possible extensions $Q'$ of $Q$ are the positive semidefinite completions of $Q^?$.  If $Q^?$ is partially nonsingular, then these completions are parametrized by contractions from $\bbC^{\oplus k}$ to $\bbC^{\oplus k|F\setminus sF|}$ according to Proposition~\ref{prop:three-block-completion}.  This partial nonsingularity holds if and only if $Q$ is nonsingular, since the two relevant two-by-two-block submatrices of $Q^?$ are both submatrices of $Q$ by translation-invariance.

\begin{dfn}\label{dfn:Verb}
If $Q \in \S^\circ_k(F)$ and $Q'$ is an extension of it in $\S^\rm{u}_k(F\cup g)$, then the \textbf{Verblunsky coefficient} of $Q'$ over $Q$ is the element of $\Xi(k,k|F\setminus sF|)$ that parametrizes it according to Proposition~\ref{prop:three-block-completion}.
	
If $\phi \in \S^\rm{u}_k(\G)$, $F$ is a grounded set, $\phi[F]$ is nonsingular, and $F\cup g$ is an enlargement of $F$, then the \textbf{Verblunksy coefficient} of $\phi$ from $F$ to $F\cup g$ is the Verblunsky coefficient of $\phi[F\cup g]$ over $\phi[F]$.
		\end{dfn}

The use of this term is extended from the classical case when $\G = \bbZ$ and $k=1$. By Bochner's theorem, a unital positive definite function $\phi$ on $\bbZ$ is the Fourier--Stieltjes transform of a probability measure $\mu$ on the unit circle $\bf{K}$ in $\bbC$.  As a result, if $F = \{1,2,\dots,m\}$, then $\phi[F]$ is the Toeplitz matrix $[\langle z^j,z^i\rangle_{L^2(\mu)}]_{i,j=1}^m$.  The associated sequence $(p_n)_{n\ge 0}$ of orthogonal polynomials is obtained by applying the Gram--Schmidt procedure to the sequence $1$, $z$, $z^2$, \dots in $L^2(\mu)$.  These polynomials are related by a recursive formula due to Szeg\H{o}~\cite[Thm. 1.5.2]{SimOPUCI}.  Provided $\mu$ is not finitely supported, so $1$, $z$, \dots are linearly independent in $L^2(\mu)$, that recursion introduces an infinite sequence of coefficients $\a_n$ called the Verblunsky coefficients.  They lie in the open disk in the complex plane, and their geometric interpretation (explained in~\cite[Sec. 1.5]{SimOPUCI}) shows that they are precisely the special case of our Definition~\ref{dfn:Verb}.  The generalization with $\G = \bbZ$ but $k > 1$ has also been studied fairly completely: see the introduction and further references in~\cite[Sec. 2.13]{SimOPUCI}.%, or the overview in~\cite{Treil89}.

Our work in this section is a generalization for free groups of this classical parametrization of positive definite functions on $\bbZ$ by their Verblunsky coefficients.  This point of view is also taken by Bakonyi and Timotin in~\cite{BakTim07} (they refer to `Szeg\H{o} parameters' rather than Verblunsky).  Our parametrization is a technical variation on theirs.  Some of our other results later are also free-group generalization of known theorems about orthogonal polynomials.

Our next result is the free-group generalization of Verblunsky's original theorem in this area: compare~\cite[Thm. 1.7.11]{SimOPUCI}. % [[ DOUBLE-CHECK: REALLY A VERSION OF FAVARD'S THEOREM? ]]

\begin{cor}\label{cor:Verb}
Fix a grounded enumeration $g_0,g_1,\dots$ of $\G$, let $F_n := \{g_0,g_1,\dots,g_n\}$ for each $n$, and assume that $F_{n+1}$ is an enlargement of $F_n$ in direction $s_n$.  Given $\phi \in \S^\circ_k(\G)$, let $C_n$ be the Verblunsky coefficient of $\phi$ from $F_n$ to $F_{n+1}$.  Then the map
\begin{equation}\label{eq:Verb-map}
	\S^\circ_k(\G) \to \prod_{n\ge 0}\Xi^\circ(k,k|F_n\setminus s_nF_n|):\phi\mapsto (C_0,C_1,\dots)
	\end{equation}
	is a bijection.
	\end{cor}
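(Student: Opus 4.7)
The plan is to prove the two directions of the bijection by induction on $n$, using Proposition~\ref{prop:three-block-completion} (which parametrizes completions at each enlargement) together with Lemma~\ref{lem:Toe} (which ensures that Toeplitz symmetry forces exactly the entries one wants).

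For injectivity, I would observe that $\phi[F_0] = I_k$ is determined automatically because $F_0 = \{e\}$ and $\phi$ is unital. Assuming inductively that $\phi[F_n]$ is determined by $(C_0,\dots,C_{n-1})$, the matrix $\phi[F_{n+1}]$ is obtained from $\phi[F_n]$ by filling the new block-row and block-column. By the discussion preceding Definition~\ref{dfn:Verb}, those entries of the new row and column that are forced by the Toeplitz symmetry~\eqref{eq:Toe} are exactly the block $Q_{23}$ (indexed by $F_n \cap s_nF_n$), and the remaining block $R$ is parametrized bijectively by the Verblunsky coefficient $C_n$ according to Proposition~\ref{prop:three-block-completion}. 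Hence $\phi[F_{n+1}]$ is determined, and since the $F_n$ exhaust $\G$, so is $\phi$.

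For surjectivity, given a sequence $(C_0,C_1,\dots)$ with each $C_n \in \Xi^\circ(k,k|F_n \setminus s_nF_n|)$, I would construct nonsingular $Q_n \in \S^\circ_k(F_n)$ inductively with $Q_0 = I_k$ and $Q_{n+1}$ obtained from $Q_n$ as follows: form the partial matrix $Q_n^?$ over $F_{n+1}$ by placing $Q_n$ in the $F_n$-by-$F_n$ block, $I_k$ on the $(g_{n+1},g_{n+1})$ diagonal block, and $Q_n(s_n^{-1}g_{n+1},s_n^{-1}h)$ in the entry $(g_{n+1},h)$ for each $h \in F_n \cap s_nF_n$; then let $Q_{n+1}$ be the unique completion whose Verblunsky coefficient is $C_n$, given by Proposition~\ref{prop:three-block-completion}. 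Because $C_n$ is a strict contraction and $Q_n$ is nonsingular, $Q_{n+1}$ is nonsingular as well.

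The one step that requires care is checking that each $Q_{n+1}$ satisfies the Toeplitz condition~\eqref{eq:Toe} on $F_{n+1}$; the rest is bookkeeping. The condition is already satisfied on the $F_n$-by-$F_n$ block by hypothesis, so it suffices to verify it for pairs involving $g_{n+1}$. By Lemma~\ref{lem:Toe}, whenever $a,b,h \in F_n$ satisfy $a^{-1}b = h^{-1}g_{n+1}$ we must have $h \in F_n \cap s_n F_n$; so the only Toeplitz-forced entries in the new row and column are those already prescribed in $Q_n^?$, and their values are consistent with the Toeplitz symmetry of $Q_n$ (take $a = s_n^{-1}h$, $b = s_n^{-1}g_{n+1}$). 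Consequently $(Q_n)_{n \ge 0}$ is a consistent family of partial positive definite functions that assembles (as described after~\eqref{eq:phi-QF}) into a unique $\phi \in \S^\rm{u}_k(\G)$; nonsingularity on every finite subset of $\G$ follows because every such subset is contained in some $F_n$ and principal submatrices of positive definite matrices remain positive definite. By construction the Verblunsky coefficients of $\phi$ are exactly $(C_0,C_1,\dots)$, which proves surjectivity.
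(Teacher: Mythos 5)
Your proposal is correct and follows essentially the same route as the paper: injectivity by recursively recovering $\phi[F_{n+1}]$ from $\phi[F_n]$ and $C_n$, and surjectivity by recursively building nonsingular $Q_n \in \S^\circ_k(F_n)$ via Proposition~\ref{prop:three-block-completion} and assembling them into an element of $\S^\circ_k(\G)$. The paper's proof is terser, deferring the verification that the Toeplitz symmetry forces exactly the $F_n\cap s_nF_n$-indexed entries to the discussion surrounding Lemma~\ref{lem:Toe}, whereas you carry out that check explicitly; the substance is the same.
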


\begin{proof}
First suppose we know the sequence $C_0,C_1$, \dots, for some $\phi \in \S^\circ_k(\G)$.  Then $\phi(e,e) = I_k$, and once $\phi[F_n]$ is known, it and the contraction $C_n$ determine $\phi[F_{n+1}]$.  So the given map is injective.

On the other hand, given any such sequence $C_0,C_1,\dots$, we can construct a compatible sequence of elements $Q_n \in \S_k^\circ(F_n)$ by setting $Q_0 := I_k$ and then recursively appealing to Proposition~\ref{prop:three-block-completion}.   As explained at the start of the section, these all come from a single element of $\S^\circ_k(\G)$, and that element is nonsingular by construction.  So the given map is surjective.
	\end{proof}

By allowing generalized Schur complements, the inverse of the map~\eqref{eq:Verb-map} can be extended to parametrize the whole of $\S^\rm{u}_k(\G)$, not just $\S_k^\circ(\G)$.  But this is more complicated, particularly when the topologies on those spaces of contractions must be taken into account, so instead we find ways to work around this case in the sequel.  The necessary modifications are already a little fiddly when $\G = \bbZ$.  See, for instance,~\cite[Intro.]{BreSimZei18}, where the authors have to allow finitely supported measures because they are working towards a large deviations principle on the space of \emph{all} probability measures on $\bbT$.

We also sometimes need the continuity given by the following consequence of Corollary~\ref{cor:three-block-completion}.

\begin{lem}\label{lem:pdf-completion}
Let $F$ be a grounded set and let $F\cup g$ be an enlargement of $F$ in direction $s$.  Given $Q \in \S^\rm{u}_k(F\cup g)$ such that $Q[F]$ is nonsingular, let $C$ be its Verblunsky coefficient over $Q[F]$.  Then the map
\begin{align*}
\big\{Q \in \S^\rm{u}_k(F\cup g):\ Q[F]\ \hbox{nonsingular}\big\} &\to \S_k^\circ(F)\times \Xi(k,k|F\setminus sF|):\\ Q &\mapsto (Q[F],C)
\end{align*}
is a homeomorphism, and it maps $\S_k^\circ(F\cup g)$ onto $\S_k^\circ(F)\times \Xi^\circ(k,k|F\setminus sF|)$.
\end{lem}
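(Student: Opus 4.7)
The plan is to unpack the lemma as a direct corollary of the three-block positive-semidefinite completion result (Proposition~\ref{prop:three-block-completion} and its continuity statement Corollary~\ref{cor:three-block-completion}) combined with the Toeplitz rigidity provided by Lemma~\ref{lem:Toe}. Concretely, I would arrange elements of $F \cup g$ by listing $F \setminus sF$ first, then $F \cap sF$, then $g$, so that any $Q \in \S^\rm{u}_k(F \cup g)$ takes the block form displayed in~\eqref{eq:K-ext}. In this form, the corner block $R = Q(F\setminus sF,\ g)$ is the only off-diagonal block of the last block-column and block-row that is not already determined by $Q[F]$. Indeed, the diagonal block at $g$ is $I_k$ by the unital condition, and Lemma~\ref{lem:Toe} shows that every entry of the block $Q_{23} = Q(F\cap sF,\ g)$ is forced by the Toeplitz symmetry~\eqref{eq:Toe} to equal a corresponding entry of $Q[F]$ (specifically, under the identification $h \mapsto s^{-1}h$ for $h \in F\cap sF$).

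Given this, I would argue the bijection as follows. If $Q[F]$ is nonsingular, then both two-by-two block submatrices of the partial matrix $Q^?$ appearing in~\eqref{eq:K-ext} are nonsingular, so $Q^?$ is a partially nonsingular three-block partial matrix in the sense of Proposition~\ref{prop:three-block-completion}. Definition~\ref{dfn:Verb} already names the parametrizing contraction: $R$ is determined by, and freely chosen to be, the contraction $C \in \Xi(k, k|F\setminus sF|)$ attached to $Q^?$. This produces the map $Q \mapsto (Q[F], C)$ and shows it is a bijection onto $\S^\circ_k(F) \times \Xi(k, k|F\setminus sF|)$.

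Continuity in both directions is where I would cite Corollary~\ref{cor:three-block-completion}: the completion of a partially nonsingular three-block matrix from its Verblunsky contraction is jointly continuous in the known blocks and the contraction, and conversely $C$ depends continuously on a completed $Q$ with $Q[F]$ nonsingular. The entries of $Q_{23}$ being copied from $Q[F]$ by Lemma~\ref{lem:Toe} is a continuous operation, so nothing new arises from the Toeplitz constraint. Hence the map is a homeomorphism.

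Finally, for the nonsingularity claim, I would use the standard Schur-complement determinant identity applied to the bordered matrix $Q$ in~\eqref{eq:K-ext}. Writing $P := \mathrm{diag}(Q[F], I_k)$ and using the two-by-two block structure with $Q[F]$ as the principal block, $\det Q$ factors as $\det Q[F]$ times the determinant of the Schur complement of $Q[F]$ in $Q$, and this Schur complement is by construction $I_k - C^\ast C$ (up to conjugation by the invertible positive square roots appearing in Proposition~\ref{prop:three-block-completion}). Thus $Q$ is nonsingular precisely when $I_k - C^\ast C$ is, i.e.\ when $C \in \Xi^\circ$. I expect the only minor obstacle is bookkeeping the precise form of the Schur complement so that this last equivalence is literally $\|C\|<1$; everything else is transcription of Proposition~\ref{prop:three-block-completion} and Corollary~\ref{cor:three-block-completion}.
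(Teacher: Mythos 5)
Your proposal is correct and follows essentially the same route as the paper, which simply identifies $\S^\rm{u}_k(F\cup g)$ with the subset of $\rmM_{k(|F|+1)+}$ cut out by the symmetries~\eqref{eq:Toe} and restricts the homeomorphism of Corollary~\ref{cor:three-block-completion} to that subset, with Lemma~\ref{lem:Toe} guaranteeing (as in the discussion preceding Definition~\ref{dfn:Verb}) that only the corner block $R$ is left free. Your closing Schur-complement argument for the nonsingularity statement is sound but redundant: the equivalence between nonsingularity of $Q$ and strictness of $C$ is already part of Proposition~\ref{prop:three-block-completion}, so no extra bookkeeping is needed.
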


\begin{proof}
We identify $\S^\rm{u}_k(F\cup g)$ as the subset of $\rmM_{k(|F| + 1)+}$ defined by the symmetries~\eqref{eq:Toe}; restrict the map from Corollary~\ref{cor:three-block-completion} to this subset; and then identify its image as described above.
\end{proof}

The use of Verblunsky coefficients to parametrize completely positive maps on free groups is a great advantage in the sequel.  It has no clear analog in the ergodic theory of stationary processes over free groups.  This enables new proofs about annealed AP entropy that are quite different from their counterparts for annealed sofic entropy, including in cases where those ergodic theoretic proofs do not extend.

% SOMEWHERE -- PERHAPS LATER, WITHIN THE REAL LDP CHAPTER -- PUT A DISCUSSION OF HOW YOU COULD INSTEAD HAVE JUST DONE A CONDITIONAL DISTRIBUTION AND A `LOCALLY UNIFORM CONDITIONAL LDP' ARGUMENT TO GET THE EXTENSION, BUT IT DOES END UP BEING A LOT MORE COMPLICATED SINCE I DON'T SEE HOW TO AVOID WRITING OUT THE CONDITIONAL DISTRIBUTIONS EXPLICITLY, AND THEY ARE /GNARLY/ -- INCLUDE THE FORMULA IN THIS REMARK JUST FOR THE CASE $k=1$, BUT REASSURE THE READER THAT YOU DON'T NEED IT WITH YOUR APPROACH!

For $\G = \bbZ$, Verblunsky's thoerem is the beginning of a long-running search for implications between properties of a measure $\mu$ on $\bf{K}$ and properties of its Verblunsky coefficients.  We discuss some possible analogs of these over free groups in Subsection~\ref{subs:harm-an} below.

\section{Haagerup functions and central extensions}\label{sec:Haa}

Let $\phi(e) := 1$, and let $\phi(s) = \ol{\phi(s^{-1})}$ be a complex number of modulus at most one for each $s \in S$.  Now extend $\phi$ by multiplying:
\[\phi(s_1\cdots s_n) = \phi(s_1)\cdots \phi(s_n) \qquad \hbox{for any reduced word}\ s_1\cdots s_n\ \hbox{in}\ S\cup S^{-1}.\]
A function on $\G$ constructed this way is called a \textbf{Haagerup function}.  The radial examples $\phi(g) = e^{-t|g|}$ were studied by Haagerup himself in~\cite{Haa79}, and then the class above was introduced and named in~\cite{DeMFigTal80}.  According to~\cite[Thm. 1]{DeMFigTal80}, Haagerup functions are positive definite.  They are perhaps the simplest examples apart from the regular character.  Note that they depend on the choice of the free generating set $S$, and they are never characters apart from the trivial examples of the regular character and $1_G$.  They generalize~\cite[Exs. 1.6.4 and 2.1.6]{SimOPUCI} from the classical case $\G = \bbZ$.  They are also introduced carefully in~\cite[Sec. 8.III]{FigTalPicHAFG}.  We refer to them several times later to illustrate our main results.

Suppose that the Haagerup function $\phi$ is associated to $\pi = \pi_\phi$ by the cyclic unit vector $z$, and set $z_g := \pi(g)z$ for all $g \in \G$.  The analysis of these examples begins with the following lemma.

\begin{lem}\label{lem:like-Markov}
Let $F$ be a finite connected subset of the \emph{right} Cayley graph of $(\G,S)$, and suppose that $g \in F$ but $gs \in \G\setminus F$ for some $s \in S\cup S^{-1}$.  Then the orthogonal projection of $z_{gs}$ onto $\rm{span}\{z_h:\ h \in F\}$ is equal to $\phi(s)z_g$.
\end{lem}

\begin{proof}
Let $h \in F$, and let
\[h,\ ht_1,\ ht_1t_2,\ \dots,\ ht_1\cdots t_\ell = g\]
be the unique path in the right Cayley graph from $h$ to $g$.  Since $F$ is connected, this path lies in $F$, and therefore it does not contain $gs$ and so $s \ne t_\ell^{-1}$.  It follows that
\[\langle z_{gs},z_h\rangle = \langle \pi(h^{-1}gs)z,z\rangle = \phi(t_1\cdots t_\ell s) = \phi(t_1\cdots t_\ell)\phi(s) = \langle \phi(s)z_g,z_h\rangle.\]
\end{proof}

Now let $r = |S|$ and pick a length-first enumeration $e = g_0$, $g_1$, \dots of $\G$, such that $S = \{g_1,\dots,g_r\}$ and $S^{-1} = \{g_{r+1},\dots,g_{2r}\}$. Let $F_n := \{g_0,g_1,\dots,g_n\}$ for each $n$, and suppose that $F_{n+1}$ is an enlargement of $F_n$ in direction $s_n$.

\begin{cor}\label{cor:Haa-Verb}
If $\phi$ is a Haagerup function, then for the enumeration above the Verblunsky coefficient from $F_{n-1}$ to $F_n$ equals $\phi(s_n)$ for $n=1,2,\dots,r$, and all subsequent Verblunsky coefficients vanish.
\end{cor}

\begin{proof}
First, because we use a length-first enumeration, each $F_n$ is sandwiched between two consecutive balls for the word length.  This implies that $F_n$ is connected in the right Cayley graph as well as the left Cayley graph for every $n$.

Now let $n\ge 1$ and let $h_n$ be the prefix of $g_n$ of length $|g_n|-1$.  Since $|h_n| < |g_n|$ and we use a length-first enumeration, $h_n$ must also lie in $F_n$.  Therefore, in the right Cayley graph, $h_n$ is the unique neighbour of $g_n$ that belongs to $F_n$.

We now consider three cases:
\begin{itemize}
\item If $1 \le n \le r$, then $g_n = s_{n-1} \in S$, $h_n = e$, and $F_n\cap g_nF_n \subset S\cap g_nS= \emptyset$.  Therefore, by Lemma~\ref{lem:like-Markov}, the orthogonal projection of $\pi(g_n)z$ onto the span of $\pi(F_n)z$ is equal to $\phi(g_n)z$, whereas the span of $\pi(F_n\cap g_nF_n)z$ is trivial.  The resulting Verblunsky coefficient is $\phi(g_n)$ by Proposition~\ref{prop:three-block-completion}. %[[ SAY MORE ? ]].
\item If $r+1 \le n \le 2r$, then $g_n = s_{n-1} \in S^{-1}$ and $h_n = 0$.  Since $g_n^{-1} = s_{n-1}^{-1} \in S$, this inverse has already appeared previously in our enumeration, so this time we have $F_n\cap g_nF_n = \{e\}$.  By two uses of Lemma~\ref{lem:like-Markov}, it follows that the orthogonal projections of $z_{g_n}$ onto $\rm{span}(\pi(F_n)z)$ and $\rm{span}(\pi(F_n\cap g_nF_n)z)$ are both equal to $\phi(g_n)z$. Since these projections agree, the Verblunsky coefficient is zero, by Proposition~\ref{prop:three-block-completion}.
\item Finally, if $n > 2r$, then $s_{n-1}$ is the first letter of the reduced word of $g_n$, and also of $h_n$ because in this case $|h_n| \ge 1$.  Overall, this shows that $h_n \in F_n\cap s_{n-1} F_n$, and $h_n$ separates $g_n$ from the rest of $F_n$ in the right Cayley graph.  Therefore, by Lemma~\ref{lem:like-Markov}, the two relevant orthogonal projections are both equal to the same multiple of $z_{h_n}$, and so once again the Verblunsky coefficient vanishes by .
\end{itemize}
\end{proof}

Corollary~\ref{cor:Haa-Verb} actually holds for any grounded enumeration, but the general case requires some more complicated calculations because then the sets $F_n$ need not be connected in the right Cayley graph.  One has to consider the projection of $z_{g_n}$ to the span of all the vectors in $\pi(F_n)z$ which correspond to the points of each right-connected component of $F_n$ that are closest $g_n$.

Lemma~\ref{lem:like-Markov} is an instance of a more complete `independence' phenomenon which is explained in~\cite[Sec. 5]{BakTim07}.  When $r = 1$, so $\G = \bbZ$, Haagerup functions correspond to first-order Bernstein--Szeg\H{o} polynomials in the study of orthogonal polynomials on the unit circle: the family for which all but the first Verblunsky coefficients vanishs.  See~\cite[Ex. 1.6.2 and Sec. 1.7]{SimOPUCI}.  In keeping with our interpretation of `conditioning' as projecting to an orthogonal subspace (see Example~\ref{ex:1D-cond-ent}), one can also regard Haagerup functions as the representation theoretic analog of free-group Markov processes, as already noted by Burton and Juschenko in~\cite{BurJus22}.  See~\cite{Bowen10b,Sew14} for an introduction to Markov chains on free groups.  The reference~\cite[Sec. 5]{BakTim07} also explains the generalization of this construction using matrices to obtain $\rmM_k$-valued positive definite functions.

%THERE ARE PRESUMABLY ANALOGS OF HAMMERSLEY CLIFFORD ETC THAT YOU COULD EXPLORE (ALTHOUGH THAT WOULD CONNECT WITH GIBBS MEASURES, WHICH IS A REMARK/QUESTION AT THE END OF PART I, BUT MIGHT NOT BE SOMETHING YOU REALLY GO INTO IN FULL?)

In~\cite{BakTim07}, Bakonyi and Timotin also consider positive definite functions whose Verblunsky coefficients vanish beyond some larger positive radius $R$.  Along a length-first enumeration $g_0$, $g_1$, \dots, this means that we keep extending our partial positive definite functions using central completions once $|g_n| > R$ (see the remarks at the end of Section~\ref{sec:three-block-Gram}).  These examples include the Haagerup functions when $R = 1$.  When $\G = \bbZ$, the examples reduced to the higher-order Bernstein--Szeg\H{o} polynomials~\cite[Ex. 1.6.2]{SimOPUCI}.  By a generalization of~\cite[Thm. 1.7.8]{SimOPUCI}, higher-order Bernstein--Szeg\H{o} polynomials can be used to approximate an arbitrary positive definite function, and this fact plays an important role in several of the results in~\cite{SimOPUCI}. For finite-valued stochastic processes, the analog of this is the approximation of general processes by $n$-step Markov processes~\cite[Sec. 16.8]{CovTho06}.  This fact also has a version for processes over free groups which plays a role in the theory of annealed sofic entropy~\cite{Bowen10b,Sew14}.

\subsection*{\emph{Notes and further references}}

Even for locally compact Abelian groups, problems of characterizing and extending locally-defined positive definite functions quickly become rich and subtle, and have been studied widely.  The monograph~\cite{JorPedTia16} is mostly dedicated to such questions, with a number of sections that allow non-Abelian Lie groups as well.

\chapter{Uniformly random representations}\label{chap:random-orbits}

Our main results in Part~\ref{part:free} involve annealed and zeroth-order AP entropy for `uniformly random' AP sequences for free groups.  The present chapter lays foundations for this work in the form of some calculations with the distributions of these particular random AP sequences.  At the heart of these is a free-group generalization of the Killip--Nenciu theorem: see Theorem~\ref{thm:dil-dist} and Corollary~\ref{cor:KilNenfree} below.

\section{Uniformly random AP sequences}

Let $\G$ be the group freely generated by a set $S$ of size $r$, and let $\A := C^\ast \G$.  Let $(d_n)_{n\ge 1}$ be a divergent sequence of positive integers, and let $\nu_n$ be a Borel probability measure on $\rmU(d_n)$ for each $n$.  Finally, for each $n$, let $\pi_n$ be a random element of $\Rep_{d_n}(\G)$ such that the generators $\pi_n(s)$, $s \in S$, are independent and all have distribution $\nu_n$.  By the universal property of $C^\ast \G$, each $\pi_n$ extends uniquely to a random unitary representation of $C^\ast \G$.

Within this construction, two further special cases stand out:
\begin{itemize}
\item Let $d_n = n$, and let $\nu_n$ be the Haar probability measure of $\bf{U}(d_n)$.  We call the resulting example a \textbf{uniformly random AP sequence} for $\G$ (or for $C^\ast \G$).
\item Alternatively, let $d_n = n$, and let $\nu_n$ be the uniform distribution on the finite set of $n$-by-$n$ permutation matrices.  Then $\pi_n$ is the linear representation defined by a uniformly random permutation representation of $\G$.  We call the resulting example a \textbf{uniformly random permutation AP sequence} for $\G$ (or for $C^\ast \G$).
\end{itemize}

Both of these special cases have a large literature, although the terminology of `AP sequences' is not standard.  Almost all of our work below concerns uniformly random AP sequences for free groups.  By contrast, it turns out that variants of AP entropy do not shed much light on uniformly random permutation AP. This is because the most important fluctuations in behaviour that arise from permutations are only polynomially unlikely in $n$, not exponentially unlikely: see Section~\ref{sec:rndm-perms}.

The template above also allows other natural variations.  We do not explore them in detail in this book, but discuss a few as possible directions for further research in Chapter~\ref{chap:further}.  That chapter also discusses some examples of possible interest for other groups or other C*-algebras. % For example, we could choose the generators to have certain spectra... ??? [[ IS THAT WORTH DOING AT THIS POINT, OR DOES IT TAKE TOO LONG? ]]. 

%[[ MAYBE THE STUFF OF BOURGADE ET AL., AND THEN [ Gamboa, Fabrice(F-TOUL3-IM); Nagel, Jan(D-MUTU2M); Rouault, Alain(F-VER-LM) ``Sum rules and large deviations for spectral measures on the unit circle.'' (English summary) Random Matrices Theory Appl.   6 (2017), no. 1, 1750005, 49 pp ], AND MAYBE OTHERS, ABOUT RANDOM UNITARY MATRICES WITH SPECTRUM SUPPORTED ON PROPER ARCS OF THE UNIT CIRCLE, /IS/ SOME KIND OF `ONE-DIMENSIONAL PREDECESSOR' OF THE PLANTED VERSIONS YOU WANT!!! ]]

The next two results are versions of Voiculescu's fundamental discovery of `asymptotic freeness' for independent high-dimensional random matrices; see~\cite[Thms. 3.8 and 3.9]{Voi91} (and note that the first of these covers rather more than we formulate below).  Let $\tau$ be the regular tracial state on $C^\ast \G$.

\begin{thm}\label{thm:asymp-free1}
If $\bspi$ is a uniformly random AP sequence, then the expectation $\bbE(\tr_n\circ \pi_n)$ converges to $\tau$ in $\S_1(\G)$ as $n\to\infty$. \qed
	\end{thm}

% [[ WHAT ABOUT THIS FOR PERMUTATIONS? ]]

Combined with standard measure concentration estimates, Theorem~\ref{thm:asymp-free1} actually implies the following, whose specific form we need during at a few points later.  %In addition, we give an independent derivation of Theorem~\ref{thm:asymp-free1} as an illustration of some new techniques in Section~\ref{sec:asymp-free} below.

\begin{thm}\label{thm:asymp-free2}
If $\bspi$ is a uniformly random AP sequence, then for every neighbourhood $V$ of $\tau$ in $\S_1(\G)$ there is a positive constant $c$ such that
\[\bbP(\tr_n\circ \pi_n \not\in V) = O(e^{-cn^2}).\]
\end{thm}

\begin{proof}
The topology of $\S_1(\G)$ agrees with the topology of pointwise convergence.  Recalling the usual sub-base for this topology, it suffices to prove that, for each $g \in \G\setminus \{e\}$ and $\eps > 0$, there exists $c > 0$ such that
	\[\bbP(|\tr_n \pi_n(g)| \ge \eps) = O(e^{-cn^2}).\]
By Theorem~\ref{thm:asymp-free1}, this follows if we find $c > 0$ such that
\[\bbP(|\tr_n \pi_n(g) - \bbE (\tr_n \pi_n(g))| \ge \eps/2) = O(e^{-cn^2}).\]
Finally, this follows from Theorem~\ref{thm:Un-conc} because, for each $g \in \G$, the map
\[(\pi_n(s):\ s \in S) \mapsto \tr_n\pi_n(g)\]
is $O(|g|/n)$-Lipschitz for the Hilbert--Schmidt metric on $\bf{U}(n)^S$.
	\end{proof}

Some more recent results about random AP sequences have concerned strong convergence. The next is contained within a theorem of Collins and Male~\cite[Thm. 1.4]{ColMal14}.

\begin{thm}\label{thm:ColMal}
For a finitely-generated free group, the approximate equivalence classes of uniformly random AP sequences sq-converge to the class of the regular representation in probability. \qed
\end{thm}

The paper~\cite{ColMal14} is about strong convergence, but since the limit is a regular representation this implies sq-convergence by Corollary~\ref{cor:perfect}.  In~\cite{ColMal14}, strong convergence in turn is deduced from a celebrated result of Haagerup and Thorbj\o rnsen about independent GUE random matrices~\cite{HaaTho05}.  See~\cite{CGVTvH,CGVvH} for other proofs of Theorem~\ref{thm:ColMal}, and the surveys~\cite{Magee--survey,vanHan--strong-survey} for broader discussion of known exmaples of strong convergence.  In the group theoretic examples, the limits are usually left regular representations of finitely generated groups, we can actually conclude strong-quotient convergence because of Corollary~\ref{cor:perfect}.

\section{Conditioning on orbit patches}\label{sec:random-orbits}

This section develops some auxiliary calculations about conditioning on grounded subsets of orbits in a random representation.  At this point the underlying principles are quite general, so we formulate them for random homomorphisms from $\G$ to an arbitrary compact metrizable group $G$ with a homogeneous space $X$.  In Section~\ref{sec:random-Verb} we return to the special case of uniformly random AP sequences.

%Section~\ref{sec:random-orbits} gives the main result of this chapter in Proposition~\ref{prop:condition-action}. Section~\ref{sec:asymp-free} illustrates its use in a new proof of Voiculescu's asymptotic freeness theorem (Theorem~\ref{thm:asymp-free2}); this new proof is not needed later, but offers an easier application of Proposition~\ref{prop:condition-action} than those in later chapters.

Let $G$ be a compact metric group and let $m$ be its Haar probability measure.  More generally, if $gK$ is any coset of a closed subgroup $K$ of $G$, let $m_{gK} = \delta_g \ast m_K$ be the translate of $m_K$ to that coset.

The results of this section are phrased as identifying disintegrations of certain measures over certain maps.  Disintegrations are also called `regular conditional probabilities', and are classical constructs in measure theoretic probability: see~\cite[Secs. V.6--8]{ParPMMS}, for example.

Suppose now that $L\subset K$ is a nested pair of closed subgroups of $G$.  Fix a coset $gK$, and write $gK/L$ for the set of cosets of $L$ that it contains: that is,
\[gK/L := \{gkL \in G/L:\ k \in K\}.\]

\begin{lem}\label{lem:cosets-disint}
The family of measures
\[m_{gkL} \qquad (gkL \in gK/L)\]
is a disintegration of $m_{gK}$ over the map
\[f:gK \to gK/L:gh \mapsto ghL.\]
\end{lem}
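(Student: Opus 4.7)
The plan is to prove the disintegration identity directly, using the right-$L$-invariance of $m_K$. Both sides of the desired equation are left-invariant under multiplication by $g$, so first I would reduce to the case $g = e$: that is, I would show that $(m_{kL})_{kL \in K/L}$ is a disintegration of $m_K$ over the quotient map $q : K \to K/L$ sending $h \mapsto hL$, and then translate by $g$ to obtain the statement for $gK$.

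Having reduced to this case, the key structural fact is that since $K$ is a compact group, $m_K$ is \emph{bi}-invariant, and in particular is right-$L$-invariant. So for any bounded Borel function $\phi : K \to \bbC$, I would write
\begin{equation*}
\int_K \phi(h)\, dm_K(h) = \int_K \int_L \phi(hl)\, dm_L(l)\, dm_K(h)
\end{equation*}
by applying right $L$-invariance under the outer integral and then using Fubini. The inner integral here is exactly $\int_{hL}\phi\, dm_{hL}$, since $m_{hL} = \delta_h \ast m_L$ by definition. Because this inner integral depends on $h$ only through the coset $hL$, the outer integration descends to an integration against the pushforward $q_\ast m_K$ on $K/L$:
\begin{equation*}
\int_K \phi\, dm_K = \int_{K/L} \left(\int_{hL}\phi\, dm_{hL}\right) d(q_\ast m_K)(hL).
\end{equation*}
This is precisely the disintegration identity, and the measurability of $hL \mapsto \int_{hL}\phi\, dm_{hL}$ (needed for both sides to make sense) follows from Fubini applied to the continuous map $(h,l) \mapsto \phi(hl)$ on $K \times L$.

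To return to a general coset, I would left-translate: the map $L_g : K \to gK$, $h \mapsto gh$, pushes $m_K$ to $m_{gK}$ and $m_{kL}$ to $m_{gkL}$, and it intertwines $q$ with the map $f$ from the statement. So applying $L_g$ to the disintegration of $m_K$ over $q$ yields the desired disintegration of $m_{gK}$ over $f$. I do not anticipate a real obstacle here: the only thing to be careful about is confirming that the measure-theoretic conventions for disintegration used later in the paper agree with the pointwise identity above (rather than, say, requiring a specific choice of the pushforward measure on $gK/L$), but in the setting of compact metric groups every fiber is exactly a translate $gkL$ and this compatibility is automatic.
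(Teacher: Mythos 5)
Your proof is correct and follows essentially the same route as the paper's: reduce to $g=e$ by left translation, note that the fibres of $f$ are exactly the cosets $gkL$ on which the $m_{gkL}$ are supported, and verify the integration identity. The only difference is that where the paper cites the standard quotient-integration relation for a compact group over a closed subgroup (Folland, Theorem 2.51), you derive it directly from the bi-invariance of $m_K$ together with Fubini, and that derivation is valid.
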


\begin{proof}
Following the definition in~\cite[Sec. V.8]{ParPMMS}, we must check two criteria.  Firstly, for any $gkL \in gK/L$, we have
\[f^{-1}\{gkL\} = \{gk':\ gk'L = gkL\} = gkL,\]
and $m_{gkL}$ is supported on this fibre by definition.  Secondly, we must check that
\[\int m_{gkL}\,dm_K(k) = m_{gK}.\]
Translating from the left by $g^{-1}$, we may reduce this to the case $g = e$, so it remains to check that
\[\int m_{kL}\,dm_K(k) = m_K.\]
This is a standard relation between a compact group and a closed subgroup: see, for instance,~\cite[Thm. 2.51]{FolAHA}, which is formulated for all unimodular locally compact groups.
\end{proof}

Now suppose that $G$ acts on a homogeneous space $X$ from the left.  Given a point $x$ of $X$, let $G(x)$ denote its stabilizer.  More generally, given a tuple $\bf{x}$ in $X^I$ for some index set $I$, let $G(\bf{x})$ be the mutual stabilizer $\bigcap_iG(x_i)$.  In addition, given another tuple $\bf{y}$ in $X^I$, let $H(\bf{x};\bf{y})$ be the set of elements $g$ of $G$ such that
\[gx_i = y_i \qquad \forall i\in I.\]
If $\bf{y} = \bf{x}$ then this set is simply $G(\bf{x})$.  In other cases it may be empty.  If it is not empty, then it is equal to the left coset $hG(\bf{x})$ and to the right coset $G(\bf{y})h$ for any single element $h$ in $H(\bf{x};\bf{y})$.  When it is nonempty, we also write $m(\,\cdot\,|\,\bf{x};\bf{y})$ for $m_{H(\bf{x};\bf{y})}$.  Observe that
\begin{equation}\label{eq:measure-id}
m(\,\cdot\,|\,\bf{x};h\bf{y}) = \delta_h \ast m(\,\cdot\,|\,\bf{x};\bf{y}).
\end{equation}

We may apply Lemma~\ref{lem:cosets-disint} to the cosets $H(\bf{x};\bf{y})$ as follows.  Assume that $H(\bf{x};\bf{y})$ is nonempty, let $z \in X$, and let $w = gz$ for some $g \in H(\bf{x};\bf{y})$. Then $H(\bf{x},z;\bf{y},w)$ is still nonempty, because it contains $g$.  For each such $g$, the set $H(\bf{x},z;\bf{y},w)$ is equal to the set $gG(\bf{x},z)$, and it is contained in $H(\bf{x};\bf{y}) = gG(\bf{x})$.  Therefore Lemma~\ref{lem:cosets-disint} applies to tell us that the family
\[m(\,\cdot\mid \bf{x},z;\bf{y},w) \qquad (w \in H(\bf{x};\bf{y})z)\]
is a disintegration of $m(\,\cdot\mid \bf{x};\bf{y})$ over the orbit-map $g\mapsto gz$.

We need a slightly more general version of this result that allows larger Cartesian products, but it is proved in the same way.

%\begin{lem}\label{lem:cond-ingredient}
%If $H(\bf{x},\bf{y})$ is nonempty and $z$ is any other point of $X$, then
%\[m(\,\cdot\,|\,\bf{x};\bf{y}) = \int m(\,\cdot\,|\,\bf{x},z;\bf{y},gz)\ m(dg\,|\,\bf{x},\bf{y}).\]
%[[ REPHRASE AS SOMETHING BEING A DISINTEGRATION? ]]
%\end{lem}
%
%\begin{proof}
%Pick an element $h$ of $H(\bf{x};\bf{y})$.  By two appeals to~\eqref{eq:measure-id}, the left-hand side above is equal to $\delta_h \ast m_{G(\bf{x})}$, and the right-hand side is equal to
%\[\int_X m(\,\cdot\,|\,\bf{x},z;h\bf{x},hg'z)\ dm_{G(\bf{x})}(g')= \delta_h \ast \int_X \delta_{g'} \ast m_{G(\bf{x},z)}\ dm_{G(\bf{x})}(g').\]
%Therefore, translating both sides by $h^{-1}$, the desired equation is equivalent to
%\[m_{G(\bf{x})} = \int_X \delta_{g'} \ast m_{G(\bf{x},z)}\ dm_{G(\bf{x})}(g').\]
%In this form the equation is a standard identity relating Haar measures on locally compact groups, their homogeneous spaces, and stabilizer subgroups: see, for instance~\cite[Theorem 2.51]{FolAHA} (noting that all our groups are compact and hence unimodular~\cite[Corollary 2.28]{FolAHA}).
%\end{proof}

\begin{cor}\label{cor:cond-ingredient}
Let $I_1$, \dots, $I_r$ be finite index sets, and for each $i$ let $\bf{x}_i$ and $\bf{y}_i$ be two $I_i$-tuples such that $H(\bf{x}_i;\bf{y}_i)$ is nonempty.  Let $j \in \{1,2,\dots,r\}$ and let $z \in X$.  Then the kernel
\[\prod_{i<j}m(\,\cdot\mid \bf{x}_i;\bf{y}_i)\times m(\,\cdot\mid \bf{x}_j,z;\bf{y}_j,w)\times \prod_{i > j}m(\,\cdot\mid \bf{x}_i;\bf{y}_i) \qquad (w \in H(\bf{x}_j;\bf{y}_j)z)\]
is a disintegration of the measure
\[\prod_{i=1}^r m(\,\cdot \mid \bf{x}_i;\bf{y}_i)\]
over the map
\[(g_1,\dots,g_r)\mapsto g_jz.\]
\end{cor}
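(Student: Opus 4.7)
The plan is to reduce the statement to the single-index case treated just before the corollary, and then assemble the product via Fubini.

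First I would verify that the proposed kernel is well defined, i.e.\ that each measure $m(\,\cdot\mid \bf{x}_j,z;\bf{y}_j,w)$ makes sense for $w$ in the range of the map. This amounts to checking that, when $w = g_jz$ for some $g_j\in H(\bf{x}_j;\bf{y}_j)$, the coset $H(\bf{x}_j,z;\bf{y}_j,w)$ is nonempty — which is immediate because it contains $g_j$ — and that $H(\bf{x}_j,z;\bf{y}_j,w)$ is a left coset of $G(\bf{x}_j,z)$ inside $H(\bf{x}_j;\bf{y}_j)$, as noted in the paragraph preceding the corollary.

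Next I would check the two defining properties of a disintegration (in the sense of~\cite[Section V.8]{ParPMMS}) over the map $\Phi(g_1,\dots,g_r)=g_jz$. Property (i): for each $w\in H(\bf{x}_j;\bf{y}_j)z$, the $j$-th factor $m(\,\cdot\mid \bf{x}_j,z;\bf{y}_j,w)$ is concentrated on $H(\bf{x}_j,z;\bf{y}_j,w)\subset\{g_j:g_jz=w\}$, while the other factors are arbitrary elements of $H(\bf{x}_i;\bf{y}_i)$; so the product kernel is supported on $\Phi^{-1}\{w\}$, as required.

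Property (ii): the pushforward of the proposed kernel under $\Phi$ must recover the original product measure. Because $\Phi$ depends only on $g_j$, and the product measure factors over $i$, this reduces by Fubini to showing
\[\int_{H(\bf{x}_j;\bf{y}_j)z}m(\,\cdot\mid \bf{x}_j,z;\bf{y}_j,w)\, d(\Phi_\ast m(\,\cdot\mid \bf{x}_j;\bf{y}_j))(w) = m(\,\cdot\mid \bf{x}_j;\bf{y}_j).\]
This is precisely the single-index disintegration recorded right before the corollary, which in turn is Lemma~\ref{lem:cosets-disint} applied to the nested closed subgroups $G(\bf{x}_j,z)\subset G(\bf{x}_j)$ and the coset $H(\bf{x}_j;\bf{y}_j)$, with orbit map $g\mapsto gz$ identified with the coset map $g\mapsto gG(\bf{x}_j,z)$ via the bijection $gG(\bf{x}_j,z)\leftrightarrow gz$.

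The only real content is the $j$-th factor, which is handled by the paragraph preceding the corollary; the other factors come along for free since $\Phi$ ignores them. There is no genuine obstacle here — the slight care required is just in writing down the fibres of $\Phi$ inside the product $\prod_i H(\bf{x}_i;\bf{y}_i)$ and invoking Fubini to split the product kernel against $\Phi$.
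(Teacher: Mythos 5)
Your proof is correct. It reaches the same conclusion by a slightly different organization than the paper: the paper treats the whole statement as a single application of Lemma~\ref{lem:cosets-disint} inside the product group $G^r$, taking $K = \prod_i G(\mathbf{x}_i)$, $L$ the same product with the $j$-th factor replaced by $G(\mathbf{x}_j,z)$, and the large coset $\prod_i H(\mathbf{x}_i;\mathbf{y}_i)$; the identification of $gK/L$ with the orbit $H(\mathbf{x}_j;\mathbf{y}_j)z$ then does the work that your Fubini step does. You instead verify the two disintegration axioms directly, which is equally valid: your support check is exactly right, and your reduction of the pushforward identity to the $r=1$ case (the factors with $i\ne j$ being constant in $w$) is the same calculation the paper's product-group formulation packages implicitly. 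The paper's route is shorter on the page but hides the coset-to-orbit identification inside the lemma's hypotheses; yours makes the fibre structure of $\Phi$ and the role of the inert coordinates explicit. No gaps.
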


\begin{proof}
The case $r=1$ is outlined above. For the general case, in the group $G^r$, apply Lemma~\ref{lem:cosets-disint} to the large coset
\[\prod_{i=1}^r H(\bf{x}_i;\bf{y}_i)\]
and the smaller cosets
\[\prod_{i<j}H(\bf{x}_i;\bf{y}_i)\times H(\bf{x}_j,z;\bf{y}_j,w)\times \prod_{i > j}H(\bf{x}_i;\bf{y}_i) \qquad (w \in H(\bf{x}_j;\bf{y}_j)z).\]
\end{proof}

With the above preparations complete, we can return to random actions of a free group. Suppose now that $\G$ is freely generated by a finite set $S$ of size $r$, and that a random homomorphism $\pi:\G \to G$ is obtained by choosing $g_s$ for $s \in S$ independently at random from $m$ and then setting $\pi(s) := g_s$.

Where we apply Corollary~\ref{cor:cond-ingredient} below, the tuples $\bf{x}_i$ and $\bf{y}_i$ are indexed by certain subsets of $\G$. The following terminology helps keep track of them.

\begin{dfn}\label{dfn:possible}
	If $\pi \in \rm{Hom}(\G,G)$ and $x \in X$, then the \textbf{orbit map at $x$} is
	\[\pi^{(x)}(g) := \pi(g)x \qquad (g \in \G).\]
	If $F \subset \G$, then the resulting \textbf{orbit patch} is the restriction $\pi^{(x)}|F$, which we regard as a tuple in $X^F$.

If we fix $F$ and $x$, then the set of all orbit patches for different possible choices of $\pi$ is a subset $Y_{F,x}$ of $X^F$.  It is closed as a continuous image of the compact space $\rm{Hom}(\G,G)$.  We refer to the tuples in $Y_{F,x}$ as \textbf{$F$-possible starting from $x$}.
\end{dfn}

Now consider a grounded set $F$ and a tuple $\bf{y} \in Y_{F,x}$.  For each $t \in S$, the sets
\[t^{-1}F\cap F \quad \hbox{and} \quad F\cap tF\]
are naturally identified by left-translation by $t$.  With this identification, we can consider the cosets $H(\bf{y}_{t^{-1}F\cap F};\bf{y}_{F\cap tF})$ for each $t$.  The fact that $\bf{y}$ is $F$-possible implies that these cosets are all nonempty: if $\bf{y} = \pi^{(x)}|F$ then $\pi(t)$ must lie in $H(\bf{y}_{t^{-1}F\cap F};\bf{y}_{F\cap tF})$ for every $t$.

Here is the main result of this section. It describes the conditional distribution of the generators of a random action given an orbit patch.

\begin{prop}\label{prop:condition-action}
Fix $x \in X$ and let $F$ be a grounded subset of $\G$.  Let $\pi$ be a random action in which the generators $\pi(s)$, $s \in S$, are independent and distributed according to $m$. Then a regular conditional distribution for $(\pi(s):\ s \in S)$ given the orbit patch $\pi^{(x)}|F$ is given by
\begin{equation}\label{eq:prod-meas}
m_F(\,\cdot \mid \bf{y}) := \prod_{s \in S} m(\,\cdot\,|\,\bf{y}_{F\cap s^{-1}F};\bf{y}_{s F\cap F}) \qquad (\bf{y} \in Y_{F,x}).
\end{equation}
\end{prop}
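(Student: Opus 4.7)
Proof plan. I will argue by induction on the grounded set $F$, using the observation from Section~\ref{sec:group-geom} that every grounded subset of $\G$ is reached from $\{e\}$ by a sequence of enlargements.

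For the base case $F = \{e\}$, the orbit patch $\pi^{(x)}|\{e\}$ is the deterministic tuple $(x)$, so conditioning on it is vacuous. For every $s\in S$, both sets $F\cap s^{-1}F$ and $sF\cap F$ are empty (since $s\neq e$), so the corresponding cosets $H(\cdot\,;\cdot)$ equal $G$ and the factor measures are all $m$. Thus the right-hand side of~\eqref{eq:prod-meas} is $\prod_{s\in S}m$, which is the unconditional joint law of $(\pi(s):s\in S)$.

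For the inductive step, assume the proposition for the grounded set $F$, and let $F' = F\cup g$ be an enlargement in direction $\sigma\in S\cup S^{-1}$, so $h := \sigma^{-1}g\in F$. The orbit patch $\pi^{(x)}|F'$ is obtained from $\pi^{(x)}|F$ together with one new coordinate $\bf{y}_g = \pi(\sigma)\bf{y}_h$, where $\pi(\sigma)$ stands for either $\pi(\sigma)$ or $\pi(\sigma^{-1})^{-1}$ according as $\sigma\in S$ or $\sigma\in S^{-1}$. So passing from the conditional law given $\pi^{(x)}|F$ to the conditional law given $\pi^{(x)}|F'$ amounts to further conditioning on the value of a single orbit image under a single generator.

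Using Lemma~\ref{lem:shift-enlargement}, I then identify which factor of the product~\eqref{eq:prod-meas} changes under the enlargement. If $\sigma = t_0\in S$ then only the $t_0$-factor changes: its index sets gain the points $\sigma^{-1}g=h$ on the left and $g$ on the right. If instead $\sigma\in S^{-1}$, write $s = \sigma^{-1}\in S$; then only the $s$-factor changes, with $g$ added on the left index and $h = sg$ added on the right. In both cases all other factors are unchanged by Lemma~\ref{lem:shift-enlargement}.

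The heart of the argument is to match this change of the product formula with the effect of extra conditioning on $\bf{y}_g$, via Corollary~\ref{cor:cond-ingredient}. In the case $\sigma = t_0\in S$, under the inductive product measure~\eqref{eq:prod-meas}, the map $(\pi(t))_{t\in S}\mapsto \pi(t_0)\bf{y}_h$ depends only on $\pi(t_0)$; applying Corollary~\ref{cor:cond-ingredient} with $j$ corresponding to $t_0$ and $z = \bf{y}_h$ produces a disintegration whose fibre measure at $w = \bf{y}_g$ leaves every factor with $t\neq t_0$ unchanged and replaces the $t_0$-factor by
\[
m\bigl(\,\cdot\,\big|\,(\bf{y}_{F\cap t_0^{-1}F},\bf{y}_h);\,(\bf{y}_{t_0F\cap F},\bf{y}_g)\bigr).
\]
By Lemma~\ref{lem:shift-enlargement}, this is exactly the $t_0$-factor for $F'$. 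The case $\sigma\in S^{-1}$ is handled identically after using~\eqref{eq:measure-id} to express the extra constraint $\pi(s)\bf{y}_g = \bf{y}_h$ in the form required by Corollary~\ref{cor:cond-ingredient}. Combining with the fact that disintegration and further conditioning compose in the expected way, this completes the induction.

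The main obstacle is purely notational bookkeeping: one must carefully verify that the single new constraint imposed by $\bf{y}_g$ falls on exactly one factor of~\eqref{eq:prod-meas}, handle the two parities $\sigma\in S$ versus $\sigma\in S^{-1}$ uniformly, and check that the indices added to that factor's cosets match precisely the indices produced by Lemma~\ref{lem:shift-enlargement}. No deeper analytic input is needed beyond Corollary~\ref{cor:cond-ingredient} and the grounded-set induction.
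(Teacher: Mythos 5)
Your proposal is correct and follows essentially the same route as the paper's proof: induction on the grounded set, a trivial base case at $\{e\}$, reduction of the inductive step to conditioning on the single new orbit point via the tower property, Lemma~\ref{lem:shift-enlargement} to identify the one factor of~\eqref{eq:prod-meas} that changes, and Corollary~\ref{cor:cond-ingredient} to supply the disintegration. The only cosmetic difference is that you split the two parities $\sigma\in S$ versus $\sigma\in S^{-1}$ more explicitly, whereas the paper folds them into the phrase ``$t=s^{\pm 1}$''; the substance is identical.
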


\begin{proof}
We prove this by induction on the grounded set $F$.

When $F = \{e\}$, the set $Y_{F,x}$ is the singleton $\{x\}$.  The left-hand side of~\eqref{eq:prod-meas} is simply equal to $m^{\times S}$, and so is the right-hand side because $F\cap sF$ and $s^{-1}F\cap F$ are both empty for every $s$.

So now suppose that the result is known for some grounded set $F$, and let $F' = F\cup g$ be an enlargement of it in direction $s$.  Then, for any action $\pi$, we have $\pi(g)x = \pi(s)\pi(s^{-1}g)x$, and so $\pi^{(x)}|F'$ can be identified with $(\pi^{(x)}|F,\,\pi(s)\pi(s^{-1}g)x)$.  Accordingly, $Y_{F',x}$ can be identified with a subset of $Y_{F,x}\times X$.  For $\bf{y} \in Y_{F,x}$, let us write
\[Y^\bf{y}_{F',x} := \{w \in X:\ (\bf{y},w) \in Y_{F',x}\}.\]

To extend the result from $F$ to $F\cup g$, by the tower property of conditional expectation~\cite[Thm. 4.5(vi)]{Var--prob}, it suffices to prove the following:
\begin{quote}
If $\pi$ is a random action in which the generators are distributed according to $m_F(\,\cdot\mid \bf{y})$, then a regular conditional distribution for ${(\pi(s):\ s \in S)}$ given the image $\pi(s)y_{s^{-1}g}$ is given by
\begin{equation}\label{eq:prod-meas-2}
m_{F'}(\,\cdot\mid \bf{y},w) \qquad (w \in Y^\bf{y}_{F',x}).
\end{equation}
\end{quote}

To do this, we first identify the measures appearing in~\eqref{eq:prod-meas-2} more carefully.  Writing an element of $Y_{F',x}$ as $\bf{y}' := (\bf{y},w)$, consider the factor measures
\[m(\,\cdot\mid \bf{y}'_{F'\cap t^{-1}F'};\bf{y}'_{tF'\cap F'}) \qquad (t \in S).\]
If $t=s^{\pm 1}$ (according as $s$ lies in $S$ or $S^{-1}$), then either the first or the second equality of Lemma~\ref{lem:shift-enlargement} gives
\[m(\,\cdot\mid \bf{y}'_{F'\cap t^{-1}F'};\bf{y}'_{tF'\cap F'}) = m(\,\cdot\mid \bf{y}_{F\cap s^{-1}F},y_{s^{-1}g};\bf{y}_{sF\cap F},w).\]
On the other hand, if $t \ne s^{\pm 1}$, then the third equality of Lemma~\ref{lem:shift-enlargement} gives
\[m(\,\cdot\mid \bf{y}'_{F'\cap t^{-1}F'};\bf{y}'_{tF'\cap F'}) = m(\,\cdot\mid \bf{y}_{F\cap t^{-1}F};\bf{y}_{tF\cap F}).\]
(This is the key point where we need $F$ to be grounded.)

Combining the identifications above, we see that the asserted conditional distribution is precisely the one given by Corollary~\ref{cor:cond-ingredient} when we let $r = |S|$, index all the data by $S$ rather than by $\{1,2,\dots,r\}$, and identify
\[I_t,\ \bf{x}_t, \hbox{and}\ \bf{y}_t \quad \hbox{with} \quad t^{-1}F\cap F,\ \bf{y}_{t^{-1}F\cap F},\ \hbox{and}\ \bf{y}_{F\cap tF}\]
for each $t \in S$.  This continues the induction.
\end{proof}

\begin{rmk}
In the proof above, the assumption that $F$ is grounded enables us to apply Lemma~\ref{lem:shift-enlargement}.  Proposition~\ref{prop:condition-action} is not true if we omit that assumption.  To see this more clearly, consider a simplified version of Proposition~\ref{prop:condition-action} that partitions the space of actions according to orbit patches but ignores conditional distributions.  That version tells us that, if $\bf{y}$ is $F$-possible starting from $x$, then
\[\big\{(\pi(s):\ s \in S) \in G^S: \pi(g)x = y_g\ \forall g\in F\big\} = \prod_{s \in S}H(\bf{y}_{s^{-1}F\cap F};\bf{y}_{F\cap sF}).\]
The left-hand side here is contained in the right-hand side for any subset $F$ of $\G$, but the reverse inclusion can fail if $F$ is not grounded.  For example, if $F$ is actually well-separated in the sense that no two of its element are adjacent in the left Cayley graph of $\G$, then the left-hand set above may still impose many restriction on the generators $(\pi(s):\ s \in S)$, but $F\cap sF = \emptyset$ for every $s$, and so the right-hand set above is the whole of $G^S$. \fin
\end{rmk}

\begin{rmk}
Our use of grounded sets and other calculations in this section have several precedents in the literature.  For example, special cases or similar arguments appear when one studies the orbits of single points in random actions of $\G$ by permutations of a finite set.  Examples can be found in~\cite{Nic94,LinPud10,LarSha12,ParPud15}. \fin
\end{rmk}

\section{Distribution of Verblunsky coefficients}\label{sec:random-Verb}

Now consider again a random $n$-dimensional unitary representation $\pi$ for some large $n$, and fix an orthonormal $k$-tuple $V$ in $\bbC^{\oplus n}$.  We can turn the results of the previous section into a description of the distribution of $(\Phi^\pi_V)[F]$ for each grounded set $F$. In the next chapter this enables our proof of Theorem~\ref{mainthm:LDP}.

Let $Q^\pi[F] := (\Phi^\pi_V)[F]$ to lighten notation.  The description is recursive from one grounded set $F$ to an enlargement $F\cup g$ in direction $t$.  When $Q^\pi[F]$ is nonsingular, let
\[C^\pi_{F,g} \in \Xi(k,k|F\setminus tF|)\]
be the Verblunsky coefficient of $Q^\pi[F\cup g]$ over $Q^\pi[F]$ (Definition~\ref{dfn:Verb}). This is a random element of that space of contractions once we condition on the event that it is well-defined.  For notation, recall the distributions $\s_{n,\ell,k}$ on the space of contractions $\Xi(k,\ell)$ defined in~\eqref{eq:projected-contraction-dist}. % [[ CHANGE NOTATION SO THAT $k$ and $\ell$ APPEAR IN SOME ORDER IN BOTH ITEMS? ]]

\begin{thm}\label{thm:dil-dist}
If $n\ge k|F|$, then the following hold:
\begin{itemize}
\item[a.] $Q^\pi[F]$ is nonsingular almost surely, so $C^\pi_{F,g}$ is defined almost surely;
\item[b.] $C^\pi_{F,g}$ is independent from $Q^\pi[F]$;
\item[c.] $C^\pi_{F,g}$ has distribution $\s_{n-k|F\cap tF|,k|F\setminus tF|,k}$.
\end{itemize}
\end{thm}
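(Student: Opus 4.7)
The plan is to prove (a), (b), and (c) jointly by induction on $|F|$, using Proposition~\ref{prop:condition-action} to describe the conditional distribution of the generators given the orbit patch, and Proposition~\ref{prop:law-of-contraction} to identify the resulting distribution of the Verblunsky coefficient. Throughout, write $\bf{y} := (\pi(h)V)_{h\in F}$ for the random orbit patch at $F$, viewed as an element of $\rm{U}(k,n)^F$; then $Q^\pi[F]$ is $\bf{y}$-measurable since its $(h,h')$-block is $\bf{y}_h^\ast \bf{y}_{h'}$.

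For (a), the base case $F=\{e\}$ is immediate from $Q^\pi[\{e\}] = V^\ast V = I_k$. Assuming (a)--(c) for grounded sets of size at most $m$, let $|F|=m+1$ and $n\ge k|F|$, pick any $h\in F$ of maximal distance from $e$, so that $F':=F\setminus\{h\}$ is still grounded, and let $s$ be the direction of the enlargement $F=F'\cup\{h\}$. Lemma~\ref{lem:pdf-completion} implies $Q^\pi[F]$ is nonsingular iff $Q^\pi[F']$ is nonsingular and the Verblunsky from $F'$ to $F$ lies in $\Xi^\circ$. The first event is almost sure by (a) for $F'$; the second is almost sure by (c) for $F'$ together with Lemma~\ref{lem:a-s-strict}, since the identity $|F'\cap sF'|+|F'\setminus sF'|=|F'|=|F|-1$ shows that $n\ge k|F|$ is exactly the hypothesis needed for that lemma.

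For (b) and (c), given (a) for $F$, fix an enlargement $F\cup g$ in direction $t$, write $g':=t^{-1}g\in F$, and set $M:=\rm{span}(\bf{y}_{F\cap tF})$. By Proposition~\ref{prop:condition-action} (together with inversion symmetry if $t\in S^{-1}$), conditionally on $\bf{y}$ the generator $\pi(t)$ is uniformly distributed on the coset $H(\bf{y}_{F\cap t^{-1}F};\bf{y}_{tF\cap F})$, independently of the other generators. Fix any $U_0$ in this coset and write $\pi(t)=WU_0$ with $W$ uniform on the subgroup $\mathrm{Stab}(M)\le\rm{U}(n)$ fixing $M$ pointwise; then
\[
\pi(g)V=\pi(t)\bf{y}_{g'}=W(U_0\bf{y}_{g'})=:Wx_0,
\]
where $x_0:=U_0\bf{y}_{g'}$ is a $\bf{y}$-measurable orthonormal $k$-tuple. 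Now apply Proposition~\ref{prop:law-of-contraction} with the three tuples $\bf{y}_{F\setminus tF}$, $\bf{y}_{F\cap tF}$, and $x_0$. Thanks to (a) and the defining relation $U_0\bf{y}_{F\cap t^{-1}F}=\bf{y}_{tF\cap F}$ (which forces $\bf{y}_{F\cap tF}^\ast x_0=\bf{y}_{F\cap t^{-1}F}^\ast\bf{y}_{g'}$ to be a submatrix of $Q^\pi[F]$), the relevant partial Gram matrix is partially nonsingular almost surely. Proposition~\ref{prop:law-of-contraction} then identifies the conditional distribution of $C^\pi_{F,g}$ given $\bf{y}$ as $\sigma_{n-k|F\cap tF|,\,k|F\setminus tF|,\,k}$. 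This conditional distribution does not depend on $\bf{y}$, so (c) follows by integrating over $\bf{y}$ and (b) follows because $Q^\pi[F]$ is $\bf{y}$-measurable.

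The main technical burden will be the bookkeeping in the proof of (b)--(c): choosing the coset representative $U_0$, identifying the correct stabilizer subgroup on which $W$ is Haar-uniform, and checking via Lemma~\ref{lem:Toe} that every fixed block in the Proposition~\ref{prop:law-of-contraction} setup descends from a submatrix of $Q^\pi[F]$. Once these identifications are in place, the conceptual content is simply that Haar measure on $\rm{U}(n)$ produces a maximally random rotation of one orthonormal tuple relative to another, and this rotation carries the same distribution no matter which orbit patch one conditions on.
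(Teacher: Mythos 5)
Your proposal is correct and follows essentially the same route as the paper: the same induction on the grounded set, the same use of Proposition~\ref{prop:condition-action} to reduce to a Haar-uniform rotation fixing $\rm{span}(\bf{y}_{F\cap tF})$, the same appeal to Proposition~\ref{prop:law-of-contraction} to identify the constant conditional law of the Verblunsky coefficient (which yields (b) and (c) simultaneously), and the same use of Lemma~\ref{lem:a-s-strict} for the nonsingularity count $n\ge k|F|$. The only difference is cosmetic: you phrase the step for (a) as peeling a maximal-length leaf off $F$, whereas the paper runs it as the closing implication of the previous inductive round.
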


Notice that we need part (a) of this theorem for parts (b) and (c) to make sense.

\begin{proof}
We prove this by specializing the results of the previous section to the case when $G = \bf{U}(n)$ and $X = \bf{U}(k,n)$.

For a grounded set $F$ and an enlargement $F\cup g$ in direction $t$, we prove the following implications among the parts of Theorem~\ref{thm:dil-dist}:
\begin{multline*}
	\hbox{part (a) for $F$} \ \ \Rightarrow \ \ \hbox{parts (b) and (c) for $(F,g)$} \ \ \Rightarrow \ \ \hbox{part (a) for $F\cup g$}.
	\end{multline*}
When $F = \{e\}$, we have $Q^\pi[F] = I_k$, so part (a) is immediate in this case.  Starting from there, the above implications imply the result in full by induction on $F$.

So now assume that $n\ge k|F|$, and suppose we already know that $Q^\pi[F]$ is nonsingular almost surely.  Therefore the Verblunsky coefficient $C := C^\pi_{F,g}$ is defined almost surely.  It takes values in $\Xi := \Xi(k,k|F\setminus tF|)$. Let us also assume that $t \in S$; the case $t \in S^{-1}$ is analogous. %  [[ IS THAT LAST HINT GOOD ENOUGH? ]]

Let $(\Omega,\F,\bbP)$ be the background probability space, and let $\cal{G}$ be the sigma-subalgebra of $\F$ generated by the random orbit patch $\pi^{(V)}|F$.  Since $Q^\pi[F]$ is a function of this orbit patch, the law of total probability gives
\[\bbP\{Q^\pi[F] \in A,\ C \in B\} = \bbE\big[\bbP(C\in B\mid \cal{G});\ Q^\pi[F] \in A\big]\]
for any Borel subsets $A\subset \S^\rm{u}_k(F)$ and $B \subset \Xi$.
Therefore parts (b) and (c) for $F$, $g$ follow if we show that the constant $\s_{n-k|F\cap tF|,\,k|F\setminus tF|,\,k}(B)$ is a version of $\bbP(C\in B\mid \cal{G})$.

Now, $C$ is a function of $Q^\pi[F\cup g]$, and this in turn is a function of $(\pi^{(V)}|F,\,\pi(t))$.  Therefore Proposition~\ref{prop:condition-action} asserts that a version of $\bbP(C\in B\mid \cal{G})$ is given by
\begin{equation}\label{eq:C-indep}
m(\{\pi(t):\ C \in B\}\mid \bf{y}_{F\cap t^{-1}F};\bf{y}_{tF\cap F}) \qquad \hbox{when}\ \bf{y} := \pi^{(V)}|F \in Y_{F,V}.
\end{equation}
To use this expression, fix $\bf{y} \in Y_{F,V}$ and also some $U_0 \in H(\bf{y}_{F\cap t^{-1}F};\bf{y}_{tF\cap F})$, and let $U$ be drawn at random from the Haar probability measure on $\bf{U}((\bf{y}_{F\cap tF})^\perp)$.  Under the conditional measure in~\eqref{eq:C-indep}, $\pi(t)$ has the same distribution as $UU_0$, by the definitions in Section~\ref{sec:random-orbits}, and then $Q^\pi[F\cup g]$ has the same distribution as the Gram matrix of the combined tuple $[\bf{y}_{F\setminus tF},\ \bf{y}_{F\cap tF},\ UU_0y_{t^{-1}g}]$, where only the last entry is random.

If it happens that $Q^\pi[F] \in \S^\circ_k(F)$, then the combined tuple $[\bf{y}_{F\setminus tF},\ \bf{y}_{F\cap tF}]$ is linearly independent, and so is the combined tuple $[\bf{y}_{F\cap tF},\ UU_0y_{t^{-1}g}]$ because it is contained in the translate $U\bf{y}$.  We can therefore apply Proposition~\ref{prop:law-of-contraction}, which tells us that the left-hand side of~\eqref{eq:C-indep} is equal to $\s_{n-k|F\cap tF|,k|F\setminus tF|,k}(B)$, as required.  Since $Q^\pi[F] \in \S^\circ_k(F)$ lies in $Y^\circ_{F,V}$ almost surely by part (a) for $F$, this completes the proof of parts (b) and (c) for $(F,g)$.
	
Finally, by Lemma~\ref{lem:a-s-strict}, $\s_{n-k|F\cap tF|,\,k|F\setminus tF|,\,k}$ is supported on the subset ${\Xi^\circ(k,k|F\setminus tF|)}$ of proper contractions provided we have
	\[n-k|F\cap tF| \ge k + k|F\setminus tF|, \quad \hbox{or equivalently} \quad n\ge k(|F| + 1).\]
	In this case, the dilation $Q^\pi[F\cup g]$ of $Q^\pi[F]$ is still almost surely nonsingular, by Proposition~\ref{prop:three-block-completion}.  So if $n\ge k(|F|+1)$ then we can now also conclude part (a) for $F\cup g$, so the induction continues.
\end{proof}

The independence given by part Theorem~\ref{thm:dil-dist}(b) is a great advantage of working with Verblunsky coefficients during the subsequent proof of the rest of Theorem~\ref{mainthm:LDP}.  It has the following corollary.  Fix any grounded enumeration $e = g_0$, $g_1$, $g_2$, \dots of $\G$, and let $F_i := \{g_0,\dots,g_i\}$ for each $i\ge 0$.  Suppose that $F_{i+1}$ is an enlargement of $F_i$ in direction $s_{i+1}$ for each $i$.

\begin{cor}\label{cor:KilNenfree}
Let $\pi$ be a uniform random $n$-dimensional representation of $\G$, and let $m := \lfloor n/k\rfloor$.  The resulting Gram matrix $Q^\pi[F_i]$ is nonsingular almost surely for all $i =0,1,2\dots,m$, and the corresponding random Verblunsky coefficients
\begin{equation}\label{eq:first-m-Verblunskys}
C^\pi_{F_0,g_1},\ C^\pi_{F_1,g_2},\ \dots,\ C^\pi_{F_m,g_{m+1}}
\end{equation}
are defined almost surely, are independent, and have respective distributions
\[\s_{n - k|F_i\cap s_{i+1}F_i|,k|F_i\setminus s_{i+1}F_i|,k} \qquad \hbox{for}\ i=0,1,2,\dots,m.\]
\qed
\end{cor}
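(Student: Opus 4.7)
The plan is to iterate Theorem~\ref{thm:dil-dist} along the chain $\{e\} = F_0 \subset F_1 \subset \cdots \subset F_{m+1}$. First I would verify the dimension hypothesis: since $m = \lfloor n/k\rfloor$, we have $k|F_i| = k(i+1) \le k(m+1) \le n$ for each $i \in \{0,1,\dots,m\}$, so Theorem~\ref{thm:dil-dist}(a) applies to each $F_i$ in the list and gives that $Q^\pi[F_i]$ is almost surely nonsingular. Intersecting these finitely many full-probability events, on a single event of probability one every Gram matrix $Q^\pi[F_i]$ for $i=0,\dots,m$ is nonsingular, and hence every Verblunsky coefficient in~\eqref{eq:first-m-Verblunskys} is simultaneously defined. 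The marginal distribution of each $C^\pi_{F_i,g_{i+1}}$ is then read off directly from Theorem~\ref{thm:dil-dist}(c).

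The substantive part is mutual independence. The key observation, recorded in Corollary~\ref{cor:Verb} and Lemma~\ref{lem:pdf-completion}, is that on the event that $Q^\pi[F_i]$ is nonsingular the whole initial segment $(C^\pi_{F_0,g_1},\dots,C^\pi_{F_{i-1},g_i})$ is a Borel function of $Q^\pi[F_i]$: indeed, each $C^\pi_{F_j,g_{j+1}}$ is extracted from the submatrix $Q^\pi[F_{j+1}]$ of $Q^\pi[F_i]$ via the homeomorphism between partial positive definite completions and the Verblunsky parameter. Consequently the $\sigma$-algebra generated by $(C^\pi_{F_0,g_1},\dots,C^\pi_{F_{i-1},g_i})$ is contained in the $\sigma$-algebra generated by $Q^\pi[F_i]$, and Theorem~\ref{thm:dil-dist}(b) tells us that $C^\pi_{F_i,g_{i+1}}$ is independent of the latter.

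To conclude, I would invoke the standard measure-theoretic fact that if, for each $i$, the random variable $C^\pi_{F_i,g_{i+1}}$ is independent of the tuple $(C^\pi_{F_0,g_1},\dots,C^\pi_{F_{i-1},g_i})$, then the whole collection $C^\pi_{F_0,g_1},\dots,C^\pi_{F_m,g_{m+1}}$ is mutually independent; this is an immediate induction on $i$ using the definition of independence via product distributions. Combined with the marginal identification above, this yields all the claims of the corollary.

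The only real pitfall is bookkeeping of what determines what: one must be clear that the map from $Q^\pi[F_i]$ to the earlier Verblunsky coefficients is well defined and measurable on the (full-probability) nonsingular locus, which is precisely the content of Corollary~\ref{cor:Verb} and Lemma~\ref{lem:pdf-completion}. Once this is granted, the proof is a purely formal iteration of Theorem~\ref{thm:dil-dist}.
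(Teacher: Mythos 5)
Your overall strategy is the intended one --- the paper states this corollary with no separate proof, treating it as an immediate iteration of Theorem~\ref{thm:dil-dist} --- and your independence argument is correct: on the nonsingular locus the initial segment $(C^\pi_{F_0,g_1},\dots,C^\pi_{F_{i-1},g_i})$ is a measurable function of $Q^\pi[F_i]$ (each $C^\pi_{F_j,g_{j+1}}$ is extracted from the submatrix $Q^\pi[F_{j+1}]$ via Lemma~\ref{lem:pdf-completion}), so Theorem~\ref{thm:dil-dist}(b) together with the standard induction on product distributions gives mutual independence, and the marginals come from Theorem~\ref{thm:dil-dist}(c).

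There is, however, a genuine error in your dimension check. You assert $k|F_i|=k(i+1)\le k(m+1)\le n$ for all $i\le m$, but with $m=\lfloor n/k\rfloor$ the definition of the floor gives $km\le n<k(m+1)$, so the inequality $k(m+1)\le n$ never holds. For $i\le m-1$ we do have $k|F_i|=k(i+1)\le km\le n$, the hypothesis of Theorem~\ref{thm:dil-dist} is met, and your argument is fine. For $i=m$ the hypothesis fails, and the failure is not cosmetic: $Q^\pi[F_m]$ is the Gram matrix of $k(m+1)>n$ vectors in $\bbC^{\oplus n}$, hence is singular with probability one, so $C^\pi_{F_m,g_{m+1}}$ is never defined. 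The statement as printed is therefore off by one at the top index (the conclusions hold for $i=0,\dots,m-1$, equivalently one should take $m:=\lfloor n/k\rfloor-1$). The right move is to flag this boundary discrepancy explicitly rather than to paper over it with an inequality that is false; apart from this, your proposal is sound.
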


If $k=r=1$, then~\eqref{eq:first-m-Verblunskys} is the sequence of the first $m$ Verblunsky coefficients of the random positive definite function on $\bbZ$ given by the orbit of $e_1$ under a uniform random $n$-by-$n$ unitary matrix.  In this case the independence of the first $n$ of these random Verblunksy coefficients, together with their individual distributions, is a noted result of Killip and Nenciu~\cite[Prop. 3.3]{KilNen04}.  For $r= 1$ but $k \ge 2$, their result is generalized in~\cite[Thm. 3.2]{GamRou14}. Corollary~\ref{cor:KilNenfree} is a further generalization of those results to random representations of higher-rank free groups.

One could also re-write Theorem~\ref{mainthm:LDP} directly in terms of the sequences of Verblunsky coefficients that appear in Corollary~\ref{cor:KilNenfree}.  The rate function has a simple sum form in terms of those coefficients: see Corollary~\ref{cor:first-expansion} below.

\subsection*{\emph{Notes and further references}}

A single random unitary matrix with distribution $m_{\bf{U}(n)}$ is one of the most well-studied models in random matrix theory, where this distribution is traditionally called the `circular unitary ensemble': see, for instance,~\cite[Sec. 10.3]{MehRM}.  Its analysis is driven by Weyl's integration formula for central functions on unitary groups~\cite[Prop. 4.1.6]{AndGuiZei--book}.  Certain Toeplitz determinants appear inside this formula, leading to another connection with orthogonal polynomials on the unit circle: see~\cite[pp67--9]{SimOPUCI} or~\cite[Prop. 4.1.6 or Rmk. 4.1.7]{AndGuiZei--book}.

However, I do not see an analog of this connection in the setting of non-Abelian free groups.  On the whole, the interaction of several independent random unitary matrices is not amenable to such exact calculations.  The alternative point of view provided by Voiculescu's free probability theory has been key to uncovering many phenomena concerning such random tuples.  Many aspects of this work can be found in textbooks such as~\cite{AndGuiZei--book} or~\cite{HiaPetSL}.

The result of Killip and Nenciu asserts that the Verblunsky coefficients of a single uniformly random unitary matrix are independent. This is an important step towards finding random-matrix models of various natural ensembles of $n$-point subsets of the unit circle. These development and further references are discussed on~\cite[p321]{AndGuiZei--book}.

\chapter{Annealed and zeroth-order AP entropy}\label{chap:LDP-proof}

In this chapter we prove the large deviations principle in Theorem~\ref{mainthm:LDP}. Then we use it to prove Theorem~\ref{mainthm:annealed-formula} and also some other formulas for annealed AP entropy along a uniformly random AP sequence.

\section{The chosen-tuple large deviations principle}\label{sec:completed-LDP}

We now adopt the notation of Theorem~\ref{mainthm:LDP}, and also the notation for log-determinant entropy from Chapter~\ref{chap:PSD}.  Using this notation, let us define
\[h_F(q) := \left\{\begin{array}{ll} \rmH_q(F) - \sum_{s \in S}\rmH_q(F\cap sF) &\quad \hbox{if $q$ is nonsingular} \\ -\infty & \quad \hbox{if $q$ is singular}\end{array}\right.\]
for any grounded set $F$ and $q \in \S^\rm{u}_k(F)$.  Later we manipulate this quantity using the chain rules from Section~\ref{sec:block-Gram}.  Also, if $\phi:\G\to \rmM_k$ is positive definite and $F$ is a finite subset of $\G$, then we adapt the notation from Chapter~\ref{chap:PSD} by writing
\[\rmH_\phi(F) := \log \det \phi[F],\]
and similarly for conditional log-determinant entropy and mutual information.

As in Section~\ref{sec:random-Verb}, let $Q^{\pi_n}[F] := (\Phi^{\pi_n}_V)[F]$ for any $n$ and any fixed grounded subset $F$ of $\G$. We deduce Theorem~\ref{mainthm:LDP} from a corresponding large deviations principle for these finite matrices.

\begin{thm}\label{thm:pre-LDP}
	If $F$ is a grounded subset of $\G$, then $Q^{\pi_n}[F]$ obeys the large deviations principle in the space $\S^{\rm{u}}_k(F)$ with rate function $-h_F$.
\end{thm}

The proof of Theorem~\ref{thm:pre-LDP} is by induction on the grounded set $F$.  The main ingredient is Theorem~\ref{thm:dil-dist}. For an enlargement $F\cup g$ of $F$, the basic idea for the inductive step is straightforward:
\begin{itemize}
\item the inductive hypothesis gives us the LDP for $Q^{\pi_n}[F]$;
\item Theorem~\ref{thm:dil-dist} and Corollary~\ref{cor:matrix-LDP1} tell us that the next Verblunsky coefficient $C$ is independent from $Q^{\pi_n}[F]$ and satisfies its own LDP;
\item Lemma~\ref{lem:LDP-product} tells us how to combine these for the pair $(Q^{\pi_n}[F],C)$;
\item finally, since $Q^{\pi_n}[F\cup g]$ is parametrized uniquely by $Q^{\pi_n}[F]$ and $C$, applying the contraction principle should continue the induction.
\end{itemize}

There is a slight complication in the last of these steps.  This is because ${Q^{\pi_n}[F\cup g]}$ is not a continuous image of the pair $(Q^{\pi_n}[F],C)$ until we exclude the event that $Q^{\pi_n}[F]$ is singular.  For this reason, the proof also involves passing between the desired target spaces $\S^\rm{u}_k(F)$ and $\S^\rm{u}_k(F\cup g)$ and their open subsets $\S_k^\circ(F)$ and $\S_k^\circ(F\cup g)$.  This is where we need Lemma~\ref{lem:LDP-open-subset}.

\begin{proof}[Proof of Theorem~\ref{thm:pre-LDP}]
\emph{Step 1.}\quad In view of part Theorem~\ref{thm:dil-dist}(a), we may choose Borel probability measures $\mu_{F,n}$ on the open subsets $\S_k^\circ(F)$ of $\S^\rm{u}_k(F)$ such that $\mu_{F,n}$ agrees with the distribution of $Q^{\pi_n}[F]$ for all sufficiently large $n$.  In addition, we have $-h_F(q) = \infty$ for any grounded set $F$ and $q \in \S^\rm{u}_k(F)\setminus \S_k^\circ(F)$.  Therefore, by Lemma~\ref{lem:LDP-open-subset}, it suffices to prove that $(\mu_{F,n})_{n\ge 1}$ obeys the LDP on $\S_k^\circ(F)$ with rate function $-h_F|\S_k^\circ(F)$.

\vspace{7pt}

\emph{Step 2.}\quad The rest of the proof is an induction on $F$.  First, since $\phi$ is unital, we have
\[h_{\{e\}}(\phi[\{e\}]) = \rmH(I_k) - 0 = 0,\]
and the required large deviations principle is vacuous because $\S_k^{\rm{u}}(\{e\}) = \{I_k\}$.

So now suppose the result is known for a grounded set $F$ and consider an enlargement $F\cup g$ in direction $s$.  Combining this inductive hypothesis with Corollary~\ref{cor:matrix-LDP1} and applying Lemma~\ref{lem:LDP-product}, the product measures
\[\mu_{F,n}\times \s_{n - k|F\cap sF|,k|F\setminus sF|,k} \qquad (n\ge 1)\]
obey the LDP on the space
\begin{equation}\label{eq:open-prod}
\S_k^\circ(F)\times \Xi^\circ(k,k|F\setminus sF|)
\end{equation}
with the rate function
\[-h_F(q) -\log\det(I_k - C^\ast C) \qquad ((q,C) \in \S_k^\circ(F)\times \Xi^\circ(k,k|F\setminus sF|)).\]

\vspace{7pt}

\emph{Step 3.}\quad Under the inverse of the homeomorphism in Lemma~\ref{lem:pdf-completion}, the product set~\eqref{eq:open-prod} is identified with $\S_k^\circ(F\cup g)$.  Moreover, Theorem~\ref{thm:dil-dist} shows that $\mu_{F\cup g,n}$ is the image of $\mu_{F,n}\times \s_{n - k|F\cap sF|,k|F\setminus sF|,k}$ under this inverse homeomorphism.  Therefore, by the identity~\eqref{eq:cond-mut-inf-contraction}, the measures $\mu_{F\cup g,n}$ obey the LDP on ${\S_k^\circ(F\cup g)}$ with rate function
\[-h_F(q[F]) + \rmI_q(g\,;\,F\setminus sF\mid F\cap sF) \qquad (q \in \S_k^\circ(F\cup g)).\]

To finish the proof we show that the expression above equals $-h_{F\cup g}(q)$.  Since $q$ and hence $q[F]$ are nonsingular, we have
\begin{multline}\label{eq:change}
h_{F\cup g}(q) - h_F(q[F]) \\ = \rmH_q(F\cup g) - \rmH_q(F) - \sum_{t\in S}\big(\rmH_q((F\cup g)\cap t(F\cup g)) - \rmH_q(F\cap tF)\big),
\end{multline}
where all terms are finite. Lemma~\ref{lem:shift-enlargement} tells us the difference between
\[(F\cup g)\cap t(F\cup g) \qquad \hbox{and} \qquad F\cap tF\]
for each $t \in S$.  Using this and entropy identities, we can express the right-hand side of~\eqref{eq:change} in terms of conditional log-determinant entropies.  First, by the third equality from Lemma~\ref{lem:shift-enlargement}, all terms in the sum with $t \ne s^{\pm 1}$ cancel.  Next, if $s \in S$ and $t = s$, then by the first equality from Lemma~\ref{lem:shift-enlargement} and the conditional chain rule~\eqref{eq:cond-chain2} the remaining differences become
\[\rmH_q(g\mid F) - \rmH_q(g\mid F\cap tF) = - \rmI_q(g\,;\,F\setminus sF\mid F\cap sF),\]
as required in this case.  Finally, if $s \in S^{-1}$ and $t = s^{-1}$, then by the second equality from Lemma~\ref{lem:shift-enlargement} the remaining differences become
\begin{align*}
\rmH_q(g\mid F) - \rmH_q(tg\mid F\cap tF) &= \rmH_q(g\mid F) - \rmH_q(g\mid sF\cap F)\\
&= - \rmI_q(g\,;\,F\setminus sF\mid F\cap sF),
\end{align*}
where the first equality holds by the invariance of $q$ under translation by $s = t^{-1}$, and the second follows by~\eqref{eq:cond-chain2} as before.  Once again this verifies the desired equality.  This continues the induction and so completes the proof.
\end{proof}

\begin{rmk}
	It is worth comparing our appeal to Lemma~\ref{lem:LDP-open-subset} with the exposition of the case of positive definite functions on $\bbZ$ in~\cite{BreSimZei18}, which does not involve a version of that lemma.  Instead, those authors introduce a slightly larger space of finite or infinite sequences of Verblunsky coefficients that can be used to parametrize positive definite functions without assuming nonsingularity, then apply the Killip--Nenciu result to prove a large deviations principle there, and finish with a simple appeal to the contraction principle.
	
	We could take a similar approach in our setting.  However, over a non-cyclic free group, the resulting space of Verblunsky coefficients allowing singularity would be much more complicated than over $\bbZ$, and it seems easier to proceed via Lemma~\ref{lem:LDP-open-subset} as above. \fin
\end{rmk}

\begin{proof}[Proof of Theorem~\ref{mainthm:LDP}]
The whole of $\Phi^{\pi_n}_V$ is a random element of the infinite-dimensional compact convex set $\S^\rm{u}_k(\G)$, which we can regard as the inverse limit of its finite-dimensional projections $\S^\rm{u}_k(F)$ as $F$ ranges over grounded subsets of $\G$.  Since grounded sets form an upwards-directed cover of the whole of $\G$, the first formula in~\eqref{eq:LDP-formula-inf} follows from Theorem~\ref{thm:pre-LDP} by an application of Lemma~\ref{lem:LDP-inf-product}(b).  Since the balls $B_n$ are all grounded and also form an upwards-directed cover of $\G$, the same reasoning gives the analogous formula when $F$ is restricted to the set of balls.  By Lemma~\ref{lem:LDP-inf-product}(a), the quantity appearing inside that infimum is non-decreasing in $n$, so the infimum is equal to the limit as $n\to\infty$.
\end{proof}

The negative of the rate function in Theorem~\ref{mainthm:LDP} can also be written as an infinite series of contributions that appear as one enlarges a grounded set one element at a time.  Let $e = g_0$, $g_1$, $g_2$, \dots be a grounded enumeration of $\G$ (see Section~\ref{sec:group-geom}), let $F_n = \{g_0,\dots,g_n\}$ for each $n\ge 0$, and let $s_n \in S\cup S^{-1}$ be such that $F_{n+1}$ is an enlargement of $F_n$ in direction $s_n$ for every $n$.

\begin{cor}\label{cor:first-expansion}
For any $\phi \in \S^\rm{u}_k(\G)$ we have
\[\lim_{n\to\infty} h_{B_n}(\phi) = - \sum_{n=0}^\infty \rmI_\phi(g_{n+1}\,;\,F_n\setminus s_nF_n\mid F_n\cap s_nF_n).\]
If $\phi$ is nonsingular and its Verblunsky coefficient from $F_n$ to $F_n\cup g_{n+1}$ is $C_n$, then the quantity above is also equal to
\[\sum_{n=0}^\infty \rmH(I_k - C_n^\ast C_n).\]
\end{cor}

\begin{proof}
Repeating the calculation in Step 3 of the proof of Theorem~\ref{thm:pre-LDP} gives
\[h_{F_n}(\phi) = -\sum_{i=0}^{n-1} \rmI_\phi(g_{i+1}\,;\,F_i\setminus s_iF_i\mid F_i\cap s_iF_i)\]
by induction on $n$. If $\phi$ is nonsingular then the $i^{\rm{th}}$ term of this sum is equal to
\[\log\det (I_k - C_i^\ast C_i),\]
by equation~\eqref{eq:cond-mut-inf-contraction}.  These partial sums are non-increasing in $n$ and converges to the desired infinite sum as $n\to\infty$.  Since every grounded set is contained in $F_n$ for all sufficiently large, the final conclusion is now another reformulation of Theorem~\ref{mainthm:LDP}.
\end{proof}

\begin{rmk}
The partial sums of the infinite series above depend on our particular choice of enumeration of $\G$, but their limit does not.  Presumably this fact can be proved directly by re-arranging log-determinant entropies using the chain rules from Proposition~\ref{prop:chain1}, but I have not found such a proof.

It seems a little surprising that these partial sums telescope into the relatively simple form in~\eqref{eq:LDP-formula-1}.  In the analogous story in ergodic theory, a similar limit of closed-form expressions is found for annealed sofic entropy.  But in that case a different proof can be given that handles a whole ball in $\G$ (or other grounded set) at once, so the appearance of a closed form is less surprising. This is the original derivation in~\cite{Bowen10c}.  I have not found an analog of that ergodic theoretic proof for annealed AP entropy, because it relies on certain bijections between sets to estimate their cardinalities. For us these would become maps between subsets of vector spaces that have no obvious reason to preserve volumes. % [[CONNECT THIS WITH YOU DISCUSSION OF THE EXTRA CARE YOU NEED WITH INFORMATION THEORETIC IDENTITIES AROUND DUPLICATED VECTORS, COMPARED WITH SAY HOW BRANDON WRITES HIS MANIPULATIONS ]]

On the other hand, our use of Verblunsky coefficients has no predecessor in ergodic theory. They make the proof of Theorem~\ref{mainthm:LDP} much easier, especially because of the independence given by Theorem~\ref{thm:dil-dist}.  One could try to turn our proof of Theorem~\ref{mainthm:LDP} back into a derivation of annealed sofic entropy in ergodic theory.  But without the independence given by Theorem~\ref{thm:dil-dist}, this would presumably require some kind of conditional LDP for the conditional distribution of $Q^{\pi_n}[F\cup g]$ given $Q^\pi[F]$ (or their ergodic theory analogs).  Conditional versions of LDPs appear in various places in the literature: see, for instance,~\cite{Cha97}, or~\cite{DupEll--WCLD}, or the notion of `maxingales' discussed in~\cite[Chap. 2]{Puh--LDIP}.  But there does not seem to be a canonical form for such conditional LDPs, and uses of them tend to involve longer and more delicate estimates than we have needed above.  \fin
\end{rmk}

\section{Formulas for annealed AP entropy}\label{sec:ann-AP-forms}

Fix $\bspi = (\pi_n)_{n\ge 1}$ to be a uniformly random AP sequence for our rank-$r$ free group $\G$, and turn to the functionals
\[\hann := \rmh^\ann_{\bspi},\quad \ul{\rmh}^\ann := \ul{\rmh}^\ann_{\bspi}, \quad \rmh^0 := \rmh^0_{\bspi}, \quad \hbox{and} \quad \ul{\rmh}^0 := \ul{\rmh}^0_{\bspi}.\]
In addition, throughout this section, we fix a positive definite function $\phi:\G \to \rmM_k$ (it need not be unital now), and abbreviate $H(F) := \rmH_\phi(F)$ for any finite subset $F$ of $\G$, and similarly for conditional log-determinant entropy.

We now turn the results of the preceding section into formulas for $\hann$ and $\ul{\rmh}^\ann$, proving in particular that these are always equal.  These formulas include those in Theorem~\ref{mainthm:annealed-formula}.  We return to $\rmh^0$ and $\ul{\rmh}^0$ in the next chapter.

The connection from Theorem~\ref{mainthm:LDP} to Theorem~\ref{mainthm:annealed-formula} goes through the following.

\begin{lem}\label{lem:LDP-to-A}
If $\phi$ is unital and $\cal{O}$ is any base of neighbourhoods of $\phi$ in $\S^{\rm{u}}_k(\G)$, then
\[\hann(\phi) = \inf_O\limsup_{n\to\infty}\frac{1}{n}\log \bbP(\Phi^{\pi_n}_V \in O).\]
The analogous formula holds for $\ul{\rmh}^\ann$ if `$\limsup$' is replaced by `$\liminf$'.
\end{lem}

\begin{proof}
Fix a dimension $n$ and an orthonormal tuple $V = [e_1,\dots,e_k]$.  Starting from the definition of $m_{\bf{U}(k,n)}$ as a pushforward of $m_{\bf{U}(n)}$, we have
\begin{align*}
\bbE m_{\bf{U}(k,n)}\X(\pi_n,O) &= \bbE \int_{\bf{U}(n)}1_{\X(\pi_n,O)}(UV)\ dm_{\bf{U}(n)}(U) \\
&= \int_{\bf{U}(n)}\bbP(UV \in \X(\pi_n,O))\ dm_{\bf{U}(n)}(U) \\
&= \int_{\bf{U}(n)}\bbP(V \in \X(U^\ast\pi_n U,O))\ dm_{\bf{U}(n)}(U) \\
&= \bbP(V \in \X(\pi_n,O)) \\
&= \bbP(\Phi^{\pi_n}_V \in O),
\end{align*}
where the second equality holds by Tonelli's theorem, and the fourth holds because the law of $\pi_n$ is invariant under conjugation by any fixed element of $\bf{U}(n)$.  Now both conclusions follow by inserting this calculation into Lemma~\ref{lem:normalized3}.
\end{proof}

\begin{prop}\label{prop:annealed-formula-1}
If $\phi:\G\to\rmM_k$ is positive definite, then $\hann(\phi)$ and $\ul{\rmh}^\ann(\phi)$ are both equal to the quantity in formula~\eqref{eq:LDP-formula-1} from Theorem~\ref{mainthm:annealed-formula}. In particular, annealed AP entropy and lower annealed AP entropy are equal.
\end{prop}

\begin{proof}
Assume first that $\phi$ is unital, so $\hann(\phi)$ and $\ul{\rmh}^\ann(\phi)$ are given by Lemma~\ref{lem:LDP-to-A}.  In that probability formula, the $\limsup$ and the $\liminf$ are both controlled by the large deviations principle from Theorem~\ref{mainthm:LDP}, giving
	\[\hann(\phi) = \ul{\rmh}^\ann(\phi) = \lim_{n\to\infty}h_{B_n}(\phi).\]
In particular, this is the point where Theorem~\ref{mainthm:LDP} shows that `$\limsup$' and `$\liminf$' give the same value. The right-hand side above agrees with~\eqref{eq:LDP-formula-1} when $\phi$ is unital.

Next, suppose that $\phi(e)$ is nonsingular but not necessarily unital, and let $\phi_1 := \phi(e)^{-1/2}\cdot \phi \cdot \phi(e)^{-1/2}$. Then Proposition~\ref{prop:hann-properties}(b) gives
	\begin{equation}\label{eq:reduce-to-normalized}
	\hann(\phi) = \log \det \phi(e) + \hann(\phi_1)
	\end{equation}
	and similarly with $\ul{\rmh}^\ann$ on both sides.  On the other hand, for any finite subset $F$ of $\G$, we have
	\[\phi[F] = \rm{diag}(\underbrace{\phi(e)^{1/2},\dots,\phi(e)^{1/2}}_{F})\cdot \phi_1[F]\cdot \rm{diag}(\underbrace{\phi(e)^{1/2},\dots,\phi(e)^{1/2}}_{F}),\]
	and hence
	\[H(F) = |F|\log \det \phi(e) + \log\det\,\phi_1[F]\]
	(where both sides may equal $-\infty$).  Applying this equality to the balls $B_n$ and their intersections $B_n\cap sB_n$, we find that
\[	h_{B_n}(\phi) = H(B_n) - \sum_{s \in S}H(B_n\cap sB_n) = \log \det \phi(e) + h_{B_n}(\phi_1),\]
where as usual either both sides are finite or both sides are $-\infty$, and where we have used the calculation
\[|B_n| - \sum_{s \in S}|B_n\cap sB_n| = \Big(1 + 2r\cdot \frac{(2r-1)^n-1}{2r-2}\Big) - r\cdot \Big(2\cdot \frac{(2r-1)^n-1}{2r-2}\Big) = 1.\]
Comparing this with~\eqref{eq:reduce-to-normalized}, we see that the unital case of identity~\eqref{eq:LDP-formula-1} implies the nonsingular case.

Finally, if $\phi(e)$ is singular, then $\hann(\phi)=-\infty$, and the explicit convention in Theorem~\ref{mainthm:annealed-formula} says that~\eqref{eq:LDP-formula-1} is also equal to $-\infty$ in this case.
	\end{proof}

We need a few more steps to reach formula~\eqref{eq:LDP-formula-2}.  Formula~\eqref{eq:LDP-formula-2} is the direct analog of Bowen's original formula for annealed sofic entropy (then called the `f-invariant')~\cite{Bowen10free}.

%After explaining these alternate formula in Subsection ???, we also digress to derive an analog of~\eqref{eq:LDP-formula-1} in the older setting of sofic entropy in ergodic theory (Subsection ??), and to discuss another possible formula for $\hann$ that would correspond to the construction of extensions of p.d. functions in~\cite{BakTim07}, although we leave the proof (or refutation) of this one as a research problem.

Let us now name these two sequences:
\begin{align}\label{eq:En-1}
	E_n &:= H(B_{n+1}) - \sum_{s \in S}H(B_{n+1}\cap sB_{n+1})\\
	\hbox{and} \quad E_n' &:= \sum_{s \in S}H(B_n \cup sB_n) - (2r-1)\cdot H(B_n) \nonumber,
\end{align}
recalling that $r = |S|$. So $E_n$ is the $(n+1)^{\rm{th}}$ term of the sequence in~\eqref{eq:LDP-formula-1}, and $E_n'$ is the $n^{\rm{th}}$ term of the sequence in~\eqref{eq:LDP-formula-2}.

Next, we make an observation about balls in $\G$:
\begin{equation}\label{eq:cup-cap}
B_{n+1}\cap sB_{n+1} = B_n\cup sB_n \qquad (s \in S,\ n\ge 0).
\end{equation}
To see this, first note that $|sg| = |g|\pm 1$ for every group element $g$.  Consequently, the two sides of~\eqref{eq:cup-cap} both contain $B_n$ and are both contained in $B_{n+1}$.  Finally, if $|g| = n+1$, then $g$ lies in $sB_{n+1}$ if and only if its reduced word begins with $s$, hence if and only if it lies in $sB_n$.

As a result of~\eqref{eq:cup-cap}, we have the alternative expression
\begin{equation}\label{eq:En-2}
E_n = H(B_{n+1}) - \sum_{s \in S}H(B_n\cup sB_n).
\end{equation}
We use both~\eqref{eq:En-1} and~\eqref{eq:En-2} below.

\begin{lem}\label{lem:1-2-ineqs}
We have $E_{n+1}'\le E_n \le E_n'$ for all $n\ge 0$.
\end{lem}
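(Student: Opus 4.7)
Proof plan. Both inequalities reduce to the strong subadditivity of log-determinant entropy from Corollary~\ref{cor:conditioning-monotone}, written in the symmetric form
\[
H(A\cup B)+H(A\cap B)\le H(A)+H(B),
\]
together with the translation-invariance $H(sF)=H(F)$ (immediate from $\phi[sF]=\phi[F]$), and the key combinatorial identity~\eqref{eq:cup-cap}.

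For the upper bound $E_n\le E_n'$: the plan is first to list the sets $A_s:=B_n\cup sB_n$ for $s\in S\cup S^{-1}$ and verify the ``tree'' structure
\[
\bigcup_{s\in S\cup S^{-1}}A_s=B_{n+1},\qquad A_s\cap A_t=B_n\ (s\ne t),
\]
where the second equality holds because any element of $A_s\setminus B_n$ has length $n+1$ and starts with the unique letter $s$. Iterated strong subadditivity (proved by induction on the number of sets, using $(A_1\cup\cdots\cup A_{k-1})\cap A_k=B_n$ at each step) then yields
\[
H(B_{n+1})+(2r-1)H(B_n)\le \sum_{s\in S\cup S^{-1}}H(A_s).
\]
Finally, translation invariance gives $H(B_n\cup s^{-1}B_n)=H(s(B_n\cup s^{-1}B_n))=H(B_n\cup sB_n)$ for each $s\in S$, so the right-hand side equals $2\sum_{s\in S}H(B_n\cup sB_n)$. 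Rearranging reproduces $E_n\le E_n'$.

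For the lower bound $E_{n+1}'\le E_n$: first I would use~\eqref{eq:cup-cap} to rewrite $H(B_n\cup sB_n)=H(B_{n+1}\cap sB_{n+1})$, turning the desired inequality into
\[
\sum_{s\in S}\bigl[H(B_{n+1}\cup sB_{n+1})+H(B_{n+1}\cap sB_{n+1})\bigr]\le 2r\,H(B_{n+1}).
\]
Apply strong subadditivity to each pair $(B_{n+1},sB_{n+1})$, yielding $H(B_{n+1}\cup sB_{n+1})+H(B_{n+1}\cap sB_{n+1})\le H(B_{n+1})+H(sB_{n+1})=2H(B_{n+1})$ by translation-invariance. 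Summing over the $r$ elements of $S$ closes the estimate.

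The one point that deserves care is the iterated version of strong subadditivity used in the first step: it is standard, but one must check that the common-intersection hypothesis $(A_1\cup\cdots\cup A_{k-1})\cap A_k=B_n$ is preserved at every stage of the induction, which is where the tree structure of the Cayley graph (and hence the uniqueness of the first letter of a reduced word) enters. Everything else is then pure algebraic manipulation of the identities in~\eqref{eq:En-1} and~\eqref{eq:En-2}. Both inequalities allow the possibility that one or both sides equal $-\infty$, since log-determinant entropy takes values in $[-\infty,\infty)$ and all the manipulations above remain valid in that extended sense.
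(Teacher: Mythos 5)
Your proof is correct and follows essentially the same route as the paper's: both inequalities come down to strong subadditivity (Corollary~\ref{cor:conditioning-monotone}) plus translation invariance and the identity~\eqref{eq:cup-cap}, with the lower bound $E_{n+1}'\le E_n$ handled identically via the pairs $(B_{n+1},sB_{n+1})$. Your iterated strong subadditivity over the cover $\{B_n\cup sB_n\}_{s\in S\cup S^{-1}}$ of $B_{n+1}$ with common pairwise intersection $B_n$ is just an equivalent packaging of the paper's step, which conditions on $B_n$, applies subadditivity to the pieces $sB_n\setminus B_n$ of $S_{n+1}$, and then uses the chain rule.
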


% [[ HOLD ON, YOU'VE MISSED TALKING ABOUT THE CASES WHERE SOME OF THESE VALUES ARE $-\infty$??? ]]

\begin{proof}
\emph{Step 1.}\quad Writing $E_n$ as in~\eqref{eq:En-1}, we have
\begin{align*}
E_n - E_{n+1}' &= 2rH(B_{n+1}) -\sum_{S \in S}\big(H(B_{n+1}\cap sB_{n+1}) + H(B_{n+1}\cup sB_{n+1})\big)\\
&= \sum_{s \in S}\big(2H(B_{n+1}) - H(B_{n+1}\cap sB_{n+1}) - H(B_{n+1}\cup sB_{n+1})\big).
\end{align*}
By the strong subadditivity inequality~\eqref{eq:strong-subadd} and translation invariance, we have
\[H(B_{n+1}\cap sB_{n+1}) + H(B_{n+1}\cup sB_{n+1}) \le H(B_{n+1}) + H(sB_{n+1}) = 2H(B_{n+1}).\]
so $E_n - E_{n+1}'$ is a sum of non-negative terms.

\vspace{7pt}

\emph{Step 2.}\quad Writing $E_n$ as in~\eqref{eq:En-2}, we have
\begin{equation*}
E_n' - E_n = 2\sum_{s \in S}H(B_n \cup sB_n) - H(B_{n+1}) - (2r-1)\cdot H(B_n).
\end{equation*}
The sphere $S_{n+1}$ is the disjoint union of $sB_n\setminus B_n$ as $s$ ranges over $S\cup S^{-1}$.  Therefore an iterated appeal to the subadditivity from~\eqref{eq:strong-subadd} gives
\begin{equation}\label{eq:from-Sew}
H(S_{n+1}\mid B_n) \le \sum_{s \in S\cup S^{-1}}H(sB_n\setminus B_n\mid B_n).
\end{equation}
Adding $2rH(B_n)$ and using the chain rule (Proposition~\ref{prop:chain1}), this becomes
\[H(B_{n+1}) + (2r-1)\cdot H(B_n) \le \sum_{s\in S\cup S^{-1}}H(B_n \cup sB_n) = 2\sum_{s\in S}H(B_n \cup sB_n),\]
where the second equality holds by translation-invariance.  Re-arranging, this asserts that $E_n' - E_n$ is non-negative.
\end{proof}

\begin{rmk}
The inequalities above have some overlap with those in~\cite[Sec. 3]{Seward--freestab} (in the special case in which the sigma-subalgebra `$\Sigma$' of that paper is trivial).  That work largely inspired the proofs in this section.  To be specific,~\eqref{eq:from-Sew} is essentially the same as the first inequality in~\cite[Lem. 3.1]{Seward--freestab}, and its use to prove that $E_n \le E_n'$ is roughly the same as in the first paragraph of the proof of~\cite[Lem. 3.2]{Seward--freestab}.  However, beyond that our routes diverge.  The analog of the second inequality in~\cite[Lem. 3.2]{Seward--freestab} does not appear in the proof above.  In the notation above, the second paragraph of the proof of~\cite[Lem. 3.2]{Seward--freestab} essentially uses the second inequality from~\cite[Lem. 3.1]{Seward--freestab} to show that $E_{n+1}' \le (E_n + E_n')/2$, rather than showing $E_{n+1}' \le E_n$ as we have. \fin
\end{rmk}

\begin{prop}\label{prop:annealed-formula-2}
If $\phi:\G\to\rmM_k$ is positive definite, then $\hann(\phi)$ and $\ul{\rmh}^\ann(\phi)$ are both equal to the quantity in formula~\eqref{eq:LDP-formula-2} from Theorem~\ref{mainthm:annealed-formula}.
\end{prop}

\begin{proof} Lemma~\ref{lem:1-2-ineqs} shows that $E_n'$ must converge to the same limit as $E_n$, so the result follows from Proposition~\ref{prop:annealed-formula-1}.
\end{proof}

Putting Propositions~\ref{prop:annealed-formula-1} and~\ref{prop:annealed-formula-2}, we have completed the proof of Theorem~\ref{mainthm:annealed-formula}.

\begin{prob}
When $\phi$ is unital, Theorem~\ref{thm:pre-LDP} says that the value $E_n$ gives the rate function in the large deviations principle obeyed by the random element
\[(\Phi^\pi_V)[B_n] \in \S^\rm{u}_k(B_n).\]
The values $E_n'$ should have an analogous interpretation, but this time for the random $S$-tuples
\begin{equation}\label{eq:tuple-of-Phis}
\big((\Phi^\pi_V)[B_n\cup sB_n]:\ s \in S\big)
\end{equation}
taking values in the space
\[\Big\{(Q_s:\ s \in S) \in \prod_{s \in S}\S^\rm{u}_k(B_n\cup sB_n):\ (Q_s)[B_n]\ \hbox{is the same for all $s$}\Big\}.\]
If this is true, then both inequalities in Lemma~\ref{lem:1-2-ineqs} become instances of the contraction principle.

I believe one can prove this along the same lines as Theorem~\ref{thm:pre-LDP} without any really new ideas, but working with the the tuples in~\eqref{eq:tuple-of-Phis} is sure to be more complicated than working with $(\Phi^\pi_V)[B_n$ alone.
\end{prob}

Lemma~\ref{lem:1-2-ineqs} implies that the averages $(E_n+E_n')/2$ also converge to $\hann(\phi)$.  This is significant, because these sums actually take a form that is arguably simpler than either sequence individually. From~\eqref{eq:En-2}, we obtain
\begin{equation}\label{eq:pre-Sew}
(E_n + E_n') = (H(B_{n+1}) - (2r-1)\cdot H(B_n))/2.
\end{equation}
All the terms that involve shifting balls by individual generators have canceled out.  This sequence of sums is still non-increasing, since both $E_n$ and $E_n'$ have this property.  Starting from~\eqref{eq:pre-Sew}, we can now derive a non-negative series expansion for $\hann(\phi)$.  It is crucial during our proof of Theorem~\ref{mainthm:tempered} below.

Let $g_n = e$, $g_1$, $g_2$, \dots be any length-first ordering of $\G$ as described in Section~\ref{sec:group-geom}, and for each positive integer $N$ let
\begin{equation}\label{eq:past}
P(g_N):= \{g_0,g_1,\dots,g_{N-1}\}:
\end{equation}
that is, the set of predecessors of $g_N$ in this ordering.  Observe that $P(e) = \emptyset$ and that
\[P(g_N) = B_n \quad \hbox{when}\ N = |B_n|.\]

\begin{cor}\label{cor:Sew}
If $\phi:\G\to\rmM_k$ is positive definite, then $\hann(\phi)$ is equal to
	\begin{multline}\label{eq:Sew}
	\rmH_\phi(B_0) - \frac{1}{2}\sum_{n=0}^\infty \sum_{s_{n+1}\cdots s_1 \in S_{n+1}}\big(\rmH_\phi(s_n\cdots s_1\,|\,P(s_n\cdots s_1)) \\ \qquad \qquad \qquad \qquad \qquad \qquad - \rmH_\phi(s_{n+1}\cdots s_1\mid P(s_{n+1}\cdots s_1))\big).
	\end{multline}
	\end{cor}

This is the analog of a series expansion due to Seward~\cite[Thm. 1.7]{Seward--freestab}, one of the key technical innovations of that paper.  For this reason we call it the \textbf{Seward expansion} of $\hann(\phi)$ corresponding to the given ordering of $\G$.  (Seward works throughout with a length-lexicographic ordering, but he needs that more specialized assumption only at other points in his paper.)  The Seward expansion proves more convient then either~\eqref{eq:LDP-formula-1} or~\eqref{eq:LDP-formula-2} during our proof of Theorem~\ref{mainthm:tempered} later.

\begin{proof}
For any $n\ge 0$, iterating the chain rule from Proposition~\ref{prop:chain1} gives
\begin{equation*}
H(S_{n+1}\mid B_n) = \sum_{s_{n+1}\cdots s_1 \in S_{n+1}}H(s_{n+1}\cdots s_1\mid P(s_{n+1}\dots s_1)).
\end{equation*}
When $n=0$, we combine this with~\eqref{eq:pre-Sew} to obtain
\begin{align*}
E_0 + E_0' &= H(B_1) - (2r-1)\cdot H(B_0)\\
&= 2H(B_0) + H(S_1\mid B_0) -2r\cdot H(B_0)\\
&= 2H(B_0) - \sum_{s \in S_1}\big(H(B_0) - H(s\mid P(s))\big).
\end{align*}
Similarly, for $n\ge 1$ the increments of the sequence in~\eqref{eq:pre-Sew} satisfy
\begin{align*}
&(E_n + E'_n) - (E_{n-1} + E_{n-1}') \\ &= H(S_{n+1}\mid B_n) - (2r-1)\cdot H(S_n\mid B_{n-1})\\
&= \sum_{s_{n+1}\cdots s_1 \in S_{n+1}}H(s_{n+1}\cdots s_1\mid P(s_{n+1}\dots s_1)) \\
&\qquad \qquad \qquad \qquad - (2r-1)\cdot\sum_{s_n\cdots s_1 \in S_n}H(s_n\cdots s_1\mid P(s_n\dots s_1)).
\end{align*}
Since every reduced word in $S_n$ has $(2r-1)$ neighbours in $S_{n+1}$, the difference above is equal to
\[\sum_{s_{n+1}\dots s_1 \in S_{n+1}}\big(H(s_{n+1}\cdots s_1\,|\,P(s_{n+1}\cdots s_1)) - H(s_n\cdots s_1\mid P(s_n\cdots s_1))\big).\]
So~\eqref{eq:Sew} is the infinite series of increments of the sequence~\eqref{eq:pre-Sew}, and so this series converges to the same limit as that sequence.
\end{proof}

\begin{rmk}
Different length-first orderings of $\G$ give different Seward expansions for $\hann$.  I do not see a simple way to translate directly between one of the resulting Seward expansions and the other. \fin
\end{rmk}

\section{Application of the three-entropy formula}\label{sec:three-ent-again}

Let $\l$ be the left regular representation of $\G$, let $\tau$ be the regular tracial state on $C^\ast \G$, and let $\Delta$ be the Fuglede--Kadison determinant defined by $\tau$.

By Theorem~\ref{thm:asymp-free2}, our uniformly random AP sequence satisfies the fast convergence in probability in~\eqref{eq:traces-fast}.  We may therefore apply the results of Section~\ref{sec:three-entropy}, particularly the three-entropy formula from Theorem~\ref{thm:three-entropy}, which gives
\begin{equation}\label{eq:hann-three-entropy}
\hann(\phi) = \rmh^0(\phi) + \log \Delta \phi_{\rm{ac}},
\end{equation}
and its application to certain perturbed positive definite functions in Lemma~\ref{lem:mollify}.  Here is the main consequence:

\begin{cor}\label{cor:h0}
Uniformly random AP sequences satisfy $\ul{\rmh}^0 = \rmh^0$. \qed
\end{cor}

\begin{proof}
We know that $\hann = \ul{\rmh}^\ann$ from Proposition~\ref{prop:annealed-formula-1}, and now
Lemma~\ref{lem:mollify} gives
\[\ul{\rmh}^0(\phi) = \lim_{t\downarrow 0} \hann(I_k\otimes \tau + t\phi) = \rmh^0(\phi) \qquad (\phi \in \B(\A,\rmM_k)_+).\]
\end{proof}

We use Lemma~\ref{lem:mollify} in similar ways again in Section~\ref{sec:additivity} below.  We explore other aspects and consequences of~\eqref{eq:hann-three-entropy} for harmonic analysis on free groups in Subsection~\ref{subs:harm-an}.

\chapter{Additivity and temperedness}

This chapter proves two more fundamental properties of annealed and zeroth-order AP entropy.  The first is the additivity of $\hann$ and $\rmh^0$ for diagonal joinings of positive definite functions.  The second identifies those representations with vanishing $\rmh^0$ as those approximately contained in the regular represetation, leading to Theorem~\ref{mainthm:tempered}.

\section{Additivity}\label{sec:additivity}

\begin{prop}\label{prop:hann-additivity}
If $\phi \in \B(\G,\rmM_k)_+$ and $\psi \in \B(\G,\rmM_\ell)_+$ for some $k$ and $\ell$, then
\[\hann(\rm{diag}(\phi,\psi)) = \hann(\phi) + \hann(\psi).\]
\end{prop}

\begin{proof}
Consider the first of the formulas for $\hann(\phi)$ given by Theorem~\ref{mainthm:annealed-formula}: the limit as $n\to\infty$ of the expression
\begin{equation}\label{eq:hann-psi}
\log\det\,\phi[B_n] - \sum_{s \in S}\log\det\,\phi[B_n\cap sB_n].
\end{equation}
If we apply this to the diagonal joining $\rm{diag}(\phi,\psi)$, then every term is additive, and hence so is $\hann$ as a whole.
\end{proof}

Any of our other formulas for $\hann$ could be used in the proof of Proposition~\ref{prop:hann-additivity}(b) instead.  But it seems to be hard to prove this lemma directly from the interpretation of $\hann$ as the negative large deviations rate function in Theorem~\ref{mainthm:LDP}.

Proposition~\ref{prop:hann-additivity} can be seen as a version of Corollary~\ref{cor:AP-additive-concave}(a) for $\hann$.  We now obtain a version for $\rmh^0$ as well.

\begin{cor}\label{cor:h0-additivity}
If $\phi \in \B(\G,\rmM_k)_+$ and $\psi \in \B(\G,\rmM_\ell)_+$ for some $k$ and $\ell$, then
\[\rmh^0(\rm{diag}(\phi,\psi)) = \rmh^0(\phi) + \rmh^0(\psi).\]
\end{cor}

\begin{proof}
Observe that
\[\tau\otimes I_{k+\ell} + t\rm{diag}(\phi,\psi) = \rm{diag}(\tau\otimes I_k + t\phi,\tau\otimes I_\ell + t\psi)\]
for any $t \ge 0$.  Therefore Proposition~\ref{prop:hann-additivity} and Lemma~\ref{lem:mollify} give
\begin{align*}
\rmh^0(\rm{diag}(\phi,\psi)) &= \lim_{t\downarrow 0}\rmh^\ann(\tau\otimes I_{k+\ell} + t\rm{diag}(\phi,\psi))\\
&= \lim_{t\downarrow 0}\rmh^\ann(\tau\otimes I_k + t\phi) + \lim_{t\downarrow 0}\rmh^\ann(\tau\otimes I_\ell + t\psi)\\
&= \rmh^0(\phi) + \rmh^0(\psi).
\end{align*}
\end{proof}

%\begin{prob}
%[[ REVERSE IMPLICATION IN GENERAL SETTING ]] I suspect that, for a general random AP sequence, the additivity in~\eqref{eq:additive-hann} can hold for all $\phi$ and $\psi$ \emph{only} if $\tr_{d_n}\circ \pi_n$ converges to a limiting tracial state superexponentially in probability.  Indeed, it may be enough to know that~\eqref{eq:additive-hann} holds whenever $\phi$ and $\psi$ are both tracial states.
%\end{prob}

The conclusion of Corollary~\ref{cor:h0-additivity} can be written heuristically as
\begin{multline*}
\bbP\big(\pi_n \ \hbox{approx. contains}\ \rm{diag}(\phi,\psi)\big) \\ \approx \bbP(\pi_n \ \hbox{approx. contains}\ \phi)\cdot \bbP(\pi_n \ \hbox{approx. contains}\ \psi).
\end{multline*}
That is, to leading exponential order, the appearance of a tuple in $\pi_n$ that is approximately typical for $\phi$ neither helps nor hinders the chance of finding another \emph{orthogonal} tuple that is approximately typical for $\psi$.

Combining Corollary~\ref{cor:h0-additivity} with other properties of zeroth-order entropy, we find that it is actually additive in the following stronger sense.  Its proof bears some resemblance with the proof that Kolmogorov--Sinai entropy is additive for Cartesian products of measure-preserving systems~\cite[Theorem 4.23]{Walters--book}.

\begin{prop}\label{prop:additivity}
Let $(\rho_i:\ i\in I)$ be any family of separable representations and let $\rho$ be their direct sum.  Then
\[\rmh^0(\rho) = \inf\Big\{\sum_{i\in J}\rmh^0(\rho_i):\ J\subset I,\ J\ \hbox{finite}\Big\}.\]
\end{prop}

In the sequel, we usually write simply
\[\sum_{i \in I}\rmh^0(\rho_i)\]
for the infimum on the right-hand side in Proposition~\ref{prop:additivity}.

\begin{proof}
Let $H_i$ be the space of $\rho_i$ for each $i$, so the space of $\rho$ is
\[H := \bigoplus_{i \in I}H_i.\]
Let $S$ be the set of vectors in $H$ that are non-zero in at most one coordinate. This satisfies the hypotheses of Lemma~\ref{lem:h0-nearly-cyclic}, so
\[\rmh^0(\rho) = \inf\big\{\rmh^0(\Phi^\rho_X):\ X\ \hbox{is a tuple drawn from}\ S\big\}.\]

If $X$ is a tuple drawn from $S$, then it uses only finitely many of the direct summands in $H$.  Therefore, permuting the entries of $X$ if necessary, we may write it as the concatenation of some sub-tuples, say $X_{i_1}$, \dots, $X_{i_\ell}$, so that $X_{i_j}$ is a tuple of vectors nonzero in only the $i_j^{\rm{th}}$ coordinate.  Since these sub-tuples lie in orthogonal sub-representations of $\pi$, the assumed additivity of $\rmh^0_{\bspi}$ gives
\[\rmh^0(\Phi^\rho_X) = \sum_{r=1}^\ell \rmh^0(\Phi^\rho_{X_{i_r}}).\]
By taking the infimum over all possible choices of $i_1$, \dots, $i_r$ and then $X_{i_1}$, \dots, $X_{i_r}$, this completes the proof.
\end{proof}

\section{Temperedness}\label{sec:tempered}

%[[ HERE OR SOMEWHERE --- EXPLAIN HOW THE SPECIAL CASE OF THIS LOOKS AS A PROOF THAT GUE CONVERGES TO OPERATOR NORM 2 IN PROBABILITY? WRITE THAT AS A SURVEY -- CONNECT TO JACOBI POLYNOMIALS AS ANALOG OF THE LDP PROOF OF SZEGO AND ALL THAT!?  HOW COMPARE WITH KNOWN PROOFS OF THAT OPERATOR-NORM LIMIT IN SAY [ANDGUIZEI, Sec 2.6], SOME OF WHICH ALSO INVOLVE LDPS of /some/ kind? See especially Subsection 2.6.2 -- can you find another way to get the rate function they get there?   And what is up with their normalizing constant ???  The point to this would be (i) a slightly more `conceptual'?? proof of the LDP for the top eigenvalue, and (ii) motivation for what you do in APE for random reps of the free group..  /AAAND/ is there any relation between this and the `spherical integrals' stuff by Guionnet and co-authors?

%Of course, there's a whole separate story about how to do Wigner matrices with NON--Gaussian entries -- is this one of those places where Terry and Van Vu made a really big difference using Lindeberg[[ ref? ]] exchange?

%C.f. also Bai--Silverstein subsection 5.1.1 for another account of the graph-counting/Furedi--Komlos type proof.  Insane complexity!  But can you actually see that as a precursor to what Friedman and subsequent had to do with tangles for random regular graphs???  (Well, sort of, but of course these estimates should be simpler than Friedman's, because this Wigner-matrix stuff is still much more `mean-field'??  So maybe don't mention this anywhere... ]]

We prove Theorem~\ref{mainthm:tempered} following Proposition~\ref{prop:HS-close} and Theorem~\ref{thm:big-tempered} below.  These results are two of the main new discoveries in this book.  Let $\|\cdot\|_{\rm{HS}}$ be the Hilbert--Schmidt norm for operators on $H$ (see Section~\ref{sec:lin-alg}).

\begin{prop}\label{prop:HS-close}
Let $\phi :\G\to \bbC$ be nonzero and positive definite.  If $\hann(\phi) > -\infty$ and $\pi = \pi_\phi$, then there is another representation $\rho$ of $\G$ on $H_\pi$ such that $\rho \simeq \l$ and
\[\sum_{s \in S\cup S^{-1}}\|\pi(s) - \rho(s)\|_{\rm{HS}}^2 \le 4(\log \phi(e) - \hann(\phi)).\]
\end{prop}

\begin{proof}
First, letting $\phi_1 := \phi/\phi(e)$, we have $\pi_{\phi_1} \simeq \pi_\phi$, and the scalar-valued case of Proposition~\ref{prop:hann-properties}(b) gives $\hann(\phi_1) = \hann(\phi) - \log \phi(e)$.  We may therefore assume that $\phi$ itself is normalized. Suppose that $\phi$ is associated to $\pi$ by the cyclic unit vector $v$.

Now let $g_0$, $g_1$, \dots be a length-lexicographic ordering of $\G$, and for each $g \in \G$ let $P(g)$ be its set of predecessors in this ordering as in~\eqref{eq:past}.  These satisfy
\begin{equation}\label{eq:pasts-contained}
sP(g) \subset P(sg)
\end{equation}
whenever $s \in S\cup S^{-1}$, $g \in \G$, and $|sg| > |g|$.  For each $g \in \G$, let
\[M(g):= \rm{span}\{\pi(h)v:\ h \in P(g)\},\]
and let $Q_g$ be the orthogonal projection of $H_\pi$ onto $M(g)$.  From~\eqref{eq:pasts-contained} we obtain $\pi(s)M(g) \subset M(sg)$, and hence
\begin{equation}\label{eq:pasts-contained-2}
Q_{sg}^\perp = Q_{sg}^\perp\big(\pi(s)Q_g^\perp \pi(s^{-1})\big)
\end{equation}
whenever $|sg| > |g|$.

The finiteness of entropy implies that the vectors $\pi(g_n)v$ for $n=0,1,\dots$ are linearly independent, and together they span $H_\pi$.  We may therefore produce an orthonormal basis for $H_\pi$ by Gram--Schmidt orthonormalization:
\[w_g:= \frac{Q_g^\perp\pi(g)v}{\|Q_g^\perp\pi(g)v\|} \qquad (g \in \G).\]
Now define $\rho$ by setting $\rho(h)w_g := w_{hg}$ for all $g,h \in \G$ and extending by linearity.  Since the vectors $w_g$ are orthonormal, $\rho \simeq \l$.

Fix $s \in S\cup S^{-1}$, and let us use the basis $(w_g:\ g \in \G)$ to compare $\pi(s)$ and $\rho(s)$.  Let $g \in \G$, and suppose that $|sg| > |g|$.  From the definitions of $w_g$ and the relation~\eqref{eq:pasts-contained-2}, we have
\[Q_{sg}^\perp\pi(s)w_g = \frac{Q_{sg}^\perp\pi(s)Q_g^\perp\pi(g)v}{\|Q_g^\perp \pi(g)v\|} = \frac{Q_{sg}^\perp\pi(sg)v}{\|Q_g^\perp \pi(g)v\|}.\]
Comparing this with $\rho(s)w_g = w_{sg}$, which lies in the image of $Q_{sg}^\perp$, we obtain
\begin{multline*}
\langle \pi(s)w_g,\rho(s)w_g\rangle = \langle Q_{sg}^\perp\pi(s)w_g,\rho(s)w_g\rangle \\ = \frac{\langle Q_{sg}^\perp\pi(sg)v,Q_{sg}^\perp \pi(sg)v\rangle}{\|Q_g^\perp \pi(g)v\|\|Q_{sg}^\perp \pi(sg)v\|} = \frac{\|Q_{sg}^\perp \pi(sg)v\|}{\|Q_g^\perp \pi(g)v\|}.
\end{multline*}
Therefore
\begin{multline}\label{eq:HS-first}
\|(\pi(s) - \rho(s))w_g\|^2 = 2\Big(1 - \frac{\|Q_{sg}^\perp \pi(sg)v\|}{\|Q_g^\perp \pi(g)v\|}\Big) \\ \le 2 \log \frac{\|Q_g^\perp \pi(g)v\|}{\|Q_{sg}^\perp \pi(sg)v\|} = \rmH_\phi(g\,|\,P(g)) - \rmH_\phi(sg\mid P(sg)),
\end{multline}
where the inequality holds because $\log x^{-1} \ge 1-x$ whenever $0 < x \le 1$, and where the final expression comes from the calculation in Example~\ref{ex:1D-cond-ent}.

Now let us write
\[\sum_{s\in S\cup S^{-1}}\|\pi(s) - \rho(s)\|_{\rm{HS}}^2 = \sum_{g,s}\|(\pi(s) - \rho(s))w_g\|^2\]
and separate the terms of the right-hand side according as $|sg| > |g|$ or $|sg| < |g|$.
\begin{itemize}
\item By~\eqref{eq:HS-first}, the terms of the first kind satisfy
\[\sum_{g,s:\ |sg| > |g|}\|(\pi(s) - \rho(s))w_g\|^2 \le \sum_{g,s:\ |sg| > |g|} \big(\rmH_\phi(g\,|\,P(g)) - \rmH_\phi(sg\mid P(sg))\big).\]
Since $k=1$ and $\phi(e) = 1$, the right-hand side here is precisely the Seward expansion for $-2\hann(\phi)$ from Corollary~\ref{cor:Sew}.
\item On the other hand, if $|sg| < |g|$, then we can let $h:= sg$ and re-arrange like this:
\[\|(\pi(s) - \rho(s))w_g\|^2 = \|w_g - \pi(s^{-1})w_h\|^2 = \|(\rho(s^{-1}) - \pi(s^{-1}))w_h\|^2.\]
Summing over these pairs $(g,s)$, we obtain
\[\sum_{g,s:\ |sg| < |g|}\|(\pi(s) - \rho(s))w_g\|^2 = \sum_{h,s^{-1}:\ |s^{-1}h| > |h|}\|(\rho(s^{-1}) - \pi(s^{-1}))w_h\|^2,\]
so the terms of the second kind have the same sum as the terms of the first kind, and so this sum is also at most $-2\hann(\phi)$.
\end{itemize}
Adding these two upper bounds completes the proof.
\end{proof}

\begin{thm}\label{thm:big-tempered}
Let $\pi$ be a separable representation of $\G$.
\begin{enumerate}
\item[a.] If $\pi$ is cyclic and disjoint from $\l$, and if $\rmh^0(\pi) > -\infty$, then for every $\eps > 0$ there is a representation $\rho$ on $H_\pi\oplus H_\l$ such that $\rho \simeq \l$ and
\begin{equation}\label{eq:cyclic-HS-close}
\sum_{s \in S\cup S^{-1}}\|(\pi\oplus \l)(s) - \rho(s)\|^2_{\rm{HS}} \le 4(\eps - \rmh^0(\pi)).
\end{equation}
\item[b.] If $\rmh^0(\pi) > -\infty$, then $\pi$ is contained in a Hilbert--Schmidt perturbation of an inflation of $\l$.
\item[c.] If $\rmh^0(\pi) = 0$, then $\pi \lesssim_{\rm{a}} \l$.
\end{enumerate}
\end{thm}

\begin{proof}
\emph{Part (a).}\quad Suppose that $\pi$ is the GNS representation of the state $\phi$, and let $\psi$ be $\tau + \eps\phi$.  Since $\pi\spoon \l$, Corollary~\ref{cor:dom-contained} gives $\pi_\psi \simeq \pi \oplus \l$.

From Lemmas~\ref{lem:mollify} and then~\ref{lem:h0-sing-only}(b), we obtain $\hann(\psi) \ge \rmh^0(\pi_\psi) = \rmh^0(\pi)$. On the other hand, $\log \psi(e) = \log (1+\eps) \le \eps$. Now~\eqref{eq:cyclic-HS-close} follows by applying Proposition~\ref{prop:HS-close} to the state $\psi$.

\vspace{7pt}

\emph{Part (b).}\quad By Proposition~\ref{prop:Leb-reps} and Lemma~\ref{lem:h0-sing-only}(b), we may first reduce to the case when $\pi$ is nonzero and disjoint from $\l$.  Then, decomposing further, we can write $\pi$ as $\bigoplus_{i\in I} \pi_i$, where $I \subset \bbN$ and each $\pi_i$ is cyclic and disjoint from $\l$.

Since $\rmh^0(\pi)$ is finite, so is $\rmh^0(\pi_i)$ for every $i$.  Therefore, for each $i$, part (a) gives a representation $\rho_i$ on $H_{\pi_i}\oplus H_\l$ such that $\rho_i \simeq \l$ and
\[\sum_{s\in S\cup S^{-1}} \|(\pi_i \oplus \l)(s) - \rho_i(s)\|_{\rm{HS}}^2 \le 4(2^{-i}\eps - \rmh^0(\pi_i)).\]
Summing over $i$, Proposition~\ref{prop:additivity} turns this into
\begin{equation}\label{eq:4eps-minus}
\sum_{s \in S\cup S^{-1}}\Big\|\bigoplus_{i \in I} (\pi_i\oplus \l)(s) - \bigoplus_{i\in I} \rho_i(s)\Big\|_{\rm{HS}}^2 \le 4\sum_{i \in I} 2^{-i}\eps - 4\sum_{i \in I}\rmh^0(\pi_i) \le 4\eps - 4\rmh^0(\pi).
\end{equation}

So $\bigoplus_i((\pi_i\oplus \l)(s) - \rho_i(s))$ is of Hilbert--Schmidt class for every $s \in S$. This fact extends to any other group element because the Hilbert--Schmidt operators form a $\ast$-closed ideal.  So $\pi$ is contained in $\bigoplus_{i \in I} (\pi_i\oplus \l)$, and this is a Hilbert--Schmidt perturbation of $\bigoplus_{i\in I} \rho_i \simeq \l^{\oplus I}$.

\vspace{7pt}

\emph{Part (c).}\quad As in part (b), we may first assume that $\pi$ is nonzero and disjoint from $\l$, and then decompose $\pi$ into cyclic summands $\pi_i$. If $\rmh^0(\pi) = 0$, then the right-hand side of~\eqref{eq:4eps-minus} is just $4\eps$, which we can make arbitrarily small.  So in this case $\bigoplus_{i \in I} (\pi_i\oplus \l)$ is an arbitrarily small Hilbert--Schmidt perturbation of $\l^{\oplus I}$.  Since the Hilbert--Schmidt norm dominates the operator norm (see inequality~\eqref{eq:op-HS-ineq}), this verifies condition (iv) in Theorem~\ref{thm:Voi-main} to show that $\pi\oplus \l^{\oplus I} \lesssim_{\rm{a}} \l^{\oplus I}$.  Finally, $\l^{\oplus I} \simeq_{\rm{a}} \l$ by Theorem~\ref{thm:get-sum} and the fact that $\l$ contains no nonzero compact operators (Lemma~\ref{lem:mix}).
\end{proof}

\begin{proof}[Proof of Theorem~\ref{mainthm:tempered}]
When $\phi$ is tempered, the first part of Theorem~\ref{mainthm:tempered} is given by Lemma~\ref{lem:h0-sing-only}(a), recalling that the hypothesis~\eqref{eq:traces-fast} is provided by Theorem~\ref{thm:asymp-free2} for a uniformly random AP sequence. On the other hand, if $\phi$ is not tempered, then $\pi_\phi$ is not approximately contained in $\pi$, and so $\rmh^0(\phi) = \rmh^0(\pi_\phi) < 0$ by Theorem~\ref{thm:big-tempered}(c).  Now the definition of $\rmh^0(\phi)$ provides a suitable neighbourhood of $\phi$.
\end{proof}

\section{Further consequences and discussion}\label{sec:three-entropy-cors}

\subsection{Haagerup functions and other examples}

It would be interesting to explore further how $\hann$ and $\rmh^0$ behave for concrete examples of positive definite functions on $\G$, such as the Haagerup functions from Section~\ref{sec:Haa}.

If $\phi$ is Haagerup, then all but finitely many of its Verblunsky coefficients vanish, by Corollary~\ref{cor:Haa-Verb}.  In addition, if $\phi$ satisfies $|\phi(s)| < 1$ for every $s \in S$, then $\phi$ is non-singular and the first few Verblunsky coefficients all lie in the interior of the unit disk.  Therefore the formula for $\hann(\phi)$ from Corollary~\ref{cor:first-expansion} has only finitely many terms, and they are all finite.  This implies that $\rmh^0(\phi)$ is also finite by the three-entropy formula.

On the other hand, using the criteria from~\cite{Haa79} and~\cite{DeMFigTal80}, $\phi$ is tempered only if the values $|\phi(s)|$ are sufficiently small.  In the remaining cases, $\phi$ is Haagerup, nonsingular, but not tempered, and has $\rmh^0(\phi) < 0$ by Theorem~\ref{thm:big-tempered}.  The quantity $\rmh^0(\phi)$ should exhibit a phase transition in the finite-dimensional space of all Haagerup functions at the boundary of the tempered ones. % IS THAT ENOUGH ABOUT WHEN $\phi$ is tempered????

\begin{prob}
Provide a more precise formula or good estimates on $\rmh^0(\phi)$ when $\phi$ is a Haagerup.
\end{prob}

In~\cite{Seward--freestab}, Seward remarks on how the vanishing of all but finitely many terms in his expansion characterizes the special classes of tree-indexed Markov process or their generalization to larger steps (see~\cite{Bowen10b} or~\cite{Sew14}).  One would expect the analogous fact to hold for positive definite functions, perhaps with a closely related proof, complementing Corollary~\ref{cor:Haa-Verb}.

\begin{prob}
Are Haagerup functions characterized by the vanishing of sufficiently many terms in the Seward expansion?
\end{prob}

Away from Haagerup functions, other natural examples are the indicators $1_H$ when $H$ is a nontrivial subgroup of $\G$. I guess that $\rmh^0(1_H) = -\infty$, but I have not proved this.  Generalizing that example, a third possibility could be positive definite functions given by invariant random subgroups as in~\cite[Sec. 15.F]{BekdelaHarUDC}.

The following problem could also be worth exploring.

\begin{prob}
Prove that any finite-dimensional representation $\pi$ has ${\rmh^0(\pi) = -\infty}$.
\end{prob}

Many other distinguished families of free-group representations have also been studied, and it could be interesting to study our notions of entropy in those as well.  For example,~\cite[Chap. 5]{FigTalPicHAFG} introduces two other one-parameter families of irreducible free-group representations.  Such examples are often constructed as `boundary representations', and more of them are studied in~\cite{KuhSte92}, which also includes several further references concerning this family.

% REFERENCE NOT USED: Avni--Breuer--Kalai--Simon, `PERIODIC BOUNDARY CONDITIONS FOR PERIODIC JACOBI MATRICES ON TREES', math arxiv:2011.05770, and the related `tree' references they give there --- NOT OBVIOUSLY /CLOSELY/ RELATED, BUT COULD STILL BE WORTH THINKING ABOUT?

For scalar-valued positive definite functions on $\G$, other characterizations of temperedness are due to Haagerup~\cite{Haa79}, Cowling, Haagerup and Howe~\cite{CowHaaHow88}, and also Hebisch, Kuhn and Steger~\cite{Kuh94,HebKuhSte22}.  A previous version of the present paper gave a proof of Theorem~\ref{mainthm:tempered} based on Haagerup's criterion from~\cite{Haa79}, but the proof given above via Proposition~\ref{prop:HS-close} is considerably shorter.  Nevertheless, it could be interesting to look for more relationships between $\hann$ and other conditions for temperedness.

One difficulty which the proofs of Proposition~\ref{prop:HS-close} and Theorem~\ref{thm:big-tempered} must overcome is this: even if $\phi$ is a simple example such as a Haagerup function, the Verblunsky coefficients of the convex combinations $\phi_t$ that appear in Lemma~\ref{lem:mollify} are hard to compute or estimate.

Besides convex combinations, one can combine positive definite maps or representations in various other standard ways, and I largely do not know how $\hann$ or $\rmh^0$ interact with these.

\begin{prob}
Given an inclusion of discrete groups, one can use induction to extend representations of the smaller to the larger, and this construction can be carried out at the level of positive definite functions~\cite[Chap. 6]{FolAHA}.  If they are both free groups, how do $\hann$ and $\rmh^0$ behave under this construction?  The answer could be related to Seward's work~\cite{Sew14} in the ergodic theory setting.
\end{prob}

\begin{prob}
If $\phi$ and $\psi$ are positive definite, then so is $\phi\cdot \psi$: it is associated to the tensor product representation $\pi_\phi\otimes \pi_\psi$ of $\G$~\cite[Prop. 13.4.9]{Dix--Cstar}.  Is there any simple relation between $\hann(\phi\cdot \psi)$ or $\rmh^0(\phi\cdot \psi)$ and the two separate ingredients?
\end{prob}

\subsection{Harmonic analysis of positive definite functions on free groups}\label{subs:harm-an}

Let $\phi \in \S^\rm{u}_k(\G)$, let $g_0$, $g_1$, \dots be a grounded enumeration of $\G$, and let $C_0$, $C_1$, \dots be the resulting Verblunsky coefficients for $\phi$.  Consider again the special case of the three-entropy formula in~\eqref{eq:hann-three-entropy}.  Combined with Corollary~\ref{cor:first-expansion}, it gives
\begin{equation}\label{eq:free-Verb}
\sum_{n=0}^\infty\rmH(I_k - C_n^\ast C_n) = \rmh^0(\phi) + \log\Delta \phi_{\rm{ac}}.
\end{equation}
In addition, if $\phi$ is tempered, then $\rmh^0(\phi) = 0$ by Lemma~\ref{lem:h0-sing-only}(a), leaving only the log-determinant on the right-hand side of~\eqref{eq:free-Verb}.

% It should have a more `direct' proof, right, which somehow uses the non-exponential growth of $V(r)$ as an a priori bound to show that some expansion converges as it should, or something like that?  Could this argument be some kind of free-group version of a Tauberian theorem?? (See Rudin--FA, or Katznelson--HA p73 where there's Hardy's Tauberian theorem which is something about summability of Fourier series, which could indeed be relevant to us by analogy (and there's a lot more along these lines in Zygmund, I think...)?  Could there be anything in Muscalu--Schlag? Also check the literature for previous `free-group Tauberian theorems'...)  [Actually, if anything that you're dealing with is also related to zeta functions as equalling FK determinants (see Terras ``Zeta functions of graphs'', or somewhere?), could this be where there's actually a closer link to classical Tauberian theorems in fact?]

If $\G = \bbZ$, then all positive definite functions are tempered, so $\rmh^0$ is identically zero.  The remaining terms in~\eqref{eq:free-Verb} are precisely Verblunsky's form of Szeg\H{o}'s limit theorem for determinants when $k=1$ (see~\cite[Thm. 2.3.1]{SimOPUCI}), or its matrix extension when $k > 1$ (see~\cite[Thm. 2.13.5]{SimOPUCI}).  These classical theorems have many different proofs.  Among them, the proof via large deviations is a relatively recent discovery of Gamboa, Nagel and Rouault~\cite{GamNagRou16}. See also the survey~\cite{BreSimZei18}, which gives references to subsequent refinements and variants on this work by those same three authors.

Our work here generalizes these results, and also that large-deviations proof, to finitely generated free groups.  However, when $\G$ has at least two generators, this group is not amenable.  In this case some positive definite functions $\phi$ are not tempered, and for these Theorem~\ref{thm:big-tempered} tells us $\rmh^0(\phi) < 0$.  So the free-group generalization of Verblunsky's form of Szeg\H{o}'s theorem has an additional term which `feels' how far $\phi$ is from being tempered.  Theorem~\ref{thm:big-tempered}(a) makes this intuition more precise.

Now consider the following special case.  Let $a \in C^\ast \G$, let $\l$ be the regular representation with its usual tracial vector $\xi$, and let $\phi := \Phi^\pi_{\l(a)\xi}$.  Then $k=1$ and $\phi$ is tempered by construction.  Since $k=1$, the resulting Verblunsky coefficients are single vectors $c_n$, and~\eqref{eq:free-Verb} turns into
\begin{equation}\label{eq:free-Verb2}
\log\Delta |a| = \sum_{n=0}^\infty \log(1 - \|c_n\|^2).
\end{equation}
This seems to be a new formula for the Fuglede--Kadison determinant of an element of the reduced C*-algebra $C_{\rm{r}}^\ast \G$.  Its utility is unclear, since estimating the lengths $\|c_n\|$ may be difficult for interesting choices of $a$.  However, other methods of evaluating Fuglede--Kadison determinants of elements of $C^\ast_{\rm{r}}\G$ are also non-trivial: see~\cite{BenAri22} or~\cite{MaiSpe24}, for example.  It could be interesting to look for any direct connection between the expansion in~\eqref{eq:free-Verb2} and those other calculations, or for a more direct derivation of~\eqref{eq:free-Verb2} that does not go through $\hann$ and $\rmh^0$.

%--- perhaps there's a natural way to interpret what they're accomplishing in this reference as an AP entropy calculation (involving algebras generated by semicirculars and then tensor products, or something like that?)?  [[ \textbf{NO, WAIT!  I THINK MAYBE YOU SHOULD MOVE THIS EXAMPLE/DISCUSSION TO A WHOLE SUBSECTION* OF THE NEXT SECTION, WHERE YOU ASK ABOUT FINDING AN ANNEALED AP ENTROPY FORMULA FOR RANDOM TUPLES OF GUE (SAY) MATRICES AS `UNIFORM RANDOM REPRESENTATONS' OF THE C*-ALGEBRA GENERATED BY FREELY INDEPEDNENT SEMI-CIRCULARS!  AND THEN TAKE TENSOR PRODUCTS OF THAT WITH A FIXED $\rmM_k$!!!} ]]

In the proof of Proposition~\ref{prop:HS-close}, our use of the basis $(w_g:\ g \in \G)$ has a classical precedent in the theory of orthogonal polynomials on the unit circle, which correspond to the case $\G = \bbZ$ in our work.  This is the `CMV' matrix representation of a single unitary operator due to Cantero, Moral and Vel\'azquez~\cite{CanMorVel03}; see also~\cite[Sec. 4.2]{SimOPUCI}.  The CMV representation enables proofs of several relationships between the Verblunsky coefficients of a positive definite function and properties of the associated unitary operator, including estimates on that operator as compact perturbation of the shift on $\ell^2(\bbZ)$.  Much of this work can be found in~\cite[Sec. 4.3]{SimOPUCI}, together with original references.  However, those proofs for $\G = \bbZ$ depend heavily on the special band structure of the CMV representing matrix, and this is replaced by a much more complicated picture when we use the basis $(w_g:\ g \in \G)$ to study a non-amenable free group.  Our results are consequently less complete.  The next problem asks about a strengthening of Proposition~\ref{prop:HS-close} that would be a step closer to~\cite[Thm. 4.3.5]{SimOPUCI}.

\begin{prob}
Fix $\phi \in \S^{\rm{u}}_1(\G)$ and a grounded enumeration of $\G$, and let $(c_n)_{n \ge 1}$ be the resulting sequence of Verblunsky coefficients, so these are vectors in this case.  If $\|c_n\| \to 0$, is $\pi_\phi$ contained in a compact perturbation of the regular representation?
\end{prob}

On $\bbZ$, a positive definite function $\phi$ is the Fourier--Stieltjes transform of a measure $\mu$ on $\bbT$.  Many other properties of $\mu$ can be characterized in terms of its Verblunsky coefficients, and several chapters of Simon's books~\cite{SimOPUCI,SimOPUCII} are dedicated to results of this kind.  It would be interesting to look for other such results that generalize to a finitely generated but non-amenable free group $\G$.

%IS THIS COVERED NOW????? [[ IN THIS CONNECTION, NOTE THAT, EVEN THOUGH YOU PROVE THEOREM C AND SUCH THROUGH THE SEWARD EXPANSION, YOU OF COURSE DO STILL HAVE A FORMULA FOR HANN IN TERMS OF VERBLUNSKY'S, SO IN THAT SENSE THOSE COEFFICIENTS ARE STILL CONTROLLING WHAT'S GOING ON... ]]  NOTE ALSO THAT THE ACTUAL SEQUENCE OF VERBLUNSKY'S DEPENDS ON THE ENUMERATION OF THE GROUP, AND TRANSLATING BETWEEN ONE SEQUENCE AND ANOTHER DOES NOT SEEM SIMPLE AT ALL!!!!

We next sketch a few natural possibilities in this direction.  Fix again a positive definite function $\phi$ on $\G$ and a grounded enumeration of $\G$, and let $(c_n)_{n \ge 1}$ be the resulting Verblunsky coefficients of $\phi$, regarded as vectors since $k=1$.

\begin{itemize}
\item If $\G = \bbZ$ and $\phi = \hat{\mu}$, a theorem of Baxter says that the Verblunsky coefficients of $\mu$ are summable if and only if: (i) $\phi$ is summable, which forces the form $d\mu = f\cdot dm$ for some $f \in C(\bbT)$, and also (ii) $f$ is invertible in $C(\bbT)$. See~\cite[p.4 and Chap. 5]{SimOPUCI}. The simple analog of this result is clearly false over a non-amenable free group.  Indeed, the Haagerup functions from the previous subsection include examples with all Verblunsky coefficients finite and only finitely many of them non-zero, but with $\phi$ not even tempered.  I do not know whether a different way of looking at Baxter's theorem admits a meaningful generalization to free groups.

\item If $\G = \bbZ$ and $\phi = \hat{\mu}$, a theorem of Rakhmanov says that, if $d\mu_{\rm{ac}}/dm$ is positive $m$-almost everywhere, then the Verblunsky coefficients of $\phi$ tend to $0$: see~\cite[p.5]{SimOPUCI} and~\cite[Chap. 9]{SimOPUCII}.  This hypothesis on $\mu$ is equivalent to $\pi_\phi \gtrsim \l$, and in this form it makes sense over any free group.  Does it imply that $\|c_n\| \to 0$?

\item Another enhancement of Szeg\H{o}'s formula is the `higher-order Szeg\H{o} theorem' of~\cite[Sec. 2.8]{SimOPUCI}; see also~\cite{Rou21} for a matrix-valued version.  Here a suitable question over general free groups is less clear to me.

%\item {}[Anything else worth mentioning from SimOPUCI, SimOPUCII, or Bingham? Check one more time and then CUT THIS OFF! ]-- [[ ANOTHER GOOD REF TO CHECK AGAINST HERE:~\cite{Bin12}, WHICH YOU HAVE PRINTED! ]]  HOW ABOUT the implications from Verblunsky coefficients to essential spectra of measures near the end of~\cite[Sec. 3.4]{SimOPUCI}???

\item By Verblunsky's theorem, one can obtain diverse examples of positive definite functions on $\bbZ$ by choosing their Verblunsky coefficients from some natural distribution; some results like this are covered in~\cite[Chap. 12]{SimOPUCII}. Can one obtain interesting examples over other free groups in the same way, and what are their properties?
\end{itemize}

%-- Also to discuss, here or somewhere: what about `higher-order' Szeg\H{o} theorems, as proved by Simon in his books or as references in this paper:
%\url{https://www.sciencedirect.com/science/article/pii/S0021904521000411}?
%Now there's a ``$\cos \theta$'' in the Szeg\H{o} integral -- WHAT DOES IT MEAN? In FK-terms, you're twisting the relevant trace now, but WHY, WHAT IS THIS DETECTING OR MEASURING???? Looking at the discussion in that paper above, it looks like maybe you're converting to compare with a DIFFERENT TRACE (once with that cos in the formula -- bear in mind that $1 - \cos$ is still non-negative!), and getting a DIFFERENT FK determinant as a result.  Is there some version of this for AP entropy or another notion of entropy which uses `non-asymptotically-regular' AP sequences, but some others that have some other kind of `limit trace' built into them?  If so, WHAT LIMIT TRACE is it, and how does one naturally generate microstates for it at random!?!?!?!?

% The terms in~\eqref{eq:Sew2} may be written as log-determinant mutual information values [[ EXPLAIN]], and so they have associated contraction matrices.  These offer an alternative to the Verblunsky coefficients of $\phi$ we have used above. [[ ARE YOU SURE? ]]  ??? REFER TO THESE AS `MODIFIED VERBLUNSKY COEFFS', IF MENTION THEM AT ALL?

In a different direction, it could also be worth considering alternatives to the Verblunsky coefficients that we define and use in this book.  For instance, in~\cite{BakTim07}, Bakonyi and Timotin provided a different way of extending a partial positive definite function on $\G$.  They also proceed along a sequence of larger and larger finite subsets of $\G$, but of a different kind.  Their work also introduces its own analog of `Verblunsky coefficients' over free groups.

\begin{prob}
What is the joint distribution of the coefficients of Bakonyi and Timotin for a uniform random representation?  Can those coefficients be used to give another expression for $\hann$?
\end{prob}

\subsection{The structure of finite-entropy representations}

If $\pi \lesssim_{\rm{a}} \l$, then we can decompose $\pi$ into cyclic representations and apply the first part of Theorem~\ref{mainthm:tempered} to each summand in order to show that the implication in Theorem~\ref{thm:big-tempered}(c) can be reversed.

\begin{prob}
Does the reverse implication hold in Theorem~\ref{thm:big-tempered}(b)?
\end{prob}

If we combine the reverse of Theorem~\ref{thm:big-tempered}(c) with the forward direction of Theorem~\ref{thm:big-tempered}(b), we see that, if $\pi \lesssim_{\rm{a}} \l$, then $\pi$ is contained in representation $\pi'$ such that $\pi'(g) - \l^{\oplus \infty}(g)$ is Hilbert--Schmidt for every $g \in \G$. This implies that $\pi'(a) - \l^{\oplus \infty}(a)$ is compact (although not necessarily Hilbert--Schmidt) for every $a \in C^\ast \G$, and in this way it recovers another aspect of Voiculescu's result in~\cite[Thm 1.5]{Voi76}.  In fact, the Weyl--von Neumann theorem for individual self-adjoint operators already gives perturbations of Hilbert--Schmidt class~\cite{vonNeu35}.  Very possibly an analogous conclusion about the Hilbert--Schmidt class can be drawn for representations from the original proof of Voiculescu's theorem, but I have not checked this.

If $\phi$ is associated to $\l$ and $\Delta \phi > 0$ (so $\hann(\phi) > -\infty$), then applying Proposition~\ref{prop:HS-close} gives a new copy $\rho$ of the regular representation on the Hilbert space $H_\l$, but I do not see a simple direct relationship between $\rho$ and $\l$.

Consider again a separable representation $\pi$ with $\rmh^0(\pi) > -\infty$.  By Theorem~\ref{thm:big-tempered}(b), we may enlarge it to be a Hilbert--Schmidt (and hence compact) perturbation of $\l^{\oplus \infty}$.  We can now examine $\pi$ further using more ideas from the proof of Voiculescu's theorems from Section~\ref{sec:approx-equiv}.  Let $\I := \pi^{-1}[\K(H_\pi)]$, and let $M := \rm{span}(\pi(\I)H_\pi)$.  Then $\pi$ decomposes into $\pi^M$ and $\pi^{M^\perp}$.  A direct application of Theorem~\ref{thm:get-sum} gives $\pi^{M^\perp} \lesssim_{\rm{a}} \l$.  On the other hand, as in the proof of~\cite[Cor. 1.6]{Voi76} or~\cite[Prop. 2.10]{Had81}, $\pi^M$ decomposes as a countable direct sum of finite inflations of irreducible representations, say
\[\pi^M = \bigoplus_i \kappa_i^{\oplus m_i},\]
with the property that $\kappa_i|\I$ is a non-degenerate representation by compact operators for every $i$.  Now another appeal to  Lemma~\ref{lem:h0-sing-only}(a) and Proposition~\ref{prop:additivity} gives
\begin{equation}\label{eq:form-for-h0}
\rmh^0(\pi) = \rmh^0(\pi^M) = \sum_i m_i\cdot \rmh^0(\kappa_i).
\end{equation}
If $\pi(\A)$ does not contain any nonzero compact operators, then $\rmh^0(\pi)$ must be either $0$ or $-\infty$ according as $\pi \lesssim_{\rm{a}} \l$ or not.  

The analysis above applies to any Haagerup function $\phi$ provided $|\phi(s)| < 1$ for every $s$.  In fact, for a Haagerup function, all but finitely many Verblunsky coefficient vanish by Corollary~\ref{cor:Haa-Verb}, and now the proof of Proposition~\ref{prop:HS-close} actually shows that $\pi_\phi$ is a finite-rank perturbation of $\l$.  However, $r > 1$, this Haagerup function need not be tempered.  Nevertheless, the decomposition above suggests that the GNS representation of such a Haagerup function could have a fairly tracatable structure. % Therefore the GNS representation of $\phi$ has finite zeroth-order AP entropy as well, and therefore any Haagerup function $\phi$ satisfying that inequality gives a C$\sp*$-algebra $C^\ast(\pi_\phi)$ that is MF.

\begin{prob}
Let $\phi$ be Haagerup function such that ${\max_s |\phi(s)| < 1}$.  Whare are the irreducible summands when the decomposition above is applied to $\pi_\phi$?
\end{prob}

As far as I know, the decomposition in~\eqref{eq:form-for-h0} leaves open the following rather extreme possibility.

\begin{prob}\label{prob:single-irred}
Is there a single irreducible representation $\kappa$ of $\G$ such that $\kappa \gtrsim_{\rm{a}} \l$ and $\rmh^0(\kappa) > -\infty$?
\end{prob}

If one found such a $\kappa$, then it would simply give a single summand on the right-hand side of~\eqref{eq:form-for-h0}.  %Towards Problem~\ref{prob:single-irred}, one could start by looking for ways to construct compact perturbations of $\l$ that are irreducible.

\subsection{Graded Fredholm modules and generated C*-algebras}

In addition to the use of CMV representations in~\cite[Chap. 4]{SimOPUCI}, the proof of Proposition~\ref{prop:HS-close} is also partly inspired by Cuntz' and Connes' simplified proof of the Pimsner--Voiculescu result that $C^\ast_{\rm{r}}\G$ has no non-trivial projections.  This proof begins by constructing a certain graded Fredholm module for $C^\ast \G$ which intertwines $\l$ and $\l^{\oplus 2}$ up to finite-rank perturbations.  See, for instance,~\cite[Sec. IV.5]{ConnesNG},~\cite[Exer. 8.8.13]{HigRoeAKH}, or~\cite{CohenFigTal88}.

Consider again Theorem~\ref{thm:big-tempered}(c), and for simplicity assume that $\pi$ is cyclic, say equal to $\pi_\phi$ for a normalized state $\phi$.  Let $\pi' := \pi \oplus \l$. Then the proof of that theorem gives a unitary $U:H_{\pi'} \to H_\l$ such that the difference $\pi'(g) - U^\ast \l(g) U$ is of Hilbert--Schmidt class for every $g \in \G$.  It follows that the pair
\[\Big(\pi'(g)\oplus \l(g),\Big[\begin{array}{cc}0 & U^\ast \\ U & 0\end{array}\Big]\Big)\]
is a graded Fredholm module over $C^\ast \G$ (see~\cite[Sec. 8.1]{HigRoeAKH} or~\cite[Part IV]{ConnesNG}, for example).

It would be interesting to know whether zeroth-order entropy has any interesting interaction with this structure.  As a consequence of Theorem~\ref{thm:big-tempered}(c), we can put together a short exact sequence of C*-algebras
\[0 \to \pi(C^\ast\G)\cap \K(H_\pi) \to \pi(C^\ast \G) \to C^\ast_{\rm{r}}\G \to 0.\]
Can the graded Fredholm module above shed any additional light on the middle C*-algebra here?

As a result of Theorem~\ref{mainthm:sq-LDP} in the next chapter, one can show that if $\rmh^0(\pi) > -\infty$ then $\pi(C^\ast \G)$ has the `matricial field' property Blackadar and Kirchberg in~\cite{BlaKir97}.  We discuss this in Section~\ref{sec:sq-LDP}.

\subsection{Quenched AP entropy}\label{subs:other-variants}

%[[ HOWEVER! ]] Taking $p$ a positive integer, you get something interesting about moments, optimization etc??  One possible optimal joining [[ EXPLAIN! ]] is always relative product over $\pi_{\phi_{\rm{sing}}}$???

%[[ So there seems to be no RSB here, at least not as it shows up in a failure of the first moment method?

%Is there an analog of TOPOLOGICAL sofic entropy that does exhibit a more interesting version of RSB that you can compare more interestingly to the ansatz for combinatorial models? ]]

In Remark~\ref{rmk:other-orders} we consider the possibility of random AP entropy of `other orders'.  Another alternative could be `quenched' rather than `annealed' AP entropy.  This should capture the high-probability behaviour of the random volumes $\vol_{2kn}\X(\pi_n,O)$, which may not be reflected accurately by the annealed average because the expectation in~\eqref{eq:AnnAPent} may be dominated by large values on a low-probability event.  Similar phenomena appear often in the study of statistical physics models of disordered materials~\cite{MezParVir--book}.  A quenched average would be accomplished by setting
\[\rmh^{\rm{quen}}(\phi) := \inf_O \limsup_{n\to\infty}\frac{1}{n}\bbE \log\frac{\vol_{2kn}\X(\pi_n,O)}{v(n)^k}.\]
That is, we move the expectation outside the logarithm compared to~\eqref{eq:AnnAPent}.  This suppresses the tail of the distribution of the random volume $\vol_{2kn}\X(\pi_n,O)$, and so prevents the expectation from being dominated by rare events.

However, since Theorem~\ref{thm:big-tempered} gives $\rmh^0(\phi) < 0$ if $\phi$ is not tempered, we find that another variation on the proof of Theorem~\ref{thm:three-entropy} gives
\[\rmh^{\rm{quen}}(\phi) = \left\{\begin{array}{ll} \log \Delta \phi_{\rm{ac}} &\quad \hbox{if}\ \phi\ \hbox{is tempered} \\ -\infty &\quad \hbox{otherwise}.\end{array}\right.\]

So for a uniformly random AP sequence we do not obtain a really new notion of entropy from the quenched definition.  However, the reasoning above depend on a key feature of uniformly random AP sequences: if $\rmh^0(\phi) = 0$, then actually
\[\bbP(\X(\pi_n,O) \ne \emptyset) \to 1 \qquad \hbox{as}\  n\to\infty\]
for any neighrbourhood $O$ of $\phi$.  This can fail for some other natural examples, including uniformly random \emph{permutation} representations of free groups: see Section~\ref{sec:rndm-perms} below.  In some of those examples the behaviour of $\rmh^{\rm{quen}}_{\bspi}$ could be more interesting.

\subsection{Requirements for the implications in this chapter}

The additivity of $\hann$ (Proposition~\ref{prop:hann-additivity}) is an the essential ingredient throughout this chapter, and its consequences are equally essential in the next chapter.  This additivity follows fairly quickly from the formulas in Section~\ref{sec:ann-AP-forms}, but it does seem to be a special property of uniformly random AP sequences.  I do not see a clear method for proving it in greater generality.  On the other hand, once we know the additivity of $\hann$, the additivity of $\rmh^0$ (Proposition~\ref{prop:additivity}) follows quickly via Lemma~\ref{lem:mollify}, which is a much more general argument.

\chapter{More probabilistic limit laws}

\section{A new proof of the Collins--Male theorem}

Our proof of Theorem~\ref{mainthm:tempered} does not use Theorem~\ref{thm:ColMal} (the Collins--Male theorem), so we obtain a new proof of that theorem.

\begin{proof}[Proof of Theorem~\ref{thm:ColMal}]
By Lemma~\ref{lem:lower-Vietoris-simplify} and Lemma~\ref{lem:Vietoris-simplify}, it suffices to consider an sq-neighbourhood of $\t{\l}$ of the form $\cal{W}(k,U) \cap \cal{W}'(\ell,O)$ for some open subsets $U$ of $\S_k(\G)$ and $O$ of $\S_\ell(\G)$.  Since this intersection contains $\t{\l}$, the set $U$ must meet $\S_k(\l)$, and $O$ must contain all tempered positive definite maps.

On the one hand, since $U$ meets $\S_k(\l)$, it contains a tempered map. Therefore the first part of Theorem~\ref{mainthm:tempered} gives
	\[\bbP(\pi_n \in \cal{W}(k,U)) = \bbP(\X(\pi_n,U) \ne \emptyset) \to 1.\]
	
On the other hand, any $\phi \in \S_\ell(\G)\setminus O$ is not tempered, so the second part of Theorem~\ref{mainthm:tempered} provides a neighbourhood $V_\phi$ of $\phi$ and a positive value $c_\phi$ such that
\[\bbP(\S_\ell(\pi_n) \ \hbox{meets}\ V_\phi) \le e^{-c_\phi n + o(n)}.\]
By compactness, the closed set $\S_\ell(\G)\setminus O$ has a finite subcover $V_{\phi_1}\cup \cdots \cup V_{\phi_m}$, and now it follows that
\[\bbP(\t{\pi_n} \in \cal{W}'(\ell,O)) \ge 1 - \sum_{j=1}^m\bbP(\S_\ell(\pi_n)\ \hbox{meets}\ V_\phi) \to 1.\]
\end{proof}

At this point, I expect one can quickly give a new proof of the Haagerup--Thorbj\o rnsen theorem by reversing the spectral-calculus steps in Collins and Male's original proof in~\cite{ColMal14}, but I have not checked this carefully.

\section{Large deviations in the strong-quotient topology}\label{sec:sq-LDP}

For any separable representation $\pi$ we have $\rmh^0(\pi) = \ul{\rmh}^0(\pi)$ by Corollary~\ref{cor:h0}.  As usual, this agreement between `$\limsup$' and `$\liminf$' can be interpreted as a kind of large deviations principle.  By formula~\eqref{eq:0-ent-alt-2} from Lemma~\ref{lem:0-ent-alt-dfn}, this instance applies to the uniformly random AP sequence $\bspi$ and the quotient topology:
\begin{itemize}
\item[a.] If $\rmh^0(\pi) > h > -\infty$, then for any q-neighbourhood $U$ of $\t{\pi}$ we have
\begin{equation}\label{eq:h0-a}
\bbP(\t{\pi_n} \in U) \ge e^{hn - o(n)}.
\end{equation}
\item[b.] If $h >\rmh^0(\pi)$, then $\t{\pi}$ has a q-neighbourhood $U$ such that
\begin{equation}\label{eq:h0-b}
\bbP(\t{\pi_n} \in U) \le e^{hn + o(n)}.
\end{equation}
\end{itemize}
However, the quotient topology on $\Rep^\sim_{\rm{a}}$ is far from metrizable, so this theorem does not fall within the usual scope of large deviations theory (see Definition~\ref{dfn:LDP}).  Comparing with results for real-valued random variables,~\eqref{eq:h0-a} and~\eqref{eq:h0-b} are more like an estimate on the probabilities of `upper tails' than an estimate on the probabilities of landing close to specific values.

However, by combining with~\eqref{eq:h0-a} and~\eqref{eq:h0-b} with Proposition~\ref{prop:additivity} and Theorem~\ref{mainthm:tempered}, we can now prove the stronger large deviations principle in Theorem~\ref{mainthm:sq-LDP}.
%In general, Theorem~\ref{mainthm:sq-LDP} leaves a `gap' for those $\pi$ that approximately contain $\l$ but not $\kappa$, and we may not obtain a complete large deviations principle as a result.
We require one more lemma as preparation.  Intuitively, it says that $\rmh^0$ is `coercive' for approximate containment of finite-entropy representations.

\begin{lem}\label{lem:coerce}
If $\pi \gtrsim_\rm{a} \rho \gtrsim_\rm{a} \l$ and $\rmh^0(\pi) = \rmh^0(\rho) > - \infty$, then $\pi \simeq_\rm{a} \rho$.
\end{lem}

\begin{proof}
Theorem~\ref{thm:Had-main}(iii) gives $\pi \simeq_{\rm{a}} \rho \oplus \pi_1$ for some representation $\pi_1$.  Now the additivity from Proposition~\ref{prop:additivity} and our finite-entropy assumption give $\rmh^0(\pi_1) = 0$, and hence $\pi_1 \lesssim_\rm{a} \l$ by Theorem~\ref{mainthm:tempered}.  Therefore $\pi \lesssim_{\rm{a}} \rho \oplus \l$.  On the other hand, since $\rho \gtrsim_\rm{a} \l$ and $\l$ is contains no nonzero compact operators (Lemma~\ref{lem:mix}), Theorem~\ref{thm:get-sum} gives $\rho\oplus \l\simeq_\rm{a} \rho$. 
\end{proof}

\begin{proof}[Proof of Theorem~\ref{mainthm:sq-LDP}]
\emph{Step 1.}\quad If $\pi$ does not approximately contain $\l$, then there are a positive integer $k$, an element $\phi \in \S_k(\l)$, and a neighbourhood $U$ of $\phi$ such that $\ol{\S_k(\pi)}$ is disjoint from $\ol{U}$.  By Corollary~\ref{cor:APent-properties-trace}(a), the regular tacial state $\tau$ has a neighbourhood $V$ such that
\[\tr_n\circ \pi_n \in V \qquad \Rightarrow \qquad \X(\pi_n,U) \ne \emptyset.\]
We therefore obtain
\[\bbP\big(\S_k(\pi_n) \subset \S_k(\G)\setminus \ol{U}\big) = \bbP(\X(\pi_n,\ol{U}) = \emptyset) \le \bbP(\tr_n\circ \pi_n \not\in V),\]
and this decays faster than any exponential in $n$ by Theorem~\ref{thm:asymp-free2}.  Since the left-hand event displayed above is defined by an sq-neighbourhood of $\t{\pi}$, this gives the required large deviations upper bound if $\pi$ does not approximately contain $\l$.

\vspace{7pt}

\emph{Step 2.}\quad Next, let $\pi$ be any separable representation of $\G$, and let $h > \rmh^0_{\bspi}(\pi)$.  Then $\t{\pi}$ has a q-neighbourhood $U$ as in~\eqref{eq:h0-b}. Since $U$ is also an sq-neighbourhood, this proves the large deviations upper bound in all remaining cases.

\vspace{7pt}

\emph{Step 3.}\quad  Finally we prove the large deviations lower bound when $\pi \gtrsim_{\rm{a}} \l$ and $\rmh^0(\pi)$ is finite.  This is essentially a more careful version of our proof of Theorem~\ref{thm:ColMal} in the previous section.  By Lemma~\ref{lem:Vietoris-simplify}, it suffices to consider an sq-neighbourhood of $\t{\pi}$ of the form $W := U\cap \cal{W}'(\ell,G)$ for some q-neighbourhood $U$ of $\t{\pi}$, positive integer $\ell$, and open set $G \subset \S_\ell(\G)$ which contains $\ol{\S_\ell(\pi)}$. Let 
\[K := \{\t{\rho} \in \Rep^\sim_{\rm{a}}(\G):\ \ol{\S_\ell(\rho)}\ \hbox{meets}\ \S_\ell(\G)\setminus G\}.\]
This is sq-closed, hence sq-compact, and we have $W = U\setminus K$.

Now consider some $\t{\rho} \in K$.  Proposition~\ref{prop:Leb-reps-2} gives subrepresentatations $\pi' \lesssim \pi$ and $\rho' \lesssim \rho$ so that $\pi' \spoon \rho'$ and so that $\kappa:= \pi' \oplus \rho'$ contains both $\pi$ and $\rho$.   Since $G$ contains $\ol{\S_\ell(\pi)}$ but not $\ol{\S_\ell(\rho)}$, we must have $\rho \not\lesssim_{\rm{a}} \pi$, and hence also $\kappa \not\simeq_{\rm{a}} \pi$. Therefore $\rmh^0(\kappa) < \rmh^0(\pi)$ by Lemma~\ref{lem:coerce}.  Now~\eqref{eq:h0-b} gives $c_\rho > 0$ and a q-neighbourhood $W_\rho$ of $\t{\kappa}$ such that
\begin{equation}\label{eq:two-intersect-small}
\bbP(\t{\pi_n} \in W_\rho) \le e^{-c_\rho n + o(n)}e^{\rmh^0(\pi)n}.
\end{equation}
Having chosen $W_\rho$, Lemma~\ref{lem:two-q-nbhds} gives q-neighbourhoods $O_\rho$ of $\t{\rho}$ and $U_\rho$ of $\t{\pi}$ such that $O_\rho\cap U_\rho \subset W_\rho$.

Since $K$ is sq-compact, it has a finite subcover $O_{\rho_1}$, \dots, $O_{\rho_p}$, and now we have
\[W \supset U \cap U_{\rho_1}\cap \cdots \cap U_{\rho_p}\setminus (O_{\rho_1} \cup \cdots \cup O_{\rho_p}).\]
Therefore
\begin{align*}
\bbP(\t{\pi_n} \in W) &\ge \bbP(\t{\pi_n} \in U \cap U_{\rho_1}\cap \cdots \cap U_{\rho_p}) - \sum_{j=1}^p\bbP(\t{\pi_n} \in O_{\rho_j}\cap U_{\rho_j})\\
&\ge \bbP(\t{\pi_n} \in U \cap U_{\rho_1}\cap \cdots \cap U_{\rho_p}) - \sum_{j=1}^p\bbP(\t{\pi_n} \in W_{\rho_j}).
\end{align*}
On the right-hand side here, $U \cap U_{\rho_1}\cap \cdots \cap U_{\rho_p}$ is still a q-neighbourhood of $\t{\pi}$, so the first term above is bounded below by $e^{\rmh^0(\pi)n - o(n)}$ by~\eqref{eq:h0-a}.  Combining this with~\eqref{eq:two-intersect-small}, it gives
\[\bbP(\t{\pi_n} \in W) \ge e^{\rmh^0(\pi)n - o(n)} - \sum_{j=1}^p e^{-c_{\rho_j} n + o(n)}e^{\rmh^0(\pi)n} = e^{\rmh^0(\pi)n - o(n)},\]
as required.
\end{proof}

\begin{rmk}
By the definition of the strong-quotient topology, Theorem~\ref{mainthm:sq-LDP} can be regarded as a large deviations principle for the sequence of $k$-summaries $Z_k(\pi_n)$ in the space $\prod_{k\ge 1}\cal{K}(\S_k(\G))$, which is a product of hyperspaces.  Other examples of hyperspace-valued large deviations principles have appeared in the literature from time to time, particularly in works of Cerf on Minkowski sums of independent and identically distributed random compact sets~\cite{Cer99}, percolation clusters~\cite{Cer00a,Cer00b}, and the Wulff crystal~\cite{Cer06}. \fin
\end{rmk}

%\begin{rmk}
%A close relative of convergence in the LD sense [[RENAME!!! ]] has been studied in~\cite{JiaOB00}; see also~\cite[Section 4.7]{DemZei--LDPbook}. A closely related precedent is `large deviations convergence' of sequences of graphs with uniformly bounded degrees: see~\cite{BorChaGam17}; compare with Remark ????. \fin
%\end{rmk}

One potential application of Theorem~\ref{mainthm:sq-LDP} is new examples of representations that are approximable by finite-dimensional representations in the strong-quotient topology.  For example, if $\pi$ is a separable representation with $\pi \gtrsim_{\rm{a}} \kappa$ and ${\rmh^0(\pi) > -\infty}$, and $U$ is any sq-neighbourhood of $\t{\pi}$, then Theorem~\ref{mainthm:sq-LDP} gives
\[\bbP(\t{\pi_n} \in U) \ge e^{\rmh^0(\pi)n - o(n)},\]
and this is strictly positive for all sufficiently large $n$.  In particular, this event must be non-empty for all sufficiently large $n$.  From this and the metrizability of the strong-quotient topology, it follows that there is deterministic AP sequence $(\pi'_n)_{n\ge 1}$ whose approximate equivalence classes sq-converge to $\t{\pi}$.  As Lewis Bowen has pointed out to me, this implies that the generated C*-algebra $\pi(C^\ast \G)$ has the `matricial field' property of Blackadar and Kirchberg~\cite{BlaKir97}, because it follows that the composition
\[C^\ast \G \stackrel{(\pi'_1,\pi'_2,\dots)}{\longrightarrow} \prod_{n\ge 1}\rmM_n \stackrel{\rm{quotient}}{\longrightarrow} \prod_{n\ge 1}\rmM_n \Big\slash \big\{(A_n)_{n\ge 1}:\ \|A_n\|\to 0\big\}\]
has the same kernel as $\pi$.

\begin{rmk}\label{rmk:top-free-ent}
	Theorem~\ref{mainthm:sq-LDP} is worth comparing with Voiculescu's conjectured variational principle between `topological free entropy' and `free capacity' from~\cite{Voi02}.  When specialized to representations of a free group $\G$, the unproven parts of Voiculescu's principle reflect the following expected heuristic phenomena: 
	\begin{itemize}
	\item Given a character $\chi$ on $\G$ and a small neighbourhood $U$ of $\chi$, if we choose an $n$-dimensional representation $\pi_n$ of $\G$ uniformly at random, then the event that $\tr_n\circ \pi_n \in U$ has probability decaying as a well-defined exponential with speed $n^2$ (not just $n$), and with a constant that usually blows up as we shrink $U$ around $\chi$;
	\item On this event, `most' of those representations $\pi_n$ also satisfy $\S_\bullet(\pi_n) \approx \ol{\S_\bullet(\pi_\chi)}$ in the Vietoris topology.
	\end{itemize}
	Both of these assertions remain major open problems in free probability.
	
	When $\chi$ is the character of the regular representation $\l$, these conclusions are implied by the Collins--Male theorem (Theorem~\ref{thm:ColMal}).  Theorem~\ref{mainthm:sq-LDP} does not imply any other new cases, because in the regime covered by Theorem~\ref{mainthm:sq-LDP} the characters of the random representations $\pi_n$ are still typically close to regular, and the `excess' of $\S_\bullet(\pi_n)$ over $\ol{\S_\bullet(\l)}$ is caused by only a small subset of vectors in the unit sphere of $\bbC^n$.  This is also why the exponents that appear in  Theorem~\ref{mainthm:sq-LDP} are only of order $n$, whereas the more global constraints on representations required for Voiculescu's conjecture would lead to exponents of order $n^2$. But Theorem~\ref{mainthm:sq-LDP} does have the same spirit of providing probabilistic control over the sequence $\S_\bullet(\pi_n)$ in a new regime.	 \fin
	\end{rmk}

\section{Large deviations for operator norms}\label{sec:op-norm-LDP}

Fix $a \in C^\ast \G$ with $a\ge 0$.  Because of the identity~\eqref{eq:sq-to-norm}, for a separable unital representation $\pi$ the operator norm $\|\pi(a)\|$ depends only on the approximate equivalence class $\t{\pi}$, and this norm is continuous when regarded as a function of that class in the space $\Rep^\sim_{\rm{a}}(\G)$.  As a result, Theorem~\ref{mainthm:sq-LDP} combines with the contraction principle for large deviations principles (see~\eqref{eq:contraction}) to give the following.

\begin{prop}\label{prop:norm-upper-tails}
As $n\to\infty$, the random operator norms $\|\pi_n(a)\|$ obey a large deviations principle with speed $n$ and rate function
\[J_a(y) := \left\{\begin{array}{ll} \inf\big\{-\rmh^0(\pi):\ \t{\pi} \in \Rep^\sim_{\rm{a}}(\G),\ \|\pi(a)\| = y\big\} &\qquad \hbox{if}\ y \ge \|\l(a)\|\\ \infty &\qquad \hbox{if}\ y < \|\l(a)\|.\end{array}\right.\]
\qed
\end{prop}

If $a$ is not positive, then we can apply this proposition to $a^\ast a$.

Since $\rmh^0$ is monotone under containment, we can restrict attention to cyclic representations in the formula for $J_a$, and so obtain the alternative formula
\[J_a(y) = \inf\big\{-\rmh^0(\phi):\ \phi \in \S_1(\G),\ \phi(a) = y\big\} \qquad \hbox{if}\ y \ge \|\l(a)\|.\]
As a large deviations rate function, $J_a$ is lower semicontinuous.  In addition:
\begin{itemize}
\item $J_a(y) = \infty$ if $y < \|\l(a)\|$ because of the first part of Theorem~\ref{mainthm:tempered};
\item $J_a(\|\l(a)\|) = 0$ because $\rmh^0(\l) = 0$;
\item $J_a$ is strictly positive on $(\|\l(a)\|,\infty)$ because of the second part of Theorem~\ref{mainthm:tempered};
\item $J_a(y) = \infty$ if $y > \|a\|$ beause the infimum above is empty.
\end{itemize}
One could perhaps turn Proposition~\ref{prop:HS-close} into a more explicit lower bound on $J_a(y)$ for $\|\l(a)\| < y < \|a\|$, but I have not pursued this.  I expect that $J_a$ is continuous $(\|\l(a)\|,\|a\|)$, but we do not prove this here.  It is less clear whether it should be convex on this interval.

A standard alternative formulation of a large deviations principle is in terms of closed and open sets: see inequalities~\eqref{eq:LDP2-1} and~\eqref{eq:LDP2-2}.  Used in that form, Theorem~\ref{mainthm:sq-LDP} gives upper and lower estimates on the upper-tail probabilities of operator norms:
\[\bbP(\|\pi_n(a)\| \ge z) \le \exp\big(-\inf_{y \ge z}J_a(y)\cdot n + o(n)\big)\]
and
\[\bbP(\|\pi_n(a)\| > z) \ge \exp\big(-\inf_{y > z}J_a(y)\cdot n - o(n)\big).\]
The leading exponents agree if $z$ is a point of continuity of $J_a$.

In terms of $\hann$ and $\rmh^0$, we can also control the asymptotic moment generating functions of certain random variables constructed from $\pi_n(a)$.  If $a \ge 0$, then applying the asymptotic formula~\eqref{eq:Var} to Theorem~\ref{mainthm:LDP} with $k=1$ gives
\begin{align}
\frac{1}{n}\log \bbE[e^{n\langle \pi_n(a)e_1,e_1\rangle}]&\to \sup\{\phi(a) + \hann(\phi):\ \phi \in \S_1(\G)\} \label{eq:asymp-eval1}\\
&=\sup\{\phi(a) + \rmh^0(\phi_{\rm{sing}}) + \log \Delta\phi_{\rm{ac}}:\ \phi \in \S_1(\G)\}\label{eq:asymp-eval1b},
\end{align}
using Theorem~\ref{thm:three-entropy} to arrive at the second formula~\eqref{eq:asymp-eval1b}. In the same way, if $a \ge 0$, then Proposition~\ref{prop:norm-upper-tails} or Theorem~\ref{mainthm:sq-LDP} lead to
\begin{align}\label{eq:asymp-eval2}
\frac{1}{n}\log \bbE[e^{n\|\pi_n(a)\|}] &\to  \sup\{y - J_a(y):\ \|\l(a)\| \le y \le \|a\|\} \nonumber\\
&= \sup\{\phi(a) + \rmh^0(\phi):\ \phi \in \S_1(\G)\}.
\end{align}

The function $\log \Delta$ on $\S_1(\G)$ is concave by Proposition~\ref{prop:FK-det-properties}(c).  However,
neither $\hann|\S_1(\G)$ nor $\rmh^0|\S_1(\G)$ is concave.  In the first place, if $\tau$ is the regular tracial state, then a simple variant of Lemma~\ref{lem:mollify} gives $\hann((1-t)\tau + t\phi) \to \rmh^0(\phi)$ as $t\downarrow 0$ for any $\phi \in \S_1(\G)$, whereas $\hann(\tau) = 0$.  In addition, if $\phi$ and $\psi$ are mutually singular then Corollaries~\ref{cor:dom-contained} and~\ref{cor:h0-additivity} give
\[\rmh^0(t\phi + (1-t)\psi) = \rmh^0(\pi_{\phi} \oplus \pi_{\psi}) = \rmh^0(\phi) + \rmh^0(\phi) \qquad (0 < t < 1)\]
-- a sum, not a convex combination.  If $\rmh^0(\phi)$ and $\rmh^0(\psi)$ are both non-zero, then this formula also implies discontinuities in $\rmh^0|\S_1(\G)$ as $t$ converges to $0$ or $1$.  Because of Theorem~\ref{thm:big-tempered}, this failure of concavity appears as soon as one leaves the subset of tempered elements of $\S_1(\G)$.  This is a proper subset of that space provided $r\ge 2$, so we can attribute this failure to the non-amenability of non-Abelian free groups.

Because of this non-concavity, we cannot recover the functions $\hann|\S_1(\G)$ or $\rmh^0|\S_1(\G)$ from the Legendre transforms which appear on the right-hand sides of~\eqref{eq:asymp-eval1} and~\eqref{eq:asymp-eval2} (except at exposed points of their graphs).  Taking negatives, we see that the large deviations rate functions in Theorem~\ref{mainthm:LDP} and Theorem~\ref{mainthm:sq-LDP} are not convex.  Non-convex large deviations rate functions appear widely in random matrix theory and free probability: see, for example, the discussions following~\cite[eqn. (4.2.27)]{Guionnet--survey} and~\cite[Subs. 7.5 qu. 5]{Guionnet--survey}.  This inhibits the use of some classical methods for studying large deviations that originate with Cram\'er's theorem (compare~\cite[Sec. 2.4 items (4)--(10)]{Var--LDPbook2} or~\cite[Prop. III.4.20]{SimSMLG}, for example).

Nevertheless, evaluating the quantities in~\eqref{eq:asymp-eval1},~\eqref{eq:asymp-eval1b} or~\eqref{eq:asymp-eval2} would be interesting in its own right, and might lead to an evaluation of the reduced rate function $J_a$ for some choices of $a$.  A closely related scenario is studied in the recent preprint~\cite{GuiHusRek25}.  That paper concerns a random Kronecker matrix which is rather like $\pi_n(a)$ when $a$ is supported on $S\cup S^{-1}$, except that (i) the random Kronecker matrix is constructed from independent GUE ingredients, rather than independent uniformly random unitary matrices, and (ii) it additionally allows tensor products with matrix coefficients of fixed dimension.  The result is a large deviations principle for the largest eigenvalues of those matrices.  In our setting, the analogous expression would be the best upper bound on $J_a$ that can be extracted if one knows a formula for the limit of the left-hand side of~\eqref{eq:asymp-eval1} for all $\theta a$ with $\theta > 0$.  The matching lower bound on the rate function is proved in~\cite{GuiHusRek25} using distributional calculations made possible by the jointly Gaussian distribution.  For a random Kronecker matrix, the analog of the left-hand side of~\ref{eq:asymp-eval1} is an example of a `spherical integral', studied in depth previously by some of the same authors and their collaborators: see~\cite{GuiHus22} in particular, the other references cited in~\cite{GuiHusRek25}, and also~\cite{CooDucGui24}, which revealed the non-convex rate function for the largest eigenvalue of a different random matrix model by developing the use of spherical integrals.

It would be interesting to try to adapt the arguments from~\cite{GuiHusRek25} to our present setting, for example towards understanding the following.

\begin{prob}
Choose a particular element $a$ of $C^\ast \G$, such as the discrete Laplacian or a transform of it.  Which $\phi \in \S_1(\G)$ optimize the right-hand sides of~\eqref{eq:asymp-eval1} or~\eqref{eq:asymp-eval2}?  Do these optimizers have a tractable structure or a simple interpretation?
\end{prob}

If $a$ is self-adjoint and supported on $S\cup S^{-1}$, then I expect that asymptotics for~\eqref{eq:asymp-eval1} are quite tractable, and that the optimizer for $\hann$ is a Haagerup function.  But I do not know what to expect~\eqref{eq:asymp-eval2} for $\rmh^0$.

We can also regard $\rmh^0$ as a function on $\Rep^\sim_{\rm{a}}(\G)$ rather than $\S_1(\G)$.  In this form, concavity does not have an obvious meaning, but the following is open.

\begin{prob}
Is $\rmh^0$ continuous as a function on $\Rep^\sim_{\rm{a}}(\G)$?
\end{prob}

In this direction, it might be worth starting with Theorem~\ref{thm:big-tempered}(b) and the resulting decomposition in~\eqref{eq:form-for-h0}.  These describe representations with finite $\rmh^0$, and only these really contribute in our infimum formula for $J_a$.  For example, for the supremum in~\eqref{eq:asymp-eval2}, it suffices to consider $\phi$ such that $\pi_\phi$ is an irreducible sub-representation of a Hilbert--Schmidt perturbation of $\l$ but is not approximately contained in $\l$ itself. If $U$ is a small q-neighbourhood of such a representation $\t{\pi_\phi}$, then the rare event that $\t{\pi_n} \in U$ is a representation-theoretic analog of the rare event that a single random matrix has an exceptional outlying eigenvalue.  More ambitiously, describing this rare event could be a step towards a conditional limit theorem for large deviations in the random operator norm $\|\pi_n(a)\|$.

\begin{rmk}
We do not learn anything about the lower tails of $\|\pi_n(a)\|$ from Theorem~\ref{mainthm:sq-LDP}, except that these decay faster than any exponential in $n$.  This can also be deduced directly from a suitable application of Theorem~\ref{thm:Un-conc}. %Intuitively, the event that $\|\pi_n(a)\| \le z$ requires that $\pi_n(a)$ be uniformly bounded over the whole sphere $\rmS^{2n-1}$, whereas the event that $\|\pi_n(a)\| \ge z$ requires large behaviour at only one point.  
Instead, the lower tails should be controlled by the conjectured variational principle for the `topological' version of Voiculescu's free entropy: see~\cite{Voi02}, and compare with Remark~\ref{rmk:top-free-ent}.  \fin
\end{rmk}

%\section{Discussion: comparison with convergence for bounded-degree graphs}
%
%[[ Explain how your main results in this chapter answer the natural analogs of the question about local-global convergence and LD convergence for uniform random bounded-degree graphs.  Explain that those orginal questions are still open!  Give the right references! ]]
%
%
%\subsection*{\emph{Notes and further references}}
%
%Should remark somewhere that similar ideas SHOULD still give some natural monotone invariant under weak equivalence for MPSs -- it's just that, without this `convex combination trick', you don't currently have a proof that the relevant limit exists, let alone anything resembling a formula for it!  Right now I really don't see how to get any further with this issue than with the basic question of whether $\rmh_{\rm{typ}}$ ITSELF exists.  But if you believe in general that the appropriate large deviation rate `SHOULD' exist, then there should ultimately be an invariant to be had there, right?

\chapter{Related objects and open problems}\label{chap:further}

Previous chapters have already contained a selection of potential research problems that emerge naturally during the course of our work.  This final chapter describes a few more directions that could be explored next.  Some of these are rather open-ended.

\section{Uniformly random permutations}\label{sec:rndm-perms}

The proof of Lemma~\ref{lem:LDP-to-A} depends on the invariance of the law of $\pi_n$ under arbitrary unitary conjugation.  This is why annealed averages are simpler than other statistics when studying uniformly random AP sequences.  However, less symmetric distributions on finite-dimensional representations do not admit the same simplification, and the resulting zeroth-order and annealed AP entropy functionals can behave quite differently.

To consider an example, let $\G$ be freely generated by $S = \{s_1,\dots,s_r\}$, and for each $n$ let $\pi_n$ be a uniformly random permutation representation of $\G$ on $\bbC^n$.

For this group, Lubotzky and Shalom proved in~\cite{LubotSha04} that any nonempty q-open set of separable representations $U$ contains some finite permutation representation $\rho$, say on $\{1,\dots,m\}$.

For each $n \ge m$, let $V_n$ be the subspace of $\bbC^n$ spanned by the first $m$ coordinates, so this has a natural identification with $\bbC^m$. Now consider the events
\[E_n := \{\pi_n:\ \pi_n(s_i)|V_n= \rho(s_i)\ \hbox{for}\ i=1,2,\dots,m\} \qquad (n\ge m).\]
Then a standard calculation gives
\[\bbP(\t{\pi_n} \in U) \ge \bbP(E_n) = \frac{((n-m)!)^r}{(n!)^r}\ge \frac{1}{n^{mr}}.\]
Crucially, this decays more slowly than any decaying exponential function of $n$ as $n\to\infty$. Since $r$ is fixed throughout, and $m$ depends only on $U$, we may finish with an infimum over $U$ to conclude that $\rmh^0_{\bspi}(\pi) = 0$ for every separable representation $\pi$.

On the other hand, an analog of Theorem~\ref{thm:asymp-free2} does hold when $\bspi$ is a sequence of uniformly random permutation representations, with convergence rate bounded by $O(1/(n!)^c)$ for positive constants $c$.  This is still faster than any exponential in $n$, so Theorem~\ref{thm:three-entropy} applies to give
\[\rmh^\ann_{\bspi}(\phi) = \rmh^0(\bspi) + \log \Delta \phi_{\rm{ac}} = \log \Delta \phi_{\rm{ac}}.\]
So in this case the annealed AP entropy exists and is simply always given by the relevant Fuglede--Kadison determinant.  I do not see any more direct proof of this fact starting from the definition of annealed AP entropy, or any simple alternative formula for the annealed AP entropy. % [[ FIND A REF FOR THAT RATE OF CONVERGENCE OF TRACES IN PROBABILITY! ]]

%[[ ADD IN HERE A DESCRIPTION OF WHAT YOU THINK A `TYPICAL' MICROSTATE DOES LOOK LIKE WHEN THEY /DO/ EXIST: A VERY LOCALIZED BIT, WHICH H-ANN CAN'T EASILY DETECT, FOR THE SINGULAR CONTRIBUTION, AND A SUITABLE GAUSSIAN BIT FOR THE ABSOLUTELY CONTINUOUS PART, WHICH WILL GIVE THE VALUE FOR HANN BY THE OBVIOUS VOLUME-ANALOG OF LEWIS' FORMULA!!!? ]] 

While the zeroth-order AP entropy $\rmh^0_{\bspi}$ is trivial when $\bspi$ consists of uniformly random permutations reputations, the analog of Theorem~\ref{thm:ColMal} does hold for this model.  This is a major result of Bordenave and Collins~\cite[Thm. 3]{BordCol19}.

\begin{thm}\label{thm:BordCol}
The approximate equivalence classes of uniformly random permutation AP sequences for a finitely-generated free group converge strongly to the class of the regular representation in probability. \qed
\end{thm}

Once again~\cite{BordCol19} itself is about strong convergence and we have coupled it to Corollary~\ref{cor:perfect}.  Theorem~\ref{thm:BordCol} generalizes a famous older theorem of Friedman~\cite{Fri08} about spectral gaps of random regular graphs.

As shown above, if $U$ is a non-empty quotient-open set of separable representations, then the probabilities $\bbP(\t{\pi_n} \in U)$ cannot decay no faster than polynomially in $n$, but Theorem~\ref{thm:BordCol} shows that they do indeed converge to $0$ if $U$ does not contain $\t{\l}$.

\begin{prob}
Is this decay actually governed by a sensible power-law in $1/n$, and is there some kind of analog of AP entropy that computes or estimates those powers?
\end{prob}

%[[ MENTION ANY ERGODIC THEORY RELATED QUESTIONS HERE?  CITE YOUR ICM SURVEY FOR EXPLANATION! ]]

\section{Other models of random unitary representations}

\subsection*{\emph{Planted models}}

For either unitary matrices or permutations, we could move away from uniform randomness by `planting' a small number of vectors with prescribed behaviour in each of the representations $\pi_n$.  This would be the analog of planted models of random graphs, which have been widely studied in combinatorics and theoretical computer science (see~\cite{BolSco04} for a first example).  Technically, `planting' in our context should probably mean choosing the generators $(\pi_n(s):\ s \in S)$ initially from a distribution that has the form we have already met on the right-hand side of~\eqref{eq:prod-meas}. So it could be that this model of randomness requires few new ideas compared to our work above.  Among its benefits one could hope to find a definition and an interpretation of `relative' annealed AP entropy.  These should be analogous to their predecessors in ergodic theory studied in~\cite{Shr23}. % [[ WHAT BECOMES OF THE DETERMINANTAL FORMULA THERE!?! ]]

%[[ WHAT ABOUT SOMETHING FOR FREE SEMICIRCULARS AND THEIR PERTURBATIONS, WHICH WOULD BE A HIGHER-RANK GENERALIZATION OF KILLIP-SIMON RATHER THAN THE OPUC STUFF?  C.F. THE DETERMINANTS AND `RELATIVE ENTROPIES' THAT APPEAR IN THE KILLIP--SIMON PAPER, AS CITED FOR INSTANCE IN BREUER--SIMON--ZEITOUNI OR SIMON'S BOOKS? ]]
%
%\subsection*{\emph{Other algebras}}
%
%WHAT ABOUT THE ALG OF A FREE SEMIGROUP WITH TRIVIAL INVOLUTION, SO THE GENERATORS HAVE TO BE SELF-ADJOINT?  WOULD THAT LEAD YOU TO AN ANALOGOUS STORY OVER AN INDEPENDENT TUPLE OF GUE'S, RATHER THAN HAAR UNITARIES?  AND THEN MAYBE YOU'D GET GENERALIZATION JACOBI PARAMETERS (C.F. SIMON, KILLIP--SIMON ETC) RATHER THAT VERBLUNSKY'S!  AND EVEN AN L.D.P. INTERPRETATION OF THAT MULTIPLE-FREE-JACOBI-PARAMETER STUFF!  WHAT WOULD THE ROLE OF THE SEMICIRCULAR LAW BE IN ALL OF THIS?  (MAYBE GIVES THE RIGHT THING FOR THE RELEVANT FK DETERMINANT?  ANYTHING ELSE?)

\subsection*{\emph{Tensor products}}

One can also combine a random choice of unitary-matrix generators with operations such as tensor products: see~\cite{BordCol24}, for example.  A different construction with tensor products is essential to Hayes' recent proof of the Peterson--Thom conjecture in~\cite{Hay22}.  That proof was initially conditional on strong asymptotic freeness of certain random tensor product representations, but this ingredient has now been provided: see~\cite{BelCap22} and also~\cite{BordCol--tensors}.

\begin{prob}
Is any version of annealed AP entropy available in settings such as these?  And can it be estimated?
\end{prob}

\subsection*{\emph{Orthogonal or symplectic representations}}

Most of our work should be straightforward to adapt to independent random orthogonal matrices, and I doubt there are any surprises here.  This is because the main results in Part~\ref{part:free} do not depend on the spectral theorem for unitary matrices anywhere. The case of symplectic matrices is not quite so clear to me.  Note that the key calculations from Section~\ref{sec:random-orbits} already apply to any of these settings.

\subsection*{\emph{Analogies with free probability}}

Formally, our only appeal to free probability in Part~\ref{part:free} is through Theorems~\ref{thm:asymp-free1} and~\ref{thm:asymp-free2}.  But other points of resemblance are widespread.  Do other known ideas or constructions from the study of free entropy in free probability have adaptations to the study of annealed AP entropy?  See~\cite[Chap. 6]{HiaPetSL} for an introduction to that theory.  %Examples could include looking for `annealed AP' versions of `$1$-bounded entropy' or `free entropy dimension', but I have not considered these proposals carefully.

\subsection*{\emph{Fixed matrices}}

Many of the known results about asymptotic freeness or strong asymptotic freeness gain extra strength because one can also allow a sequence of fixed matrices alongside the random ones, and show that the whole partially-random collections of matrices still satisfy `as much (strong) asymptotic freeness as possible'.  See the full results in~\cite{ColMal14}, for example.

\begin{prob}
Is there a variant of annealed AP entropy that can help to analyse these models?
\end{prob}

\section{Other groups, algebras, or limits}

To construct our random finite-dimensional unitary representations of a free group, we simply choose a unitary matrix for each free generator independently from the Haar measure $m_{\bf{U}(n)}$.  As pointed out to me by Uri Bader, the distribution of the resulting random representation does not depend on the chosen free generating set for the group.  This is because any two sets of free generators are related by an automorphism of the free group, and any such automorphism is a composition of Nielsen transformations~\cite{Nie24}.  An easy check shows that each Nielsen transformation converts one tuple of independent Haar-distributed unitary matrices into another, and so the same is true of any automorphism of the group by induction.  See also~\cite{Gel08} and the references listed there for more on this measure-preserving action.

It is also natural to study uniform random finite-dimensional representations of free products of other cyclic groups, and it should be straightforward to develop a resulting theory of annealed AP entropy.  For permutations and annealed sofic entropy, this generalization is presented in~\cite{AusBowShr}.  The formula for annealed sofic entropy for such groups is connected to a classical entropy-like functional introduced by Kikuchi (see~\cite[Section 3]{AusBowShr}), and I expect the analog of this story holds for annealed AP entropy.

However, beyond this class of free-product groups, it is more difficult to define `natural' measures on representation varieties.  Some progress of this kind is due to Goldman for surface groups~\cite{Gol84}.  See~\cite{MulPen02,BureLaw22,Mag22,MagPud24} for recent results about representations chosen at random from these measures.  When such a natural finite measure exists, one can sensibly use it to define `annealed' AP entropies for those groups.  I do not know how tractable or useful the resulting theory might be.  Perhaps more fundamentally, I do not know whether these measures satisfy analogs of the main properties we have used for the Haar measures on $\bf{U}(n)$, such as the measure concentration from~\ref{thm:Un-conc}. %ANY SIMILARITY TO THAT LONG SECTION ABOUT PROJECTIVE VARIETIES IN GROMOV'S LONG BOOK?

For a Cartesian product $\G_1 \times \G_2$ of two other groups, this discussions asks for a natural way to generate a pair of commuting finite-dimensional representations of $\G_1$ and $\G_2$.  While there may not be a single canonical measure on such pairs, one possibility is offered by Kronecker products of separate representations of $\G_1$ and $\G_2$ (see~\cite[Sec. 7.3]{FolAHA}).  Studying these would bring us back to our questions about tensor products above.  In general, questions about whether such tensor-product representations can effectively `see' all representations of $\G_1\times \G_2$ have long-known connections to Connes' embedding problem and related phenomena.  Much more on these topics can be found in~\cite{PisOST},~\cite{BroOza08}, or~\cite{Oza13}, for example.

Finally, a sequence of papers by Popesu studied a notion of `entropy' for representations of free semigroups defined by a certain limit of associated log-determinants.  Popescu used these in developing a theory of Toeplitz matrices and dilations for such semigroups: see~\cite{Popescu96,Popescu01} and the further references given there.  But the semigroups of operators in those works are highly non-invertible and very different from unitary representations of groups.  I have not found a link between Popescu's entropy in those papers and the quantities studied here.  % [[ YOU THOUGHT FOR A WHILE THAT THAT STUFF OF POPESCU MIGHT BE SOME KIND OF OPERATOR ANALOG OF ENTROPY DOWN ROOTED TREES AS STUDIED BY BERGER--YE AND PETERSEN--??--??.  YOU NOW THINK THAT'S WRONG.  BUT MAYBE THERE /IS/ SOME INTERESTING ERGODIC THEORY ANALOG OF POPESCU'S SETTING THAT INVOLVES NON-COMMUTING, NON-INVERTIBLE MAPS?  MAYBE IT'S SOME FLAVOUR OF TREE-MARKOVNESS THAT HAS ACTUALLY BEEN STUDIED, IF ONLY IN SPECIAL CASES? ]]

\bibliography{bibfile}{}
\bibliographystyle{abbrv}

\begin{small}
\noindent Mathematics Institute\\ Zeeman Building\\ University of Warwick\\ Coventry CV4 7AL\\ United Kingdom\\ \href{mailto:Tim.Austin@warwick.ac.uk}{\texttt{Tim.Austin@warwick.ac.uk}}
\end{small}

\end{document}